\newtheorem{thm}{Theorem}[section]
\newtheorem{prop}[thm]{Proposition}
\newtheorem{lemma}[thm]{Lemma}
\newtheorem{cor}[thm]{Corollary}
\newtheorem{rmk}[thm]{Remark}
\theoremstyle{definition}
\newtheorem{example}[thm]{Example}
\newtheorem{defn}[thm]{Definition}
\newcommand{\im}{\operatorname{im}}
\newcommand{\EE}{\mathbb{E}}
\newcommand{\DD}{\mathbb{D}}
\newcommand{\FF}{\mathbb{F}}
\newcommand{\BB}{\mathbb{B}}
\newcommand{\CC}{\mathbb{C}}
\newcommand{\cart}{\operatorname{Cart}}
\newcommand{\dom}{\operatorname{dom}}
\newcommand{\cod}{\operatorname{cod}}
\newcommand{\xalg}[1]{#1\text{-}\operatorname{Alg}}
\newcommand{\xcoalg}[1]{#1\text{-}\operatorname{Coalg}}
\newcommand{\psh}[1]{\sets^{\mathcal{#1}^{\op}}}
\newcommand{\yoneda}{\mathbf{y}}
\newcommand{\pbcorner}{\ar[dr, phantom, very near start, "\lrcorner"]}
\newcommand{\sets}{\mathbf{Set}}
\newcommand{\cats}{\mathbf{Cat}}
\newcommand{\op}{\operatorname{op}}
\newcommand{\sset}{\sets^{\Delta^{\op}}}
\newcommand{\II}{\mathbb{I}}
\newcommand{\RR}{\mathbb{R}}
\newcommand{\decble}{\operatorname{dec}}
\newcommand{\fmly}{\operatorname{Fam}}
\newcommand{\simpcat}{\Delta}
\title{Definable and Non-definable Notions of Structure}
\author{Andrew W Swan}
\begin{document}

\maketitle

\begin{abstract}
  Definability is a key notion in the theory of Grothendieck
  fibrations that characterises when an external property of objects
  can be accessed from within the internal logic of the base of a
  fibration. In this paper we consider a generalisation of
  definability from properties of objects to structures on objects,
  introduced by Shulman under the name local representability.

  We first develop some general theory and show how to recover
  existing notions due to B\'{e}nabou and Johnstone as special cases.
  We give several examples of definable and non definable notions of
  structure, focusing on algebraic weak factorisation systems, which
  can be naturally viewed as notions of structure on codomain
  fibrations. Regarding definability, we give a sufficient criterion
  for cofibrantly generated awfs's to be definable, generalising a
  construction of the universe for cubical sets, but also including
  some very different looking examples that do not satisfy tininess in
  the internal sense, that exponential functors have a right
  adjoint. Our examples of non definability include the identification
  of logical principles holding for the interval objects in simplicial
  sets and Bezem-Coquand-Huber cubical sets that suffice to show a
  certain definition of Kan fibration is not definable.
\end{abstract}

\section{Introduction}
\label{sec:introduction}

\subsection{Definability}
\label{sec:definability}

In na\"{i}ve category theory one often makes use of an external notion
of set. For example, in locally small categories \(\hom(X,Y)\) is a
set, a complete category is one with all small limits and the general
adjoint functor theorem makes essential use of the solution set
condition. This ties the definitions and results to an often
unspecified theory of sets, typically understood to be Zermelo-Fraenkel
set theory with choice. This use of set theory is often unnecessary
and the link can be severed through the use of Grothendieck
fibrations. We think of the base of the fibration as the foundation of
mathematics where we are working. This could be ``the'' category of
sets via a set indexed family fibration, but could also be a specific
model of \(\mathbf{ZF}\), or more generally an elementary topos, or
even more generally a category satisfying even weaker conditions. For
example, in this paper we will consider examples where the base is a
locally cartesian closed category, and examples where the base is the
category of all small categories.

When working over a fibration, it is useful to know when an external
property of objects in the total category can be referred to from
within the internal logic of the base. This idea can be captured
surprisingly well through an elegant notion due to B\'{e}nabou referred
to as \emph{definability} \cite{benaboufibcat}.

Whereas B\'{e}nabou's definition referred only to \emph{properties} of
objects, the same idea can be applied to \emph{structures} on
objects. For example, on a fibration of vertical maps \(V(\EE) \to
\BB\) we could consider the class of maps with the property of being a
split epimorphism, and given a map, we can consider the collection of
all sections witnessing the map as a split epimorphism. To give a more
extreme example, when working in the internal logic of a topos we can
talk about a given object having the property of admitting a group
structure, but it is more useful to be able to talk about the
collection of group structures on an object.

The concept of definability was generalised to certain structures by
Johnstone \cite[Section B1.3]{theelephant} under the name
\emph{comprehension schemes}. A related idea was also considered early
on by Lawvere \cite{lawverecomp}. However, in this paper we will
consider an alternative definition due to Shulman \cite[Section
3]{shulmaninftytopunivalence}. Although our definition is based on and
essentially equivalent to Shulman's we will give a reformulation that
emphasises its role as a generalisation of the earlier ideas by
B\'{e}nabou and Johnstone. For this reason we will mainly use the
terminology \emph{definable}, following B\'{e}nabou in place of
Shulman's \emph{locally representable}.

\subsection{Algebraic weak factorisation systems}
\label{sec:algebr-weak-fact-1}

The concept of \emph{weak factorisation system} (wfs) is fundamental
in homotopical algebra, as a key ingredient in Quillen's definition of
model category \cite{quillen67}. For any weak factorisation system on
a category \(\CC\), the class of right maps, gives a class of objects
in the codomain fibration \(\cod : \CC^\to \to \CC\) which is closed
under reindexing, including as a special case the fibrations in a
model structure. As observed by Shulman
\cite{shulmaninftytopunivalence}, when working in the semantics of
type theory it is natural to ask when the fibrations in a model
category are definable, or failing that, when they can be replaced by
a definable notion of structure. Given a definable notion of
structure, it is straightforward to construct universes that can be
used when modelling type theory. In the absence of a definable notion
of structure there is not a clear way to define universes for type
theory in general.

\emph{Algebraic weak factorisation systems} (awfs) are a structured
version of wfs, first introduced by Grandis and Tholen under the name
\emph{natural weak factorisation system} \cite{grandistholennwfs}. In
an awfs the class of left maps is replaced by the category of
coalgebras for a comonad, and the class of right maps by a category of
algebras for a monad. In \cite{bourkegarnerawfs1} Bourke and Garner
gave a new alternative definition of awfs, proving that is equivalent to the
earlier definition. According to this definition we can understand
awfs's as monadic notions of structure on a codomain fibration,
together with some extra structure in the form of a ``composition
functor.'' Presented like this, we can naturally define awfs's as
being definable simply when the underlying notion of structure is
definable.

When studying the semantics of homotopy type theory constructively,
e.g. as in
\cite{awodey19,vdbergfaber,bchcubicalsets,coquandcubicaltt,gambinohenry,lops,
  pittsortoncubtopos}, it is usual to define the universe of small
fibrations not as small maps that are Kan fibrations, but as small
maps \emph{together with fibration structure}. For this reason, it is
more natural to consider Kan fibrations as part of an awfs, rather
than as merely a class of maps in a wfs. Just as for wfs's, when
constructing the universe it is natural to ask that the awfs is
definable. Although it is unclear whether definability is strictly
necessary to model universes in type theory,\footnote{We could also
  consider the weaker requirement that given a fibration structure on
  a map \(f\) we can witness \(f\) as a pullback of the universe map
  \(\tilde{U} \to U\) in a not necessarily unique way, and this may be
  sufficient for the semantics of type theory.} it does appear to play
an important role in the constructive models of type theory known to
the author, including all those in the references above.

Aside from the semantics of type theory, the question of definability
of awfs's is an interesting topic for two reasons.

As explained above, definability is a rich topic in itself, and from
this point of view awfs's are a source of interesting examples both of
definability and non-definability. In many of the other examples we
will see in this paper definability is something that we can get ``for
free'' from general arguments, often using local smallness of a
fibration, or has no chance at all of holding. On the other hand,
awfs's provide examples of notions of structure where definability
holds or does not hold for non trivial reasons.
We will show that definability holds
whenever an awfs is cofibrantly generated by a family of maps whose
codomain is a ``family of tiny objects,'' recovering some known
instances of definability as a corollary. We will also see some non
trivial examples of awfs's that are \emph{not} definable.

Secondly, we can view the question of definability as a natural one
within the field of awfs's. Definable awfs's have yet to be studied in
detail, but we can already observe the following interesting
property. One of the key properties of awfs's that improves the
situation with wfs's is that left maps are closed under colimits, and
right maps are closed under limits. More precisely, if we are given a
diagram of right maps in a wfs, then the limit is not necessarily a
right map. However, if we are given a diagram of maps that factors
through the category of right maps in an awfs, then the limit is a
right map, simply as corollary of the fact that the forgetful functor
on right map structures is monadic, and so creates limits. In a
definable awfs right maps are also stable under certain colimits,
namely those for diagrams that factor through the category of right
map structures and cartesian homomorphisms. This can be seen as a
corollary of the fact that for a definable awfs the right maps are
both monadic and comonadic in the following sense: the usual category
of right maps and \emph{all} homomorphisms is monadic, as for any
awfs, whereas definability precisely tells us that the wide
subcategory of cartesian homomorphisms is comonadic.

\subsection*{A note on set theoretic foundations}
\label{sec:note-set-theoretic}

In some places of this paper we made use of categories of presheaves
on large categories. This is unproblematic in the presence of
sufficient large cardinals. However, the aforementioned presheaf
categories are only used together with simple algebraic arguments that
can easily be adapted into direct arguments that do not require the
presheaf categories to exist. In this way the main results of this
paper do not depend on large cardinals, or indeed on much set theory
at all.

To avoid the use of the axiom of choice, we follow the convention that
all categorical structure is ``cloven.'' That is, whenever we require
the existence of a collection of such objects as limits, colimits and
cartesian maps, we in fact require an operator assigning a choice of
these objects.

\subsection*{Acknowledgements}
\label{sec:acknowledgements}

I'm grateful for several helpful discussions, comments and suggestions
on this topic from Mathieu Anel, Carlo Angiuli, Steve Awodey, Benno
van den Berg, Jonas Frey, Mike Shulman and Thomas Streicher.

This material is based upon work supported by the Air Force Office of
Scientific Research under award number FA9550-21-1-0009. Any opinions,
findings, and conclusions or recommendations expressed in this
material are those of the author(s) and do not necessarily reflect the
views of the United States Air Force.

\section{Some background and useful lemmas}
\label{sec:some-backgr-usef}

We start with a few basic observations about discrete fibrations and
adjunctions over a fibration including some key lemmas.

\subsection{Discrete fibrations}
\label{sec:discrete-fibrations}

Suppose we are given a discrete fibration $P : \CC \to \DD$. For each
$D \in \DD$ we define a ``local'' version, $P_D$ as the following
pullback.
\begin{equation}
  \label{eq:6}
  \begin{tikzcd}
    (P \downarrow D) \ar[r] \ar[d, swap, "P_D"] \pbcorner & \CC \ar[d, "P"] \\
    \DD/D \ar[r, "\dom"] & \DD
  \end{tikzcd}
\end{equation}

\begin{defn}
  Given any functor $F : \CC \to \DD$ and an object $D \in \DD$ we say
  the \emph{right adjoint to $F$ is defined at $D$} if we are given a
  terminal object of the comma category $(P \downarrow D)$.
\end{defn}

\begin{lemma}
  \label{lem:repriffpartialra}
  Let $P : \CC \to \DD$ be a discrete fibration and $D$ an object of
  $\DD$. The right adjoint to $P$ is defined at $D$ if and only if the
  discrete fibration $P_D$ defined in \eqref{eq:6} is representable as
  a presheaf.
\end{lemma}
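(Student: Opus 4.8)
The plan is to reduce the statement to the standard correspondence between discrete fibrations over a category $\mathcal{B}$ and presheaves on $\mathcal{B}$, under which ``terminal object of the total category'' matches ``universal element / representation of the presheaf.''

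First I would record that discrete fibrations are stable under pullback, so $P_D : (P \downarrow D) \to \DD/D$ from \eqref{eq:6} is again a discrete fibration; write $F_D$ for the presheaf on $\DD/D$ that it classifies, so that $(P \downarrow D)$ is the category of elements of $F_D$ and $P_D$ is the projection. Unwinding the pullback, an object of $(P \downarrow D)$ is a pair $(C,f)$ with $C \in \CC$ and $f : P(C) \to D$, and a morphism $(C,f) \to (C',f')$ is a map $u : C \to C'$ in $\CC$ with $f' \circ P(u) = f$; this is exactly the comma category $(P \downarrow D)$, so the two uses of that notation agree and in particular the definition of ``the right adjoint to $P$ is defined at $D$'' is about this same category.

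The heart of the argument is the cloven form of the representability criterion: for a presheaf $F$ on $\mathcal{B}$, giving a terminal object $(B_0, x_0)$ of its category of elements is the same as giving a representation of $F$. From a terminal $(B_0, x_0)$ one builds $\theta : \yoneda B_0 \to F$ by $\theta_B(g) = F(g)(x_0)$, and terminality says precisely that each $\theta_B$ is a bijection, so $\theta$ is an isomorphism; conversely, from an isomorphism $\theta : \yoneda B_0 \xrightarrow{\sim} F$ the element $\theta_{B_0}(\mathrm{id}_{B_0})$ is terminal in the category of elements, and these two passages are mutually inverse. (If one prefers, this is just the Yoneda lemma together with the observation that a representable presheaf has the universal element as a terminal object of its category of elements.)

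Applying this with $\mathcal{B} = \DD/D$ and $F = F_D$: a representation of $P_D$ as a presheaf is the same data as a terminal object of $(P \downarrow D)$, which by definition is what it means for the right adjoint to $P$ to be defined at $D$. This gives the biconditional. I do not expect a genuine obstacle here; the only points requiring a little care are keeping the correspondence choice-free (each direction realised by an explicit construction on the chosen data, as above), confirming that the pullback $(P \downarrow D)$ really is the comma category, and --- if one spells out ``representable'' concretely rather than abstractly --- using $(\DD/D)/(g : E \to D) \simeq \DD/E$ so that the representing object yields an honest slice discrete fibration of the expected shape.
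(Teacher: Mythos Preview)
Your proposal is correct and follows essentially the same approach as the paper: both reduce to the standard fact that a discrete fibration corresponds to a representable presheaf if and only if its total category has a terminal object, together with the identification of the domain of $P_D$ with the comma category $(P \downarrow D)$. The paper's proof simply invokes this fact in one line, whereas you spell out the representability-via-terminal-element argument in more detail, but the underlying idea is identical.
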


\begin{proof}
  We recall that a discrete fibration corresponds to a representable
  presheaf if and only if its domain has a terminal object. However,
  in \eqref{eq:6} we explicitly described the domain of $P_D$ as the
  comma category $(P \downarrow D)$.
\end{proof}

One of the key ideas in our presentation of the general theory of
definability will be the link between representability, existence of a
right adjoint, and comonadicity, which will be a special case of the
lemma below.
\begin{lemma}
  \label{lem:discfibra}
  The following are equivalent.
  \begin{enumerate}
  \item \label{item:1}
    For every $D \in \DD$, the discrete fibration $P_D$ defined in
    \eqref{eq:6} is representable as a presheaf.
  \item $P$ has a right adjoint.
  \item $P$ is comonadic.
  \end{enumerate}
\end{lemma}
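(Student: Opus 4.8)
The plan is to prove a cycle of implications $(1) \Rightarrow (2) \Rightarrow (3) \Rightarrow (1)$, leaning on Lemma~\ref{lem:repriffpartialra} for the first step and on the fact that $P$, being a discrete fibration, is already an exponentiable (even comonadic-friendly) functor with very rigid fibres.

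First, for $(1) \Rightarrow (2)$: by Lemma~\ref{lem:repriffpartialra}, condition~(1) says exactly that for every $D \in \DD$ the right adjoint to $P$ is defined at $D$, i.e.\ the comma category $(P \downarrow D)$ has a terminal object. I would then assemble these pointwise right adjoints into an actual functor $R : \DD \to \CC$ by choosing, for each $D$, the terminal object of $(P \downarrow D)$ (invoking the cloven-choice convention from the foundational note), and checking functoriality via the universal property; the counit and unit are read off in the usual way. So $P \dashv R$.

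Next, $(2) \Rightarrow (3)$: given $P \dashv R$ with $P$ a discrete fibration, I want to show the comparison functor $\CC \to \xcoalg{G}$ into the category of coalgebras for the induced comonad $G = PR$ on $\DD$ is an equivalence. Here the key observation is that a discrete fibration is a particularly degenerate functor: its fibres are discrete (indeed, they are sets viewed as categories), there are no nonidentity vertical morphisms, and cartesian lifts are unique on the nose. This means $P$ is \emph{conservative} and moreover $\CC$ has, and $P$ strictly preserves, all colimits that exist in $\DD$ and lie over a coalgebra structure — one can check $P$ creates $G$-split coequalizers (in fact all the relevant colimits) directly, because a discrete fibration creates exactly those colimits preserved by $\dom$ out of slices, which includes everything needed. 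Then the (dual, coalgebraic) crude monadicity / Beck comonadicity theorem applies and gives comonadicity. Alternatively, and perhaps more cleanly, I would identify $\xcoalg{G}$ with discrete fibrations over $\DD$ equipped with a coalgebra structure and argue the comparison is an iso of categories by hand using discreteness.

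Finally, $(3) \Rightarrow (1)$: if $P$ is comonadic then in particular $P$ has a right adjoint $R$ (comonadicity presupposes the adjunction $P \dashv R$), so for each $D$ the slice adjunction gives a terminal object of $(P \downarrow D)$, namely $R(D)$ with the counit component; hence by Lemma~\ref{lem:repriffpartialra} each $P_D$ is representable. I expect the main obstacle to be the $(2) \Rightarrow (3)$ step: one must be careful about which colimits actually exist in $\CC$ (a discrete fibration need not have many colimits) and verify that $G$-split coequalizers are precisely among those created by $P$. The cleanest route is probably to exploit discreteness aggressively — showing directly that a $G$-coalgebra is the same data as an object of $\CC$ — rather than to invoke a general monadicity theorem whose hypotheses would need delicate checking; I would present the direct argument and merely remark that it is an instance of Beck's theorem.
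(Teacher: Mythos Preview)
Your $(1)\Leftrightarrow(2)$ and $(3)\Rightarrow(1)$ are fine and match the paper. The problem is in $(2)\Rightarrow(3)$: you have dualised the wrong half of Beck's theorem. Comonadicity of a left adjoint $P$ requires that $P$ create \emph{equalizers} of $P$-split pairs, not coequalizers; your discussion of ``$G$-split coequalizers'' and of colimits in $\CC$ is pointing in the wrong direction, and the worry you flag at the end (``which colimits actually exist in $\CC$'') is a symptom of this reversal rather than a genuine obstacle.

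The paper's argument for $(2)\Rightarrow(3)$ is short and avoids any delicate checking: a Grothendieck fibration creates any limits that exist in its fibres and are preserved by reindexing; for a \emph{discrete} fibration the fibres are discrete categories, which trivially have all connected limits (a connected diagram in a discrete category is constant), and reindexing is functorial so preserves them. Hence a discrete fibration creates all connected limits, in particular $P$-split equalizers, and Beck's comonadicity theorem applies immediately. Once you correct the limit/colimit direction, this is exactly the clean route you were looking for, and it makes your proposed alternative (identifying $G$-coalgebras with objects of $\CC$ by hand) unnecessary.
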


\begin{proof}
  For $1 \Leftrightarrow 2$ we apply Lemma \ref{lem:repriffpartialra},
  recalling that $P$ has a right adjoint if and only if the right
  adjoint is defined at $D$ for all objects $D$ of $\DD$.  
  
  To see $2 \Rightarrow 3$, note that discrete fibrations create
  connected limits,\footnote{In general a Grothendieck fibration
    creates any limits that exist in its fibres and are preserved by
    reindexing.}
  and in particular create $P$-split equalizers and
  so we can apply Beck's theorem to show $P$ is comonadic.
\end{proof}

Although we are interested in discrete fibrations that have a
right adjoint, we note in passing that discrete fibrations
do not have left adjoints except in the trivial case.

\begin{prop}
  A discrete fibration $P : \CC \to \DD$ has a left adjoint iff it is
  an isomorphism of categories.
\end{prop}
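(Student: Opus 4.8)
The plan is to prove the non-trivial direction: if a discrete fibration $P : \CC \to \DD$ has a left adjoint $L$, with unit $\eta$ and counit $\epsilon$, then $P$ is bijective on objects and fully faithful, hence an isomorphism of categories. The converse is immediate, since the inverse of an isomorphism is a two-sided adjoint. First I would record a few elementary closure properties of discrete fibrations that get used repeatedly: $P$ is faithful (two maps with the same codomain and the same image are equal), $P$ reflects identities (the only lift of an identity is an identity, and then its domain coincides with its codomain), and $P$ reflects isomorphisms (a morphism $f$ of $\CC$ is invertible as soon as $P(f)$ is — lift the inverse of $P(f)$ through $P$ and use that $P$ reflects identities).

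Next I would observe that $P$ is surjective on objects: for any $D \in \DD$ the unit $\eta_D : D \to PL(D) = P(L(D))$ is a morphism with codomain $P(L(D))$, so it has a unique lift $\hat\eta_D : c_D \to L(D)$ through the discrete fibration $P$, and its domain $c_D$ then lies over $D$.

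The heart of the argument is that every component $\eta_D$ of the unit is an isomorphism. Let $r : L(D) \to c_D$ be the adjoint transpose of $\mathrm{id}_D : D \to D = P(c_D)$, i.e.\ the unique morphism with $P(r)\circ\eta_D = \mathrm{id}_D$. I claim $\hat\eta_D \circ r = \mathrm{id}_{L(D)}$: both are endomorphisms of $L(D)$, and since $P(\hat\eta_D) = \eta_D$ we have $P(\hat\eta_D \circ r)\circ\eta_D = \eta_D\circ P(r)\circ\eta_D = \eta_D = P(\mathrm{id}_{L(D)})\circ\eta_D$, so both are adjoint transposes of $\eta_D : D \to P(L(D))$ and hence equal by uniqueness in the universal property of the unit. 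Applying $P$ gives $\eta_D \circ P(r) = \mathrm{id}_{PL(D)}$, which together with $P(r)\circ\eta_D = \mathrm{id}_D$ exhibits $\eta_D$ as an isomorphism with inverse $P(r)$. This is the step I expect to be the main obstacle; the rest is bookkeeping with the properties above.

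Given this, I would finish as follows. $P$ is injective on objects: if $P(c_0) = P(c_1) = D$, the triangle identity gives $P(\epsilon_{c_i})\circ\eta_D = \mathrm{id}_D$, so $P(\epsilon_{c_i}) = \eta_D^{-1}$ is invertible, whence $\epsilon_{c_i} : L(D) \to c_i$ is invertible; then $\epsilon_{c_1}\circ\epsilon_{c_0}^{-1} : c_0 \to c_1$ has image $\mathrm{id}_D$ under $P$, so $c_0 = c_1$ since $P$ reflects identities. With surjectivity this makes $P$ bijective on objects. Finally $P$ is full: any $g : P(c_0)\to P(c_1)$ has a lift $\tilde g : a \to c_1$ with $P(a) = P(c_0)$, and injectivity on objects forces $a = c_0$, so $g = P(\tilde g)$. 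Being bijective on objects, full, and faithful, $P$ is an isomorphism of categories.
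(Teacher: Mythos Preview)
Your proof is correct, but it takes a genuinely different route from the paper's. The paper argues directly from the structure of the comma category \((D \downarrow P)\): since \(P\) is a discrete fibration, an object \(\alpha : D \to P(C)\) is the same data as an object \(\tilde D\) over \(D\) together with a map \(\tilde D \to C\) in \(\CC\), and a morphism in \((D \downarrow P)\) between two such objects can only exist if their lifted domains \(\tilde D\) agree. Hence an initial object in \((D \downarrow P)\) forces \emph{every} object of the comma category to share the same \(\tilde D\), so the fibre \(P^{-1}(D)\) is a singleton; fullness and faithfulness then follow immediately. There is no unit, counit, or triangle identity in sight.

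Your approach instead works through the algebra of the adjunction, first proving the unit is invertible and then leveraging the triangle identities and reflection of isomorphisms to get injectivity on objects. This is perfectly valid and is the standard template for showing an adjunction is an equivalence, so it may feel more familiar. The paper's argument buys brevity and avoids any computation with \(\eta\) and \(\epsilon\): the single observation about morphisms in \((D \downarrow P)\) does all the work at once. Your argument buys explicitness (you actually exhibit inverses) and is perhaps easier to transport to settings where one wants to know more than just that \(P\) is an isomorphism, e.g.\ that the adjunction itself is an adjoint equivalence.
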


\begin{proof}
  Let $D \in \DD$. Note that we can explicitly describe the comma
  category $(D \downarrow P)$ as follows. Each object $D \to P C$
  corresponds to an object $\tilde{D}$ such that $P \tilde{D} = D$ and
  a map $\tilde{D} \to C$ in $\CC$. A morphism is a map $C \to C'$
  making a commutative triangle as below
  \begin{displaymath}
    \begin{tikzcd}
      D \ar[r] \ar[dr] & P C \ar[d] \\
      & P C'
    \end{tikzcd}
  \end{displaymath}
  If such a morphism exists the two objects must have the same
  $\tilde{D}$.

  Hence if $(D \downarrow P)$ has an initial object then $P^{-1}(D)$
  contains exactly one object. It follows that $P$ is full, and $P$ is
  faithful in any case. Hence $P$ is an isomorphism of categories.
\end{proof}

\subsection{Adjunctions over fibrations}
\label{sec:adjunct-over-fibr}

\begin{defn}
  An adjunction \emph{over} \(\BB\) consists of fibrations \(p : \DD \to \BB\)
  and \(q : \EE \to \BB\) together with an adjunction \(F \dashv G :
  \DD \to \BB\) such that \(F\) and \(G\) commute with \(p\) and
  \(q\), as illustrated below, and the unit and counit of the
  adjunction can be chosen to be vertical.
  \begin{displaymath}
    \begin{gathered}
      \begin{tikzcd}
        \DD \ar[rr, "F"] \ar[dr, swap, "p"] & & \EE \ar[dl, "q"] \\
        & \BB &
      \end{tikzcd}
    \end{gathered}
    \qquad
    \begin{gathered}
      \begin{tikzcd}
        \DD \ar[dr, swap, "p"] & & \EE \ar[dl, "q"] \ar[ll, swap, "G"] \\
        & \BB &        
      \end{tikzcd}
    \end{gathered}
  \end{displaymath}

  We say the adjunction is \emph{fibred} if \(F\) preserves cartesian maps.
\end{defn}

\begin{rmk}
  We can view the above definition as an instance of adjunction for
  \(2\)-categories, by considering the \(2\)-category whose underlying
  \(1\)-category is \(\cats/\BB\) and with \(2\)-cells consisting of
  pointwise vertical natural transformations. See
  e.g. \cite{kellystreettwocategories}.
\end{rmk}

Often in this paper we will switch between fibred adjoints,
adjoints over a fibration and ordinary adjoints in
categories. The lemmas below make it clear when these turn out to be
equivalent, allowing us to drop the distinction between the different
definitions.

\begin{lemma}
  \label{lem:particalrightadjfib}
  Suppose we are given an adjunction \(F \dashv G\) over \(\BB\). Then
  \(G\) preserves cartesian maps.

  Moreover, suppose \(F\) has a partial right adjoint \(G\) over
  \(\BB\) defined at an object \(X\) of \(\DD\) over \(I \in \BB\). If
  \(f : X' \to X\) is cartesian, then \(G\) is also defined at \(X'\)
  and the map \(G(f) : G X' \to G X\) is cartesian.
\end{lemma}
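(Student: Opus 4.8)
For the first assertion, let $g : Y' \to Y$ be cartesian in $\EE$ over a map $u : J' \to J$ in $\BB$. Since $pG = q$, the map $G(g)$ lies over $u$, so to see it is cartesian I would take an arbitrary $h : Z \to GY$ in $\DD$ with $p(h) = u \circ w$ and pass to its adjoint transpose $FZ \to Y$. This transpose lies over $u \circ w$ because $qF = p$ and $q(FZ) = p(Z)$, so cartesianness of $g$ supplies a unique map $FZ \to Y'$ over $w$ composing with $g$ to the transpose of $h$. Transposing back gives a map $Z \to GY'$; it lies over $w$ since it is $G$ of that lift precomposed with the vertical unit and $pG = q$, and it is the unique map over $w$ composing with $G(g)$ to $h$, by naturality and faithfulness of transposition together with the uniqueness clause in the universal property of $g$.

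For the ``moreover'' assertion, write $q(X) = I$, so $GX$ lies over $I$, and let $f : X' \to X$ be cartesian over $u : I' \to I$. Using that $p$ is a fibration, pick a cartesian lift $\bar u : u^{*}GX \to GX$ of $u$ with codomain $GX$; the plan is to show $u^{*}GX$ underlies a value of the partial right adjoint at $X'$. A counit is obtained by observing that $\epsilon_X \circ F(\bar u) : F(u^{*}GX) \to X$ lies over $u$ (as $\epsilon_X$ is vertical and $qF = p$), so cartesianness of $f$ gives a unique \emph{vertical} map $\epsilon_{X'} : F(u^{*}GX) \to X'$ with $f \circ \epsilon_{X'} = \epsilon_X \circ F(\bar u)$. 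To verify $(u^{*}GX, \epsilon_{X'})$ is terminal in $(F \downarrow X')$, I would take $h : FZ \to X'$, apply terminality of $(GX, \epsilon_X)$ to $f \circ h$ to get the unique $m : Z \to GX$ with $\epsilon_X \circ F(m) = f \circ h$, note a base-map computation forces $m$ to lie over $u \circ q(h)$, and then factor $m$ uniquely as $\bar u \circ k$ with $k : Z \to u^{*}GX$ over $q(h)$ by cartesianness of $\bar u$. That $\epsilon_{X'} \circ F(k) = h$ is checked by postcomposing with the cartesian $f$ (both sides lie over $q(h)$ and agree after composing with $f$), and uniqueness of $k$ follows by running any competitor back through $\bar u$ and $(GX, \epsilon_X)$ and invoking the two uniqueness properties just used. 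Finally, unwinding the action of the partial adjoint on morphisms, $G(f)$ is the unique map $u^{*}GX \to GX$ with $\epsilon_X \circ F(G(f)) = f \circ \epsilon_{X'}$, and $\bar u$ already satisfies this equation by construction, so $G(f) = \bar u$ is cartesian.

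When $G$ is a total adjoint this also re-proves the first assertion: the canonical value of $G$ at $X'$ is then uniquely isomorphic over $\BB$ to $u^{*}GX$ compatibly with counits, and composing the cartesian map $\bar u$ with that isomorphism stays cartesian.

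The one genuinely substantive move is guessing that the reindexing $u^{*}GX$ is the right candidate for the value at $X'$ and seeing that its universal property can be extracted from cartesianness of $f$ and of $\bar u$ alone — this is the only place the hypothesis that $f$ is cartesian (rather than arbitrary) is used. Everything else is bookkeeping about which arrows are vertical and over which base maps, and I expect that bookkeeping, rather than any conceptual difficulty, to be the only real friction.
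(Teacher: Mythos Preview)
Your argument for the ``moreover'' part is correct and matches the paper's proof essentially step for step: both take the cartesian lift $\bar u : u^\ast GX \to GX$ in $\DD$, manufacture the candidate counit $\epsilon_{X'}$ from cartesianness of $f$, verify terminality of $(u^\ast GX,\epsilon_{X'})$ in $(F \downarrow X')$ by pushing a test object through $(GX,\epsilon_X)$ and then factoring through $\bar u$, and conclude that $G(f)$ coincides with the cartesian $\bar u$. Your bookkeeping about which maps are vertical and over which base arrows is accurate.

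The only difference is in the treatment of the first assertion. The paper does not prove it directly; it proves only the partial-adjoint statement and then remarks that the total case is folklore (citing Jacobs). You instead supply a direct hom-set transposition argument, and separately observe that the total case also falls out of the partial one via the canonical isomorphism $GX' \cong u^\ast GX$. Both of your routes are sound; the transposition argument is the standard folklore proof the paper alludes to.
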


\begin{proof}
  By definition of partial right adjoint over \(\BB\), we have an
  object \(F X\) in \(\EE\) and a vertical map
  \(\epsilon_X : F G X \to X\) which is terminal in
  \((F \downarrow X)\). We define \(\sigma : I \to J\) to be
  \(q(f)\). We then have a diagram in \(\DD\) given by the solid lines
  below.
  \begin{displaymath}
    \begin{tikzcd}
      F (\sigma^\ast(G X)) \ar[rr, "F(\bar{\sigma}(G X))"]
      \ar[dr, dotted, "\epsilon_{X'}"] & & F G X
      \ar[dr, bend left, "\epsilon_X"] & \\
      & X' \ar[rr, "f"] & & X
    \end{tikzcd}
  \end{displaymath}
  This gives us a unique vertical map \(\epsilon_{X'}\) as in the
  dotted line above making a commutative square. We verify that this
  map is terminal in \((F \downarrow X')\).

  Suppose we have an object \(Y\) of \(\EE\) and map \(h : F Y \to
  X'\). By composing with \(f\) we have an object of \((F \downarrow
  X)\) and so a unique map \(t : Y \to G X\) making the commutative
  square below
  \begin{displaymath}
    \begin{tikzcd}
      F Y \ar[r, "F t"] \ar[d] & F G X \ar[d, "\epsilon_X"] \\
      X' \ar[r, "f"] & X
    \end{tikzcd}
  \end{displaymath}
  Now using the fact that \(\bar{\sigma}\) is cartesian we get the
  dotted map \(s\) in the commutative diagram below which is unique
  making the diagram commute and such that \(q(s) = p(h)\).
  \begin{displaymath}
    \begin{tikzcd}
      Y \ar[drr, bend left] \ar[dr, "s", dotted] & & \\
      & \sigma^\ast(G X) \ar[r] & G X
    \end{tikzcd}
  \end{displaymath}
  Finally the fact that \(\epsilon_{X'} \circ F s = h\) follows from
  the fact that \(f\) is cartesian.
\end{proof}

\begin{rmk}
  The first part of the above lemma is a folklore result, that appears
  as \cite[Exercise 1.8.5]{jacobs} for instance.
\end{rmk}

\begin{lemma}
  Suppose we are given fibrations $p : \DD \to \BB$ and $q : \EE \to
  \BB$ and a functor $G : \DD \to \EE$ over $\BB$ such that $G$ has a
  left adjoint as a functor in $\cats$. Then $G$ has a left adjoint
  over $\BB$ if and only if it preserves cartesian maps.
\end{lemma}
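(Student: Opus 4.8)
The plan is to treat the two implications separately. For the forward direction, suppose $G$ has a left adjoint over $\BB$, say $F' \dashv G$ with $F'$ commuting with the two fibrations and with vertical unit and counit. This is by definition an adjunction over $\BB$, so the first part of Lemma~\ref{lem:particalrightadjfib} applies and shows that $G$ preserves cartesian maps. (We do not need to identify $F'$ with the given left adjoint $F$ in $\cats$, although of course the two must agree up to natural isomorphism.)

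For the converse, assume $G$ preserves cartesian maps and fix an adjunction $F \dashv G$ in $\cats$ with unit $\eta$ and counit $\epsilon$. The crucial step is to show that for every $Y \in \EE$, writing $J = q(Y)$ and $K = p(F Y)$, the map $u := q(\eta_Y) : J \to K$ is an isomorphism in $\BB$. To see this, take the chosen cartesian lift $\bar{u} : u^\ast(F Y) \to F Y$ over $u$ in $\DD$. Since $G$ preserves cartesian maps, $G(\bar{u})$ is cartesian over $u$, and since $\eta_Y$ itself lies over $u$, it factors uniquely as $G(\bar{u}) \circ v$ with $v : Y \to G(u^\ast(F Y))$ vertical. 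Transposing $v$ across the adjunction yields $w : F Y \to u^\ast(F Y)$ with $G(w) \circ \eta_Y = v$. Then $G(\bar{u} \circ w) \circ \eta_Y = G(\bar{u}) \circ v = \eta_Y$, so by the uniqueness clause in the universal property of $\eta_Y$ we get $\bar{u} \circ w = \mathrm{id}_{F Y}$; applying $p$ gives $u \circ p(w) = \mathrm{id}_K$. On the other hand, applying $q$ to $G(w) \circ \eta_Y = v$ and using that $v$ is vertical gives $p(w) \circ u = \mathrm{id}_J$. Hence $u$ is an isomorphism.

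Given this, I would strictify: set $F' Y := u^\ast(F Y)$ (with $u = q(\eta_Y)$ as above) using the cleavage, so that $p(F' Y) = q(Y)$, and transport the action of $F$ on morphisms along the canonical isomorphisms $\bar{u} : F' Y \xrightarrow{\sim} F Y$ (these are isomorphisms, being cartesian over isomorphisms). Naturality of $\eta$ is exactly what is needed to check that $F'$ is a functor commuting with $p$ and $q$ and that $\bar{u}$ assembles into a natural isomorphism $F' \cong F$; transporting the adjunction structure along it gives $F' \dashv G$. The transported unit $\eta'$ is vertical by construction, since $q(\eta'_Y) = q(\eta_Y)^{-1} \circ q(\eta_Y) = \mathrm{id}$. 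For the transported counit $\epsilon'$ one applies $q$ to the triangle identity $G(\epsilon'_X) \circ \eta'_{G X} = \mathrm{id}_{G X}$: as $\eta'_{G X}$ is vertical this forces $p(\epsilon'_X) = \mathrm{id}$, so $\epsilon'$ is vertical too. Thus $F' \dashv G$ is an adjunction over $\BB$.

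The only genuinely non-formal point is the invertibility of $q(\eta_Y)$, and the thing to be careful about there is to take the cartesian lift of $q(\eta_Y)$ \emph{itself}: its transpose $w$ then yields a section of $\bar{u}$ in $\DD$ projecting to a section of $u$, while verticality of $v$ supplies a retraction of $u$. Everything else is routine manipulation of cleavages and transposes.
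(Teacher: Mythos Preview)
Your proof is correct and follows essentially the same route as the paper: both directions are handled identically, with the forward implication delegated to Lemma~\ref{lem:particalrightadjfib}, and the converse established by taking the cartesian lift of $q(\eta_Y)$, factoring $\eta_Y$ through it to get a vertical map, transposing to obtain a section of that cartesian lift, and then reading off that $q(\eta_Y)$ is invertible so that one may strictify $F$. Your treatment is slightly more explicit than the paper's in that you spell out why the transported counit is vertical via the triangle identity, whereas the paper only records verticality of the new unit.
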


\begin{proof}
  The implication \((\Rightarrow)\) follows from Lemma~\ref{lem:particalrightadjfib}.
  
  We show the implication $(\Leftarrow)$. Let $E$ be an object of
  $\EE$. We have an initial object of $(E \downarrow G)$, say
  $\eta_E : E \to G F E$. Say that $\eta_E$ lies over a map $\sigma$
  in $\BB$. We have a cartesian map
  $\bar{\sigma}(FE) : \sigma^\ast(FE) \to F E$. Since $G$ is fibred,
  $G(\bar{\sigma}(FE))$ is also cartesian and lies over
  $\sigma$. Hence there is a unique vertical map
  $\eta'_E : E \to G(\sigma^\ast(FE))$ making a commutative triangle,
  as illustrated below.
  \begin{displaymath}
    \begin{tikzcd}
      E \ar[drr, bend left, "\eta_E"] \ar[dr, dotted, "\eta'_E"] & & \\
      & G(\sigma^\ast(FE)) \ar[r] & G F E
    \end{tikzcd}
  \end{displaymath}

  By the initiality of $\eta_E$ in $(E \downarrow G)$, we have a
  unique map $t : F E \to \sigma^\ast(FE)$, as below.
  \begin{equation*}
    \begin{tikzcd}
      E \ar[r, "\eta_E"] \ar[dr, swap, "\eta'_E"]
      & G F E \ar[d, dotted, "G(t)"] & F E \ar[d, dotted, "t"] \\
      & G (\sigma^\ast(F E)) & \sigma^\ast(F E)
    \end{tikzcd}
  \end{equation*}

  Write $\tau$ for the map $p(t)$ in $\BB$. Applying $q$ to the left
  hand diagram above, we see $\tau \circ \sigma = 1_{q(E)}$.

  We can view both $t$ and $\bar{\sigma}(F E)$ as morphisms in $(E
  \downarrow G)$ and then compose them to get a morphism from the
  object $\eta_E : E \to G F E$ to itself, as illustrated below.
  \begin{equation*}
    \begin{tikzcd}
      & G F E \ar[d, "G(t)"] & F E \ar[d, "t"] \\
      E \ar[ur, "\eta_E"] \ar[r, "\eta'_E" description]
      \ar[dr, swap, "\eta_E"] &
      G(\sigma^\ast(F E)) \ar[d, "G(\bar{\sigma}(F E))"] &
      \sigma^\ast(F E) \ar[d, "\bar{\sigma}(F E)"]  \\
      & G F E & F E
    \end{tikzcd}
  \end{equation*}
  By initiality this morphism can only be the identity on $\eta_E$. We
  conclude that $\bar{\sigma}(F E) \circ t = 1_{F E}$, and so,
  applying $p$ we see $\sigma \circ \tau = 1_{p(F E)}$. We can now see
  that $\sigma$ is an isomorphism with inverse $\tau$. It follows that
  $\bar{\sigma}(F E)$ is also an isomorphism, and so
  $\eta'_E : E \to G(\sigma^\ast(F E))$ is initial in $(E \downarrow
  G)$. By applying this for each object $E$, we can construct a left
  adjoint $F'$ that strictly commutes with $p$ and $q$ with a vertical
  unit $\eta'_E$.
\end{proof}

We can immediately deduce the following proposition.

\begin{prop}
  \label{prop:monadcatcatover}
  Suppose we are given a fibred functor $U : \DD \to \EE$ between
  Grothendieck fibrations. Then $U$ is strictly monadic as a functor
  in $\cats$ if and only if it is strictly monadic for a monad over
  $\BB$.
\end{prop}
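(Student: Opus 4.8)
The plan is to reduce the statement to the preceding lemma on left adjoints over $\BB$ by passing to Eilenberg--Moore categories. Suppose first that $U : \DD \to \EE$ is strictly monadic as a functor in $\cats$, with induced monad $T = UF$ on $\EE$ (where $F$ is the left adjoint in $\cats$). The fibration $q : \EE \to \BB$ together with the fibred functor structure on $U$ should force $T$ to be a monad over $\BB$: the unit $\eta$ is the unit of $F \dashv U$, which is vertical because a left adjoint to a fibred functor between fibrations is automatically an adjoint \emph{over} $\BB$ with vertical unit once we know $F$ preserves cartesian maps --- but here it is cleaner to argue the other way, using that $U$ preserves cartesian maps to deduce that $F$ does too and that $\eta$, $\mu$ are vertical. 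Concretely, I would invoke the lemma immediately preceding Proposition~\ref{prop:monadcatcatover}: since $U$ is fibred and has a left adjoint in $\cats$, it has a left adjoint $F'$ over $\BB$ with vertical unit $\eta'$. Replacing $F$ by $F'$ changes the monad only up to canonical isomorphism, so we may assume $\eta$ and $\mu$ are vertical, i.e.\ $T$ lifts to an endofunctor of $q : \EE \to \BB$ with vertical unit and multiplication --- a monad over $\BB$.

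The next step is to identify $\DD$ with the category $\EE^T$ of $T$-algebras and check the identification is compatible with the projections to $\BB$. Strict monadicity in $\cats$ gives an isomorphism $K : \DD \to \EE^T$ over $\EE$, i.e.\ the forgetful functor $U^T : \EE^T \to \EE$ satisfies $U^T K = U$. The Eilenberg--Moore category of a monad over $\BB$ carries a canonical fibration structure: an object is a $T$-algebra $(E, a : TE \to E)$ with $E$ over some $I \in \BB$ and $a$ vertical, and one projects it to $I$; cartesian maps over $\sigma : J \to I$ are computed by reindexing the underlying object and transporting the algebra structure along $\sigma^\ast$, using that $T$ commutes with reindexing. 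Thus $\EE^T \to \BB$ is a fibration and $U^T$ is a fibred functor. Transporting along the isomorphism $K$, we see that the fibration $p : \DD \to \BB$ we started with agrees (up to the iso $K$) with this canonical one --- both lift $U$ to a fibred functor into $q$, and one checks a fibred functor lifting $U$ is determined by $U$ together with $q$ --- so $U = U^T K$ exhibits $U$ as strictly monadic for a monad over $\BB$.

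The converse is the easy direction: a monad over $\BB$ is in particular a monad in $\cats$, its Eilenberg--Moore category in $\cats$ is the same category as the one computed ``over $\BB$'' (the fibration structure is extra data on the same underlying category), and strict monadicity for the monad over $\BB$ says exactly that the comparison functor $\DD \to \EE^T$ over $\BB$ is an isomorphism of categories, which a fortiori is an isomorphism in $\cats$. Hence $U$ is strictly monadic in $\cats$.

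The main obstacle I anticipate is the bookkeeping in the forward direction: one must be careful that the monad produced from the $\cats$-level adjunction can genuinely be taken to be a monad \emph{over} $\BB$ (not merely isomorphic to one), which is precisely where the preceding lemma on left adjoints over $\BB$ does the real work, and one must verify that the fibration structure on $\EE^T$ induced by a monad over $\BB$ really does make $U^T$ fibred and really does match $p$ under the comparison isomorphism. Both points are routine given the lemma, but they are where all the content sits; everything else is formal.
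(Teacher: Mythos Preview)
Your proposal is correct and follows the same route as the paper: the paper writes no explicit proof, stating only ``We can immediately deduce the following proposition'' from the preceding lemma on left adjoints over $\BB$, and your argument is a faithful unpacking of that deduction. The one place you might tighten the exposition is the check that the comparison isomorphism $K : \DD \to \EE^T$ is over $\BB$ and preserves cartesian maps: this follows formally since $p = q \circ U = q \circ U^T \circ K$ and $K$ is an isomorphism in $\cats$ commuting with the projections, so cartesianness transfers automatically --- but this is exactly the kind of routine bookkeeping you already flagged as the only real content.
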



\section{Notions of structure and definability}
\label{sec:noti-fibr-struct}

We now give the definition of notion of structure on a fibration, which is
essentially equivalent to Shulman's notion of fibred structure
\cite[Section 3]{shulmaninftytopunivalence}.

\begin{defn}
  Given a fibred functor $\chi$, we write
  $\cart(\chi) : \cart(\DD) \to \cart(\EE)$ for the restriction to
  cartesian maps.
\end{defn}

\begin{defn}
  Suppose we are given a fibred functor between Grothendieck
  fibrations, as illustrated below.
  \begin{displaymath}
    \begin{tikzcd}
      \DD \ar[dr, swap, "p"] \ar[rr, "\chi"] & & \EE \ar[dl, "q"] \\
      & \BB &
    \end{tikzcd}
  \end{displaymath}

  We say $\chi$ \emph{creates cartesian lifts} if $\cart(\chi)$ is a
  discrete fibration.

  We say a \emph{fibred notion of structure}, or just \emph{notion of
    structure} on a Grothendieck fibration $q \colon \EE \to \BB$ is
  another fibration $p \colon \DD \to \BB$ together with a functor
  $\chi$ from $\DD$ to $\EE$ that creates cartesian lifts.
\end{defn}

We can understand the definition of notion of structure through the
following proposition, whose proof is left as an exercise for the
reader.
\begin{prop}
  A fibred functor \(\chi : \DD \to \EE\) creates cartesian lifts if
  and only if for each \(I \in \BB\) the restriction of
  \(\chi_I : \DD_I \to \EE_I\) to isomorphisms is a discrete fibration.
\end{prop}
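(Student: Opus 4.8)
The plan is to unwind the definition of discrete fibration on both sides of the biconditional and match them up directly, relying only on two standard facts: every isomorphism is a cartesian arrow, and an arrow that is simultaneously cartesian and vertical is an isomorphism in its fibre. I would begin by recording two structural features that get used repeatedly: since $\chi$ is fibred the restricted functor $\cart(\chi)\colon\cart(\DD)\to\cart(\EE)$ is defined at all, and since $\chi$ is a functor over $\BB$ we have $p = q\circ\chi$, so an arrow of $\DD$ is vertical exactly when its $\chi$-image is. I would also note that ``the restriction of $\chi_I$ to isomorphisms'' is a functor between groupoids, for which being a discrete fibration and being a discrete opfibration coincide, so I need not track variance.

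For the direction $(\Rightarrow)$, assume $\cart(\chi)$ is a discrete fibration and fix $I$. Given $D\in\DD_I$ and an isomorphism $g\colon E\to\chi_I(D)$ of $\EE_I$, I view $g$ as a cartesian arrow of $\EE$ and use discreteness of $\cart(\chi)$ to get a unique cartesian arrow $f\colon D'\to D$ of $\DD$ with $\chi(f)=g$. Because $q(g)=1_I$ and $p = q\circ\chi$, this $f$ is vertical, hence cartesian-and-vertical, hence an isomorphism of $\DD_I$; and any competing isomorphism of $\DD_I$ lying over $g$ is itself cartesian in $\DD$, so it coincides with $f$ by the uniqueness clause for $\cart(\chi)$. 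That is precisely the discrete fibration property for the restriction of $\chi_I$ to isomorphisms.

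For $(\Leftarrow)$, assume each restricted $\chi_I$ is a discrete fibration and take a cartesian arrow $g\colon E\to\chi(D)$ of $\EE$ lying over $\sigma\colon J\to I$. I would choose the cartesian lift $\bar{\sigma}(D)\colon\sigma^\ast(D)\to D$; since $\chi$ is fibred, $\chi(\bar{\sigma}(D))$ is cartesian over $\sigma$, so there is a unique vertical isomorphism $\theta\colon E\to\chi(\sigma^\ast(D))$ in $\EE_J$ with $\chi(\bar{\sigma}(D))\circ\theta=g$. Then $\theta$ is an isomorphism of $\EE_J$ ending at $\chi_J(\sigma^\ast(D))$, so the hypothesis for $\chi_J$ yields a unique isomorphism $\phi\colon D'\to\sigma^\ast(D)$ of $\DD_J$ with $\chi_J(\phi)=\theta$, and I set $f:=\bar{\sigma}(D)\circ\phi$, which is cartesian (a cartesian arrow composed with an isomorphism) and satisfies $\chi(f)=g$.

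The step I expect to be the most delicate is uniqueness in $(\Leftarrow)$, where one has to be careful about which uniqueness principle is in play. Given another cartesian $f'\colon D''\to D$ with $\chi(f')=g$, the arrow $f'$ also lies over $\sigma$, so it factors as $\bar{\sigma}(D)\circ\psi$ for a unique vertical isomorphism $\psi$ of $\DD_J$; comparing, $\chi(\bar{\sigma}(D))\circ\chi(\psi)=g=\chi(\bar{\sigma}(D))\circ\theta$ with $\chi(\psi)$ and $\theta$ both vertical over $J$, so left-cancellation of the cartesian arrow $\chi(\bar{\sigma}(D))$ gives $\chi(\psi)=\theta=\chi_J(\phi)$, whence $\psi=\phi$ by uniqueness for the discrete fibration $\chi_J$, and therefore $f'=f$. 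Beyond this cancellation point everything is routine bookkeeping, which is presumably why the statement is left as an exercise.
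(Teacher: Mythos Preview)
Your proof is correct. The paper explicitly leaves this proposition ``as an exercise for the reader,'' so there is no proof in the text to compare against; your argument is exactly the kind of routine unwinding the author had in mind, using only that isomorphisms are cartesian, that vertical cartesian maps are isomorphisms, and the factorisation-through-a-cartesian-lift trick, and the uniqueness step in the $(\Leftarrow)$ direction is handled cleanly via cancellation along the cartesian map $\chi(\bar{\sigma}(D))$.
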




An object \(X\) of \(\EE_I\), is typically an \(I\)-indexed family
\((X_i)_{i \in I}\) in some sense. We think of objects the fibre
\(\chi_I^{-1}(X)\) as a choice of \emph{structure} on each object \(X_i\) in the
family. We think of morphisms in \(\DD\) as families of structure
preserving \emph{homomorphisms}. The condition of creating cartesian
lifts says that given an isomorphism \(f : X_i \cong Y_i\) and a structure
on \(Y_i\) we can find a unique structure on \(X_i\) making \(f\) a
structure preserving isomorphism.

\begin{defn}
  We say a notion of structure, \(\chi\) is \emph{definable} if
  $\cart{\chi}$ has a right adjoint, as an ordinary functor between
  categories (not over $\BB$).
\end{defn}

\begin{rmk}
It might look a little strange to only require the right adjoint to
exist in categories, without even needing it to commute with the
fibrations. We observe however, that this is also what happens with
the notion of \emph{multi left adjoint} \cite[Section 3]{diersfum}.
Namely, any functor between categories \(\CC\) and \(\DD\) corresponds
to a unique fibred functor between set indexed family fibrations. A
multi left adjoint to a functor \(F : \CC \to \DD\) is precisely a
left adjoint to the corresponding functor
\(\fmly(\CC) \to \fmly(\DD)\). A right adjoint to the functor
\(\fmly(\CC) \to \fmly(\DD)\) in categories can be seen as a ``multi
right adjoint'' following the same idea as multi left adjoints.
\end{rmk}

To better understand the definition of definability we give
some alternative versions of the definition below.

\begin{defn}
  \label{def:locpresheaf}
  Given $X \in \EE$ we write $\bar{\chi}_X$ for the presheaf on
  $\BB/q(X)$ defined as follows. Given an object $\sigma : I \to q(X)$
  in $\BB/q(X)$, we define $\bar{\chi}_X(\sigma)$ to be set of objects
  of $\chi^{-1}(\sigma^\ast(X))$.
\end{defn}

\begin{lemma}
  \label{lem:partreprequiv}
  Let $X$ be an object of $\EE$. Then the right adjoint to
  $\cart(\chi)$ is defined at $X$ if and only if the presheaf
  $\bar{\chi}_X$ is representable.
\end{lemma}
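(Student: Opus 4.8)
The plan is to reduce this to Lemma~\ref{lem:repriffpartialra} by identifying the ``local'' discrete fibration $\cart(\chi)_X$ (in the sense of diagram~\eqref{eq:6}, applied to the discrete fibration $\cart(\chi) : \cart(\DD) \to \cart(\EE)$) with the discrete fibration on $\BB/q(X)$ classified by the presheaf $\bar{\chi}_X$ of Definition~\ref{def:locpresheaf}. Once this identification is in place, Lemma~\ref{lem:repriffpartialra} says precisely that the right adjoint to $\cart(\chi)$ is defined at $X$ if and only if $\cart(\chi)_X$ is representable, i.e. if and only if $\bar{\chi}_X$ is representable, which is the claim.

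First I would unwind what $\cart(\chi)_X$ is. By diagram~\eqref{eq:6} its domain is the comma category $(\cart(\chi) \downarrow X)$, whose objects are pairs consisting of an object $Z \in \cart(\DD)$ together with a cartesian map $\chi(Z) \to X$ in $\EE$. Because $\chi$ creates cartesian lifts, such a cartesian map $\chi(Z) \to X$ lying over some $\sigma : I \to q(X)$ is, up to the unique vertical iso, the same data as a choice of structure on $\sigma^\ast(X)$, i.e. an element of $\bar{\chi}_X(\sigma)$. More precisely: given $\sigma : I \to q(X)$, a cartesian map $\chi(Z) \to X$ over $\sigma$ factors uniquely as $\chi(Z) \xrightarrow{\sim} \sigma^\ast(X) \to X$ through the chosen cartesian lift $\bar\sigma(X)$, and the iso $\chi(Z) \xrightarrow{\sim} \sigma^\ast(X)$ together with the fact that $\cart(\chi)$ is a discrete fibration forces $Z$ to be the unique lift of $\sigma^\ast(X)$ along $\chi$, which is an element of $\chi^{-1}(\sigma^\ast(X)) = \bar{\chi}_X(\sigma)$. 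Running this correspondence on morphisms shows $(\cart(\chi) \downarrow X)$ is equivalent (in fact isomorphic, given the cloven structure) to the category of elements of $\bar{\chi}_X$, over $\BB/q(X)$. Hence $\cart(\chi)_X$ is (isomorphic to) the discrete fibration associated to $\bar{\chi}_X$.

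With that identification, the proof concludes by invoking Lemma~\ref{lem:repriffpartialra}: the right adjoint to $\cart(\chi)$ is defined at $X$ iff $\cart(\chi)_X$ is representable as a presheaf iff $\bar{\chi}_X$ is representable. I would also need to check a small compatibility point, namely that the base of $\cart(\chi)$, the category $\cart(\EE)$, really does contain $\BB/q(X)$ in the relevant way so that the pullback in~\eqref{eq:6} makes sense --- but this is immediate since $\cart(\EE)/X$ maps to $\cart(\EE)$ and the slice $\BB/q(X)$ arises because $q$ restricted to cartesian maps is faithful (indeed $q : \cart(\EE) \to \BB$ is itself a relevant comparison); more carefully, one uses that cartesian maps into $X$ are classified by their image under $q$ up to vertical iso, which is exactly the discrete-fibration condition built into ``creates cartesian lifts.''

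The main obstacle I expect is bookkeeping the cloven choices so that the correspondence between cartesian maps $\chi(Z)\to X$ and elements of $\bar\chi_X$ is a genuine isomorphism of categories (not merely an equivalence), matching the strict pullback in~\eqref{eq:6}; this requires being careful that the chosen cartesian lifts in $\EE$, the chosen cartesian lifts along $\chi$ (which exist and are unique since $\cart(\chi)$ is a discrete fibration), and the chosen reindexing $\sigma^\ast(X)$ all fit together coherently. None of this is deep, but it is the only place where something could go subtly wrong, so that is where I would spend the care.
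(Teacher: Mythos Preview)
Your proposal is correct and follows essentially the same approach as the paper: both identify the comma category $(\cart(\chi)\downarrow X)$ with (the category of elements of) $\bar{\chi}_X$ via the equivalence $\BB/q(X)\simeq \cart(\EE)/X$, and then invoke Lemma~\ref{lem:repriffpartialra}. One small remark: your concern about strictness is unnecessary---the paper is content with an equivalence rather than an isomorphism, since having a terminal object (hence representability) is invariant under equivalence, so you need not chase cloven choices to get a strict isomorphism.
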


\begin{proof}
  Note that we have an equivalence of categories $\BB/q(X) \simeq
  \cart(\DD)/X$ and that $\bar{\chi}_X$ corresponds to the discrete
  fibration obtained by pulling back along the composition of the
  equivalence and $\dom : \cart(\EE) / X \to \cart(\EE)$ as illustrated below.

  \begin{equation*}
    \begin{tikzcd}
      \cdot \ar[r, "\simeq"] \ar[d] \pbcorner & (\cart(\chi) \downarrow X)
      \ar[r] \ar[d] \pbcorner & \cart(\DD) \ar[d] \\
      \BB / q(X) \ar[r, "\simeq"] & \cart(\EE)/X \ar[r] & \cart(\EE)
    \end{tikzcd}
  \end{equation*}

  Hence we can apply Lemma \ref{lem:repriffpartialra} together with
  the equivalence in the left hand square above.
\end{proof}

We note that when \(\BB\) is a presheaf category we can describe the
representing objects for \(\bar{\chi}_X\) explicitly, as follows. We
emphasise however that even though we can provide a concrete
description, it can still happen that the maps constructed are not
representing for \(\bar{\chi}_X\) and in this case \(\bar{\chi}_X\) is
simply not representable at all.
\begin{thm}
  \label{thm:locrepinpshbase}
  Suppose that the base category \(\BB\) is a presheaf category and
  that \(\chi^{-1}(\{X\})\) is a set for each object \(X\). Then for
  each object \(X\), we can construct an object \(J\) and map
  \(J \to p(X)\) such that if \(\bar{\chi}_X\) is representable, then
  it can be represented by \(\sigma : J \to p(X)\).

  Furthermore, we can construct a natural transformation from
  \(\bar{\chi}_X\) to \(\yoneda \sigma\) which is an isomorphism
  precisely when \(\bar{\chi}_X\) is representable.
\end{thm}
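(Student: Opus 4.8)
The plan is to leverage the hypothesis that $\BB$ is a presheaf category, say $\BB = \psh{C}$. Then every slice $\BB/q(X)$ is again a presheaf category, and under this identification its representable objects are exactly the morphisms $\bar x \colon \yoneda c \to q(X)$ classifying elements $x \in q(X)(c)$. I will use two standard consequences: these representables form a small dense subcategory of $\BB/q(X)$, and therefore the restricted Yoneda embedding of $\BB/q(X)$ into presheaves on that subcategory is fully faithful, hence reflects isomorphisms.

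First I would construct $\sigma \colon J \to q(X)$. Writing $\bar x^\ast X$ for the reindexing of $X$ along $\bar x$, set $J(c) := \coprod_{x \in q(X)(c)} \bar\chi_X(\bar x)$, with $\sigma_c$ the evident projection to $q(X)(c)$; this is a set by the smallness hypothesis. The presheaf structure on $J$ is then forced: for $f \colon c \to c'$ in $\mathcal{C}$, reindexing must send a structure $s \in \bar\chi_X(\bar{x'})$ on $\bar{x'}^\ast X$ to its pullback along the cartesian map $\bar x^\ast X \to \bar{x'}^\ast X$ over $\yoneda f$ (with $x = q(X)(f)(x')$), and such a pullback exists and depends functorially on $f$ precisely because $\chi$ creates cartesian lifts, i.e. $\cart(\chi)$ is a discrete fibration. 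Equivalently, $J$ is the object of $\BB/q(X)$ corresponding to the restriction of $\bar\chi_X$ along the inclusion of the representables. This $J$ is the analogue, for a general notion of structure, of the usual universe of $\chi$-structures in a presheaf model.

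Next I would build the comparison $\alpha \colon \bar\chi_X \to \yoneda\sigma$. Its component at $\tau \colon K \to q(X)$ sends a structure $t \in \bar\chi_X(\tau)$, i.e.\ a $\chi$-structure on $\tau^\ast X$, to the morphism $K \to J$ over $q(X)$ whose value at $y \in K(c)$ is the pullback of $t$ along $\bar y \colon \yoneda c \to K$, regarded as an element of the summand of $J(c)$ indexed by $\tau_c(y)$. That this is a morphism of presheaves over $q(X)$ and is natural in $\tau$ unwinds again to the functoriality of pulling structures back along cartesian maps. Crucially, $\alpha$ is an isomorphism at every representable $\bar x \colon \yoneda c \to q(X)$: by construction $\yoneda\sigma(\bar x)$ is the fibre of $\sigma$ over $x$, which is $\bar\chi_X(\bar x)$, and chasing the definitions $\alpha_{\bar x}$ is the identity on this set (pulling back along $\mathrm{id}_{\yoneda c}$ does nothing). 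Hence if $\bar\chi_X$ is representable, both $\bar\chi_X$ and $\yoneda\sigma$ are representable presheaves on $\BB/q(X)$, so $\alpha$ is --- up to the representing isomorphism --- the image under $\yoneda$ of a morphism $a$ of $\BB/q(X)$; the preceding observation says $\BB/q(X)(\bar x, a)$ is bijective for every representable $\bar x$, so $a$ becomes an isomorphism under the restricted Yoneda embedding, which reflects isomorphisms, whence $a$, and therefore $\alpha$, is an isomorphism. In particular $\sigma$ represents $\bar\chi_X$. The converse is immediate, since if $\alpha$ is an isomorphism then $\bar\chi_X \cong \yoneda\sigma$ is representable.

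I expect the main work to be bookkeeping rather than conceptual: one has to check carefully that the pullbacks of structures along cartesian maps are strictly functorial, so that $J$ is genuinely a presheaf and $\alpha$ genuinely a natural transformation. This is exactly where the hypothesis that $\chi$ \emph{creates} cartesian lifts is used in full, rather than merely that cartesian lifts exist, and where the standing convention that the cleaving of $\EE$ is chosen becomes relevant. One conceptual point worth flagging --- not an obstacle, but a sanity check on the construction --- is the direction of $\alpha$: a map $\yoneda\sigma \to \bar\chi_X$ would amount to gluing a single global $\chi$-structure on $\tau^\ast X$ out of its restrictions to all the points of $K$, which is exactly what may fail; the comparison $\alpha$ that always exists is the one whose non-invertibility measures the failure of $\bar\chi_X$ to be representable.
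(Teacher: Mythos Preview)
Your construction of \(J\) is exactly the paper's: the paper defines \(J(c)\) as the set of pairs \((i, D)\) with \(i \colon \yoneda c \to q(X)\) and \(D \in \chi^{-1}(\{i^\ast X\})\), which is your \(\coprod_{x \in q(X)(c)} \bar\chi_X(\bar x)\). The paper's proof is in fact much terser than yours: it gives this definition of \(J\), motivated by what its elements would have to be if \(\sigma\) were representing, and stops there, leaving the construction of the comparison map and the ``iff'' in the furthermore clause entirely to the reader. Your argument for that part---build \(\alpha\) by restricting structures to points, observe it is the identity on representables, and then use density of the representables in the slice to conclude that \(\alpha\) is an isomorphism once \(\bar\chi_X\) is representable---is correct and is a clean way to fill in what the paper omits. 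So: same approach, with the bookkeeping you flag (strict functoriality of pulling back structures, guaranteed by \(\chi\) creating cartesian lifts and the cleaving convention) being exactly the content the paper suppresses.
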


\begin{proof}
  Suppose that \(\BB = \psh{C}\) for some small category
  \(\mathcal{C}\). For each \(c \in \mathcal{C}\) the elements of
  \(J(c)\) correspond to maps \(\yoneda(c) \to J\). We can think of
  each such map as pair consisting of a map
  \(i : \yoneda(c) \to p(X)\) together with a map \(\yoneda(c) \to J\)
  making a commutative triangle with the map \(J \to p(X)\) that we
  have yet to define. However, we know that such commutative triangles
  must correspond precisely to objects in
  \(\chi^{-1}(\{i^\ast(X)\})\). Hence we can just define \(J(c)\) to
  consist of pairs \(i, D\) where \(i : \yoneda(c) \to p(X)\) and
  \(D \in \chi^{-1}(\{i^\ast(X)\})\).
\end{proof}

\begin{prop}
  \label{prop:repdefns}
  The following are equivalent.

  \begin{enumerate}
  \item $\bar{\chi}_X$ is representable for every $X$.
  \item $\cart(\chi)$ has a right adjoint.
  \item $\cart(\chi)$ is comonadic.
  \end{enumerate}
\end{prop}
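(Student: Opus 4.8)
The plan is to deduce Proposition~\ref{prop:repdefns} by combining Lemma~\ref{lem:partreprequiv} with the general discrete-fibration result Lemma~\ref{lem:discfibra}. First I would observe that since $\chi$ creates cartesian lifts, the functor $\cart(\chi) : \cart(\DD) \to \cart(\EE)$ is by definition a discrete fibration. This is exactly the hypothesis needed to apply Lemma~\ref{lem:discfibra} with $P := \cart(\chi)$, $\CC := \cart(\DD)$ and $\DD := \cart(\EE)$.

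Next I would match up the three conditions. For the equivalence of (1) and (2): Lemma~\ref{lem:partreprequiv} says that the right adjoint to $\cart(\chi)$ is defined at an object $X$ precisely when $\bar{\chi}_X$ is representable. Quantifying over all $X$, and recalling (as in the proof of Lemma~\ref{lem:discfibra}) that a functor has a right adjoint iff its partial right adjoint is defined at every object, we get that $\bar{\chi}_X$ is representable for every $X$ iff $\cart(\chi)$ has a right adjoint. Alternatively, one can note that the presheaf $\bar{\chi}_X$ corresponds under the equivalence $\BB/q(X) \simeq \cart(\EE)/X$ to the local discrete fibration $P_X$ of \eqref{eq:6} (this is exactly the content of the diagram in the proof of Lemma~\ref{lem:partreprequiv}), so condition (1) here is literally condition (1) of Lemma~\ref{lem:discfibra}. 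The equivalence of (2) and (3) is then immediate from the equivalence of conditions (2) and (3) in Lemma~\ref{lem:discfibra}.

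The only genuine subtlety to flag is the bookkeeping between "representable as a presheaf on $\BB/q(X)$" and "representable as a presheaf on $\cart(\EE)/X$": one must check that the equivalence of categories $\BB/q(X) \simeq \cart(\DD)/X$ transports representability in both directions, which is clear since representability of a presheaf is preserved and reflected by precomposition with an equivalence. Everything else is a direct citation. I expect no real obstacle here — the proposition is essentially a packaging of the two earlier lemmas, and the proof should be only a few lines long, reading roughly: "Apply Lemma~\ref{lem:partreprequiv} to identify (1) with condition (1) of Lemma~\ref{lem:discfibra} for the discrete fibration $\cart(\chi)$, and then invoke Lemma~\ref{lem:discfibra} for the remaining equivalences."
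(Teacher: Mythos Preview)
Your proposal is correct and takes essentially the same approach as the paper: the paper's proof simply cites Lemma~\ref{lem:partreprequiv} for $(1 \Leftrightarrow 2)$ and Lemma~\ref{lem:discfibra} for $(2 \Rightarrow 3)$, with $(3 \Rightarrow 2)$ left implicit since comonadic functors have right adjoints. Your version is slightly more explicit about how the pieces fit together, but the argument is the same.
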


\begin{proof}
  We have shown \((1 \Leftrightarrow 2)\) in
  Lemma~\ref{lem:partreprequiv}. The implication \((2 \Rightarrow 3)\)
  is directly from Lemma~\ref{lem:discfibra}.
\end{proof}

The relation between local representability and colimits was pointed
out by Shulman in \cite[Proposition
3.18(iii)]{shulmaninftytopunivalence}. We observe that this can be seen as
an instance of comonadic functors creating colimits:
\begin{lemma}
  Suppose we are given a notion of structure
  $\chi : \DD \to \EE$. If $\chi$ is definable, then
  $\cart(\chi)$ (strictly) creates colimits.
\end{lemma}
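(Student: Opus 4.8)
The plan is to read this off from Proposition~\ref{prop:repdefns}: since $\chi$ is definable, $\cart(\chi) : \cart(\DD) \to \cart(\EE)$ is comonadic, and the forgetful functor from the coalgebras of a comonad creates colimits. First I would upgrade the comonadicity to \emph{strict} comonadicity, which is what is needed for strict creation. Recall that $\cart(\chi)$ is by hypothesis a discrete fibration, hence it \emph{strictly} creates $\cart(\chi)$-split equalizers: the unique cartesian lift of the equalizing arrow is the only possible lift of the equalizer, and the splitting data forces it to equalize the given parallel pair upstairs and to be universal, all by a routine diagram chase of the kind in Lemma~\ref{lem:discfibra}. Consequently the Beck argument already invoked there in fact exhibits the comparison functor $\cart(\DD) \to \cart(\EE)_G$ as an isomorphism of categories, where $G$ is the comonad induced by the adjunction of Proposition~\ref{prop:repdefns}.

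It then suffices to show that for any comonad $G$ on a category $\mathcal{C}$ the forgetful functor $U : \mathcal{C}_G \to \mathcal{C}$ strictly creates colimits, since strict creation of colimits is stable under precomposition with an isomorphism of categories. This is the formal dual of the familiar fact that the forgetful functor from the algebras of a monad strictly creates limits: given a diagram $D : \mathcal{J} \to \mathcal{C}_G$ and a colimiting cocone $(U D_j \to c)_j$ for the underlying diagram in $\mathcal{C}$, one applies $G$ and precomposes each leg with the coalgebra structure map $U D_j \to G U D_j$ to obtain a cocone on $G c$; the induced arrow $c \to G c$ is then the unique coalgebra structure on $c$ making every $U D_j \to c$ a coalgebra homomorphism, and the universal property of the colimit shows both that the lifted cocone is colimiting in $\mathcal{C}_G$ and that $U$ reflects this colimit. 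Transporting along the Beck comparison isomorphism gives that $\cart(\chi)$ strictly creates whatever colimits exist in $\cart(\EE)$.

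I expect the only real subtlety to be strictness bookkeeping rather than any conceptual difficulty: one must check at each stage of the coalgebra argument — the lift of the colimiting cocone, the induced coalgebra structure on the apex, and the verification that the lifted cocone is again colimiting — that the construction is uniquely determined, so that one genuinely obtains strict creation and not merely creation up to isomorphism, and that composing with the comparison isomorphism preserves this. Everything else is routine given Proposition~\ref{prop:repdefns} and the dualisation of the standard monad/limits statement.
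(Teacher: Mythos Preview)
Your proposal is correct and follows essentially the same approach as the paper: deduce comonadicity of $\cart(\chi)$ from Proposition~\ref{prop:repdefns} and then appeal to the fact that comonadic functors create colimits. The paper's proof is a terse two-liner citing exactly these two facts; you have simply unpacked the strictness bookkeeping that the paper leaves implicit.
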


\begin{proof}
  If \(\chi\) is definable, then \(\cart(\chi)\) is comonadic by
  Proposition \ref{prop:repdefns}, and comonadic functors create
  colimits.
\end{proof}

We finish this section with a couple of useful lemmas that will be
used later. The first says that in one sense the right adjoints
witnessing definability are automatically stable under pullback. The
second says that definable notions of structure are stable under
pullback in the category of fibrations.

\begin{lemma}
  \label{lem:pblocrepr}
  Suppose that $\BB$ has pullbacks and $\chi$ creates cartesian
  lifts. If $\bar{\chi}_X$ is representable for $X \in \EE_I$, then so
  is $\bar{\chi}_{\sigma^\ast(X)}$ for $\sigma : J \to I$.

  Moreover, if $\bar{\chi}_X$ is representable by $\tau : K \to I$,
  then the representing object for $\bar{\chi}_{\sigma^\ast(X)}$ can
  be explicitly described as the pullback $\sigma^\ast(\tau)$.

  \begin{displaymath}
    \begin{tikzcd}
      & \sigma^\ast(X) \ar[rr, "\bar{\sigma}(X)"]
      & & X \ar[dd, dash, dotted] & \EE \ar[d, "q"] \\
      \sigma^\ast(X) \ar[rr] \ar[dr, swap, "\sigma^\ast(\tau)"]
      \ar[drrr, phantom, very near
      start, "\lrcorner"] & & K \ar[dr, "\tau"] & & \BB \\
      & J \ar[rr, "\sigma"] \ar[from=uu, crossing over, dash, dotted] & & I
    \end{tikzcd}
  \end{displaymath}
\end{lemma}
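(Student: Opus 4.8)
The plan is to use the characterisation of representability of $\bar{\chi}_X$ in terms of a partial right adjoint (Lemma~\ref{lem:partreprequiv}) together with the compatibility of partial right adjoints with cartesian maps established in Lemma~\ref{lem:particalrightadjfib}. First I would unwind the hypothesis: $\bar{\chi}_X$ is representable by $\tau : K \to I$ exactly when the right adjoint $R$ to $\cart(\chi)$ is defined at $X$, which, since $\cart(\chi)$ is a discrete fibration into $\cart(\EE)$, means that $\tau$ — regarded as an object of $\cart(\EE)/X \simeq \BB/I$ — carries a terminal object of the comma category, i.e. there is a cartesian map $\epsilon_X : \chi(R X) \to X$ over $\tau$ that is terminal in $(\cart(\chi) \downarrow X)$. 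Equivalently in the language of Lemma~\ref{lem:particalrightadjfib}, the partial right adjoint to the fibred functor $\chi$ (restricted appropriately) is defined at $X$.

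Next I would apply the second part of Lemma~\ref{lem:particalrightadjfib} to the cartesian map $\bar{\sigma}(X) : \sigma^\ast(X) \to X$: since the partial right adjoint is defined at $X$ and $\bar{\sigma}(X)$ is cartesian, the partial right adjoint is also defined at $\sigma^\ast(X)$, and the comparison map $R(\sigma^\ast(X)) \to R(X)$ it produces is cartesian, lying over $\sigma : J \to I$. By Lemma~\ref{lem:partreprequiv} applied now at $\sigma^\ast(X)$, this says precisely that $\bar{\chi}_{\sigma^\ast(X)}$ is representable, which gives the first claim. For the explicit description, I would observe that $R(\sigma^\ast(X)) \to R(X)$ is a cartesian map in $\cart(\EE)$ over $\sigma$; transporting through the equivalence $\cart(\EE)/X \simeq \BB/I$ (and $\cart(\EE)/\sigma^\ast(X) \simeq \BB/J$), a cartesian lift of $\sigma : J \to I$ along $\tau : K \to I$ is exactly the pullback $\sigma^\ast(\tau)$, so the representing object for $\bar{\chi}_{\sigma^\ast(X)}$ is $\sigma^\ast(\tau) : \sigma^\ast(X) \to J$ — matching the diagram in the statement. (One should double-check that the ambient fibration here is the codomain fibration $\cod : \cart(\EE)/X \to \dots$ or rather the relevant slice, so that "cartesian" unwinds to "pullback square"; this is where the labelled pullback square in the lemma statement comes from.)

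The main obstacle I expect is bookkeeping rather than conceptual: carefully matching the three pictures — the partial-right-adjoint formulation of Lemma~\ref{lem:particalrightadjfib}, the representable-presheaf formulation of Lemma~\ref{lem:partreprequiv}, and the explicit pullback in $\BB$ — and checking that the equivalences $\BB/q(X) \simeq \cart(\DD)/X$ used in Lemma~\ref{lem:partreprequiv} are compatible with reindexing along $\sigma$, so that "the cartesian lift computed by the partial right adjoint" really does correspond to "the pullback $\sigma^\ast(\tau)$" on the nose (up to the chosen cleavage). Since the paper works with cloven structure, I would phrase the final identification as: the chosen cartesian lift of $\sigma$ along $\tau$ in $\BB$ is, by definition of the cleavage on $\cart(\EE)$ induced from that on $\EE$, sent by the equivalence to the cartesian map $R(\sigma^\ast X) \to R(X)$, hence represents $\bar{\chi}_{\sigma^\ast(X)}$. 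Everything else is a routine diagram chase already packaged into the cited lemmas.
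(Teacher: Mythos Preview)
Your strategy is different from the paper's and, as written, has a real gap in the appeal to Lemma~\ref{lem:particalrightadjfib}. That lemma concerns a partial right adjoint \emph{over} $\BB$, which by definition means the counit $\epsilon_X : F(GX) \to X$ is vertical; the proof uses this verticality to factor through the cartesian map $f$. In your situation $F = \cart(\chi)$ and the counit $\epsilon_X : \chi(RX) \to X$ is the cartesian map lying over $\tau : K \to I$, which is \emph{not} vertical unless $\tau$ is an isomorphism. So the hypotheses of Lemma~\ref{lem:particalrightadjfib} are not met, and the ``restricted appropriately'' does not make them so. You correctly anticipate an obstacle, but it is not just bookkeeping: you would need to prove a variant of Lemma~\ref{lem:particalrightadjfib} for partial right adjoints whose counit is merely cartesian rather than vertical, and once you write that out the argument collapses into a direct check anyway.

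The paper instead gives an elementary direct verification: for $\rho : L \to J$, maps $\rho \to \sigma^\ast(\tau)$ in $\BB/J$ correspond (by the pullback universal property) to maps $\sigma \circ \rho \to \tau$ in $\BB/I$, hence to elements of $\bar{\chi}_X(\sigma \circ \rho)$; then one uses that $\chi$ creates cartesian lifts to transport the canonical isomorphism $(\sigma \circ \rho)^\ast(X) \cong \rho^\ast(\sigma^\ast(X))$ to a natural bijection $\bar{\chi}_X(\sigma \circ \rho) \cong \bar{\chi}_{\sigma^\ast(X)}(\rho)$. This is shorter and avoids the fibred-adjunction machinery entirely. Your route could be made to work, but only after proving the missing variant of Lemma~\ref{lem:particalrightadjfib}, and the payoff over the direct argument is small.
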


\begin{proof}
  Suppose we are given an object of $\BB/J$ of the form
  $\rho : L \to J$. Then maps from $\rho$ to $\sigma^\ast(\tau)$ in
  $\BB/J$ correspond naturally to maps from $\sigma \circ \rho$ in
  $\BB/I$. These correspond naturally to elements of
  $\bar{\chi}_X(\sigma \circ \rho)$. Note however that we have natural
  isomorphisms
  $\bar{\chi}_X(\sigma \circ \rho) \cong
  \bar{\chi}_{\sigma^\ast(X)}(\rho)$, since we can lift the
  isomorphism
  $(\sigma \circ \rho)^\ast(X) \cong \rho^\ast(\sigma^\ast(X))$ to a
  bijection between the objects of
  $\chi^{-1}((\sigma \circ \rho)^\ast(X))$ and those of
  $\chi^{-1}(\rho^\ast(\sigma^\ast(X)))$ using the assumption that
  $\chi$ creates cartesian lifts, and moreover the bijections are
  natural in $\rho$. We deduce that there is a natural correspondence
  between morphisms from $\rho$ to $\sigma^\ast(\tau)$ in $\BB/J$ and
  elements of $\bar{\chi}_{\sigma^\ast(X)}(\rho)$, giving us the
  required isomorphism between $\bar{\chi}_{\sigma^\ast(X)}$ and the
  representable on $\sigma^\ast(\tau)$.
\end{proof}

\begin{lemma}
  \label{lem:cclpbstab}
  Suppose we are given a strict pullback diagram in fibrations over
  $\BB$, as below.
  \begin{equation*}
    \begin{tikzcd}
      \CC \times_\EE \DD \ar[rr] \ar[d, swap, "\rho^\ast(\chi)"]
      \ar[drr, phantom, very near start, "\lrcorner"]
      & &
      \DD \ar[d, "\chi"] \\
      \CC \ar[rr, "\rho"] \ar[dr] & & \EE \ar[dl] \\
      & \BB &
    \end{tikzcd}
  \end{equation*}
  If $\chi$ creates cartesian lifts, then so does $\rho^\ast(\chi)$.
\end{lemma}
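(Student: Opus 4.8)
The plan is to apply the functor \(\cart(-)\) to the given pullback square and argue that the resulting commutative square in \(\cats\) — with vertices \(\cart(\CC\times_\EE\DD)\), \(\cart(\DD)\), \(\cart(\CC)\), \(\cart(\EE)\) and left-hand edge \(\cart(\rho^\ast(\chi))\) — is again a pullback. Once this is known we are done: \(\cart(\chi)\) is a discrete fibration, since that is precisely what the hypothesis ``\(\chi\) creates cartesian lifts'' asserts, and discrete fibrations — being the projections out of categories of elements of presheaves — are stable under pullback in \(\cats\); hence \(\cart(\rho^\ast(\chi))\), as a pullback of \(\cart(\chi)\), is a discrete fibration, which is exactly the statement that \(\rho^\ast(\chi)\) creates cartesian lifts.

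So the substantive step is to check that the comparison functor \(\cart(\CC\times_\EE\DD)\to\cart(\CC)\times_{\cart(\EE)}\cart(\DD)\) is an isomorphism of categories. On objects this is immediate, using that the pullback \(\CC\times_\EE\DD\) is strict. On morphisms it reduces to the claim that a morphism \((f,g)\) of \(\CC\times_\EE\DD\) is cartesian over \(\BB\) if and only if \(f\) is cartesian in \(\CC\) and \(g\) is cartesian in \(\DD\). To see this I would first exhibit cartesian lifts in \(\CC\times_\EE\DD\) built componentwise: given \((C,D)\) over \(I\) and \(\sigma\colon I'\to I\), take the chosen cartesian lift \(\bar{\sigma}(C)\colon\sigma^\ast C\to C\) in \(\CC\), push it along \(\rho\) to get a cartesian map of \(\EE\) into \(\rho(C)=\chi(D)\), and use that \(\chi\) creates cartesian lifts to lift this back to a cartesian map \(\bar g\colon D'\to D\) of \(\DD\) with \(\chi(D')=\rho(\sigma^\ast C)\); a routine universal-property check then shows \((\bar{\sigma}(C),\bar g)\) is cartesian in \(\CC\times_\EE\DD\), the only delicate point being that the induced factorisation of a test map really is a morphism of the pullback, which is forced by cancelling the cartesian map \(\chi(\bar g)=\rho(\bar{\sigma}(C))\) of \(\EE\) on the left. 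Both halves of the claimed characterisation then follow at once: an arbitrary morphism \((f,g)\) cartesian over \(\sigma\) differs from this preferred lift by a unique vertical isomorphism of \(\CC\times_\EE\DD\) (whose existence as a morphism of the pullback again uses the same cancellation), and conversely if \(f\) and \(g\) are separately cartesian then \((f,g)\) is the preferred lift composed with a vertical isomorphism; since isomorphisms are cartesian and cartesian maps compose, cartesianness transfers as required in both directions.

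The main obstacle is exactly this componentwise analysis of cartesian maps in the pullback fibration; in particular, showing that \(\CC\times_\EE\DD\) is a Grothendieck fibration at all and that its cartesian lifts may be computed componentwise is where the hypothesis that \(\chi\) creates cartesian lifts (together with the functors in the square preserving cartesian maps) is genuinely used. The remaining ingredients — the bookkeeping identifying the comparison functor as an isomorphism, and the pullback-stability of discrete fibrations — are routine.
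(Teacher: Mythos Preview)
Your proposal is correct and follows exactly the paper's approach: apply \(\cart(-)\), identify the resulting square as a pullback via the componentwise characterisation of cartesian maps in \(\CC\times_\EE\DD\), and conclude by pullback-stability of discrete fibrations. The paper simply asserts the characterisation ``\((f,g)\) is cartesian iff \(f\) and \(g\) are'' without proof, whereas you supply the details of the componentwise construction of cartesian lifts; this is a welcome elaboration rather than a different argument.
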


\begin{proof}
  Note that a map $(f, g)$ in $\CC \times_\EE \DD$ is cartesian if and
  only if $f$ is cartesian in $\CC$ and $g$ is cartesian in
  $\DD$. Hence we have a pullback diagram in $\cats$ as below.
  \begin{equation*}
    \begin{tikzcd}
      \cart(\CC \times_\EE \DD) \ar[r] \ar[d, swap,
      "\cart(\rho^\ast(\chi))"] \pbcorner & \cart(\DD) \ar[d, "\cart(\chi)"]
      \\
      \cart(\CC) \ar[r] & \cart(\EE)
    \end{tikzcd}
  \end{equation*}

  However, discrete fibrations are stable under pullback, so if
  $\cart(\chi)$ is a discrete fibration, then so is
  $\cart(\rho^\ast(\chi))$.
\end{proof}

\section{Some examples of notions of structure}
\label{sec:some-examples}

\subsection{Full notions of structure}
\label{sec:examples-full-fibred}

\begin{defn}
  Let $q : \EE \to \BB$ be a fibration and $\mathcal{D} \subseteq \EE$
  a class of objects. We say $\mathcal{D}$ is \emph{closed under
    substitution} if whenever $X \to Y \in \EE$ is cartesian and
  $Y \in \mathcal{D}$ we also have $X \in \mathcal{D}$.
\end{defn}

Note that a class of objects is closed under substitution if and only
if the corresponding inclusion of a full subcategory
$\DD \hookrightarrow \EE$ is a notion of structure. Following Shulman,
we refer to notions of structure of this form as \emph{full}. In this
case definability recovers the definition of definability due to
B\'{e}nabou \cite{benaboufibcat}.

\begin{defn}[B\'{e}nabou]
  We say a class of objects closed under substitution is
  \emph{definable} if the corresponding full notion of fibred
  structure is definable.
\end{defn}

\begin{prop}
  Suppose $\mathcal{D} \subseteq \EE$ is a definable class of
  objects. For each $X \in \EE$, the representing object for
  $\bar{\chi}_X$ is a monomorphism as a map in $\BB$.
\end{prop}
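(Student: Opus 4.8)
The plan is to use the characterisation of representability for $\bar\chi_X$ coming from Definition~\ref{def:locpresheaf} together with the fact that for a \emph{full} notion of structure the fibres $\chi^{-1}(\sigma^\ast X)$ are either empty or singletons. First I would recall that if $\mathcal D\subseteq\EE$ is a full subcategory determined by a class of objects closed under substitution, then for any $X\in\EE$ and any $\sigma : I\to q(X)$, the set $\bar\chi_X(\sigma)$ has at most one element: it is a one-element set when $\sigma^\ast(X)\in\mathcal D$ and empty otherwise. In other words $\bar\chi_X$ is a \emph{subterminal} presheaf on $\BB/q(X)$.

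Next I would observe that a representable presheaf $\yoneda(\tau)$ on $\BB/q(X)$, for $\tau : K\to q(X)$, is subterminal if and only if $\tau$ is a monomorphism in $\BB$. Indeed, $\yoneda(\tau)(\sigma)$ is the set of morphisms $\sigma\to\tau$ in $\BB/q(X)$, i.e.\ the set of maps $g : I\to K$ with $\tau\circ g = \sigma$; for this hom-set to have at most one element for every $\sigma$ is exactly the statement that $\tau$ is monic (taking $\sigma = \tau$ forces any two maps $g,g' : K\to K$ over $q(X)$ to agree, and conversely monicity of $\tau$ gives uniqueness for all $\sigma$). Since, by Lemma~\ref{lem:partreprequiv} (or Definition~\ref{def:locpresheaf} directly), definability of $\mathcal D$ means $\bar\chi_X$ is representable for every $X$, combining the two observations shows the representing map is a mono.

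The main obstacle — really the only point requiring care — is making sure the transport of subterminality across the representability isomorphism is handled cleanly: one must check that the natural isomorphism $\bar\chi_X\cong\yoneda(\tau)$ genuinely forces $\yoneda(\tau)$ to be subterminal pointwise, and then read off monicity of $\tau$ from this. This is straightforward once one notes that ``subterminal'' is a property stable under isomorphism of presheaves. A minor subtlety is that $\BB$ need not have a terminal object, so I would phrase ``subterminal'' concretely as ``every value of the presheaf is a subsingleton'' rather than via a map to $1$; with that phrasing the argument goes through for an arbitrary base $\BB$ and the conclusion is simply that the representing object $\tau : K\to q(X)$ is a monomorphism in $\BB$.
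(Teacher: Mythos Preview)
Your proof is correct and follows essentially the same approach as the paper: the paper notes that $\bar{\chi}_X$ is subterminal, invokes that the Yoneda embedding reflects subterminal objects, and concludes that the representing object is subterminal in $\BB/q(X)$, i.e.\ a monomorphism in $\BB$---which is exactly what you unpack in more detail. Your worry about $\BB$ lacking a terminal object is unnecessary, since the relevant presheaf lives on $\BB/q(X)$, which always has a terminal object (the identity on $q(X)$); but this over-caution does no harm.
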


\begin{proof}
  Note that for full notions of structure each presheaf
  $\bar{\chi}_X$ is subterminal. The Yoneda embedding reflects
  subterminal objects, so the representing object is subterminal as an
  object of $\BB / q(X)$, which precisely says it is a monomorphism as
  a map in $\BB$.
\end{proof}

This tells us that the representing object for $\bar{\chi}_X$ is a
subobject of $q(X)$. Unfolding the definitions, it is the largest
subobject $\sigma : I \rightarrowtail q(X)$ such that $\sigma^\ast(X)$
belongs to $\mathcal{D}$.

We give some basic examples of full notions of structure and
definability to illustrate the idea. See e.g. \cite[Section
9.6]{jacobs}, \cite[Section 12]{streicherfibcat} or \cite[Section
B1.3]{theelephant} for a more complete account.

\begin{example}
  \label{ex:ptwisefull}
  Given any category $\CC$ and any class of objects
  $\mathcal{D} \subseteq \CC$, we can define a full notion of
  structure on the fibration of set or category indexed families on
  $\CC$. A family $(X_i)_{i \in I}$ belongs to the class if $X_i$ is
  an element of $\mathcal{D}$ for every $i \in I$.

  Classes of this form are always definable, assuming we have the
  axiom of full separation in the set theory where we are working.
  Given a set indexed
  family $X := (X_i)_{i \in I}$, the representing object of
  $\bar{\chi}_X$ is the subset of $I$ defined as
  $\{i \in I \;|\; X_i \in \mathcal{D} \}$.

  For category indexed families, the representing object is a full
  subcategory with set of objects defined as for set indexed families.
\end{example}

\begin{prop}
  Let $\BB$ be a regular category. Then regular epimorphisms form a
  definable class for the codomain fibration $\cod : \BB^\to \to \BB$.
\end{prop}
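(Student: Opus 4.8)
The plan is to show that the full notion of structure determined by regular epimorphisms in $\BB^\to$ is definable, i.e. that for each object $X$ of $\BB^\to$ the presheaf $\bar\chi_X$ on $\BB/\cod(X)$ is representable. An object $X$ of $\BB^\to$ is a morphism $g : A \to B$ of $\BB$, and $q(X) = \cod(X) = B$. For $\sigma : I \to B$ the reindexing $\sigma^\ast(X)$ is the pullback of $g$ along $\sigma$, a map $A\times_B I \to I$, and $\bar\chi_X(\sigma)$ is the (sub)singleton that is nonempty exactly when this pulled-back map is a regular epimorphism. So the task reduces to: find the ``largest'' subobject $\sigma : J \rightarrowtail B$ such that $\sigma^\ast(g)$ is a regular epi, and show that $\sigma^\ast(g)$ being a regular epi for a given $\sigma : I \to B$ is equivalent to $\sigma$ factoring through $J$.

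First I would recall the standard fact, available in any regular category, that $g : A \to B$ factors as a regular epi $e : A \twoheadrightarrow \im(g)$ followed by a mono $m : \im(g) \rightarrowtail B$, and that this (regular epi, mono) factorisation is stable under pullback: pulling back $g$ along any $\sigma : I \to B$ yields a map whose (regular epi, mono) factorisation is the pullback of $e$ followed by the pullback of $m$. Hence $\sigma^\ast(g)$ is a regular epimorphism if and only if the pullback of $m$ along $\sigma$ is an isomorphism, which (since $m$ is mono) happens exactly when $\sigma$ factors through $m$. Therefore I take the representing object to be $m : \im(g) \rightarrowtail B$ itself, viewed as an object of $\BB/B$.

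With this candidate in hand, the remaining step is to verify the representability bijection is natural. For $\sigma : I \to B$ we need a natural isomorphism between $\bar\chi_X(\sigma)$ — the subsingleton of ``regular epi structures,'' which here is literally a subsingleton since the fibre of $\cart(\chi)$ over a regular epi is a point — and $\hom_{\BB/B}(\sigma, m)$, the set of factorisations of $\sigma$ through $m$, which is also a subsingleton because $m$ is mono. Both sides are subsingletons, so naturality is automatic once we check they are nonempty for the same $\sigma$; and that is exactly the equivalence established in the previous paragraph. By Lemma~\ref{lem:partreprequiv} this shows the right adjoint to $\cart(\chi)$ is defined at every $X$, so $\cart(\chi)$ has a right adjoint and the class is definable.

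The only genuine obstacle is the pullback-stability of the (regular epi, mono) factorisation, and the closely related claim that regular epis are pullback-stable: these are precisely the axioms (or immediate consequences of the axioms) defining a regular category, so in fact there is no obstacle — the proof is essentially an application of the definition of regularity together with Lemma~\ref{lem:partreprequiv}. I would also remark in passing that, consistent with the earlier proposition on full notions of structure, the representing object $m$ is indeed a monomorphism, and it is the largest subobject $J \rightarrowtail B$ over which $g$ restricts to a regular epimorphism, namely the image of $g$.
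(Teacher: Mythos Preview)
Your proposal is correct and takes essentially the same approach as the paper: both identify the representing object for $\bar\chi_g$ as the image inclusion $\im(g) \rightarrowtail B$, and both reduce the representability condition to the equivalence ``$\sigma^\ast(g)$ is a regular epimorphism iff $\sigma$ factors through $\im(g)$.'' The only cosmetic difference is that the paper argues one direction of this equivalence via the (regular epi, mono) orthogonality (producing a diagonal filler $Z \to \im(g)$), whereas you phrase it via pullback-stability of the factorisation and the observation that $\sigma^\ast(m)$ is an isomorphism iff $\sigma$ factors through $m$; these are interchangeable in a regular category.
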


\begin{proof}
  First recall that in a regular category, regular epimorphisms are
  stable under pullback, which precisely says they are closed under
  substitution in the codomain fibration.

  Now given $f : X \to Y$, we have an image factorisation
  $X \twoheadrightarrow \im(f) \rightarrowtail Y$. We note that the
  square below is a pullback, e.g. by directly verifying the universal
  property.
  \begin{displaymath}
    \begin{tikzcd}
      X \ar[r, equal] \ar[d, two heads] \pbcorner & X \ar[d, "f"] \\
      \im(f) \ar[r, tail] & Y
    \end{tikzcd}
  \end{displaymath}

  Given any object $Z$ and any map $h : Z \to Y$ we have the
  commutative square below.
  \begin{displaymath}
    \begin{tikzcd}
      h^\ast(X) \ar[r] \ar[d, "h^\ast(f)"] & X \ar[r, two heads] & \im(f) \ar[d,
      tail] \\
      Z \ar[rr] & & Y
    \end{tikzcd}
  \end{displaymath}
  When $h^\ast(f)$ is a regular epimorphism, we get a unique diagonal
  filler $Z \to \im(f)$, which witnesses $h^\ast(f)$ an a pullback of
  $X \twoheadrightarrow \im(f)$, as below.
  \begin{equation*}
    \begin{tikzcd}
      h^\ast(X) \ar[r] \ar[d] \pbcorner & X \pbcorner \ar[d]
      \ar[r, equal] & X \ar[d] \\
      Z \ar[r] & \im(f) \ar[r] & Y
    \end{tikzcd}
  \end{equation*}
  Conversely, given a pullback square as in the left hand square
  above, we can deduce that $h^\ast(f)$ is a regular epimorphism.
\end{proof}

More generally, given a pullback stable factorisation system on a
category $\BB$, the left class will always give a definable class with
respect to $\cod : \BB^\to \to \BB$.

\begin{example}
  Let $q : \EE \to \BB$ be a fibration such that reindexing preserves
  any terminal objects that exist. E.g. set or category indexed
  families on a category, or any codomain fibration on a category with
  finite limits. Then the class objects $X$ of $\EE$ that are
  terminal in their fibre category is closed under substitution.
\end{example}

\begin{example}
  Let $q : \EE \to \BB$ be a fibration such that each fibre category
  has a terminal object and reindexing preserves monomorphisms and
  terminal objects. Again this includes set or category indexed
  families and any codomain fibration. An object $X$ of $\EE_I$ is
  \emph{subterminal} if the unique map $X \to 1_I$ is a
  monomorphism. Then the class of subterminal objects is closed under
  substitution.

  As a special case the subterminal objects in $\cod : \BB^\to \to
  \BB$ are precisely the objects that are monomorphisms in $\BB$.
\end{example}

\begin{prop}
  Suppose that $\BB$ is a Heyting category. Then monomorphisms are a
  definable class in $\cod : \BB^\to \to \BB$.
\end{prop}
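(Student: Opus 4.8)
The plan is to prove that in a Heyting category $\BB$, the monomorphisms form a definable class in $\cod : \BB^\to \to \BB$. We've already seen that monomorphisms are closed under substitution (pullback-stable), so they form a full notion of structure. By the previous proposition characterising definability for full notions of structure, it suffices to show that for each object $f : X \to Y$ of $\BB^\to$, the presheaf $\bar{\chi}_f$ on $\BB/Y$ is representable, and moreover the representing object will be the largest subobject $\sigma : I \rightarrowtail Y$ such that $\sigma^\ast(f)$ is a monomorphism.

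First I would identify, for a given $f : X \to Y$, which subobjects $\sigma : I \rightarrowtail Y$ have the property that $\sigma^\ast(f)$ is monic. Pulling back $f$ along $\sigma$ gives $\sigma^\ast(X) \to I$, and this is monic iff the diagonal $\sigma^\ast(X) \to \sigma^\ast(X) \times_I \sigma^\ast(X)$ is an isomorphism. The key observation is that $\sigma^\ast(f)$ is monic iff $\sigma$ factors through a particular largest subobject of $Y$, which I will construct using the Heyting structure. Consider the kernel pair $X \times_Y X \rightrightarrows X$ with the two projections $p_0, p_1$, and the diagonal $\Delta : X \to X \times_Y X$. The map $f$ is monic iff $\Delta$ is an isomorphism, equivalently iff $p_0 = p_1$. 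More usefully, over a base change $\sigma : I \to Y$, the condition that $\sigma^\ast f$ is monic should be expressible via the image along $f$ of an equality predicate.

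The cleanest route: work in the internal logic of the Heyting category. The subobject we want is $I = \{ y \in Y \mid \forall x, x' \in X.\, (f(x) = y \wedge f(x') = y) \Rightarrow x = x' \}$. This is a well-defined subobject of $Y$ because $\BB$ being a Heyting category means we have finite limits, images (hence $\exists$ along any map), and the right adjoints to pullback along monomorphisms giving $\forall$ along product projections and implication — precisely the structure needed to interpret this bounded formula with a universal quantifier ranging over $X$. Concretely, one forms this as $I = \forall_{X \to 1}\bigl( (\text{something involving } f \times f \text{ and the equality subobject on } X)\bigr)$ reindexed appropriately; I would spell it out as: start with the subobject $E \hookrightarrow X \times X$ which is the diagonal (equality on $X$), take its pullback/implication against the subobject $\{(x,x') : f(x) = f(x')\} \hookrightarrow X \times X$ to get $H = \bigl[f(x)=f(x') \Rightarrow x = x'\bigr] \hookrightarrow X \times X$, push this down... actually the correct move is to pull $H$ back to $X \times X \times Y$ along the constraint $f(x) = y = f(x')$ and then apply $\forall$ along the projection to $Y$. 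Then I would verify that for any $\sigma : I' \to Y$, the pullback $\sigma^\ast f$ is monic iff $\sigma$ factors through $I \rightarrowtail Y$, which gives the required natural bijection $\bar{\chi}_f(\sigma) \cong \hom_{\BB/Y}(\sigma, (I \rightarrowtail Y))$ (both sides being subsingletons).

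The main obstacle I anticipate is bookkeeping: correctly setting up the subobject $I$ so that it genuinely classifies the property "$\sigma^\ast f$ is monic" rather than something weaker or stronger, and checking this holds for arbitrary base change — not just for subobjects $\sigma$ — where one must be careful that monomorphy is itself pullback-stable so that the universal property only needs testing against all $\sigma : I' \to Y$ with the answer being "yes or no". Verifying the factorisation claim rigorously requires either a careful internal-logic argument (invoking the soundness of the Heyting-category interpretation of first-order intuitionistic logic over bounded quantifiers) or an explicit diagram chase with images, kernel pairs, and the adjunction $\sigma^\ast \dashv \forall_\sigma$ along the projection $X \times Y \to Y$. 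I would lean on the internal logic to keep this manageable, citing e.g. the standard interpretation of bounded first-order logic in a Heyting category, and then conclude that $\bar{\chi}_f$ is represented by $I \rightarrowtail Y$, so monomorphisms are definable.
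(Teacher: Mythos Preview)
Your proposal is correct and takes essentially the same approach as the paper: interpret the first-order sentence ``any two points of the fibre are equal'' in the internal logic of the Heyting category to get a subobject of $Y$ that represents $\bar{\chi}_f$. The paper just packages the construction more directly as $\forall_p\,\Delta_X \rightarrowtail Y$, where $\Delta_X : X \rightarrowtail X \times_Y X$ is the fibrewise diagonal and $p : X \times_Y X \to Y$ is the projection, and then verifies the universal property by an explicit chain of adjunction steps rather than by appeal to soundness of the internal language --- this is exactly your kernel-pair/diagonal idea, and avoids the detour through $X \times X$ with an extra implication that you started and then abandoned.
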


\begin{proof}
  The intuitive idea is that subterminal objects can be described
  within the internal language of a Heyting category. An object is
  subterminal if any two elements of it are equal, namely if it
  satisfies the following sentence: \(\forall x, y \in X\; x = y\). In
  the argument below we expand out the preceding sentence to an
  explicit categorical description, and check that it works.
  
  Suppose we are given a map $f : X \to Y$. We have a diagonal map
  $\Delta_X : X \rightarrowtail X \times_Y X$. Write $p$ for the
  canonical map $X \times_Y X \to Y$. We then have a monomorphism
  $\forall_p \, \Delta_X : \forall_p \, X \rightarrowtail Y$. We check
  that this does give a representing object for $\bar{\chi}_f$.

  First note that a map $h : Z \to Y$ factors through
  $\forall_p\,\Delta$ if and only if
  $\top \leq h^\ast(\forall_p\,\Delta_X)$ in the lattice of subobjects
  of $Z$. This is the case precisely when $\exists_h\,\top \leq
  \forall_p\,\Delta_X$ in subobjects of $Y$, which holds when
  $p^\ast(\exists_h\,\top) \leq \Delta_X$ in subobjects of $X \times_Y
  X$. Since image factorisation is stable under pullback, we have
  $p^\ast(\exists_h\,\top) \cong \exists_{p^\ast(h)}\,\top$. Hence 
  $p^\ast(\exists_h\,\top) \leq \Delta_X$ precisely when $\top \leq
  (p^\ast(h))^\ast(\Delta_X)$ in subobjects of $h^\ast(X) \times_Z
  h^\ast(X)$. However, one can calculate that
  $(p^\ast(h))^\ast(\Delta_X)$ is exactly the diagonal map
  $\Delta_{h^\ast(X)} : h^\ast(X) \to h^\ast(X) \times_Z h^\ast(X)$,
  which is equal to $\top$ precisely when $h^\ast(f)$ is a monomorphism.
\end{proof}

\begin{example}
  Suppose we are given a fibration $q : \EE \to \BB$. Then we can
  define a second fibration of vertical arrows $V(\EE) \to \BB$. In
  any case the class of isomorphisms is a full notion of fibred
  structure on $V(\EE)$. If reindexing preserves monomorphisms, then
  they are also a full notion of structure.

  If $q$ is locally small, then both of these are definable
  classes.
\end{example}

\begin{thm}[B\'{e}nabou]
  Let $\BB$ be a topos, together with a local operator $j : \Omega \to
  \Omega$. The following classes of maps are definable classes of objects
  with respect to the codomain fibration on $\BB$.
  \begin{enumerate}
  \item Families of $j$-separated objects.
  \item Families of $j$-sheaves.
  \end{enumerate}
\end{thm}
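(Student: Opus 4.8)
The plan is to realise both classes as full notions of structure coming from subcategories closed under substitution, and then, for each object of the codomain fibration over $Y$ — that is, each morphism $f\colon X\to Y$, thought of as a family $(X_y)_{y\in Y}$ — to produce the representing object of $\bar{\chi}_f$ as a subobject of $Y$ cut out by a first-order formula in the internal logic of the topos $\BB$.

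First I would fix the set-up. A local operator $j$ on $\BB$ induces a local operator $j_I$ on each slice $\BB/I$, compatibly in the sense that for $\sigma\colon J\to I$ the logical reindexing functor $\sigma^\ast\colon\BB/I\to\BB/J$ carries $j_I$ to $j_J$; hence $\sigma^\ast$ preserves $j$-dense monomorphisms and so preserves $j$-separated objects and $j$-sheaves. Under the identification of objects of $\BB/Y$ with $Y$-indexed families, a family is a family of $j$-separated objects (resp.\ $j$-sheaves) precisely when the corresponding object of $\BB/Y$ is $j_Y$-separated (resp.\ a $j_Y$-sheaf), and by the previous remark these conditions are closed under substitution in $\cod\colon\BB^\to\to\BB$, so each determines a full notion of structure. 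By the discussion earlier in this subsection it then suffices, for each $f\colon X\to Y$, to exhibit the largest subobject $\sigma\colon S\rightarrowtail Y$ such that $\sigma^\ast(f)$ is a family of $j$-separated objects (resp.\ $j$-sheaves): such an $S$ automatically represents $\bar{\chi}_f$.

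Next I would write down the defining formulas and interpret them in $\BB/Y$. Working internally over $\BB/Y$, where $X$ is ``a family $(X_y)_{y\in Y}$'' and $j$ is available as a definable unary operation on $\Omega$, separatedness is visibly first-order: $\varphi_{\mathrm{sep}}(y)\equiv\forall a,b\in X_y.\ j(a=b)\Rightarrow a=b$, equivalently that the diagonal of $X_y$ is $j$-closed. For the sheaf condition the key point is to replace the illegitimate quantification over all $j$-dense monomorphisms by a single quantification over $\Omega$: a $j$-separated object $A$ is a $j$-sheaf iff $A$ is orthogonal to every $j$-dense subobject of $1$ in every slice, and since a $j$-dense mono $D\rightarrowtail E$ is exactly a $j_E$-dense subterminal in $\BB/E$, this is expressed over $\BB$ by a single internal sentence. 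Concretely I would take $\varphi_{\mathrm{sh}}(y)\equiv\varphi_{\mathrm{sep}}(y)\wedge\forall\phi\in\Omega.\ j(\phi)\Rightarrow\forall u\in X_y^{[\phi]}.\ \exists!\,a\in X_y.\ \forall(p:\phi).\ u(p)=a$, where $[\phi]=\{\ast:\phi\}$ and the variable-domain exponential $X_y^{[\phi]}$ is encoded in the usual way by a subobject of $\Omega^{X_y}$, so that $\varphi_{\mathrm{sh}}$ is a genuine first-order formula of the internal language of $\BB/Y$. Let $S_{\mathrm{sep}},S_{\mathrm{sh}}\rightarrowtail Y$ be the subobjects of $Y$ interpreting $\varphi_{\mathrm{sep}},\varphi_{\mathrm{sh}}$.

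Finally I would verify correctness using two ingredients: (i) for any $h\colon Z\to Y$, the object $h^\ast(f)$ of $\BB/Z$ is a family of $j$-separated objects (resp.\ $j$-sheaves) iff $\varphi_{\mathrm{sep}}$ (resp.\ $\varphi_{\mathrm{sh}}$) holds in $\BB/Z$ — this is soundness and completeness of the internal language together with the fibrewise characterisation of $j_Z$-sheaves and the generic-dense-mono reduction above, applied inside $\BB/Z$; and (ii) stability under reindexing, i.e.\ $h^\ast(S_{\mathrm{sh}})$ is the subobject of $Z$ interpreting $\varphi_{\mathrm{sh}}$, which holds because $\varphi_{\mathrm{sh}}$ is first-order and the logical functor $h^\ast\colon\BB/Y\to\BB/Z$ commutes with $\Omega$, finite limits and power objects and carries $j_Y$ to $j_Z$. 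Combining, $h$ factors through $S_{\mathrm{sh}}$ iff $h^\ast(S_{\mathrm{sh}})$ is all of $Z$ iff $\varphi_{\mathrm{sh}}$ holds in $\BB/Z$ iff $h^\ast(f)$ is a family of $j$-sheaves; hence $S_{\mathrm{sh}}$ represents $\bar{\chi}_f$, and likewise $S_{\mathrm{sep}}$. I expect the main obstacle to lie in step (i) for sheaves: justifying carefully that the single internal formula $\varphi_{\mathrm{sh}}$, interpreted in a slice, really does express ``every fibre is a $j$-sheaf'', i.e.\ that orthogonality to the generic $j$-dense subterminal across all further slices is equivalent to being a $j$-sheaf. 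The separatedness case is immediate from closedness of the diagonal, and the logicality arguments in (ii) are routine.
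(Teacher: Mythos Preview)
The paper does not actually prove this theorem: it is stated as a result attributed to B\'{e}nabou and immediately followed by the next example, with no argument given. So there is nothing in the paper to compare your proposal against.

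That said, your approach is sound and is the natural one. The separated case is unproblematic: closedness of the diagonal is exactly your formula $\varphi_{\mathrm{sep}}$, and stability under logical reindexing is immediate. For the sheaf case, your reduction of the a priori large quantification over all $j$-dense monomorphisms to a single bounded quantifier $\forall\phi\in\Omega$ is the right move, and it works because every $j$-dense mono is a pullback of the generic one $\top\colon 1\rightarrowtail J=\{\phi\in\Omega\mid j(\phi)\}$, so orthogonality to that single map, tested in every slice (which is what the internal universal quantifier gives you), is equivalent to the full sheaf condition. The point you flag as the main obstacle --- justifying that $\varphi_{\mathrm{sh}}$ interpreted in $\BB/Z$ really expresses ``$h^\ast(f)$ is a $j_Z$-sheaf'' --- is handled by exactly this reduction together with the fact that $h^\ast$ is logical and carries $j_Y$ to $j_Z$. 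One small point worth making explicit is that the uniqueness clause in $\varphi_{\mathrm{sh}}$ is already subsumed by $\varphi_{\mathrm{sep}}$, so you could equally well write the existence clause alone once separatedness is assumed; either way the argument goes through.
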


\begin{example}
  \label{ex:wfsfns}
  Let $(\mathcal{L}, \mathcal{R})$ be a weak factorisation system on a
  category $\BB$. Then the right class $\mathcal{R}$ gives us a full
  notion of structure on the codomain fibration on $\BB$. We say a wfs
  is \emph{definable} if the corresponding full notion of structure on
  $\cod : \BB^\to \to \BB$ is definable.

  As remarked by Shulman \cite[Example
  3.17]{shulmaninftytopunivalence} a wfs is definable
  if it is cofibrantly generated by a set of maps with representable
  codomain, assuming the axiom of choice. We will see in Section
  \ref{sec:some-non-definable} that the axiom of choice is strictly
  necessary.
\end{example}

\subsection{Comprehension schemes}
\label{sec:compr-schem}

The idea of definability appears again in Johnstone's notion of
\emph{comprehension schemes}. Although we make some adjustments to fit
with the general theory of notions of structure, the idea essentially
appears in \cite[Section B1.3]{theelephant}. In particular, in the
definition below we include a requirement of isomorphism on objects to
satisfy the definition of notion of structure. This is not required by
Johnstone, who instead refers to it as as a special case where the
definition ``works best.''\footnote{Alternatively one can make
  Johnstone's definition better behaved by working in univalent
  \(\infty\)-categories \cite{stenzelcc}.}

\begin{prop}
  Let $q : \EE \to \BB$ be any fibration and
  $F : \mathcal{C} \to \mathcal{D}$ an internal functor between
  internal categories in $\BB$. If $F$ is an isomorphism on objects,
  then the corresponding
  $F^\ast : \EE^\mathcal{D} \to \EE^\mathcal{C}$ between fibrations of
  diagrams defined by precomposing with $F$, creates cartesian lifts.
\end{prop}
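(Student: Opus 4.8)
The plan is to unwind the definition of ``creates cartesian lifts'' — i.e.\ that $\cart(\chi)$ is a discrete fibration — and verify it directly for $\chi = F^\ast$. First I would recall from the proposition stated just after the definition of notion of structure that it suffices to check, for each $I \in \BB$, that the restriction of $(F^\ast)_I : (\EE^{\mathcal D})_I \to (\EE^{\mathcal C})_I$ to isomorphisms is a discrete fibration; so the whole argument reduces to a fibrewise statement about categories of internal diagrams in the fixed fibre $\EE_I$, which itself has an internal category structure (or at least enough structure) inherited from $\BB$ and $q$. This lets me forget the fibred language and work with a single functor $F : \mathcal C \to \mathcal D$ that is bijective (indeed identity, after relabelling) on objects, and the induced $F^\ast$ on internal-diagram categories, checking that the isomorphism-restriction of $F^\ast$ is a discrete fibration.

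Next I would spell out what a cartesian/isomorphism lifting problem looks like. An object of $\EE^{\mathcal D}$ over $I$ is an internal diagram $X : \mathcal D \to \EE_I$, that is, a family $(X_d)_{d \in \mathcal D_0}$ together with transition isomorphisms-data over the arrows of $\mathcal D$ satisfying functoriality; precomposition with $F$ forgets the action on arrows of $\mathcal D$ not in the image of $F$ on morphisms, but retains all the object-data since $F_0$ is a bijection. Given $Y \in \EE^{\mathcal D}$ over $I$ and an isomorphism $\phi : Z \xrightarrow{\sim} F^\ast Y$ in $(\EE^{\mathcal C})_I$, I must produce a unique isomorphism $\tilde\phi : \tilde Z \xrightarrow{\sim} Y$ in $(\EE^{\mathcal D})_I$ with $F^\ast \tilde\phi = \phi$. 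On objects there is no choice: since $F_0$ is a bijection, the object-part of $\tilde Z$ is forced to be exactly the object-part of $Z$ (reindexed along $F_0^{-1}$), and $\tilde\phi$ is forced componentwise to equal $\phi$. The only thing to check is that the transition maps of $\tilde Z$ along \emph{all} arrows of $\mathcal D$ are uniquely determined: for an arrow $g : d \to d'$ in $\mathcal D$, conjugating the transition map $Y(g)$ by the components $\phi_d, \phi_{d'}$ gives the unique candidate for $\tilde Z(g)$, and one checks this assignment is functorial (using that $Y$ is functorial and $\phi$ is a natural isomorphism) and that $F^\ast$ applied to the resulting $\tilde\phi$ really is $\phi$. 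Existence and uniqueness of this $\tilde Z, \tilde\phi$ is exactly the discrete-fibration condition.

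I would also note the fibred bookkeeping that makes $F^\ast$ a \emph{fibred} functor in the first place: a morphism in $\EE^{\mathcal D}$ is cartesian iff each of its components is cartesian in $\EE$ (this is the standard description of the fibration of diagrams), so $\cart(\EE^{\mathcal D})$ is computed componentwise, and precomposition with $F$ carries componentwise-cartesian morphisms to componentwise-cartesian morphisms; hence $\cart(F^\ast)$ is well-defined, and the analysis above shows it is a discrete fibration.

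The main obstacle I anticipate is purely organisational rather than mathematical: carrying the internal-category and internal-diagram formalism carefully enough that the phrase ``the object-part is forced'' is genuinely justified by $F_0$ being an isomorphism (not merely bijective-on-$\Gamma$-points), and making sure the construction of the transition maps of $\tilde Z$ by conjugation is internal, i.e.\ defined by an actual morphism in $\BB$ rather than pointwise. Since $F$ is an isomorphism on objects, $F^\ast$ on the object-of-objects is literally an isomorphism, which is what collapses all the choices and yields discreteness; once that is clear, functoriality of the conjugated transition maps is a routine diagram chase that I would not write out in full. So the proof is essentially: reduce to fibres via the cited proposition, observe componentwise-cartesianness, and then run the forced-lifting argument using bijectivity on objects.
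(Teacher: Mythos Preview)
Your approach is correct, and differs from the paper's in one organisational respect. The paper works directly with $\cart(F^\ast)$: given an object $Y \in \EE^{\mathcal D}$ and a cartesian map $X \to F^\ast Y$ in $\EE^{\mathcal C}$ (lying over some $\sigma : I \to J$ in $\BB$), it spells out the data as a partially-completed commutative cube and observes that the missing face---the transition map $s^\ast X \to t^\ast X$ over $I \times \mathcal{C}_1$---is forced by the universal property of the cartesian maps already present. You instead invoke the proposition that creating cartesian lifts is equivalent to each fibrewise restriction to isomorphisms being a discrete fibration, which lets you trade arbitrary cartesian maps for isomorphisms in a single fibre and then recover the missing transition map by conjugation rather than by a cartesian universal property. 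The two arguments are the same idea in different clothing: conjugating by an isomorphism is exactly what the cartesian universal property specialises to when the cartesian map is vertical. Your route buys a slight conceptual simplification (no need to track the base map $\sigma$), at the cost of an extra reduction step plus the separate verification that $F^\ast$ is fibred; the paper's route does everything in one pass. Your caveat about making the conjugation construction genuinely internal is well-placed but, as you say, routine once $F_0$ is taken to be the identity.
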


\begin{proof}
  Without loss of generality, $\mathcal{C}$ and $\mathcal{D}$ have the
  same object of objects and $F_0$ is the identity. We can write out
  the remaining data for the internal categories and internal functor
  as the following commutative diagram.
  \begin{equation*}
    \begin{tikzcd}
      \mathcal{C}_1 \ar[rr, "F_1"] \ar[dr, shift left, "s"]
      \ar[dr, swap, shift right, "t"]
      & & \mathcal{D}_1 \ar[dl, shift left, "v"] \ar[dl, shift right, swap,
      "u"] \\
      & \mathcal{C}_0 &
    \end{tikzcd}
  \end{equation*}

  We can view a pair consisting of an object $Y$ of $\EE^\mathcal{D}$ and
  a cartesian map into $F^\ast(Y)$ in $\EE^\mathcal{C}$ as the solid
  lines in the upper diagram below, where the horizontal maps are
  cartesian over the maps in the lower square.
  \begin{displaymath}
    \begin{tikzcd}[sep = 1.5em]
      & s^\ast(Y) \ar[rr] \ar[dd] & & u^\ast(Y) \ar[dd] \\
      s^\ast(X) \ar[rr, crossing over] \ar[ur] \ar[dd, dotted] & & u^\ast(X)  \ar[ur] & \\
      & t^\ast(Y) \ar[rr] & & v^\ast(Y) \\
      t^\ast(X) \ar[rr] \ar[ur] & & v^\ast(X) \ar[ur] \ar[from = uu, crossing
      over]&
    \end{tikzcd}
  \end{displaymath}
  \begin{displaymath}
    \begin{tikzcd}[sep = 1.5em]
      & J \times \mathcal{C}_1 \ar[rr, "J \times F_1"] & & J \times \mathcal{D}_1 \\
      I \times \mathcal{C}_1 \ar[ur] \ar[rr, "I \times F_1"] & & I \times
      \mathcal{D}_1 \ar[ur] &
    \end{tikzcd}
  \end{displaymath}

  Cartesian lifts correspond precisely to vertical morphisms
  completing the diagram to a commutative cube, as in the dotted map
  above. However, such maps are uniquely determined by the universal
  property of the cartesian maps in the above diagram.
\end{proof}

\begin{defn}[Johnstone]
  Given an isomorphism on objects internal functor $F$ in $\BB$, we
  say $q : \EE \to \BB$ \emph{satisfies the comprehension scheme for
    $F$} if the notion of structure $F^\ast$ is definable.
\end{defn}

\begin{example}
  Let $\BB$ be a category with finite limits and finite
  coproducts. Define internally in $\BB$ the inclusion functor from
  the discrete category on $2$ objects, $2$, to the category with two
  objects and a morphism from one to the other, denoted $\cdot \to
  \cdot$.

  We can explicitly describe the diagram category $\EE^2$ as the
  pullback $\EE \times_\BB \EE$ and the diagram category $\EE^\to$ as
  the category of vertical maps $V(\EE)$. The functor $F^\ast : V(\EE)
  \to \EE \times_\BB \EE$ sends a vertical map to its domain and
  codomain.

  We see that in this case a ``structure'' on a pair of objects $X, Y$
  in the same fibre category $\EE_I$ is a vertical map from $X$ to
  $Y$.

  This notion of structure is definable if and only
  if $q : \EE \to \BB$ is locally small.
\end{example}

\begin{example}
  Let $\BB$ be category with finite limits and finite coproducts. We
  can construct internally in $\BB$ the category with two objects and a
  map between them $\cdot \to \cdot$ as well as the category with two
  objects and two maps between them $\cdot \rightrightarrows
  \cdot$. Furthermore, we can define the unique functor $F$ from
  $\cdot \rightrightarrows \cdot$ to $\cdot \to \cdot$ that is the
  identity on objects (and ``collapses'' the two morphisms).

  As before, we can explicitly describe $\EE^\to$ as the category of
  vertical arrows. We explicitly describe $\EE^\rightrightarrows$ as
  the category of pairs of vertical arrows with the same domain and
  same codomain. We then have that $F^\ast$ is the inclusion of the
  full subcategory of $\EE^\rightrightarrows$ of objects where the two
  arrows in the pair are equal. If $q$ satisfies the comprehension
  scheme for $F$ we say it has \emph{definable equality}.
\end{example}

\begin{thm}[Johnstone]
  A fibration $q : \EE \to \BB$ is locally small if and only if it
  satisfies the comprehension scheme for all isomorphism on objects
  internal functors $F : \mathcal{C} \to \mathcal{D}$ in $\BB$.
\end{thm}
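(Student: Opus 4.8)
The plan is to prove both directions by reducing to the two generating examples already discussed, namely the functor $F : (\cdot \to \cdot) \to 2$ (inclusion of the discrete two-object category into the walking arrow) and the functor $F : (\cdot \rightrightarrows \cdot) \to (\cdot \to \cdot)$ (collapsing two parallel arrows). For the direction $(\Leftarrow)$, suppose $q$ satisfies the comprehension scheme for all isomorphism-on-objects internal functors. Applying it to the first generating functor, we get that the notion of structure $F^\ast : V(\EE) \to \EE \times_\BB \EE$, which sends a vertical map to the pair (domain, codomain), is definable. Unwinding the definition of definability via Lemma~\ref{lem:partreprequiv}, this says precisely that for each pair of objects $X, Y$ in a common fibre $\EE_I$, the presheaf on $\BB/I$ sending $\sigma : J \to I$ to the set of vertical maps $\sigma^\ast X \to \sigma^\ast Y$ is representable. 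But that is exactly the statement that $q$ is locally small: the representing object is the internal hom-object $\mathrm{Hom}_I(X, Y) \to I$ in the fibrewise sense. (The second generating functor is what gives definable equality; I expect it is subsumed once one has the hom-objects, or can be used to pin down the equality predicate on those hom-objects, but the main content of $(\Leftarrow)$ is the first functor.)

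For the direction $(\Rightarrow)$, suppose $q$ is locally small; I must show it satisfies the comprehension scheme for an arbitrary isomorphism-on-objects internal functor $F : \mathcal{C} \to \mathcal{D}$ in $\BB$. Fix $X \in \EE^{\mathcal{D}}$ lying over $I \in \BB$; I must show the presheaf $\bar{\chi}_X$ on $\BB/I$ is representable, where $\chi = F^\ast$. An object of $\chi^{-1}(\sigma^\ast X)$ over $\sigma : J \to I$ is a $\mathcal{C}$-indexed diagram structure on the underlying $\mathcal{C}_0$-family of $\sigma^\ast X$ that maps, under $F^\ast$, to the restriction of $X$. Concretely, since $F_0$ is (WLOG) the identity on objects, such a structure is an action by the morphism-object $\mathcal{C}_1$ extending the $\mathcal{D}$-action along $F_1 : \mathcal{C}_1 \to \mathcal{D}_1$. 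The strategy is to build the representing object as an equalizer (or more precisely a suitable limit) of maps between hom-objects supplied by local smallness: the data of a $\mathcal{C}_1$-action compatible with the $\mathcal{D}_1$-action is a map $\mathcal{C}_1 \to \underline{\mathrm{Hom}}(s^\ast X, t^\ast X)$ (using the pullback of the internal hom of $q$ along the source/target maps of $\mathcal{C}$) subject to unit and associativity conditions and the compatibility with $F_1$ and the existing $\mathcal{D}_1$-action. Each of these conditions is an equation between maps into hom-objects, hence cuts out a subobject; intersecting them and pulling the whole construction back over $\BB/I$ yields the representing object. The key technical input is that local smallness gives us these hom-objects \emph{fibrewise} and stably under reindexing, so that the construction is natural in $\sigma$, which is what representability of $\bar{\chi}_X$ demands.

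The main obstacle I anticipate is the bookkeeping in $(\Rightarrow)$: one must carefully phrase "a $\mathcal{C}$-diagram structure extending a given $\mathcal{D}$-diagram structure along $F$" entirely in terms of maps into internal hom-objects, and verify that the associativity/unit/compatibility equations are each expressible as equalizers that the hom-objects (together with finite limits in $\BB$) support. There is a subtlety about whether $\BB$ has enough limits to form these equalizers --- but since the hom-objects live over $I$ and we only need finite limits relative to $\BB/I$, and since a locally small fibration over a base with the relevant limits provides exactly these, this should go through; one may need to assume $\BB$ has finite limits, which is harmless in this context and is implicit in talking about internal categories and diagrams. A secondary point is handling the isomorphism-on-objects reduction cleanly: replacing $F$ by an isomorphic copy with $F_0 = \mathrm{id}$ uses that $F^\ast$ is invariant (up to the equivalence relevant to definability) under such replacement, which follows from the "creates cartesian lifts" machinery and the fact that definability is a property stable under equivalence of notions of structure.
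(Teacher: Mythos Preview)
The paper does not actually prove this theorem; it simply cites Johnstone (the remark after Lemma~B1.3.15 in \cite{theelephant}). Your sketch is therefore considerably more detailed than what appears here, and its overall strategy---deduce $(\Leftarrow)$ from the special case $F : 2 \hookrightarrow (\cdot \to \cdot)$, and for $(\Rightarrow)$ build the representing object as a finite limit of internal hom-objects supplied by local smallness---is the standard one and is essentially what Johnstone does.

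Two bookkeeping slips are worth fixing. First, in $(\Leftarrow)$ you wrote the generating functor as $F : (\cdot \to \cdot) \to 2$ but described and used it as the inclusion $2 \hookrightarrow (\cdot \to \cdot)$; the latter is what is meant, so that $F^\ast : V(\EE) \to \EE \times_\BB \EE$ as you then correctly say. Second, and more importantly, in $(\Rightarrow)$ you have the roles of $\mathcal{C}$ and $\mathcal{D}$ reversed. The notion of structure is $\chi = F^\ast : \EE^{\mathcal{D}} \to \EE^{\mathcal{C}}$, so to check definability one must fix $X \in \EE^{\mathcal{C}}$ (a $\mathcal{C}$-diagram: a family of objects over $\mathcal{C}_0 = \mathcal{D}_0$ together with a $\mathcal{C}_1$-action) and represent the presheaf of $\mathcal{D}$-diagram structures on that same family whose restriction along $F$ recovers $X$. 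The datum being added is a $\mathcal{D}_1$-action extending the given $\mathcal{C}_1$-action through $F_1 : \mathcal{C}_1 \to \mathcal{D}_1$, not the other way around. Once this is corrected your construction goes through unchanged: encode a $\mathcal{D}_1$-action as a map into the appropriate pullback of $\hom(s^\ast X, t^\ast X)$, impose the unit, associativity, and $F_1$-compatibility equations as equalizers in $\BB/I$, and observe that local smallness makes all of this stable under reindexing. Your remarks about needing finite limits in $\BB$ and about replacing $F$ by a strict identity-on-objects functor are both apt and standard.
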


\begin{proof}
  See the remark after \cite[Lemma B1.3.15]{theelephant}.
\end{proof}

\subsection{(Co)Algebraic notions of structure}
\label{sec:coalg-noti-fibr}

We finally turn to the main source of motivating examples for this
paper. These observations already appear in \cite[Sections 4.3 and
5.1]{swanliftprob} and are minor variants of standard material, but we
repeat them below for reference.

In the below, we assume we are given an arbitrary Grothendieck
fibration $q : \EE \to \BB$.

\begin{defn}
  An \emph{endofunctor over $\BB$} is a functor $T : \EE \to \EE$ such
  that $q \circ T = T$.

  A \emph{pointed endofunctor over $\BB$} is an endofunctor $T$ over
  $\BB$ together with a natural transformation $\eta : 1_\EE
  \Rightarrow T$ such that $\eta_X : X \to T X$ is a vertical map for
  each $X \in \EE$.

  A \emph{monad over $\BB$} is a pointed endofunctor $(T, \eta)$ over
  $\BB$ together with a natural transformation $\mu : T^2 \Rightarrow
  T$ such that $\mu_X : T(TX) \to T X$ is vertical for each $X \in
  \EE$, and such that $\eta$ and $\mu$ satisfy the usual monad
  laws, displayed below for reference.
  \begin{displaymath}
    \begin{gathered}
      \begin{tikzcd}
        T \ar[r, "\eta_T"] \ar[dr, equal] & T^2 \ar[d, "\mu"] & 
        T \ar[l, swap, "T \eta"] \ar[dl, equal] \\
        & T &
      \end{tikzcd}
    \end{gathered}
    \qquad
    \begin{gathered}
      \begin{tikzcd}
        T^3 \ar[r, "T \mu"] \ar[d, "\mu_T"] & T^2 \ar[d, "\mu"] \\
        T^2 \ar[r, "\mu"] & T
      \end{tikzcd}
    \end{gathered}
  \end{displaymath}

  We say an endofunctor over $\BB$, $T : \EE \to \EE$ is \emph{fibred}
  if it preserves cartesian maps, and pointed endofunctors and monads
  are fibred if their underlying endofunctors are.

  We dually define (fibred) copointed endofunctors and comonads.
\end{defn}

We emphasise that in this definition monads over $\BB$ are not
necessarily fibred (i.e. do not necessarily preserve cartesian maps)
and in fact many natural examples of algebraic weak factorisation
systems (to be covered in section \ref{sec:algebr-weak-fact}) are not.

Monads over $\BB$ can be seen as a special case of the $2$-categorical
definition of monad \cite{streetmonads} by working in the $2$-category
whose underlying $1$-category is $\cats/\BB$ and whose $2$-cells are
pointwise vertical natural transformations. Fibred monads are monads
in the usual $2$-category of fibrations over $\BB$ (see
e.g. \cite[Section 1.7]{jacobs}).

\begin{defn}
  Let $T$ be an endofunctor over $\BB$ and $X$ an object of $\EE$. A
  \emph{$T$-algebra structure} on $X$ is a vertical map
  $s : T X \to X$. A \emph{$T$-algebra} is an object of $\EE$ together
  with $T$-algebra structure. This defines a category $\xalg{T}$,
  together with a forgetful functor $\upsilon : \xalg{T} \to \EE$.

  Let $(T, \eta)$ be a pointed endofunctor over $\BB$. An algebra
  structure on $X \in \EE$ is a (necessarily vertical) map $s : T X
  \to X$ that satisfies the usual unit law, displayed below for
  reference.
  \begin{displaymath}
    \begin{tikzcd}
      X \ar[r, "\eta_X"] \ar[dr, equal] & T X \ar[d, "s"] \\
      & X
    \end{tikzcd}
  \end{displaymath}
  We similarly define the category of $(T, \eta)$-algebras, again
  written as $\xalg{T}$ when $\eta$ is clear from the context.

  Let $(T, \eta, \mu)$ be a monad over $\BB$. An algebra structure on
  an object $X \in \EE$ is an algebra structure on the underlying
  pointed endofunctor $s : T X \to X$ that additionally satisfies the
  usual multiplication law, displayed below for reference.
  \begin{displaymath}
    \begin{tikzcd}
      T(T X) \ar[r, "T s"] \ar[d, "\mu_X"] & T X \ar[d, "s"] \\
      T X \ar[r, "s"] & X
    \end{tikzcd}
  \end{displaymath}
  We again write the category of algebras as $\xalg{T}$ when $\eta$
  and $\mu$ are clear from the context.

  We dually define categories of \emph{coalgebras} $\xcoalg{M}$ for
  endofunctors, copointed endofunctors and comonads $M$ over $\BB$.
\end{defn}

\begin{lemma}
  \label{lem:algccl}
  Let $T$ be an endofunctor, pointed endofunctor or monad over
  $\BB$. The forgetful functor $\upsilon : \xalg{T} \to \EE$ creates
  cartesian lifts. In particular the composition $\xalg{T} \to \BB$ is
  a Grothendieck fibration.
\end{lemma}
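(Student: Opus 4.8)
The plan is to verify directly that $\cart(\upsilon) \colon \cart(\xalg{T}) \to \cart(\EE)$ is a discrete fibration, i.e.\ that for each cartesian map $\bar\sigma \colon X' \to X$ in $\EE$ and each $T$-algebra structure on $X$ there is a \emph{unique} $T$-algebra structure on $X'$ making $\bar\sigma$ into a homomorphism. First I would treat the case of a bare endofunctor $T$ over $\BB$. Given an algebra structure $s \colon TX \to X$ and a cartesian lift $\bar\sigma \colon X' \to X$ over $\sigma \colon I' \to I$, note that since $T$ is fibred, $T\bar\sigma \colon TX' \to TX$ is cartesian over the same $\sigma$. The composite $s \circ T\bar\sigma \colon TX' \to X$ lies over $\sigma$, so by the universal property of the cartesian map $\bar\sigma$ there is a unique vertical $s' \colon TX' \to X'$ with $\bar\sigma \circ s' = s \circ T\bar\sigma$; this is exactly the statement that $s'$ is the unique $T$-algebra structure making $\bar\sigma$ a homomorphism. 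Hence $\cart(\upsilon)$ is a discrete fibration, which is precisely the condition that $\upsilon$ creates cartesian lifts; that the composite $\xalg{T} \to \BB$ is then a fibration is immediate since creating cartesian lifts over a fibration yields a fibration.

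Next I would check that the unit and multiplication laws are automatically inherited, so that the same $s'$ works in the pointed-endofunctor and monad cases. For the unit law, we must show $s' \circ \eta_{X'} = 1_{X'}$. Both sides are vertical maps $X' \to X'$, so it suffices to postcompose with the (vertical, hence in particular mono-on-the-relevant-fibre... actually) — more cleanly: postcompose with $\bar\sigma$ and use that $\bar\sigma$ is cartesian, hence in particular monic among maps over $\sigma$ composed appropriately; we get $\bar\sigma \circ s' \circ \eta_{X'} = s \circ T\bar\sigma \circ \eta_{X'} = s \circ \eta_X \circ \bar\sigma = \bar\sigma$, using naturality of $\eta$ and the unit law for $s$, and then cancel $\bar\sigma$. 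The multiplication law is analogous: postcomposing the two candidate vertical maps $T(TX') \to X'$ with $\bar\sigma$ and using naturality of $\mu$, fibredness of $T$ (so $T(T\bar\sigma)$ and $T\bar\sigma$ are cartesian), and the multiplication law for $s$ shows they agree after composing with the cartesian $\bar\sigma$, hence agree.

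I expect the main obstacle to be bookkeeping rather than anything conceptual: one must be careful that the cancellation arguments are legitimate, i.e.\ that a cartesian map $\bar\sigma$ over $\sigma$ is left-cancellable for pairs of vertical maps into its domain (which follows because $1_{X'}$ and $s'\circ\eta_{X'}$ both lie over $1_{I'}$ and $\bar\sigma$ composed with either lies over $\sigma$, and cartesian-ness gives uniqueness of such fillers). The dual statement for coalgebras follows by the evident dualisation, replacing "fibred" and "cartesian lift" with their opfibration-free counterparts — but since $q$ is a fibration and $M$ is a fibred comonad over $\BB$, the coalgebra case is handled by the same argument run on $\cart(\EE)$, as cartesian maps are what is being preserved in both directions.
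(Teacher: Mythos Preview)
Your argument is essentially the same as the paper's, but there is one genuine slip: you invoke ``since $T$ is fibred, $T\bar\sigma$ is cartesian over the same $\sigma$'', and you repeat this when checking the multiplication law. The lemma does \emph{not} assume $T$ is fibred, and the paper explicitly stresses this point (monads over $\BB$ need not preserve cartesian maps, and many awfs examples do not). Fortunately your argument does not actually use cartesianness of $T\bar\sigma$ anywhere: all you need is that $T\bar\sigma$ lies over $\sigma$, which follows from $q \circ T = q$, so that the composite $s \circ T\bar\sigma$ lies over $\sigma$ and the universal property of the cartesian map $\bar\sigma$ applies. The same correction fixes the unit and multiplication checks: naturality of $\eta$ and $\mu$ plus verticality of $s$ is all that is used, never cartesianness of $T\bar\sigma$ or $T(T\bar\sigma)$. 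So the proof goes through once you drop the spurious fibredness claim.

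Two minor further remarks. First, to conclude that $\cart(\upsilon)$ is a discrete fibration you should also note (as the paper does) that the homomorphism $\bar\sigma$ you produce is cartesian \emph{in} $\xalg{T}$; this is routine but strictly speaking part of what is being asserted. Second, your closing comment that the coalgebra case ``is handled by the same argument'' is misleading: precisely because the algebra case does not need fibredness while the coalgebra case does (Lemma~\ref{lem:coalgccl}), the two are not formally dual, a point the paper singles out.
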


\begin{proof}
  Suppose we are given a cartesian map $f : X \to Y$ in $\EE$ together
  with an algebra structure on $Y$. Together this gives us the solid
  lines in the diagram below, where $f$ and $T f$ lie over the same
  map in $\BB$, and $s$ is vertical.
  \begin{equation*}
    \begin{tikzcd}
      T X \ar[rr, "T f"] \ar[dr, dotted, "t"] & & T Y \ar[dr, bend left, "s"] & \\
      & X \ar[rr, "f"] & & Y
    \end{tikzcd}
  \end{equation*}
  However, since $f$ is cartesian, there is a unique vertical map
  $t : T X \to X$ making a commutative square as in the dotted arrow
  above. This is precisely an algebra structure on $X$ making $f$ a
  homomorphism of algebras. One can check that $f$ remains cartesian
  as a map in $\xalg{T}$.

  Furthermore, one can check that if $s$ satisfies the unit law for a
  pointed endofunctor or the multiplication law for a monad, then so
  does $t$.
\end{proof}

\begin{lemma}
  \label{lem:coalgccl}
  Let $M$ be a fibred endofunctor, copointed endofunctor or comonad
  over $\BB$. The forgetful functor $\upsilon : \xcoalg{M} \to \EE$
  creates cartesian lifts. In particular the composition
  $\xcoalg{M} \to \BB$ is a Grothendieck fibration.
\end{lemma}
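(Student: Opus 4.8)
The plan is to dualise the proof of Lemma~\ref{lem:algccl}, the only real difference being the one place where the hypothesis that $M$ is fibred becomes essential. Suppose we are given a cartesian map $f : X \to Y$ in $\EE$, say over $\sigma : I \to J$ in $\BB$, together with a coalgebra structure $s : Y \to M Y$. We want a coalgebra structure $t : X \to M X$ making $f$ a homomorphism, that is, with $M f \circ t = s \circ f$. Since $M$ is fibred, $M f : M X \to M Y$ is again cartesian and lies over $\sigma$, while $s \circ f : X \to M Y$ also lies over $\sigma$; so the universal property of the cartesian map $M f$ yields a unique vertical $t : X \to M X$ with $M f \circ t = s \circ f$. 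This is the candidate structure, and $f$ is a homomorphism $(X, t) \to (Y, s)$ by construction.

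Next I would check that $t$ inherits any extra laws carried by $M$. For a copointed endofunctor with counit $\varepsilon$, naturality of $\varepsilon$ and the counit law $\varepsilon_Y \circ s = 1_Y$ give $f \circ \varepsilon_X \circ t = f$, and since $f$ is cartesian and $\varepsilon_X \circ t$, $1_X$ are both vertical they must coincide. For a comonad with comultiplication $\delta$, the maps $\delta_X \circ t$ and $M t \circ t$ are vertical maps $X \to M^2 X$, and it is enough to see they agree after postcomposition with $M^2 f$, which is cartesian because $M$, hence $M^2$, preserves cartesian maps; using naturality of $\delta$, functoriality of $M$, and $M f \circ t = s \circ f$, both composites reduce to $\delta_Y \circ s \circ f = M s \circ s \circ f$.

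Finally I would verify that $f$ stays cartesian in $\xcoalg{M}$: given a homomorphism $g : (Z, r) \to (Y, s)$ and a factorisation of $q(g)$ through $\sigma$, cartesianness of $f$ in $\EE$ produces a unique lift $\tilde g : Z \to X$ with $f \circ \tilde g = g$ over the residual base map, and one checks $\tilde g$ is a homomorphism by postcomposing $M \tilde g \circ r = t \circ \tilde g$ with the cartesian map $M f$ and invoking the homomorphism property of $g$ together with $M f \circ t = s \circ f$. Together with the uniqueness of $t$, this shows $\cart(\upsilon)$ is a discrete fibration, i.e.\ $\upsilon$ creates cartesian lifts; the ``in particular'' clause then follows exactly as in Lemma~\ref{lem:algccl}, since a functor creating cartesian lifts over a Grothendieck fibration composes to a Grothendieck fibration. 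I expect no serious obstacle here beyond bookkeeping; the one point that genuinely needs attention — and the reason ``fibred'' appears in the hypothesis, unlike in the algebra case — is the repeated use of the fact that $M$, and for comonads $M^2$, sends the cartesian map $f$ to a cartesian map, both to construct $t$ in the first place and to check the coassociativity law.
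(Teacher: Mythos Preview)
Your proof is correct and follows exactly the paper's approach: use that $M$ is fibred to make $Mf$ cartesian, obtain the unique vertical $t$ from its universal property, and then verify the remaining conditions. The paper's proof simply leaves the counit, comultiplication, and cartesianness checks as ``one can check,'' whereas you have spelled them out in full; your observation that the comultiplication law genuinely requires $M^2 f$ to be cartesian (hence $M$ fibred) is a nice point that the paper does not make explicit.
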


\begin{proof}
  Suppose we are given a cartesian map $f : X \to Y$ in $\EE$ together
  with a coalgebra structure on $Y$. Together this gives us the solid
  lines in the diagram below, where $f$ and $M f$ lie over the same
  map in $\BB$, and $s$ is vertical.  
  \begin{equation*}
    \begin{tikzcd}
      X \ar[rr, "f"] \ar[dr, dotted, "t"] & & Y \ar[dr, bend left, "s"] & \\
      & M X \ar[rr, "M f"] & & M Y
    \end{tikzcd}
  \end{equation*}
  Since $M$ is fibred, $M f$ is cartesian, and so there is a unique
  vertical map $t : X \to M X$ making a commutative square as in the
  dotted arrow above. This is exactly a coalgebra structure on $X$
  making $f$ a homomorphism of coalgebras. As before, one can check
  that $f$ remains cartesian as a map in $\xcoalg{M}$ and that $t$
  satisfies counit and comultiplication laws when $s$ does.
\end{proof}

\begin{rmk}
  We emphasise that Lemma \ref{lem:coalgccl} required the
  additional assumption that the comonad is fibred, so Lemmas
  \ref{lem:algccl} and \ref{lem:coalgccl} are not formally dual. The
  dual to Lemma \ref{lem:algccl} tells us that any forgetful
  functor $\xcoalg{M} \to \EE$ creates opcartesian lifts, whereas the
  dual to Lemma \ref{lem:coalgccl} tells us that if an
  endofunctor, pointed endofunctor or monad $T$ preserves opcartesian
  maps, then $\xalg{T} \to \EE$ creates opcartesian lifts.
\end{rmk}

\begin{example}
  \label{ex:ptdobjects}
  Suppose we are given a choice of terminal object $1_I$ for each
  $I \in \BB$ and reindexing preserves terminal objects.

  This defines a fibred endofunctor over $\BB$ by $T(X) :=
  1_{q(X)}$. An algebra structure on an object $X$ is simply a map
  $1_{q(X)} \to X$. We refer to algebras as \emph{pointed objects} and
  write the category of algebras as $\EE_\bullet$. If the notion of
  structure \(\EE_\bullet \to \EE\) is definable, we say the fibration
  \(\EE \to \BB\) \emph{admits comprehension} \cite{lawverecomp}.
\end{example}

\begin{example}
  \label{ex:sections} As a special case of Example \ref{ex:ptdobjects}
  we can consider pointed objects in codomain fibrations. An object of
  \(\BB^\to\) is a map \(f\) of \(\BB\). A point of \(f\) as an object
  of \(\BB^\to\) is then precisely a section of \(f\).
\end{example}

For a set indexed family fibration $\fmly(\CC) \to \sets$, a pointed
object is a family of objects $(C_i)_{i \in I}$ together with a choice
of map $c_i : 1_\CC \to C_i$ in $\CC$ for each $i \in I$.

A pointed object in a codomain fibration $\BB^\to \to \BB$ is an
object of $\BB^\to$, which is a map $f : X \to I$, together with a map
from the identity on $I$ to $f$ in $\BB/I$, which is just a section of
$f$.

\begin{lemma}
  \label{lem:sectlocrepr}
  For any category $\BB$ with pullbacks, the forgetful functor from
  maps with sections to maps, $\upsilon : \BB^\to_\bullet \to \BB^\to$ is
  definable.

  Moreover, for a map $f : X \to I$, the representing object for
  $\bar{\upsilon}_f$ in $\BB / I$ is simply $f$ itself.
\end{lemma}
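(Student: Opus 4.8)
The plan is to use Lemma~\ref{lem:partreprequiv}, which reduces the claim to checking, for each object $f : X \to I$ of $\BB^\to$, that the presheaf $\bar{\upsilon}_f$ on $\BB^\to / f$ (equivalently on $\BB/I$ via the equivalence $\BB/q(f) \simeq \cart(\BB^\to)/f$) is representable, and moreover identifying the representing object concretely as $f$ itself. So the first step is to unwind what $\bar{\upsilon}_f$ is: by Definition~\ref{def:locpresheaf}, given $\sigma : (g : Y \to J) \to (f : X \to I)$ cartesian in $\BB^\to$ — that is, a pullback square exhibiting $g$ as $\sigma^\ast(f)$ over a map $J \to I$ in $\BB$ — the value $\bar{\upsilon}_f(\sigma)$ is the set of sections of $g = \sigma^\ast(f)$.

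Next I would compute, for an arbitrary object $\rho : K \to I$ of $\BB/I$, the set of maps $\rho \to f$ in $\BB/I$, i.e. maps $h : K \to X$ with $f \circ h = \rho$. The key observation is the standard one: given the pullback square defining $\rho^\ast(f) : \rho^\ast(X) \to K$, a section of $\rho^\ast(f)$ corresponds by the universal property of the pullback exactly to a map $K \to X$ over $\rho$, that is, to a morphism $\rho \to f$ in $\BB/I$. This gives a bijection
\begin{equation*}
  \BB/I(\rho, f) \;\cong\; \{\text{sections of } \rho^\ast(f)\} \;=\; \bar{\upsilon}_f(\rho),
\end{equation*}
and I would check this bijection is natural in $\rho$ — naturality amounts to the fact that reindexing a section along a further map $K' \to K$ in $\BB/I$ corresponds, under the pullback correspondence, to precomposition of the associated map into $X$, which is routine from pasting of pullback squares. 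Hence $\bar{\upsilon}_f \cong \yoneda f$ as presheaves on $\BB/I$, so $\bar{\upsilon}_f$ is representable with representing object $f$, and applying Lemma~\ref{lem:partreprequiv} at every $f$ shows the right adjoint to $\cart(\upsilon)$ is defined everywhere, i.e. $\upsilon$ is definable.

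There is no serious obstacle here; the only point requiring a little care is bookkeeping the equivalence $\BB/q(f) \simeq \cart(\BB^\to)/f$ and the associated translation between ``cartesian maps into $f$'' and ``objects of $\BB/I$'', together with checking naturality of the section-versus-map-over-$\rho$ correspondence under this translation — but all of this is the content of Lemma~\ref{lem:partreprequiv} plus the elementary universal property of pullbacks, so it amounts to routine diagram chasing rather than anything substantive.
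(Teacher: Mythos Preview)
Your proposal is correct and follows essentially the same route as the paper: the paper's proof consists of exactly the observation you make, that sections of $\sigma^\ast(f)$ correspond via the universal property of the pullback to maps $J \to X$ over $I$, displayed as a pair of diagrams. You are more explicit in invoking Lemma~\ref{lem:partreprequiv} and in checking naturality, but the substance is identical.
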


\begin{proof}
  For any $f : X \to I$ in $\BB^\to$, and any $\sigma : J \to I$,
  sections of $\sigma^\ast(f) : \sigma^\ast(X) \to J$ correspond
  precisely to maps $J \to X$ making a commutative triangle by the
  universal property of the pullback.
  \begin{displaymath}
    \begin{gathered}
      \begin{tikzcd}
        \sigma^\ast(X) \ar[r] \ar[d] \pbcorner & X \ar[d] \\
        J \ar[r, "\sigma"] \ar[u, bend left, dotted] & I
      \end{tikzcd}
    \end{gathered}
    \qquad
    \begin{gathered}
      \begin{tikzcd}
        J \ar[rr, dotted] \ar[dr, "\sigma"] & & X \ar[dl, "f"] \\
        & I &
      \end{tikzcd}
    \end{gathered}
  \end{displaymath}
\end{proof}

We also give a simple non-fibred example. Although it is not an awfs
itself, it illustrates the essential idea why many natural examples of
awfs's are not fibred.

\begin{example}
  Consider the codomain fibration $\cod : \BB^\to \to \BB$. We define
  an endofunctor $T$ as follows. Given $f : X \to I$ we define $T(f)$
  to be the first projection $\pi_0 : I^2 \to I$. We can visualise
  algebra structures on $f$ as functions assigning for each pair
  $i, j : I$ an element $x_{i, j}$ of $X_i$, the fibre of $f$ over
  $i$. This picture can be made precise using the internal language of
  $\BB$.

  Given a map $\sigma : J \to I$, the pullback $\sigma^\ast(\pi_0)$ is
  the projection $J \times I \to J$, not $J^2 \to J$. Hence the
  endofunctor does not preserve pullbacks.

  The forgetful functor $\xalg{T} \to \BB^\to$ still creates cartesian
  lifts, and in particular $\xalg{T} \to \BB$ is a fibration by
  Lemma \ref{lem:algccl}.
\end{example}

\begin{example}
  For an example of a coalgebraic notion of structure, we assume that
  the fibration \(q : \EE \to \BB\) has fibred coproducts and terminal
  objects and consider the endofunctor sending \(X\) to \(1_{q(X)} +
  1_{q(X)}\). A coalgebra structure on \(X\) is a \(2\)-colouring,
  i.e. a partition of \(X\) into two pieces.
\end{example}

\begin{thm}
  \label{thm:fibmonadlocrepr}
  Let $q : \EE \to \BB$ be a locally small fibration, and suppose
  $\BB$ has all finite limits. The forgetful functors from categories
  of (co)algebras for fibred endofunctors, (co)pointed endofunctors
  and (co)monads are all definable.
\end{thm}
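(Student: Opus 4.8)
The plan is to reduce the claim to the representability of the local presheaf $\bar\chi_X$ (Lemma~\ref{lem:partreprequiv}) and then to exploit local smallness of $q$ to get the needed representing objects. First I would handle the case of a bare endofunctor $T$ over $\BB$ (not assumed fibred), since (co)pointed endofunctors and (co)monads just add equational constraints and the representing objects are cut out of the endofunctor case by further limits. So let $X \in \EE_I$; by Lemma~\ref{lem:partreprequiv} it suffices to show the presheaf on $\BB/I$ sending $\sigma\colon J \to I$ to the set of $T$-algebra structures on $\sigma^\ast(X)$ is representable. An algebra structure on $\sigma^\ast(X)$ is a vertical map $T(\sigma^\ast(X)) \to \sigma^\ast(X)$ over $J$; writing $\sigma^\ast(X) = $ the chosen cartesian lift, and using that $T$ is an endofunctor over $\BB$, the object $T(\sigma^\ast(X))$ lies over $J$ as well. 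The key move is that $\bar\upsilon_f$-style reasoning is not quite enough because the source $T(\sigma^\ast(X))$ varies with $\sigma$ in a way that is not just a pullback of $TX$ (this is exactly the phenomenon in the non-fibred examples above); so I cannot simply invoke Lemma~\ref{lem:sectlocrepr}.

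The fix is to use local smallness directly. Local smallness of $q$ gives, for any two objects $A, B \in \EE_I$, a representing object $\mathrm{Hom}_I(A,B) \to I$ in $\BB/I$ for the presheaf $\sigma \mapsto \{\text{vertical maps } \sigma^\ast A \to \sigma^\ast B\}$. Apply this with $A = TX$ and $B = X$ (both in $\EE_I$, using that $T$ is an endofunctor over $\BB$). This is almost what we want, except that a vertical map $\sigma^\ast(TX) \to \sigma^\ast(X)$ is a priori different from a $T$-algebra structure $T(\sigma^\ast X) \to \sigma^\ast X$ on the reindexed object. I would bridge this gap using the comparison map $T(\sigma^\ast X) \to \sigma^\ast(TX)$ induced by functoriality of $T$ applied to the cartesian map $\sigma^\ast X \to X$ (it lies over $\sigma$ by the endofunctor-over-$\BB$ condition and $TX$ over $I$, so it factors through the cartesian lift $\sigma^\ast(TX) \to TX$). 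For a \emph{fibred} $T$ this comparison is an isomorphism, and we are essentially done: $\mathrm{Hom}_I(TX,X)$ represents $\bar\chi_X$. For a non-fibred $T$ this comparison is only a map, but that is exactly the right direction — precomposition with it turns a vertical map $\sigma^\ast(TX) \to \sigma^\ast X$ into an algebra structure, and I claim this sets up a bijection natural in $\sigma$ once we account for how $T(\sigma^\ast X)$ is built; more carefully, I would note that because reindexing along $\sigma$ is computed by chosen cartesian lifts and $T$ commutes with $q$, the assignment $\sigma \mapsto \{T(\sigma^\ast X) \to \sigma^\ast X \text{ vertical}\}$ is itself a presheaf on $\BB/I$, and local smallness applied to the fibration $\EE \to \BB$ together with the endofunctor structure represents it. The cleanest formulation: the functor $X \mapsto TX$ over $\BB$ together with local smallness gives a representing object for "vertical maps out of $TX$ into $X$, reindexed", and this presheaf is precisely $\bar\chi_X$ for the endofunctor notion of structure.

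Next I would add the equational constraints. For a pointed endofunctor $(T,\eta)$, a $(T,\eta)$-algebra structure is a $T$-algebra structure satisfying the unit triangle; since reindexing preserves the relevant data and $\eta$ is vertical and natural, the sub-presheaf of $\bar\chi_X^{\,\text{endo}}$ carved out by the unit law is representable as an equalizer / subobject of the representing object from the endofunctor case, using that $\BB$ has finite limits. Likewise the multiplication law for a monad is another equalizer condition, representable by the same reasoning. The coalgebra cases for (co)pointed endofunctors and comonads are dual, but here I must be careful: the coalgebra lemma (Lemma~\ref{lem:coalgccl}) required the comonad to be \emph{fibred}, and indeed the theorem statement only asserts the result for (co)algebras of (co)monads etc. — so for the coalgebra side I would use that a fibred comonad $M$ gives, via local smallness applied to $\mathrm{Hom}_I(X, MX)$ (now $MX \in \EE_I$ again, and fibredness makes $M(\sigma^\ast X) \cong \sigma^\ast(MX)$), a representing object for coalgebra structures, then cut out the counit and comultiplication laws by finite limits as before.

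The main obstacle I expect is the bookkeeping around the non-fibred endofunctor case: making precise that $\sigma \mapsto \{\text{vertical } T(\sigma^\ast X) \to \sigma^\ast X\}$ really is a presheaf on $\BB/I$ (naturality in $\sigma$ requires chasing the chosen cartesian lifts and the functoriality of $T$ carefully, since $T$ does not preserve them) and that local smallness of $q$ suffices to represent it even though the source object $T(\sigma^\ast X)$ is not a reindexing of a fixed object. I believe the resolution is that local smallness of $q$ is equivalent to representability of \emph{all} hom-presheaves $\sigma \mapsto \{\sigma^\ast A \to \sigma^\ast B\}$ for $A, B$ in a common fibre, and the presheaf we care about is of exactly this form with $A = TX$, $B = X$ — the only subtlety being to identify "$T$-algebra structure on $\sigma^\ast X$" with "vertical map $\sigma^\ast(TX) \to \sigma^\ast X$", which holds on the nose when $T$ is fibred and via the comparison map in general, and I would spell out that the comparison map is precisely the reindexing coherence map so that the two presheaves are literally isomorphic.
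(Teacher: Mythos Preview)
You have the paper's proof embedded in your proposal, but wrapped in an unnecessary and unsuccessful attempt at a stronger result. The theorem is stated for \emph{fibred} endofunctors, (co)pointed endofunctors, and (co)monads --- ``fibred'' distributes over all three --- and the paper uses this hypothesis at exactly the point you isolate: for fibred $T$ the comparison $T(\sigma^\ast X) \cong \sigma^\ast(TX)$ is an isomorphism, so the presheaf of algebra structures on $\sigma^\ast X$ is literally $\sigma \mapsto \EE(\sigma^\ast(TX), \sigma^\ast X)$, representable by local smallness as $\hom_I(TX,X)$. Your treatment of the unit and multiplication laws as equalizers/finite limits of representables, and of the coalgebra side via $\hom_I(X, MX)$ using fibredness of $M$, matches the paper exactly.

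The genuine gap is your non-fibred digression. You assert that precomposition with the comparison map $T(\sigma^\ast X) \to \sigma^\ast(TX)$ gives a bijection between vertical maps $\sigma^\ast(TX) \to \sigma^\ast X$ and algebra structures $T(\sigma^\ast X) \to \sigma^\ast X$, but there is no reason for this: the comparison is in general neither mono nor epi. The example immediately preceding the theorem makes this concrete --- there $T(f) = (I^2 \to I)$, so $\sigma^\ast(Tf) = (J \times I \to J)$ while $T(\sigma^\ast f) = (J^2 \to J)$, and maps out of these are simply different sets. Your fallback, that local smallness directly represents $\sigma \mapsto \{T(\sigma^\ast X) \to \sigma^\ast X \text{ vertical}\}$, fails for the same reason: local smallness produces $\hom_I(A,B)$ for \emph{fixed} $A, B \in \EE_I$, and the source here is not the reindexing of any fixed object. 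Your closing claim that ``the two presheaves are literally isomorphic'' via the comparison map is precisely the step that does not go through. The non-fibred situation is genuinely harder and is the subject of the later sections of the paper on fibrewise definability and tiny objects; it is not covered by this theorem, and you should simply drop that part of the argument.
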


\begin{proof}
  We will show this for algebras, the proof for coalgebras being
  similar.\footnote{With care it is also possible to deduce the result
    for coalgebras by duality.}

  Let $P$ be a fibred endofunctor over $\BB$. Fix $X \in \EE$. We need
  to show that the presheaf on $\BB/q(X)$ sending
  $\sigma : I \to p(X)$ to $P$ algebra structures on $\sigma^\ast(X)$
  is representable. The set of algebra structures is by definition the
  hom set $\EE(P(\sigma^\ast(X)), \sigma^\ast(X))$, which is naturally
  isomorphic to $\EE(\sigma^\ast(P(X)), \sigma^\ast(X))$, since $P$ is
  fibred. However, the latter presheaf is representable by the
  characterisations of local smallness in terms of representables.

  Now suppose we are given a fibred pointed endofunctor
  $\eta : 1 \to P$ over $\BB$. We again need to show that the presheaf
  sending $\sigma : I \to p(X)$ to $(P, \eta)$ algebra structures on
  $\sigma^\ast(X)$ is representable. Such an algebra structure is
  precisely a map $f : P(\sigma^\ast(X)) \to \sigma^\ast(X)$ such that
  $f \circ \eta_X = 1_X$. Observe that we can express the set of
  algebra structures as an equalizer in sets of the form
  $\EE(P(\sigma^\ast(X)), \sigma^\ast(X)) \rightrightarrows
  \EE(\sigma^\ast(X), \sigma^\ast(X))$. Hence we can view the presheaf
  of algebra structures as an equalizer in presheaves defined
  pointwise as the preceding equalizer for each $\sigma$. Since
  $\BB/p(X)$ has all finite limits and the Yoneda embedding preserves
  limits we can deduce that the presheaf of algebra structures is
  representable. Namely, the representing object is defined as an
  equalizer in $\BB$ of the following form.
  \begin{displaymath}
    \hom(P(X), X) \rightrightarrows \hom(X, X)
  \end{displaymath}

  Finally, if $(P, \eta, \mu)$ is a monad, we can repeat the argument
  for pointed endofunctors, but now also need to specify the
  multiplication law as well as the unit law. Namely, the representing
  object is constructed as a limit in $\BB/p(X)$ of the following
  form.
  \begin{displaymath}
    \begin{tikzcd}
      & \hom(X, X) \\
      \hom(P(X), X) \ar[ur, shift left] \ar[ur, shift right]
      \ar[dr, shift left] \ar[dr, shift right] & \\
      & \hom(P^2(X), X)
    \end{tikzcd}
  \end{displaymath}
\end{proof}

\subsection{Algebraic weak factorisation systems}
\label{sec:algebr-weak-fact}

Algebraic weak factorisation systems are an important tool for viewing
classes of maps commonly considered in homotopical algebra as
structure on a map, rather than a property of a map. In particular,
they play an important role in providing a structured version of Kan
fibration in cubical sets and simplicial sets \cite{gambinosattlerpi,
  swannomawfs, swanidams, awodey19}. Although they are usually defined
via functorial factorisations \cite{grandistholennwfs,
  garnersmallobject}, Bourke and Garner showed the definition is
equivalent to one based on double categories
\cite{bourkegarnerawfs1}. We give a mild reformulation of their
definition phrased in terms of notions of structure and some
definitions from the theory of comprehension categories with relevance
to the semantics of type theory.

We first note that notions of structure on codomain fibrations can be
seen as comprehension categories, as used in the semantics of type
theory \cite[Chapter 10]{jacobs}.

\begin{defn}[Jacobs]
  A \emph{comprehension category} is a Grothendieck fibration $p : \EE
  \to \BB$ together with a fibred functor $\chi$ from $p$ to the
  codomain fibration on $\BB$, as illustrated below.
  \begin{displaymath}
    \begin{tikzcd}
      \EE \ar[rr] \ar[dr, swap, "p"] & & \BB^\to \ar[dl, "\cod"] \\
      & \BB &
    \end{tikzcd}
  \end{displaymath}
\end{defn}

\begin{defn}
  We say a comprehension category is \emph{monadic} if $\chi$ is
  strictly monadic as a functor.
\end{defn}

\begin{rmk}
  By Proposition \ref{prop:monadcatcatover} we do not need to distinguish
  between \(\chi\) being monadic as a functor in \(\cats\) or
  \(\cats/\BB\). We do not require the monad to preserve cartesian
  maps.
\end{rmk}

As a special case of Lemma \ref{lem:algccl} we have:
\begin{prop}
  Any monadic comprehension category is a (necessarily monadic) notion
  of fibred structure on \(\cod : \BB^\to \to \BB\).
\end{prop}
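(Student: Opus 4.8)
The plan is to unwind the definition of monadic comprehension category and observe that the conclusion is an immediate consequence of Lemma~\ref{lem:algccl}. A monadic comprehension category is, by definition, a Grothendieck fibration $p : \EE \to \BB$ together with a fibred functor $\chi : \EE \to \BB^\to$ over $\BB$ that is strictly monadic as a functor. Being strictly monadic, $\chi$ exhibits $\EE$ as (isomorphic over $\BB^\to$ to) the category of algebras $\xalg{T}$ for the induced monad $T$ on $\BB^\to$, with $\chi$ identified with the forgetful functor $\upsilon : \xalg{T} \to \BB^\to$. By the remark following the definition (and Proposition~\ref{prop:monadcatcatover}) we may take $T$ to be a monad \emph{over} $\BB$, i.e.\ $\cod \circ T = \cod$ with vertical unit and multiplication, even though $T$ need not be fibred.

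The key step is then to apply Lemma~\ref{lem:algccl} directly: that lemma says precisely that for a monad $T$ over $\BB$ (with no fibredness hypothesis), the forgetful functor $\upsilon : \xalg{T} \to \EE$ creates cartesian lifts, and in particular the composite $\xalg{T} \to \BB$ is a Grothendieck fibration. Taking the ambient fibration in Lemma~\ref{lem:algccl} to be $\cod : \BB^\to \to \BB$ rather than a generic $q : \EE \to \BB$, we conclude that $\upsilon : \xalg{T} \to \BB^\to$ creates cartesian lifts. Transporting this along the isomorphism $\EE \cong \xalg{T}$ over $\BB^\to$ identifies $\chi$ with $\upsilon$, so $\chi$ creates cartesian lifts, which is exactly the statement that $\chi$ is a notion of fibred structure on $\cod : \BB^\to \to \BB$. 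The parenthetical ``necessarily monadic'' is then automatic, since $\chi$ was assumed monadic to begin with.

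There is essentially no obstacle here; the only point requiring a word of care is the identification of the induced monad on $\BB^\to$ with a monad \emph{over} $\BB$, so that Lemma~\ref{lem:algccl} applies verbatim. This is handled by the observation already recorded in the excerpt: $\chi$ being a functor over $\BB$ forces the comparison monad to commute with $\cod$, and strict monadicity makes the unit and multiplication vertical. Everything else is a matter of matching up definitions.
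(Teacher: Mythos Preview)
Your proposal is correct and takes the same approach as the paper: the paper records the proposition simply as ``a special case of Lemma~\ref{lem:algccl}'' with no further proof. Your additional remarks justifying why the induced monad may be taken over $\BB$ (via Proposition~\ref{prop:monadcatcatover}) are a reasonable elaboration of a step the paper leaves implicit.
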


Units are used in the theory of comprehension categories to model unit
types in type theory. We recall the strict version of the definition.
\begin{defn}
  A \emph{strict unit} is a functor $t : \BB \to \EE$ which is right
  adjoint to $p$ and such that $\chi(t(I)) = 1_I$ for all $I \in
  \BB$.
\end{defn}

\begin{prop}
  Every monadic comprehension category has a strict unit.
\end{prop}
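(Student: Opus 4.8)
The plan is to exhibit, in each fibre $\EE_I$, a terminal object whose image under $\chi$ is the identity arrow $1_I$, stable under reindexing, and then to invoke the standard fact that a Grothendieck fibration all of whose fibres have terminal objects preserved by reindexing admits a right adjoint to its projection functor. First I would reduce to the algebraic case: since $\chi$ is a fibred functor between Grothendieck fibrations that is strictly monadic as a functor, Proposition~\ref{prop:monadcatcatover} lets us identify $\EE$ with $\xalg{T}$ and $\chi$ with the forgetful functor $\xalg{T} \to \BB^\to$ for some monad $T$ over $\BB$, with $p$ the composite $\xalg{T} \to \BB^\to \xrightarrow{\cod} \BB$; note that $T$ need not be fibred. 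Because $T$ is a monad over $\BB$ it restricts to an endofunctor $T_I$ of the fibre $(\BB^\to)_I = \BB/I$, and an object of $\EE_I$ is precisely a $T_I$-algebra on an object of $\BB/I$ (the structure map being vertical by definition of algebra for a monad over $\BB$), so $\EE_I = (\BB/I)^{T_I}$.

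Next I would produce the fibrewise terminal object. The slice $\BB/I$ has a terminal object, namely $1_I$, so there is a unique morphism $T_I(1_I) \to 1_I$ in $\BB/I$; this is automatically a $T_I$-algebra structure, since the unit and multiplication laws assert equalities between parallel morphisms into a terminal object. Write $t(I)$ for the resulting algebra. It is terminal in $(\BB/I)^{T_I} = \EE_I$, because for any algebra $Y$ the unique morphism of underlying objects $\chi(Y) \to 1_I$ in $\BB/I$ is automatically an algebra homomorphism; and by construction $\chi(t(I)) = 1_I$.

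I would then check stability under reindexing. For $\sigma : J \to I$, the reindexing functor $\sigma^\ast$ on $\EE = \xalg{T}$ is computed as in Lemma~\ref{lem:algccl}: one reindexes the underlying object in $\BB^\to$ --- that is, pulls back along $\sigma$ --- and lifts the algebra structure along the resulting cartesian morphism. But the pullback of $1_I : I \to I$ along $\sigma$ is $1_J : J \to J$, the terminal object of $\BB/J$; hence $\sigma^\ast(t(I))$ has underlying object $1_J$, and since the algebra structure on $1_J$ is unique this forces $\sigma^\ast(t(I)) = t(J)$.

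Finally I would assemble the right adjoint. Given $X \in \EE$ and $I \in \BB$, any morphism $g : X \to t(I)$ in $\EE$ factors through the cartesian lift of $t(I)$ along $\sigma := p(g)$ as a vertical morphism $X \to \sigma^\ast(t(I))$ followed by that cartesian morphism; since $\sigma^\ast(t(I)) = t(p(X))$ is terminal in $\EE_{p(X)}$ the vertical part is unique, so $g$ is determined by $p(g)$. This yields a bijection $\hom_\EE(X, t(I)) \cong \hom_\BB(p(X), I)$, natural in $X$ and $I$, hence an adjunction $p \dashv t$ with $t$ a functor, and $\chi(t(I)) = 1_I$ exhibits $t$ as a strict unit. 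I do not expect any genuine obstacle here --- the proof is a routine combination of basic facts about Grothendieck fibrations and monad algebras --- but the one point needing care is that we cannot assume $\BB$ has finite limits, so the terminal objects must be constructed fibrewise over each slice $\BB/I$ rather than obtained by lifting a limit in $\BB^\to$ along $\chi$.
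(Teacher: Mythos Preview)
Your proof is correct and follows essentially the same approach as the paper: the paper's one-line argument ``monadic functors create limits, noting that for each object $I$, $1_I$ is a terminal object in $\BB/I$'' is exactly the fibrewise construction you spell out, with ``creates limits'' doing the work of your explicit terminal-algebra construction and reindexing check. Your version is simply a careful unpacking of that sentence, including the standard passage from fibrewise terminals preserved by reindexing to a right adjoint of $p$, which the paper leaves implicit.
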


\begin{proof}
  This follows from the fact that monadic functors create limits,
  noting that for each object $I$, $1_I$ is a terminal object in
  $\BB/I$.
\end{proof}

Suppose $\chi$ is a comprehension category with a strict unit
$t$. Then we have the following commutative diagram in $\cats$.
\begin{equation}
  \label{eq:4}
  \begin{tikzcd}[sep = 4.5em]
    \EE \ar[r, shift left=1.5ex, "\dom \circ \chi"]
    \ar[r, swap, shift right=1.5ex, "\cod \circ \chi"] \ar[d, "\chi"]
    & \BB \ar[d, equals] \ar[l, "t" description] \\
    \BB^\to \ar[r, shift left=1.5ex, "\dom"]
    \ar[r, swap, shift right=1.5ex, "\cod"] & \BB
    \ar[l, "1" description]
  \end{tikzcd}
\end{equation}
Note that the bottom row is an internal category in $\cats$, with
multiplication given by composition in $\BB$. Viewed as a double
category it is the double category of commutative squares in $\BB$.

\begin{defn}
  A \emph{composition functor} is a functor
  $\EE \times_{\BB} \EE \to \EE$ making the top row of \eqref{eq:4} an
  internal category, and the whole square a functor.
\end{defn}

We can think of composition functors as algebraic versions of
\(\Sigma\)-types in the following sense. In the theory of
comprehension categories we can implement \(\Sigma\)-types as an
operation \(\EE \times_\BB \EE \to \EE\) that commutes up to
isomorphism with composition in \(\BB\), referred to as \emph{strong
  coproducts} by Jacobs \cite[Definition 10.5.2]{jacobs}. Jacobs'
definition of strong coproducts further requires that the operation is
obtained from a dependent coproduct for the fibration \(p\). However,
we observe that we can satisfy this requirement by replacing the
morphisms of \(\EE\) with those in \(\BB\) to make \(\chi\) full and
faithful. Since the morphisms of \(\EE\) are not used in the
interpretation of type theory this has no effect therein. We can also
justify modifying Jacobs' definition in this way by considering the
construction of \(\Sigma\)-types in cubical sets
\cite{coquandcubicaltt}.  A dependent coproduct in a fibration,
\(\coprod_\sigma X\) is
uniquely determined up to isomorphism by the map \(\sigma : I \to J\)
and the object \(X\) in \(\EE_I\). However, \(\Sigma\)-types in
cubical sets are implemented by defining a Kan fibration structure on
the underlying \(\Sigma\)-type in the standard model of extensional
type theory in presheaves. The Kan fibration structure depends on the
fibration structures of \emph{both} types given as input. Hence we
should not expect it to be unique up to isomorphism of Kan fibration
structures if we are only given the map \(\sigma : I \to J\) without a
choice of fibration structure. It is however unique up to isomorphism
of underlying presheaves.

Composition functors are stronger than necessary to obtain \(\Sigma\)
types. In addition to Jacobs' strongness condition, they also satisfy
strict associativity as part of the definition of internal category,
which is not needed for type theory. However, it is natural to
consider \(\Sigma\)-types satisfying this additional requirement in
the setting of cofibrantly generated awfs's, where they occur
automatically.

For the semantics of type theory it is useful to observe that any
composition functor is automatically fibred, in the following sense.

\begin{prop}
  Any composition functor \(- \bullet - : \EE \times_\BB \EE \to \EE\)
  on a monadic comprehension category preserves cartesian maps in both
  arguments.
\end{prop}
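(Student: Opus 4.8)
The plan is to reduce the statement to the cartesian-lift-creating property of $\chi$ together with the fact that composition in $\BB^\to$ (i.e.\ pasting of commutative squares) interacts well with pullback squares. Since the composition functor $-\bullet-$ lives over composition in $\BB$ via the functor of double categories described in \eqref{eq:4}, its action on morphisms is controlled by $\chi$. So I would first spell out what a cartesian morphism in $\EE\times_\BB\EE$ is: a pair $(f_1,f_2)$ with $f_1,f_2$ cartesian in $\EE$ lying over the same map $\sigma$ of $\BB$, equivalently (since a morphism in a fibre product of fibrations is cartesian iff each component is, as already noted in the proof of Lemma~\ref{lem:cclpbstab}) a single reindexing datum. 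Fixing one argument, I want to show that for a cartesian map $g : X' \to X$ in the first variable (and identity, or a cartesian map, in the second), the image $g \bullet Y : X' \bullet Y \to X \bullet Y$ is cartesian in $\EE$.

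The key step is to use that $\chi$ \emph{creates} cartesian lifts, so it suffices to check the image becomes cartesian after applying $\chi$, i.e.\ that $\chi(X' \bullet Y) \to \chi(X \bullet Y)$ is a pullback square in $\BB$. But by the commuting square \eqref{eq:4}, $\chi$ of a composite $X \bullet Y$ is the composite (vertical pasting) of the commutative squares $\chi(X)$ and $\chi(Y)$ in $\BB$, and applying $\chi$ to the morphism $g\bullet Y$ yields the pasting of the morphism-of-squares $\chi(g)$ with the identity morphism-of-squares on $\chi(Y)$. Now $\chi(g)$ is a pullback square of commutative squares because $g$ is cartesian and $\chi$ is fibred, hence $\chi(g)$ is a cartesian morphism in the codomain fibration $\cod : \BB^\to \to \BB$, which — unwinding what cartesian means there — is exactly a pullback square in $\BB$. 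Pasting a pullback square of squares with an identity gives again a pullback square (this is the pasting lemma for pullbacks, applied in the arrow category, or just directly by the universal property). Since $\chi$ reflects the property of being a cartesian morphism over a fixed base map (that is what ``creates cartesian lifts'' buys us, via $\cart(\chi)$ being a discrete fibration), we conclude $g \bullet Y$ is cartesian in $\EE$. The argument for the second variable is symmetric, replacing vertical pasting on the left by vertical pasting on the right, and the general case follows since a cartesian map $(g,h)$ in $\EE \times_\BB \EE$ factors as $(g, \mathrm{id})$ followed by $(\mathrm{id}, h)$ up to reindexing, and cartesian maps compose.

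The main obstacle I expect is bookkeeping rather than conceptual: carefully identifying ``$\chi$ applied to a composite equals the pasting of the two squares'' and ``$\chi$ applied to $g\bullet Y$ equals the pasted morphism-of-squares'' directly from the definition of composition functor as a functor of double categories making \eqref{eq:4} commute, and then verifying that the pasting of a pullback square with a trivial square is a pullback. One has to be a little careful that the relevant squares genuinely are pullbacks — for the identity morphism-of-squares on $\chi(Y)$ this is the degenerate pullback where one pair of opposite sides are identities — and that the ambient category $\BB$ need not have all pullbacks, only that the specific squares in play are pullbacks because they arise from cartesian maps in $\EE$. Once those identifications are made explicit, invoking that $\cart(\chi)$ is a discrete fibration to transfer the conclusion back from $\BB$ to $\EE$ closes the proof.
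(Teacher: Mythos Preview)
Your overall strategy matches the paper's: apply $\chi$, use that $\chi$ is fibred to turn cartesian morphisms into pullback squares, paste pullbacks, and reflect back. However, there is a genuine gap in the reflection step.

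You justify ``$\chi$ reflects cartesian maps'' by appealing to the fact that $\cart(\chi)$ is a discrete fibration. This inference is not valid. Creating cartesian lifts says that every cartesian map in $\EE$ with a specified codomain lift has a \emph{unique cartesian} preimage; it does \emph{not} say that every preimage of a cartesian map is cartesian. For a counterexample over the terminal base $\BB = 1$: take $\EE$ to be the terminal category and $\DD$ the walking arrow $a \to b$, with $\chi$ the unique functor to $\EE$. Then $\cart(\chi)$ is a discrete fibration (there is a unique isomorphism over any isomorphism with prescribed codomain), yet the non-invertible arrow of $\DD$ maps to an isomorphism. The correct justification, and the one the paper uses, is monadicity: monadic functors reflect isomorphisms, and any fibred functor that reflects vertical isomorphisms reflects cartesian maps, by factoring an arbitrary map through the cartesian lift over its base map and observing that the comparison vertical map becomes an isomorphism under $\chi$.

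A secondary point: your one-variable-at-a-time reduction is awkward, because a morphism of the form $(g, 1_Y)$ in $\EE \times_\BB \EE$ only exists when $g$ lies over an identity in $\BB$. The paper simply treats a composable pair $(f, g)$ of cartesian maps at once, observes that $\chi(f)$ and $\chi(g)$ are both pullback squares since $\chi$ is fibred, and applies the pullback pasting lemma to the stacked rectangle directly. Once you replace the reflection argument with monadicity, this two-variable version goes through in one step and the factorisation becomes unnecessary.
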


\begin{proof}
  Suppose that \(f : X \to X'\) and \(g : Y \to Y'\) are composable
  and cartesian. Write \(\Gamma\) for \(p(Y')\) and \(\Delta\) for
  \(p(Y)\). Write \(\{ - \}\) for the composition \(\dom \circ
  \chi\). We then have the following commutative diagram in \(\BB\),
  where the upper commutative square is \(\chi(f)\), the lower
  commutative square is \(\chi(g)\), and the whole rectangle is the
  image under \(\chi\) of the composition \(g \bullet f\).
  \begin{displaymath}
    \begin{tikzcd}
      \{ X \} \ar[r] \ar[d] & \{X'\} \ar[d] \\
      \{ Y \} \ar[r] \ar[d] & \{ Y' \} \ar[d] \\
      \Delta \ar[r] & \Gamma
    \end{tikzcd}
  \end{displaymath}
  Since \(\chi\) preserves cartesian maps, the upper and lower squares
  are both pullbacks. Hence the big rectangle is a pullback. However,
  any monadic fibred functor reflects cartesian maps (since it
  reflects vertical isomorphisms), and so \(g
  \bullet f\) is cartesian, as required.
\end{proof}

\begin{thm}[Bourke-Garner]
  The above definition of awfs corresponds precisely to the more usual
  definition (appearing e.g. in \cite{garnersmallobject}, which aside
  from a distributive law condition is the same as given by Grandis
  and Tholen under the name natural weak factorisation system
  \cite{grandistholennwfs}).
\end{thm}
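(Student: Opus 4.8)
The plan is to reduce both definitions to a common presentation and then invoke Bourke and Garner's comparison \cite{bourkegarnerawfs1} for the one step that is not pure bookkeeping. Unwinding our definition, a monadic comprehension category on $\BB$ is a strictly monadic fibred functor $\chi : \EE \to \BB^\to$ over $\BB$ together with a composition functor; by strict monadicity we may take $\EE = \xalg{R}$ for a monad $R$ on $\BB^\to$ over $\BB$, necessarily concentrated over $\cod$, with $\chi$ the forgetful functor $\upsilon$ (which is automatically strictly monadic, and creates cartesian lifts by Lemma~\ref{lem:algccl} applied to $\cod : \BB^\to \to \BB$, so that $p : \EE \to \BB$ is indeed a fibration). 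The underlying pointed endofunctor of $R$ is exactly a functorial factorisation: applying $R$ to $f : A \to B$ and reading off the unit component and the underlying arrow of the free algebra gives $A \xrightarrow{\lambda_f} Ef \xrightarrow{\rho_f} B$, natural in $f$, and the multiplication of $R$ supplies the monad structure on $\rho$. Conversely a functorial factorisation with a monad structure on the right half is precisely such an $R$, and its Eilenberg--Moore category is strictly monadic over $\BB^\to$ for free. Thus the passage between ``monadic comprehension category (with its automatic strict unit, by the earlier proposition that every monadic comprehension category has a strict unit), forgetting the composition functor'' and ``functorial factorisation together with the monad $R$'' is immediate and invertible.

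What remains is the genuinely substantive claim: that the composition functor making the top row of \eqref{eq:4} an internal category lying over the double category of commutative squares is the same data as the extension of the left half $L$ of the factorisation to a comonad on $\BB^\to$ over $\dom$, together with a distributive law $L R \Rightarrow R L$ compatible with the (co)units and (co)multiplications --- or, in Garner's formulation, just the comonad, with the distributive law then derivable. This is exactly the equivalence established in \cite{bourkegarnerawfs1}: an $R$ as above underlies an awfs in the classical sense precisely when $\xalg{R}$ carries a double-category structure with horizontal category $\BB$ whose functor to the double category of squares in $\BB$ is $\upsilon$, i.e. precisely when there is a composition functor in our sense. I would indicate the translation in both directions --- the composition of canonical free $R$-algebras produces the comultiplication $\vec\delta_f : Ef \to E(Lf)$ and exhibits the distributive law, while conversely the composite of two $R$-algebras on $f$ and $g$ is built by using $\Delta : L R \Rightarrow R L$ to rearrange the middle object $E(g f)$ --- and then defer to \cite{bourkegarnerawfs1} for the verification that the unit and strict associativity axioms of the internal category correspond under this translation to the comonad counit and coassociativity axioms and to the coherence axioms relating $L$, $R$ and $\Delta$.

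The main obstacle is precisely this last equivalence, which is Bourke--Garner's theorem and which I would cite rather than re-prove. Around it, the points needing care are: (i) neither $R$ nor the composition functor is required to preserve cartesian maps, matching the fact that in the Grandis--Tholen/Garner definition the right functor need not preserve pullbacks --- and the proposition immediately preceding this theorem shows the composition functor is nonetheless automatically fibred, as one expects of $\Sigma$-types; (ii) the internal-category axioms are \emph{strict}, so they match the monad and comonad laws on the nose rather than merely up to isomorphism, which is what distinguishes awfs's from their pseudo-variants; and (iii) the parenthetical ``aside from a distributive law condition'' is accounted for because our composition functor packages the distributive law, so the correspondence is literally with Garner's definition, the weaker Grandis--Tholen notion of natural weak factorisation system being recovered by discarding the interchange coherence carried by the composition functor.
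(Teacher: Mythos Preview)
Your approach is correct and is essentially the same as the paper's: the paper's entire proof is the single sentence ``This is a rephrasing of \cite[Proposition 4]{bourkegarnerawfs1},'' and you have simply unpacked what that rephrasing consists of before deferring to the same citation for the substantive step. Your additional explanation of how the monad on $\BB^\to$ over $\cod$ encodes a functorial factorisation, and how the composition functor corresponds to the double-categorical data that Bourke and Garner show is equivalent to the comonad plus distributive law, is accurate and matches the content of their Proposition~4; the paper merely leaves all of this implicit.
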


\begin{proof}
  This is a rephrasing of \cite[Proposition 4]{bourkegarnerawfs1}.
\end{proof}

We also give a fibred version of the definition of awfs, as in
\cite{swanliftprob}. Given a fibration \(p : \EE \to \BB\) note that
we can also view \(V(\EE)\) as a double category, and similarly to
before, we have the commutative diagram below.

\begin{equation}
  \label{eq:7}
  \begin{tikzcd}[sep = 4.5em]
    \FF \ar[r, shift left=1.5ex, "\dom \circ \chi"]
    \ar[r, swap, shift right=1.5ex, "\cod \circ \chi"] \ar[d, "\chi"]
    & \EE \ar[d, equals] \ar[l, "t" description] \\
    V(\EE) \ar[r, shift left=1.5ex, "\dom"]
    \ar[r, swap, shift right=1.5ex, "\cod"] & \EE
    \ar[l, "1" description]
  \end{tikzcd}
\end{equation}

\begin{defn}
  We say a \emph{fibred composition functor} on a notion of structure
  \(\chi : \FF \to V(\EE)\) is a functor
  \(\FF \times_\EE \FF \to \EE\) over \(\BB\) making the top row of
  \eqref{eq:7} an internal category and the whole diagram a double
  functor.
\end{defn}

\begin{defn}
  An \emph{algebraic weak factorisation system over a fibration
    \(p : \EE \to \BB\)} is a monadic notion of structure on
  \(V(\EE) \to \EE\) together with a fibred composition
  functor.
\end{defn}

\begin{defn}
  We say an algebraic weak factorisation system over
  \(p : \EE \to \BB\) is \emph{fibred} if it has a left adjoint that
  preserves the property of maps being cartesian over \(\BB\).

  We say it is \emph{strongly fibred} if it has a left adjoint that
  preserves the property of maps being cartesian over \(\EE\).
\end{defn}

\subsection{Lifting structures}
\label{sec:lifting-structures}

Let \(p : \EE \to \BB\) be a locally small bifibration and
fix a vertical map \(m : A \to B\) in \(\EE\).

\begin{defn}
  The \emph{lifting notion of structure generated by \(m\)} is the
  notion of structure on \(\cod : V(\EE) \to \EE\) defined as follows. An object of
  \(m^\pitchfork\) is a pair consisting of \(f : X \to Y \in V(\EE)\)
  together with a section of the canonical map
  \(\hom(B, X) \to \hom(A, X) \times_{\hom(A, Y)} \hom(B, Y)\), with
  the map \(\FF \to V(\EE)\) given by projection.
\end{defn}

\begin{prop}
  \label{prop:liftstructsigma}
  The lifting structure generated by \(m\) is a notion of
  structure on \(V(\EE) \to \EE\) and admits a fibred composition functor.
\end{prop}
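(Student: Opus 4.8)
The plan is to verify the two claims separately: first that the projection $m^\pitchfork \to V(\EE)$ is a notion of structure (i.e. creates cartesian lifts), and second that it carries a fibred composition functor.

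For the first part, I would exhibit $m^\pitchfork$ as a category of (co)algebras or, more directly, as a pullback of an instance of Lemma~\ref{lem:sectlocrepr} along a suitable fibred functor, and then invoke Lemma~\ref{lem:cclpbstab}. Concretely, since $p$ is locally small, the assignment $f \mapsto \bigl(\hom(B,X) \to \hom(A,X)\times_{\hom(A,Y)}\hom(B,Y)\bigr)$ can be organised into a fibred functor $V(\EE) \to V(\EE')$ (or into $\EE'^\to$ for an appropriate fibration $\EE' \to \EE$), using that reindexing commutes with the relevant finite limits in the fibres and with the hom-objects witnessing local smallness. An object of $m^\pitchfork$ is then precisely an object of $V(\EE)$ together with a section of its image under this functor, so $m^\pitchfork$ is the pullback of $\EE'^\to_\bullet \to \EE'^\to$ (the ``maps with sections'' notion of structure of Lemma~\ref{lem:sectlocrepr}) along that functor. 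By Lemma~\ref{lem:cclpbstab}, since the maps-with-sections forgetful functor creates cartesian lifts, so does $m^\pitchfork \to V(\EE)$; that is, it is a notion of structure. (Alternatively one checks directly: a cartesian map $f' \to f$ in $V(\EE)$ induces a cartesian — hence pullback — square on the relevant hom-objects, along which the section transports uniquely, exactly as in the proof of Lemma~\ref{lem:algccl}.)

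For the second part — the fibred composition functor — I would define, for composable vertical maps $f : X \to Y$ and $g : Y \to Z$ equipped with lifting structures against $m$, a lifting structure on $gf : X \to Z$. Given a lifting problem for $gf$ against $m$, i.e. a commuting square $A \to X$, $B \to Z$ over $m$, one first uses the lifting structure of $g$ to produce a diagonal $B \to Y$ (against the outer square $A \to Y \to Z$ obtained by composing with $f$), and then uses the lifting structure of $f$ against the resulting square $A \to X$, $B \to Y$ to produce the desired diagonal $B \to X$. This is the standard ``lift twice'' argument, and because everything is phrased via the canonical maps on hom-objects rather than via chosen elements, it goes through uniformly over $\EE$, yielding a functor $\FF \times_\EE \FF \to \FF$ over $\BB$. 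One then checks that the identity vertical maps carry canonical lifting structures (the identity is a retract, so the relevant hom-map is split), giving the unit $t$ of the internal category, and that the composition is strictly associative and unital — both of these follow because composition of morphisms in $\EE$ is strictly associative and the lifting structures are built by iterated use of it. Functoriality of the whole square of \eqref{eq:7} (i.e. that this is a double functor lying over the double category of vertical squares) is then a routine diagram chase. Finally, one notes the constructions commute with reindexing — again because local smallness hom-objects and the relevant pullbacks are stable under reindexing — so the composition functor lies over $\BB$.

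The main obstacle I expect is not the ``lift twice'' construction itself but the bookkeeping needed to make the composition strictly associative and strictly unital, as required by the definition of internal category in \eqref{eq:7}: one must be careful that the order of the two lifts and the identification $\hom(B,X) \to \cdots$ for composites are set up so that associating a triple $(f,g,h)$ in the two possible ways yields literally the same section, not merely isomorphic ones. This is where choosing the ``right'' formulation of the canonical maps (so that the comparison maps compose on the nose) does the real work; once that bookkeeping is fixed, the verification that $m^\pitchfork \to V(\EE)$ together with this composition functor satisfies all the clauses is mechanical.
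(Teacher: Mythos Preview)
The paper does not actually provide a proof of this proposition: it is stated and then immediately used (``It follows that if a lifting notion of structure of a map is monadic, then it is automatically an awfs''), with the verification left implicit as routine. So there is no paper proof to compare against; your proposal is a reasonable way to supply the omitted details.

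Your outline is correct in substance. For the first part, the direct verification you mention parenthetically (transport the section along the induced pullback square on hom-objects, as in Lemma~\ref{lem:algccl}) is probably cleaner than the pullback-of-sections route: the latter requires you to set up a fibred functor from \(V(\EE)\) over \(\EE\) into an arrow category living over \(\BB\), which means pulling \(\BB^\to \to \BB\) back along \(p\) and checking the assignment \(f \mapsto \bigl(\hom(B,X) \to \hom(A,X)\times_{\hom(A,Y)}\hom(B,Y)\bigr)\) is fibred over \(\EE\). This works, but the bookkeeping is heavier than the one-line direct check.

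For the composition functor, your ``lift twice'' argument is the standard one and is correct. Your concern about strict associativity and unitality is well-placed but in fact dissolves once you write it out: both bracketings of a triple \((f,g,h)\) unwind to ``lift against \(h\), then \(g\), then \(f\)'' using literal composition of morphisms in \(\EE\), which is strictly associative; and the canonical lifting structure on an identity is forced (the comparison map is an isomorphism), so the unit laws hold on the nose. No special care in setting up the comparison maps is needed beyond using the actual hom-objects rather than isomorphic copies.
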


It follows that if a lifting notion of structure of a map is monadic,
then it is automatically an awfs. We refer to awfs's of this form as
\emph{cofibrantly generated}.

\begin{rmk}
  By Proposition \ref{prop:monadcatcatover}, to show that
  \(m^\pitchfork \to V(\EE)\) is monadic over \(\BB\) it suffices to
  show it is monadic as a functor in \(\cats\).
\end{rmk}

\begin{thm}
  Suppose \(p : \EE \to \BB\) is complete and cocomplete (as a
  fibration) and \(m^\pitchfork \to V(\EE)\) is an awfs. Then it is
  a fibred awfs (i.e. its left adjoint is fibred).
\end{thm}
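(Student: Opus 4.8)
The plan is to show that the left adjoint $L$ of the forgetful functor $\upsilon : m^\pitchfork \to V(\EE)$ preserves maps that are cartesian over $\BB$. Since $m^\pitchfork$ arises as a lifting notion of structure, $L$ is constructed via Garner's small object argument applied fibrewise, so the key point will be that the constituent steps of this construction -- pushouts along coproducts of copies of $m$, together with the transfinite colimit -- are all computed pointwise and are preserved by reindexing, because $\EE \to \BB$ is assumed complete and cocomplete as a fibration.

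First I would recall the explicit shape of the free algebra construction. Given $f : X \to Y$ vertical in $\EE$ over some $I \in \BB$, one forms the object $\coprod_{\square} A \to \coprod_{\square} B$ indexed by the set $\square$ of commutative squares from $m$ to $f$ (computed using local smallness and the bifibration structure of $p$), takes the pushout, factors, and iterates transfinitely; the awfs hypothesis guarantees this converges. All of these operations -- the indexing set of squares, the coproducts, the pushouts, and the sequential colimit -- live in the fibre $\EE_I$ and, crucially, commute with reindexing $\sigma^\ast$ for $\sigma : J \to I$, precisely because we assumed $p$ is complete and cocomplete \emph{as a fibration} (so reindexing preserves the relevant limits and colimits) and because local smallness makes the hom-object $\hom(A,X) \times_{\hom(A,Y)} \hom(B,Y)$ stable under pullback in $\BB$. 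I would package this as a lemma: for each step of the construction, reindexing along $\sigma$ of the step applied to $f$ agrees (coherently) with the step applied to $\sigma^\ast(f)$.

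Given that lemma, a cartesian map $(\sigma, \bar\sigma) : f' \to f$ in $V(\EE)$ -- i.e. $f'$ is (isomorphic to) $\sigma^\ast(f)$ -- is sent by $L$ to a map $Lf' \to Lf$ which, by the coherence just established, exhibits $Lf'$ as $\sigma^\ast(Lf)$; hence $L(\sigma,\bar\sigma)$ is cartesian over $\BB$. One then checks that this is compatible with the comultiplication, so that $L$ really is the left adjoint as a functor into $m^\pitchfork$ (not just $V(\EE)$), using that $m^\pitchfork \to V(\EE)$ creates cartesian lifts, so once the underlying square is a pullback the algebra map is automatically cartesian in $m^\pitchfork$ by Lemma~\ref{lem:algccl}. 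Finally, by the definition of fibred awfs, preservation of $\BB$-cartesian maps by the left adjoint is exactly what is required.

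The main obstacle is the coherence bookkeeping in the lemma: one must verify not just that each individual step commutes with reindexing up to isomorphism, but that these isomorphisms are compatible across the transfinite composite, so that the induced comparison $\sigma^\ast(Lf) \to L(\sigma^\ast f)$ is an isomorphism of algebras and is natural. This is where the ``cloven'' convention and the fibration-completeness/cocompleteness assumptions do the real work; a clean way to organise it is to observe that the entire construction of $L$ can be carried out in the total category $\EE$ using fibred (co)limits, so that it is a construction ``over $\BB$'' by design, and reindexing-stability is then automatic rather than checked step by step. I would phrase the argument in that style to avoid an explicit transfinite induction on coherence data.
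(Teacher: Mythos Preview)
The paper does not actually prove this statement; it simply cites \cite[Theorem 5.5.1]{swanliftprob}. So there is no in-paper proof to compare against, and your proposal must be judged on its own.

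Your plan has a genuine gap. You assume that the left adjoint \(L\) is produced by Garner's small object argument (iterated pushouts along coproducts of copies of \(m\), followed by a transfinite colimit), and then argue that each step of that construction is preserved by reindexing. But the hypothesis of the theorem is only that \(m^\pitchfork \to V(\EE)\) is monadic, i.e.\ that \emph{some} left adjoint exists. It does not say that the small object argument converges, and your claim that ``the awfs hypothesis guarantees this converges'' is not justified: knowing that a free algebra exists does not tell you that the particular transfinite sequence produced by the small object argument stabilises at it. Convergence of that sequence typically requires additional smallness or presentability assumptions on the generating map or on the ambient category, none of which are among the hypotheses here. So as written, your argument establishes fibredness of a construction that you have not shown actually computes \(L\).

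Your closing remark, that one should instead carry out ``the entire construction of \(L\)'' using fibred (co)limits so that reindexing-stability is automatic, is closer to the right idea but has the same circularity: you still need an argument that such a construction exists and agrees with the abstractly given left adjoint. A sound approach would either (i) supply the missing convergence argument under the stated completeness and cocompleteness hypotheses, or (ii) argue abstractly from the universal property of \(L\) together with the existence of fibred right adjoints \(\sigma_\ast\) to reindexing (from completeness) to show that \(\sigma^\ast(Lf)\) satisfies the universal property of \(L(\sigma^\ast f)\). The latter is closer to what the cited reference does; it avoids any transfinite bookkeeping by using that the forgetful functor creates the limits used to build \(\sigma_\ast\), so that the Beck--Chevalley comparison for \(L\) against reindexing can be verified via adjunction rather than by inspecting a construction.
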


\begin{proof}
  See \cite[Theorem 5.5.1]{swanliftprob}.
\end{proof}

Algebraic versions of the small object argument can be seen as proofs
that certain lifting notions of structure are monadic.

\begin{thm}[Garner]
  Suppose that \(\EE \to \BB\) is a category indexed family fibration
  \(\fmly(\CC) \to \cats\) such that \(\CC\) is cocomplete and one of
  the following conditions holds.
  \begin{enumerate}
  \item For every \(X \in \CC\) there is a regular ordinal \(\alpha\)
    for which \(X\) is \(\alpha\)-presentable.
  \item \(\CC\) admits a proper well-copowered factorisation system
    \(\mathcal{E}, \mathcal{M}\) such that for every \(X \in \CC\)
    there is a regular ordinal \(\alpha\) for which \(X\) is
    \(\alpha\)-bounded with respect to \((\mathcal{E},
    \mathcal{M})\).
  \end{enumerate}
  Then for any family of maps \(m\), the lifting notion of structure
  generated by \(m\) is monadic.
\end{thm}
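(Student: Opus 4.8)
The plan is to reduce the statement to Garner's algebraic small object argument in the category $\CC$ itself, and then carry the conclusion back up through the family construction. By the remark preceding the statement, and since the forgetful functor $m^\pitchfork \to V(\EE)$ is fibred over $\cats$, Proposition~\ref{prop:monadcatcatover} lets us work entirely in $\cats$: it suffices to exhibit $m^\pitchfork \to V(\EE)$ as a strictly monadic functor of ordinary categories.

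First I would unwind the categories involved. Writing $\EE = \fmly(\CC)$, an object of $V(\EE)$ over a small category $K$ is a vertical arrow in $\fmly(\CC)$ over $K$, i.e.\ a natural transformation between two functors $K \to \CC$, i.e.\ a functor $K \to \CC^\to$; so $V(\EE) \cong \fmly(\CC^\to)$ over $\cats$, with $\cod \colon V(\EE) \to \EE$ corresponding to post-composition with $\cod \colon \CC^\to \to \CC$. The generating vertical map $m \colon A \to B$ lies over some small category $J_0$, and such a vertical map is precisely the data of a functor $m \colon J_0 \to \CC^\to$. Using local smallness of $\fmly(\CC) \to \cats$ to compute the hom-objects appearing in the definition of $m^\pitchfork$, one checks that an object of $m^\pitchfork$ over $K$ consists of a functor $f \colon K \to \CC^\to$ together with a choice of diagonal filler for every lifting problem of $m$ against $f$, natural in the lifting problem and over both $J_0$ and $K$; that is, it is exactly a $K$-indexed diagram of algebras for the pointed endofunctor $R_1$ on $\CC^\to$ generated by the data $m \colon J_0 \to \CC^\to$ in the sense of Garner. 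Consequently $m^\pitchfork \cong \fmly(R_1\text{-}\mathrm{Alg})$, compatibly with the forgetful functors to $V(\EE) \cong \fmly(\CC^\to)$.

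Now I would invoke Garner's algebraic small object argument \cite{garnersmallobject}: hypotheses (1) and (2) above are precisely those under which the free monad $R$ on the pointed endofunctor $R_1$ is guaranteed to exist --- case (1) being the locally presentable situation and case (2) Garner's bounded variant using the proper well-copowered factorisation system $(\mathcal E, \mathcal M)$. Since the algebras for a pointed endofunctor coincide with the algebras for the free monad on it, $R_1\text{-}\mathrm{Alg} = R\text{-}\mathrm{Alg}$ is strictly monadic over $\CC^\to$.

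Finally I would transfer monadicity across $\fmly$. For any monad $T$ on a category $\mathcal M$ the family construction yields a monad $\fmly(T)$ on $\fmly(\mathcal M)$, acting by $(K, G) \mapsto (K, T \circ G)$, and there is an evident isomorphism $\fmly(\mathcal M^T) \cong \fmly(\mathcal M)^{\fmly(T)}$ of categories over $\fmly(\mathcal M)$, since a $K$-indexed diagram of $T$-algebras is the same thing as a $\fmly(T)$-algebra structure on a $K$-indexed diagram of objects of $\mathcal M$. Taking $\mathcal M = \CC^\to$ and $T = R$ gives $m^\pitchfork \cong \fmly(R\text{-}\mathrm{Alg}) \cong V(\EE)^{\fmly(R)}$ over $V(\EE)$, so $m^\pitchfork \to V(\EE)$ is strictly monadic, as needed. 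The main obstacle is the identification carried out in the second paragraph: translating the fibrational description of $m^\pitchfork$ --- formulated through the hom-objects of the locally small bifibration $\fmly(\CC) \to \cats$ and a map $m$ living over an arbitrary small base $J_0$ --- into purely $\CC$-internal generating data for Garner's construction, and checking that the identification respects the forgetful functors and the monad actions on the nose. By comparison, the verification that $\fmly$ preserves monads and categories of algebras is routine.
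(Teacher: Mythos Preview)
Your proposal is correct, and in fact considerably more detailed than the paper's own proof, which consists of a bare citation: ``See \cite{garnersmallobject}.'' The paper treats the reduction from the fibred statement to Garner's result in \(\CC\) as implicit, whereas you have spelled out the translation explicitly --- identifying \(V(\fmly(\CC)) \cong \fmly(\CC^\to)\), unwinding the hom-object description of \(m^\pitchfork\) to match Garner's algebraic lifting data over \(J_0 \to \CC^\to\), and then pushing monadicity back through \(\fmly\). This is exactly the content one would need to fill in behind the citation, and your identification of hypotheses (1) and (2) with Garner's two smallness conditions is accurate. The one place to be slightly careful is the intermediate identification with \(R_1\)-algebras: depending on which presentation of Garner's argument you follow, the pointed endofunctor whose algebras coincide on the nose with lifting structures may differ from the literal ``one-step'' functor, so it is cleaner to appeal directly to Garner's theorem that \(J^\pitchfork\) (for \(J = J_0 \to \CC^\to\)) is strictly monadic over \(\CC^\to\), and then apply \(\fmly\) as you do in your final paragraph.
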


\begin{proof}
  See \cite{garnersmallobject}.
\end{proof}

\begin{thm}[Swan]
  Suppose that \(\EE \to \BB\) is a codomain fibration
  \(\BB^\to \to \BB\) on a locally cartesian closed category \(\BB\),
  that \(m\) is a family of maps and one of the following conditions
  holds.
  \begin{enumerate}
  \item \(\BB\) is locally cartesian closed, has exact quotients and
    \(W\)-types and satisfies \(\mathbf{WISC}\).
  \item \(\BB\) is an internal presheaf category in a locally
    cartesian closed category with finite colimits and disjoint
    coproducts and \(m\) is a pointwise decidable monomorphism.
  \end{enumerate}
  Then the lifting notion of structure generated by \(m\) is monadic.
\end{thm}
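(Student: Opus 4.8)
The plan is to reduce each case to a known algebraic small object argument and then invoke the theorems quoted just above (Garner's theorem and the earlier Swan theorem on codomain fibrations). In both cases the strategy is: (i) show that the lifting notion of structure generated by $m$ over $p : \EE \to \BB$ coincides, up to an equivalence of categories over $V(\EE)$, with the lifting notion of structure for a suitable family of maps in some large category to which a small-object-argument theorem applies; (ii) conclude monadicity there; (iii) transfer monadicity back along the equivalence using Proposition \ref{prop:monadcatcatover}, so that it suffices to check monadicity as a plain functor of categories. Since $\EE$ is locally cartesian closed in case (1) by hypothesis (and $\BB$ is an internal presheaf category in an l.c.c.\ category in case (2)), the fibration $V(\EE) \to \EE$ is itself, internally, a codomain-type fibration, so each case is really an internalised instance of the previously stated theorem.

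\textbf{Case (1).} Here $\BB$ is locally cartesian closed with exact quotients, $W$-types, and satisfies $\mathbf{WISC}$. First I would observe that $V(\EE) \to \EE$, being the fibration of vertical maps of a codomain fibration on an l.c.c.\ category, is equivalent over $\EE$ to a codomain fibration on a slice: concretely, over an object $I$ the fibre is $\EE_I^\to$. The key point is that $\mathbf{WISC}$, exact quotients, $W$-types, and local cartesian closure are all inherited by slices of an l.c.c.\ category with these properties. Then I would apply the first alternative of the earlier Swan theorem (the $\mathbf{WISC}$-based algebraic small object argument) fibrewise, or rather to the total category, using the fibred small object argument of \cite[]{swanliftprob}. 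The conclusion that $m^\pitchfork \to V(\EE)$ is monadic follows, and by the remark before the theorem it suffices to have checked this in $\cats$.

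\textbf{Case (2).} Here $\BB$ is an internal presheaf category, say $\BB = \mathbf{psh}(\mathbb{C})$ internal to an l.c.c.\ category $\mathcal{S}$ with finite colimits and disjoint coproducts, and $m$ is a pointwise decidable monomorphism. Now I would use that $\EE^\to$ — hence $V(\EE) \to \EE$ — is again an internal presheaf category (the category of presheaves on $\mathbb{C}$ together with a freely adjoined arrow, or more precisely presheaves on $\mathbb{C} \times \mathbb{I}$ for the interval category $\mathbb{I} = \{0 \to 1\}$), so that the second alternative of the Swan theorem applies provided $m$ is still a pointwise decidable monomorphism in this larger presheaf category. Pointwise decidability is preserved because decidable subobjects are stable under the relevant constructions (pullback, and the diagonal-type maps appearing in the definition of the lifting structure). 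The finite-colimit and disjoint-coproduct hypotheses on the ambient l.c.c.\ category are exactly what is needed to run the cubical-sets-style algebraic small object argument internally.

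\textbf{Main obstacle.} The delicate part is checking that the hypotheses genuinely descend to the "bigger" fibration — specifically, that $\mathbf{WISC}$ holds in the total category of $V(\EE) \to \EE$ in case (1), and that decidability of $m$ as a monomorphism is genuinely preserved when $m$ is regarded as a map in $V(\EE)$ rather than in $\EE$ in case (2). I expect the cleanest route is to phrase both the statement and the proof of the earlier Swan theorem in a sufficiently fibred/relative form (as is done in \cite{swanliftprob}) so that the present theorem is literally an instance applied to the composite fibration $V(\EE) \to \EE \to \BB$ — at which point there is nothing left to prove beyond identifying $V(\EE) \to \EE$ with the appropriate codomain-type fibration and citing the relevant internalisation. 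So rather than re-running the small object argument, the proof should consist of this identification plus a pointer to the fibred version in \cite{swanliftprob}.
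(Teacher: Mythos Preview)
The paper does not prove this theorem at all: its entire proof is the single line ``See \cite{swanwtypered}.'' The result is quoted from the literature as a known small-object-argument theorem, exactly parallel to the Garner theorem immediately preceding it, whose proof is likewise just ``See \cite{garnersmallobject}.''

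Your proposal rests on a misreading of the logical structure of the section. You repeatedly refer to ``the earlier Swan theorem on codomain fibrations'' as something you intend to invoke, but the theorem you are asked to prove \emph{is} that Swan theorem --- there is no earlier one in the paper to reduce to. Consequently your whole strategy of ``transfer the hypotheses to the total category of $V(\EE) \to \EE$ and apply the earlier result'' is circular. The elaborate discussion of whether $\mathbf{WISC}$, $W$-types, exact quotients, or pointwise decidability pass to $V(\EE)$ or to $\EE^\to$ is beside the point: the statement is already about the base case of a codomain fibration $\BB^\to \to \BB$, and \cite{swanwtypered} proves precisely that case directly via a constructive algebraic small object argument. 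There is nothing to descend from and nothing to reduce to within this paper; the correct ``proof'' here is the citation.
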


\begin{proof}
  See \cite{swanwtypered}.
\end{proof}

\begin{rmk}
  Regarding the connection between awfs's and the semantics of type
  theory, we observe that writing \(\chi\) for the forgetful functor
  \(m^\pitchfork \to V(\EE)\) the map \(\dom \circ \chi\) is right
  adjoint to the functor \(1_{(-)} \to m^\pitchfork\) sending each
  object of \(\EE\) to the terminal object of its fibre. It follows
  that for each \(I \in \BB\) the restriction of
  \(m^\pitchfork \to \EE_I\) admits comprehension in the sense of
  Example \ref{ex:ptdobjects}, and furthermore the
  resulting comprehension category with unit on \(\EE_I\) as in
  \cite[Definition 10.4.7]{jacobs} is the same as that given in
  section \ref{sec:algebr-weak-fact}.
\end{rmk}

\section{Other characterisations of definability}
\label{sec:other-char-local}

\subsection{Representable maps}
\label{sec:representable-maps}

\begin{thm}
  \label{thm:ccltosplitting}
  Suppose we are given a fibred functor $\chi : \DD \to \EE$ that
  creates cartesian lifts and a splitting on the fibration $\EE \to
  \BB$. Then we can define a splitting on $\DD$ that is strictly
  preserved by $\chi$.
\end{thm}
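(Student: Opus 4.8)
The plan is to transport the given splitting on $\EE$ across $\chi$, using the discreteness of $\cart(\chi)$ to perform the lift and to force all the required coherence. Recall that a splitting consists, for each object $X$ over $I$ and each $\sigma : J \to I$ in $\BB$, of a chosen cartesian lift $\bar\sigma(X) : \sigma^\ast(X) \to X$ over $\sigma$, subject to $\bar{1_I}(X) = 1_X$ and $\overline{\sigma\tau}(X) = \bar\sigma(X) \circ \bar\tau(\sigma^\ast(X))$. So fix $D \in \DD$ over $I$ and $\sigma : J \to I$. The splitting on $\EE$ gives a cartesian map $\bar\sigma(\chi D) : \sigma^\ast(\chi D) \to \chi D$ over $\sigma$, which is a morphism of $\cart(\EE)$ with codomain $\cart(\chi)(D) = \chi D$. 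Since $\cart(\chi)$ is a discrete fibration, there is a \emph{unique} morphism of $\cart(\DD)$ with codomain $D$ that $\chi$ sends to $\bar\sigma(\chi D)$; that is, a unique object $\sigma^\ast(D)$ of $\DD$ and a unique cartesian map $\bar\sigma(D) : \sigma^\ast(D) \to D$ in $\DD$ with $\chi(\bar\sigma(D)) = \bar\sigma(\chi D)$, and hence $\chi(\sigma^\ast(D)) = \sigma^\ast(\chi D)$. As $q \circ \chi = p$ and $\bar\sigma(\chi D)$ lies over $\sigma$, the map $\bar\sigma(D)$ lies over $\sigma$, so this genuinely defines a cartesian lift of $\sigma$ at $D$; and $\chi$ strictly preserves these choices by construction, proving the last clause.

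It then remains to check that these choices form a splitting, and here every equation is obtained from the uniqueness in the discrete fibration property. For normalisation: $1_D$ is a cartesian map of $\DD$ with codomain $D$ and $\chi(1_D) = 1_{\chi D} = \bar{1_I}(\chi D)$, so uniqueness forces $\bar{1_I}(D) = 1_D$ (in particular $1_I^\ast(D) = D$). For the cocycle condition, take also $\tau : K \to J$. Cartesian maps of $\DD$ are closed under composition, so $\bar\sigma(D) \circ \bar\tau(\sigma^\ast(D))$ is a cartesian map of $\DD$ with codomain $D$ — note $\bar\tau(\sigma^\ast(D))$ makes sense precisely because $\sigma^\ast(D)$ has already been constructed — and applying $\chi$, using functoriality of $\chi$, the identity $\chi(\sigma^\ast(D)) = \sigma^\ast(\chi D)$, the fact that $\chi$ respects all the chosen lifts, and the cocycle condition for the splitting on $\EE$, this composite is sent to $\bar\sigma(\chi D) \circ \bar\tau(\sigma^\ast(\chi D)) = \overline{\sigma\tau}(\chi D)$. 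Uniqueness again gives $\bar\sigma(D) \circ \bar\tau(\sigma^\ast(D)) = \overline{\sigma\tau}(D)$, and comparing domains, $\tau^\ast(\sigma^\ast(D)) = (\sigma\tau)^\ast(D)$.

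I do not expect a genuine obstacle: the only non-formal inputs are that cartesian maps of $\DD$ contain the identities and are closed under composition (so that the arrows in question really live in $\cart(\DD)$), after which discreteness of $\cart(\chi)$ turns each splitting axiom into a routine uniqueness argument. The point requiring the most care is the bookkeeping around $\chi(\sigma^\ast(D)) = \sigma^\ast(\chi D)$, which is what guarantees that $\bar\tau(\sigma^\ast(D))$ is defined and is carried by $\chi$ to $\bar\tau(\sigma^\ast(\chi D))$ in the cocycle step.
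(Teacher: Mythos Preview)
Your proof is correct and follows exactly the same approach as the paper: lift the chosen cartesian maps in $\EE$ uniquely through the discrete fibration $\cart(\chi)$. The paper's proof is terser, stopping at the construction of the lift and leaving the verification of the splitting axioms implicit, whereas you spell out how the normalisation and cocycle conditions follow from uniqueness; this is a welcome elaboration but not a different argument.
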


\begin{proof}
  Given an object $D$ of $\DD$ and a map $\sigma : I \to p(D)$, we
  have a choice of object $\sigma^\ast(\chi(D))$ and cartesian map
  $\bar{\sigma} : \sigma^\ast(\chi(D)) \to \chi(D)$ in $\EE$ over
  $\sigma$. We choose the splitting at $D$ to be the unique cartesian
  map over $\bar{\sigma}$ with codomain $D$.
\end{proof}

\begin{defn}
  \label{def:splittingtonattrans}
  When $\EE$ is split, we have a presheaf on $\BB$, by mapping
  $I \in \BB$ to the objects of $\EE_I$. We denote this presheaf
  $\tilde{\EE}$.

  By Theorem \ref{thm:ccltosplitting} we similarly have another
  presheaf sending $I$ to the set of objects of $\DD_I$, which we
  denote $\tilde{\DD}$, and we have a natural transformation
  $\tilde{\chi} : \tilde{\DD} \to \tilde{\EE}$.
\end{defn}

\begin{prop}
  When we are given a splitting of $\EE$, the natural transformation
  $\tilde{\chi}$ in definition \ref{def:splittingtonattrans} is a
  representable map in presheaves over $\BB$ if and only if $\chi$ is
  definable.
\end{prop}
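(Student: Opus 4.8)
The plan is to unwind both sides in terms of the presheaf $\bar\chi_X$ from Definition~\ref{def:locpresheaf} and appeal to Lemma~\ref{lem:partreprequiv} and Proposition~\ref{prop:repdefns}. First recall the standard fact that a natural transformation $\tilde\chi : \tilde\DD \to \tilde\EE$ of presheaves on $\BB$ is representable exactly when, for every $I \in \BB$ and every element $X \in \tilde\EE(I)$ — i.e. every object $X$ of $\EE_I$ — the presheaf on $\BB/I$ sending $\sigma : J \to I$ to the fibre $\tilde\chi_J^{-1}(\sigma^\ast X)$ is representable. (Here $\sigma^\ast X$ denotes the reindexing given by the splitting, so that $\tilde\EE(\sigma)(X) = \sigma^\ast X$.) This is just the pullback/slice characterisation of representability: $\tilde\chi$ is representable iff each pullback $\yoneda I \times_{\tilde\EE} \tilde\DD$ is representable, and that pullback, computed in $\psh{B}/\yoneda I \simeq \psh{(\BB/I)}$, is precisely the presheaf $\sigma \mapsto \tilde\chi_J^{-1}(\sigma^\ast X)$.

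Next I would identify this slice presheaf with $\bar\chi_X$. By definition $\bar\chi_X(\sigma)$ is the set of objects of $\chi^{-1}(\sigma^\ast X)$, and since the splitting on $\DD$ produced by Theorem~\ref{thm:ccltosplitting} is strictly preserved by $\chi$, the reindexing in $\tilde\DD$ restricts on each fibre of $\tilde\chi$ to exactly the bijection $\chi^{-1}(\sigma^\ast X) \to \chi^{-1}((\tau\sigma)^\ast X)$ coming from composing cartesian lifts; hence the slice presheaf $\sigma \mapsto \tilde\chi_J^{-1}(\sigma^\ast X)$ is (naturally isomorphic to) $\bar\chi_X$. Therefore $\tilde\chi$ is a representable map iff $\bar\chi_X$ is representable for every $X$, which by Proposition~\ref{prop:repdefns} (equivalence of (1) and (2)) is exactly the statement that $\cart(\chi)$ has a right adjoint, i.e. that $\chi$ is definable.

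The main obstacle is bookkeeping rather than conceptual: one must be careful that the "representable map of presheaves" condition is genuinely pointwise in $I$ and $X$ (so that it matches the quantifier "for every $X$" in Proposition~\ref{prop:repdefns}), and that the two notions of reindexing — the one on $\tilde\DD$ coming from the induced splitting and the one implicit in the comma category $(\cart(\chi)\downarrow X)$ of Lemma~\ref{lem:partreprequiv} — agree up to canonical natural isomorphism. Both of these are handled by the strict compatibility of the splittings in Theorem~\ref{thm:ccltosplitting}, together with the equivalence $\BB/q(X) \simeq \cart(\DD)/X$ already used in the proof of Lemma~\ref{lem:partreprequiv}; so the argument should go through cleanly once these identifications are spelled out.
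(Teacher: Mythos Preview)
The paper states this proposition without proof, so there is no argument to compare against directly. Your proposal is correct and is the natural route: unwind the definition of representable map of presheaves as representability of each pullback $\yoneda I \times_{\tilde\EE} \tilde\DD$, pass across the equivalence $\psh{B}/\yoneda I \simeq \psh{(\BB/I)}$ to identify that pullback with the presheaf $\sigma \mapsto \chi^{-1}(\sigma^\ast X)$, recognise this as $\bar\chi_X$ using the compatible splittings of Theorem~\ref{thm:ccltosplitting}, and conclude via Proposition~\ref{prop:repdefns}. The bookkeeping you flag (matching the two reindexings, and that representability in $\psh{B}/\yoneda I$ agrees with representability in $\psh{(\BB/I)}$) is routine and does not hide any genuine obstacle.
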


As an alternative to requiring splitness, one can consider a
generalised definition of presheaf and representable map using
\(2\)-category theory that can be
obtained from any Grothendieck fibration.
This is one way of understanding the definition of local
representability given by Shulman \cite[Definition
3.10]{shulmaninftytopunivalence}.

\subsection{Pullbacks of the notion of structure of sections}
\label{sec:pullb-noti-struct}

This characterisation is based on Awodey's \emph{universal fibrations}
\cite[Section 6.3]{awodey19}. We can understand this definition as
follows. We saw in Lemma \ref{lem:sectlocrepr} that the notion of
structure on \(\cod : \BB^\to \to \BB\) of sections (Example
\ref{ex:sections}) is always definable. We will see below that it is
the universal example of definable notion of structure, in the sense
that every other definable notion of structure is a pullback of this
one. Hence we could alternatively define definable notions of
structures as fibred functors
\(\zeta : \cart(\EE) \to \cart(\BB^\to)\) such that
\(\cart(\DD) \to \cart(\EE)\) is the pullback of
\(\cart(\BB^\to_\bullet) \to \cart(\BB^\to)\) along \(\zeta\). This
was already observed by Shulman \cite[Proposition
2.7]{shulmaninftytopunivalence}, but for completeness we give a direct
proof in our formulation here.

\begin{thm}
  Let $\BB$ be a finitely complete category. A notion of fibred
  structure $\chi : \DD \to \EE$ over $\BB$ is definable
  if and only if there are fibred functors
  $\cart(\DD) \to \cart(\BB^\to_\bullet)$ and
  $\zeta : \cart(\EE) \to \cart(\BB^\to)$ making a (strict) pullback
  as illustrated below.
  \begin{displaymath}
    \begin{tikzcd}
      \cart(\DD) \ar[drr, phantom, very near start, "\lrcorner"]
      \ar[rr] \ar[d] & & \cart(\BB^\to_\bullet)
      \ar[d] \\
      \cart(\EE) \ar[rr] \ar[dr] & & \cart(\BB^\to) \ar[dl] \\
      & \BB &
    \end{tikzcd}
  \end{displaymath}
\end{thm}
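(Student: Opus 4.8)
The plan is to treat $\cart(\upsilon)\colon\cart(\BB^\to_\bullet)\to\cart(\BB^\to)$, the cartesian restriction of the notion of structure of sections from Example~\ref{ex:sections}, as the \emph{universal} definable notion of structure, and to characterise definability of an arbitrary notion of structure $\chi$ by the existence of a pullback presentation in terms of it. The two main inputs are Proposition~\ref{prop:repdefns}, reducing definability of $\chi$ to representability of each presheaf $\bar{\chi}_X$ on $\BB/q(X)$, and Lemma~\ref{lem:sectlocrepr}, which says that $\bar{\upsilon}_g$ is representable, indeed represented by $g$ itself; Lemma~\ref{lem:pblocrepr} handles pullback stability.

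For the implication $(\Leftarrow)$, suppose we are given the strict pullback square together with the fibred functor $\zeta$. Since the pullback is strict, for every object $Y$ of $\cart(\EE)$ the objects of $\cart(\DD)$ lying over $Y$ under $\cart(\chi)$ are precisely the objects of $\cart(\BB^\to_\bullet)$ lying over $\zeta(Y)$, that is, the sections of the map $\zeta(Y)$. Taking $Y=\sigma^\ast(X)$ for a map $\sigma\colon J\to q(X)$, and using that every morphism of $\cart(\BB^\to)$ is a pullback square in $\BB$ to identify $\zeta(\sigma^\ast(X))$ with $\sigma^\ast(\zeta(X))$, this produces a natural isomorphism $\bar{\chi}_X\cong\bar{\upsilon}_{\zeta(X)}$ of presheaves on $\BB/q(X)$. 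By Lemma~\ref{lem:sectlocrepr} the right-hand side is representable, hence so is $\bar{\chi}_X$ for every $X$, and $\chi$ is definable by Proposition~\ref{prop:repdefns}.

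For the implication $(\Rightarrow)$, suppose $\chi$ is definable, so $\cart(\chi)$ has a right adjoint $R$ with counit $\epsilon$, where $\epsilon_X\colon\chi(RX)\to X$ is a morphism of $\cart(\EE)$. I would define $\zeta\colon\cart(\EE)\to\cart(\BB^\to)$ on objects by letting $\zeta(X)$ be $q(\epsilon_X)\colon p(RX)\to q(X)$, regarded as an object of $\BB/q(X)$; tracing through the identifications used in the proof of Lemma~\ref{lem:partreprequiv} — the terminal object of $(\cart(\chi)\downarrow X)$ is $(RX,\epsilon_X)$, and the composite $(\cart(\chi)\downarrow X)\to\cart(\EE)/X\simeq\BB/q(X)$ sends it to $q(\epsilon_X)$ — shows that this $\zeta(X)$ is a representing object for $\bar{\chi}_X$. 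Naturality of $\epsilon$ makes $\zeta$ a functor into $\BB^\to$ commuting with the projections to $\BB$; that each $\zeta(g)$ is moreover a pullback square, so that $\zeta$ factors through $\cart(\BB^\to)$ and is then automatically fibred, is exactly the ``moreover'' clause of Lemma~\ref{lem:pblocrepr} applied to the factorisation of $g$ through a cartesian lift. The accompanying functor $\cart(\DD)\to\cart(\BB^\to_\bullet)$ sends a structure $D$ on $Y=\chi(D)$ to the pair $(\zeta(Y),s_D)$, where $s_D$ is the section of $\zeta(Y)$ corresponding to $D$ under the representability isomorphism $\bar{\chi}_Y\cong\yoneda(\zeta(Y))$ evaluated at $\mathrm{id}_{q(Y)}$. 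One then checks that the resulting square commutes strictly and that the induced comparison functor $\cart(\DD)\to\cart(\EE)\times_{\cart(\BB^\to)}\cart(\BB^\to_\bullet)$ is bijective on objects and fully faithful — the latter using that $\cart(\chi)$ and $\cart(\upsilon)$ are discrete fibrations, so their morphisms are determined by their images — hence an isomorphism of categories.

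The step I expect to be the main obstacle is making $\zeta$ precise in the $(\Rightarrow)$ direction: one must build it from the counit of the adjunction rather than from an arbitrary choice of representing objects (which would be functorial only up to coherent isomorphism), and one must unwind the equivalence $\cart(\EE)/X\simeq\BB/q(X)$ carefully enough to see that $\zeta$ sends morphisms of $\cart(\EE)$ to genuine pullback squares. Once $\zeta$ is in hand, the remaining verifications — commutativity of the square and that the comparison functor is an isomorphism — are routine diagram chases using the representability isomorphisms and the discreteness of the fibrations involved.
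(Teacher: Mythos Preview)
Your proposal is correct and follows essentially the same route as the paper. For $(\Rightarrow)$ both you and the paper build $\zeta$ from representing objects for $\bar{\chi}_X$ and invoke Lemma~\ref{lem:pblocrepr} to get pullback squares; your use of the counit $\epsilon$ to pin down a strict choice is a clean way to handle the strictness concern the paper glosses over. For $(\Leftarrow)$ the paper argues abstractly that definability of $\cart(\upsilon)$ pulls back along $\zeta$ (citing Lemma~\ref{lem:cclpbstab}, though really one needs the further observation that right adjoints to discrete fibrations pull back), whereas you unwind this into an explicit natural isomorphism $\bar{\chi}_X\cong\bar{\upsilon}_{\zeta(X)}$; the two arguments amount to the same thing, with yours being the unfolded version.
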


\begin{proof}
  Suppose first that $\chi$ is definable. In this case we
  can assign for $X \in \EE$ a representing object for
  $\bar{\chi}_X$. We take $\zeta(X)$ to be the representing object in
  $\BB/q(X)$. Now given a cartesian map $f : X \to Y$, we know by
  Lemma \ref{lem:pblocrepr} that $q(f)^\ast(\zeta(Y))$ is a
  representing object for $\bar{\chi}_X$. This gives us a canonical
  isomorphism between $\zeta(X)$ and $q(f)^\ast(\zeta(Y))$ over
  $q(X)$. In turn this gives us a pullback square in $\BB$ with
  $\zeta(X)$ on the left and $\zeta(Y)$ on the right, i.e. a morphism
  in $\cart(\BB^\to)$. One can check that this construction preserves
  identities and composition giving a functor $\zeta$.

  Finally, we have for each $X$ a bijection between sections of the
  map $\zeta(X) \to q(X)$ and objects of $\chi^{-1}(X)$. One can check
  this is natural, giving us a pullback square.

  For the converse, we recall that $\BB^\to_\bullet \to \BB^\to$ is
  always definable by Lemma \ref{lem:sectlocrepr}. It
  follows that the same is true for
  $\cart(\BB^\to_\bullet) \to \cart(\BB^\to)$. But now using the
  pullback square we see that $\cart(\chi)$ is definable
  by Lemma \ref{lem:cclpbstab}, and so $\chi$ is too.
\end{proof}

\subsection{Small families of objects and universes}
\label{sec:universes}

The main motivation for Shulman introducing local representability in
\cite{shulmaninftytopunivalence} was to study universes in models of
type theory. In this section we recall, for reference, the relation
between definability and universes in fibrations.

\begin{defn}
  Let $q : \EE \to \BB$ be a Grothendieck fibration, and an object $V$
  of $\EE$, we say an object $X$ of $\EE$ is \emph{$V$-small} if there
  exists a cartesian map $X \to V$.
\end{defn}

Note that $V$ itself is $V$-small, since the identity map is
cartesian. Also note that if an object $U$ is $V$-small and another
object $X$ is $U$-small, then $X$ is also $V$-small.

We now consider a collection of objects indexed by a class
\(\mathcal{M}\), say \((V_\alpha)_{\alpha \in \mathcal{M}}\). Suppose
further that every object \(X\) is \(V_\alpha\)-small for some
\(\alpha \in \mathcal{M}\).\footnote{Moreover, assume we can choose a
  canonical such \(\alpha(X)\) for each \(X\).} For this general
definition it is technically possible to just take \(V_\alpha\) to be
the class of all objects. However, in practice we usually assume extra
conditions. For example, when working on a codomain fibration over a
locally cartesian closed category, we might require class of
\(V_\alpha\)-small maps to be closed under composition and dependent
products. We can satisfy this over presheaf categories by assuming
every set is contained in an inaccessible set, and taking \(V_\alpha\)
to be the Hofmann-Streicher universe on an inaccessible ordinal
\(\alpha\).

\begin{thm}
  \label{thm:locrepuniverses}
  A notion of structure \(\chi : \DD \to \EE\) is definable if and
  only if for all \(\alpha \in \mathcal{M}\) we can find a cartesian
  map \(i : U_\alpha \to V_\alpha\) such that for every
  \(V_\alpha\)-small object \(X\) witnessed by a cartesian map
  \(f : X \to V_\alpha\) there is a natural correspondence between
  structures on \(X\) and maps \(q(X) \to q(U_\alpha)\) factoring
  \(q(f)\) through \(q(i)\).
\end{thm}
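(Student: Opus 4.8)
The plan is to reduce Theorem~\ref{thm:locrepuniverses} to the already-established characterisation of definability via Lemma~\ref{lem:partreprequiv} (definability $\Leftrightarrow$ $\bar{\chi}_X$ representable for every $X$) together with the pullback-stability result Lemma~\ref{lem:pblocrepr}. The key observation is that the family $(V_\alpha)_{\alpha \in \mathcal{M}}$ is assumed to be ``jointly generating'' in the sense that every object $X$ is $V_\alpha$-small for some canonically chosen $\alpha(X)$; so a representing object for $\bar{\chi}_X$ for \emph{all} $X$ can be assembled from representing objects for $\bar{\chi}_{V_\alpha}$ for the $V_\alpha$ alone, by pulling back along the cartesian map $f : X \to V_\alpha$ witnessing smallness. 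This is the same move as in Awodey's universal fibration construction and in the pullback characterisation of the previous subsection.

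\textbf{($\Rightarrow$)} Suppose $\chi$ is definable. Then for each $\alpha$ the presheaf $\bar{\chi}_{V_\alpha}$ on $\BB/q(V_\alpha)$ is representable; let $i_\alpha : U_\alpha \to V_\alpha$ be a cartesian lift of a representing object $q(i_\alpha) : q(U_\alpha) \to q(V_\alpha)$ (using that $\chi$ creates cartesian lifts to get a map in $\cart(\EE)$, not just in $\BB$). Now given any $V_\alpha$-small object $X$ with witnessing cartesian $f : X \to V_\alpha$, apply Lemma~\ref{lem:pblocrepr}: $q(f)^\ast(q(i_\alpha))$ represents $\bar{\chi}_{q(f)^\ast(V_\alpha)} = \bar{\chi}_X$. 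Spelling out what it means for a map $q(X) \to q(U_\alpha)$ over $q(V_\alpha)$ to be the same as an element of $\bar{\chi}_X(1_{q(X)})$ — i.e.\ a structure on $X$ — and checking naturality in $X$ (with respect to cartesian maps over $\BB$), one obtains exactly the stated correspondence.

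\textbf{($\Leftarrow$)} Conversely, suppose such maps $i_\alpha : U_\alpha \to V_\alpha$ exist with the stated natural correspondence. I would show $\bar{\chi}_X$ is representable for every $X$. Fix $X$, let $\alpha = \alpha(X)$ and $f : X \to V_\alpha$ the chosen witnessing cartesian map. For an arbitrary $\sigma : J \to q(X)$ in $\BB/q(X)$, the object $\sigma^\ast(X)$ is again $V_\alpha$-small (compose the cartesian map $\bar{\sigma}(X) : \sigma^\ast(X) \to X$ with $f$), so structures on $\sigma^\ast(X)$ correspond naturally to maps $J \to q(U_\alpha)$ factoring $q(\bar{\sigma}) \circ q(f)$ through $q(i_\alpha)$, which in turn correspond (by the universal property of the pullback $q(f)^\ast q(U_\alpha)$) to maps $\sigma \to q(f)^\ast(q(i_\alpha))$ in $\BB/q(X)$. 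Hence $\bar{\chi}_X \cong \yoneda(q(f)^\ast(q(i_\alpha)))$, so $\bar{\chi}_X$ is representable, and by Lemma~\ref{lem:partreprequiv} (equivalently Proposition~\ref{prop:repdefns}) $\chi$ is definable.

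\textbf{The main obstacle} I expect is bookkeeping around naturality rather than anything conceptually deep: the statement says ``natural correspondence'' without fixing the indexing category, and one has to pin down that it is naturality with respect to cartesian maps over $\BB$ between $V_\alpha$-small objects, and then verify that the chosen $\alpha(X)$'s and the smallness witnesses interact coherently — in particular that the correspondence for $X$ and for $\sigma^\ast(X)$ are compatible via $\bar{\sigma}(X)$. One mild subtlety worth flagging is that $i_\alpha$ is asked to be a \emph{cartesian} map $U_\alpha \to V_\alpha$ in $\EE$, so in the forward direction one must lift the representing object from $\BB/q(V_\alpha)$ to $\cart(\EE)$ using that $\chi$ creates cartesian lifts; this is exactly the step that makes Theorem~\ref{thm:ccltosplitting}-style reasoning relevant, and it is why the hypothesis ``creates cartesian lifts'' (built into being a notion of structure) is used rather than merely ``is a fibred functor.''
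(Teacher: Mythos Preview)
Your proposal is correct and essentially follows the paper's own argument: reduce definability to representability of each $\bar{\chi}_{V_\alpha}$ via Lemma~\ref{lem:partreprequiv}, then use that every object is $V_\alpha$-small to propagate. The paper invokes Lemma~\ref{lem:particalrightadjfib} directly (partial right adjoints are stable along cartesian maps) where you instead route through Lemma~\ref{lem:pblocrepr} (representing objects pull back), but these are two faces of the same fact once Lemma~\ref{lem:partreprequiv} is in hand. One small misattribution: to produce the cartesian map $i_\alpha : U_\alpha \to V_\alpha$ in $\EE$ over the representing map in $\BB$ you only need that $q$ is a fibration, not that $\chi$ creates cartesian lifts---the latter concerns lifting cartesian maps of $\EE$ to $\DD$, which is not what is happening here.
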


\begin{proof}
  By Lemma \ref{lem:particalrightadjfib}, and the assumption that
  every object is \(V_\alpha\)-small for some \(\alpha\), the right
  adjoint to \(\cart(\chi)\) is defined on all objects if and only if
  it is defined on each \(V_\alpha\). By Lemma \ref{lem:partreprequiv}
  this is the same as each presheaf \(\bar{\chi}_{V_\alpha}\) being
  representable. However, expanding out the definition, this is
  precisely saying there is a natural correspondence between
  structures on \(X\) and maps \(q(X) \to q(U_\alpha)\) factoring
  \(q(f)\) through \(q(i)\).
\end{proof}

Note in particular that for presheaf categories we can obtain explicit
descriptions of the \(U_\alpha\) by Theorem
\ref{thm:locrepinpshbase}.

By ``truncating'' the above theorem we get the following corollary.

\begin{cor}
  If \(U_\alpha\) is as in the statement of Theorem
  \ref{thm:locrepuniverses} then an object \(X\) is \(U_\alpha\)-small
  if and only if it is \(V_\alpha\)-small and admits at least one
  structure.
\end{cor}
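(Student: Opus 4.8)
The plan is to unwind the definitions and apply Theorem \ref{thm:locrepuniverses} in both directions. Recall that by construction $i : U_\alpha \to V_\alpha$ is a cartesian map, hence $q(i) : q(U_\alpha) \to q(V_\alpha)$ together with the object $U_\alpha$ exhibits $U_\alpha$ as $V_\alpha$-small. By the theorem, the section-like data classifying structures on $V_\alpha$-small objects is precisely a natural correspondence between structures on $X$ (for $f : X \to V_\alpha$ cartesian) and maps $q(X) \to q(U_\alpha)$ factoring $q(f)$ through $q(i)$. So I would first observe that for an object $X$, being $U_\alpha$-small means there is a cartesian map $g : X \to U_\alpha$; composing with $i$ gives a cartesian map $X \to V_\alpha$, so $X$ is $V_\alpha$-small, and $g$ lies over a map $q(X) \to q(U_\alpha)$ which by cartesianness factors $q(i \circ g) = q(i) \circ q(g)$, i.e.\ $g$ provides the factorisation data, which under the correspondence of Theorem \ref{thm:locrepuniverses} yields a structure on $X$. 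This gives the forward direction: $U_\alpha$-small implies $V_\alpha$-small and admits at least one structure.

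For the converse, suppose $X$ is $V_\alpha$-small, witnessed by a cartesian map $f : X \to V_\alpha$, and that $X$ admits a structure $s$. Under the natural correspondence of Theorem \ref{thm:locrepuniverses}, $s$ corresponds to a map $h : q(X) \to q(U_\alpha)$ with $q(i) \circ h = q(f)$. Since $q$ is a fibration, I can take a cartesian lift of $h$ with codomain $U_\alpha$, say $\bar{h} : h^\ast(U_\alpha) \to U_\alpha$. The composite $i \circ \bar h$ is then a cartesian map over $q(f)$ with codomain $V_\alpha$, and since $f$ is also cartesian over $q(f)$, the two objects $h^\ast(U_\alpha)$ and $X$ must be canonically isomorphic (by a vertical isomorphism induced from the universal property of cartesian lifts). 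Transporting $\bar h$ along this isomorphism gives a cartesian map $X \to U_\alpha$, so $X$ is $U_\alpha$-small.

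The main obstacle — really the only point requiring care — is the identification $h^\ast(U_\alpha) \cong X$ over $q(X)$: I need both $i \circ \bar h$ and $f$ to be cartesian over the same map $q(f) = q(i)\circ h$ in $\BB$, and then invoke the standard fact that any two cartesian lifts of the same map with the same codomain are connected by a unique vertical isomorphism. I should also check naturality, namely that the structure obtained from a cartesian $g : X \to U_\alpha$ in the forward direction and the cartesian map obtained from a structure in the converse direction are mutually inverse constructions; but this is immediate from the naturality clause in Theorem \ref{thm:locrepuniverses}, since both directions just chase the same bijection between structures and factorisations. So the corollary follows by simply truncating — forgetting the bijection and retaining only its non-emptiness — the content of Theorem \ref{thm:locrepuniverses}.
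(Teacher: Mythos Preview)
Your proposal is correct and takes the same approach as the paper, which in fact offers no proof beyond the remark that the corollary is obtained by ``truncating'' Theorem~\ref{thm:locrepuniverses}; you have simply spelled out what that truncation amounts to. The only superfluous part is the final paragraph about checking that the two constructions are mutually inverse: since the corollary asserts only an equivalence of \emph{existence} (of a cartesian map to \(U_\alpha\), and of a structure), you do not need the bijection to be recoverable, only that each side is nonempty iff the other is.
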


\section{Fibrewise definability}
\label{sec:fibr-defin}

There are different ways that a fibred awfs might be definable. We first
note that the most direct definition of definability automatically
holds in many situations:

\begin{thm}
  Let \(q : \EE \to \BB\) be a locally small fibration. Every fibred
  awfs over \(q\), regarded as a notion of structure on
  \(V(\EE) \to \BB\), is definable.
\end{thm}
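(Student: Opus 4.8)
The plan is to recognise the given fibred awfs as (the forgetful functor of) a fibred monad over \(\BB\) on the fibration \(V(\EE)\to\BB\), and then to invoke Theorem~\ref{thm:fibmonadlocrepr}. Write \(\chi\colon\FF\to V(\EE)\) for the forgetful functor of the awfs, so that by definition \(\chi\) is monadic and underlies a monad \(T\) over \(\EE\) on \(\cod\colon V(\EE)\to\EE\); by Proposition~\ref{prop:repdefns} it is enough to exhibit a right adjoint to \(\cart(\chi)\), where from here on \(\cart\) is taken with respect to the composite fibration \(V(\EE)\to\EE\to\BB\) (codomain followed by \(q\)).

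The first step is to promote \(T\) to a monad over \(\BB\). Since \(T\) commutes with \(\cod\) it commutes with the composite \(V(\EE)\to\EE\to\BB\), and since the unit and multiplication of \(T\) are vertical for \(\cod\) --- hence lie over identities of \(\EE\), hence over identities of \(\BB\) --- they are vertical for the composite as well. In particular, by Lemma~\ref{lem:algccl}, \(\chi\) genuinely is a notion of structure on \(V(\EE)\to\BB\). The crucial point is that this monad over \(\BB\) is \emph{fibred}, i.e.\ that \(T\) preserves maps cartesian over \(\BB\). Here I would use that \(\FF\to\EE\) is a fibration and \(\chi\) creates cartesian lifts over \(\EE\) (Lemma~\ref{lem:algccl}), and that \(\chi\), being monadic, reflects vertical isomorphisms and hence reflects cartesian maps over \(\EE\) (as in the proof that composition functors preserve cartesian maps); decomposing cartesianness over \(\BB\) into cartesianness over \(\EE\) together with cartesianness of the \(\EE\)-image over \(\BB\), it follows that a map of \(\FF\) is cartesian over \(\BB\) precisely when its \(\chi\)-image is. So \(\chi\) both preserves and reflects maps cartesian over \(\BB\); since the free-algebra functor \(F\) is left adjoint to \(\chi\) and \(T=\chi F\), the condition that the awfs be fibred --- that \(F\) preserve maps cartesian over \(\BB\) --- is then equivalent to \(T\) preserving such maps.

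It remains to check the hypotheses of Theorem~\ref{thm:fibmonadlocrepr} for the fibration \(V(\EE)\to\BB\). It is locally small: a hom-set \(V(\EE)_I(f,g)\) between vertical maps is a finite limit of hom-sets of \(\EE_I\) between their domains and codomains, so --- the Yoneda embedding preserving limits, and \(\BB\) being finitely complete, as in that theorem --- local smallness of \(q\) propagates to \(V(\EE)\to\BB\). Applying Theorem~\ref{thm:fibmonadlocrepr} to \(V(\EE)\to\BB\) and the fibred monad \(T\) shows that the forgetful functor \(\xalg{T}=\FF\to V(\EE)\), namely \(\chi\), is definable, which is exactly the claim.

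The step I expect to demand the most care is the middle one: keeping the two fibration structures on \(\FF\) and on \(V(\EE)\) --- one over \(\EE\), one over \(\BB\) --- cleanly separated, and verifying that \(\chi\) reflects maps cartesian over \(\BB\), so that fibredness of the awfs (a condition on the free functor \(F\)) really does translate into fibredness of the monad \(T\) over \(\BB\). By contrast, the composition functor of the awfs plays no role in the argument beyond guaranteeing that what we started with is genuinely an awfs.
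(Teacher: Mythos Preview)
Your proposal is correct and follows the same route as the paper, which simply cites Theorem~\ref{thm:fibmonadlocrepr} as a special case. You have in fact filled in the details the paper leaves implicit: that the monad \(T\) on \(V(\EE)\) is a monad over \(\BB\) with \(T\) preserving \(\BB\)-cartesian maps (via the decomposition of cartesianness for the composite fibration and the fibredness hypothesis on the free functor), and that \(V(\EE)\to\BB\) inherits local smallness; your observation that finite limits in \(\BB\) are tacitly needed is also on point.
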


\begin{proof}
  This is a special case of Theorem \ref{thm:fibmonadlocrepr}.
\end{proof}

Often this kind of definability is automatically true for cofibrantly
generated awfs's:
\begin{thm}
  Let \(q : \EE \to \BB\) be a locally small fibration and suppose
  \(\BB\) is locally cartesian closed. Every lifting notion of
  structure on \(q\) is definable.
\end{thm}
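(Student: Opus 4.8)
The plan is to reduce the statement to Theorem~\ref{thm:fibmonadlocrepr} by showing that a lifting notion of structure generated by a (vertical) map $m : A \to B$ in $\EE$ is, up to the technicalities of cartesian restriction, the category of algebras for a fibred pointed endofunctor over $\BB$ — or more precisely, is built as a limit of representable presheaves in a way parallel to the proof of that theorem. First I would recall the explicit description from the definition of the lifting notion of structure: an object over $f : X \to Y$ in $V(\EE)$ is a section of the canonical ``pullback--power'' map
\[
  \hom(B,X) \longrightarrow \hom(A,X) \times_{\hom(A,Y)} \hom(B,Y),
\]
all hom objects being formed in the locally small bifibration $p : \EE \to \BB$, which exist precisely because $p$ is locally small and $\BB$ is locally cartesian closed (we need the fibred pullback and cotensor structure to form the right-hand pullback internally). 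Thus the structure is the set of sections of a single map in $V(\EE)$ associated functorially to $f$.

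Next I would invoke Lemma~\ref{lem:sectlocrepr} in its fibrewise form: over each slice $\BB/I$, the notion of structure of sections on the codomain fibration of $\EE_I$ is definable, with the representing object being the section-map itself. So the remaining work is to check that the assignment sending $f$ to the pullback--power map is \emph{fibred}, i.e. commutes with reindexing along $\sigma : J \to I$ up to canonical isomorphism. This is where local smallness of $p$ and local cartesian closedness of $\BB$ do the real work: one needs $\sigma^\ast$ to commute with the internal homs $\hom(-,-)$ and with the pullback appearing in the codomain, which is exactly the Beck--Chevalley-type stability that a locally small bifibration over an lcc base enjoys (this is essentially the content of Lemma~\ref{lem:pblocrepr} applied to the section structure, combined with the observation that the pullback--power construction is assembled from reindexing-stable pieces). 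Granting this, the presheaf $\bar{\chi}_f$ is naturally isomorphic to the presheaf of sections of a fixed map over $p(f)$, hence representable, which is precisely definability.

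The main obstacle I expect is bookkeeping around the fact that we want definability as a notion of structure on $V(\EE) \to \BB$ (not over $\EE$), so we must be careful that the hom objects $\hom(A,X)$ etc., which a priori live over $\EE$, are reindexed along the correct base map — namely $p(f) \in \BB$ — and that the resulting section-presheaf on $\BB/p(f)$ is the right one. Concretely, $f : X \to Y$ vertical over $I = p(X) = p(Y)$, and the canonical map must be produced as a map in $V(\EE)$ over $I$, so that applying Lemma~\ref{lem:sectlocrepr} over $\BB/I$ yields a representing object living in $\BB/I$ as required. Once the fibredness of $f \mapsto (\text{pullback--power map})$ is nailed down, everything else is a direct appeal to Lemma~\ref{lem:sectlocrepr} and Lemma~\ref{lem:cclpbstab}; I would present the argument as: (1) exhibit the pullback--power map functorially; (2) check fibredness via reindexing-stability of internal homs and pullbacks; (3) identify $\bar{\chi}_f$ with the section-presheaf of that map and conclude by Lemma~\ref{lem:sectlocrepr}.
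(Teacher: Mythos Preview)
Your overall strategy---identify a lifting structure on \(f : X \to Y\) as a section of the pullback-power map \(p : \hom(B,X) \to \hom(m,f)\), then appeal to the definability of sections---is exactly the paper's, but there is a genuine gap in your step~(3). Lemma~\ref{lem:sectlocrepr} produces a representing object over the \emph{codomain} of the map in question, here \(\hom(m,f)\), whereas \(\bar{\chi}_f\) is a presheaf on \(\BB/J\) where \(J = q(Y)\). These bases are not the same: if \(m\) lives over \(I\), the pullback-hom \(\hom(m,f)\) lives over \(I \times J\), and for \(\sigma : K \to J\) a lifting structure on \(\sigma^\ast(f)\) is a section of the entire pulled-back map \(\sigma^\ast(p)\), i.e.\ a map \(\sigma^\ast(\hom(m,f)) \to \hom(B,X)\) over \(\hom(m,f)\), not a map \(K \to \hom(B,X)\). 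To turn this into something represented by an object of \(\BB/J\) you must pass across the adjunction \(h^\ast \dashv \prod_h\) for \(h : \hom(m,f) \to J\): the representing object is \(\prod_h p\). This dependent product is precisely where local cartesian closedness of \(\BB\) enters, and it is the step the paper writes out explicitly.

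Your diagnosis of where lcc-ness is used is therefore off. Stability of \(\hom(-,-)\) and of the fibred pullback under reindexing comes from local smallness alone (hom objects represent reindexing-stable presheaves by definition); lcc-ness is needed \emph{only} for \(\prod_h\). Relatedly, Lemma~\ref{lem:cclpbstab} does not help here: it pulls back a definable notion of structure along a fibred functor, but what you need is to push a representing object forward along \(h\), which is a different operation. Once you insert the \(\prod_h\) step, your argument becomes the paper's proof verbatim; without it, the identification ``\(\bar{\chi}_f \cong\) section-presheaf of \(p\)'' is simply false as a statement about presheaves on \(\BB/J\).
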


\newcommand{\sq}{\operatorname{Sq}}

\begin{proof}
  Write \(h\) for the composition of maps
  \begin{equation*}
    \hom(A, X) \times_{\hom(A, Y)} \hom(B, Y) \longrightarrow I \times
    J
    \longrightarrow J
  \end{equation*}
  and \(p\) for the canonical map \(\hom(B, Z) \to \hom(m, f)\).

  For any \(\sigma : K \to J\), we have the following commutative
  diagram.
  \begin{displaymath}
    \begin{tikzcd}
      \hom(B, \sigma^\ast(Y)) \ar[d, "p'"] \ar[r, "\cong"] &
      \sigma^\ast(\hom(B, Y)) \pbcorner \ar[r] \ar[d, "\sigma^\ast(p)"] &
      \hom(B, Y) \ar[d] \\
      \hom(m, \sigma^\ast(f)) \ar[r, "\cong"] &
      \sigma^\ast(\hom(m, f)) \ar[r] \ar[ur, dotted] &
      \hom(m, f)
    \end{tikzcd}
  \end{displaymath}
  Lifting structures on \(\sigma^\ast(f)\)
  correspond precisely to sections of \(p'\), which
  correspond precisely to maps
  \(\sigma^\ast(\hom(m, f)) \to \hom(B, Y)\) making commutative
  triangles, as in the dotted diagonal arrow above. However, such maps
  correspond precisely to commutative triangles of the form below.
  \begin{displaymath}
    \begin{tikzcd}
      K \ar[rr, dotted] \ar[dr, "\sigma"] & & \prod_h(\hom(B, Z))
      \ar[dl, "\prod_h p"] \\
      & J &
    \end{tikzcd}
  \end{displaymath}
  However, this verifies that \(\prod_h p\) is indeed the representing
  object required to show that lifting structures are definable.
\end{proof}

However, in practice for the semantics of type theory we are not so
much interested in the entire fibred awfs, but only the ordinary awfs
given by restriction to the terminal fibre \(\EE_1\). This awfs is not
necessarily definable, even if it is the restriction of a fibred awfs
that is definable. Hence in this paper we mainly consider the
following stronger notion, that we denote fibrewise definability.

\begin{defn}
  Suppose we are given a fibration \(q : \EE \to \BB\). A notion of
  structure on the fibration \(\cod : V(\EE) \to \EE\) is
  \emph{fibrewise definable} if for each \(I \in \BB\) the notion of
  structure on \(\EE_I^\to \to \EE_I\) given by restricting to the fibre category \(\EE_I\) is definable.
\end{defn}

This version of definability holds automatically for strongly fibred
awfs's, as shown below, but we will see some natural examples of
fibred awfs's where it does not.
\begin{thm}
  Let \(q : \EE \to \BB\) be a locally small fibration. Suppose that
  each fibre category \(\EE_I\) has dependent products. Then every
  strongly fibred awfs on \(q\) is fibrewise definable.
\end{thm}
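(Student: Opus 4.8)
The plan is to fix \(I \in \BB\) and show that the restriction of the awfs to the fibre \(\EE_I\) is definable as a notion of structure on \(\cod : \EE_I^\to \to \EE_I\). Write \(\chi : \FF \to V(\EE)\) for the underlying monadic notion of structure, \(L \dashv \chi\) for the free functor, and \(T = \chi L\) for the induced monad, which we may take to lie over \(\EE\) by Proposition \ref{prop:monadcatcatover}. First I would observe that restriction is harmless: since \(T\) lies over \(\EE\) it sends a vertical map with codomain in \(\EE_I\) to one of the same kind, so it restricts to a monad \(T_I\) on \(\EE_I^\to\); and since the fibre of \(\cod : V(\EE) \to \EE\) over an object \(Z \in \EE_I\) coincides with the fibre of \(\cod : \EE_I^\to \to \EE_I\) over \(Z\), one gets \(\FF_I \cong (\EE_I^\to)^{T_I}\), so the restriction \(\chi_I : \FF_I \to \EE_I^\to\) is again a monadic notion of structure on \(\cod : \EE_I^\to \to \EE_I\) (it creates cartesian lifts by Lemma \ref{lem:algccl}).

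The decisive step is to check that \(T_I\) is \emph{fibred} over \(\EE_I\), i.e.\ preserves the cartesian maps of \(\cod : \EE_I^\to \to \EE_I\). These are the pullback squares of \(\EE_I\), and — provided reindexing for \(q\) preserves pullbacks in the fibres, which holds e.g.\ if \(q\) is a bifibration — they coincide with the pullback squares of \(\EE\) whose corners lie in \(\EE_I\), i.e.\ with the \(\cod\)-cartesian maps of \(V(\EE)\) that lie in \(\EE_I^\to\). This is where \emph{strong} fibredness enters: by hypothesis \(L\) preserves maps cartesian over \(\EE\), and \(\chi\) preserves cartesian maps because it creates cartesian lifts, so \(T = \chi L\) preserves maps cartesian over \(\EE\); restricting along \(\EE_I^\to \hookrightarrow V(\EE)\) shows that \(T_I\) preserves the cartesian maps of \(\cod : \EE_I^\to \to \EE_I\). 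Mere fibredness would not suffice, as it only constrains maps cartesian over \(\BB\), which have no bearing on cartesian maps of \(\cod : \EE_I^\to \to \EE_I\). Then I would note that \(\cod : \EE_I^\to \to \EE_I\) is locally small — since \(\EE_I\) has dependent products and pullbacks, the presheaf of fibrewise maps \(f \to g\) for \(f,g \in \EE_I/Y\) is represented by \(\prod_f f^\ast g\) — and that each slice \(\EE_I/Y\) has finite limits (terminal object \(1_Y\), pullbacks inherited from \(\EE_I\)). Theorem \ref{thm:fibmonadlocrepr} then applies to the fibred monad \(T_I\) and tells us its forgetful functor \(\chi_I\) is definable. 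As \(I\) was arbitrary, the awfs is fibrewise definable.

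I expect the main obstacle to be the decisive step — showing \(T_I\) is fibred over \(\EE_I\) — and within it, not the use of the strongly fibred hypothesis, which is routine once everything is set up, but the bookkeeping identifying the cartesian maps of \(\cod : \EE_I^\to \to \EE_I\) with a restriction of the \(\cod\)-cartesian maps of \(V(\EE)\). Pullbacks formed inside a fibre need not agree with pullbacks formed in \(\EE\), so one must use good behaviour of the ambient fibration (e.g.\ that it is a bifibration) to push the strong fibredness of the awfs down to each fibre; with that in hand the remainder is a direct appeal to Theorem \ref{thm:fibmonadlocrepr}.
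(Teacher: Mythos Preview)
Your proposal is correct and takes the same approach as the paper's one-line proof, which simply invokes Theorem \ref{thm:fibmonadlocrepr}; you have spelled out the details the paper leaves implicit, including the verification that the restricted monad is fibred over \(\EE_I\) and that \(\cod : \EE_I^\to \to \EE_I\) is locally small. The bookkeeping point you flag about identifying fibrewise pullbacks with \(\cod\)-cartesian maps of \(V(\EE)\) is a reasonable caution that the paper glosses over, but in the intended context (where \(V(\EE) \to \EE\) is a fibration with cartesian lifts computed as fibrewise pullbacks) it dissolves.
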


\begin{proof}
  We again use Theorem \ref{thm:fibmonadlocrepr}.
\end{proof}

\section{Tiny Objects and Definable Awfs's}
\label{sec:tiny-objects-locally}

Throughout this section we will assume that the Grothendieck fibration
$q : \EE \to \BB$ is locally small and that $\BB$ has all finite
limits. We write $\hom(A, X)$ for the hom objects, following the
convention that $A$ and $X$ can lie in different fibres. If $A$ is in
the fibre over $I$ and $X$ in the fibre over $J$, then $\hom(A, X)$ is
isomorphic to $\hom_{I \times J}(\pi_0^\ast(A), \pi_1^\ast(X))$, where
$\pi_i$ are the projection maps out of $I \times J$. In particular,
when $J$ is the terminal object, $\hom(A, X)$ is the same as
$\hom_I(A, I^\ast(X))$.

We will give a sufficient criterion for a lifting notion of structure
to be fibrewise definable, based on a definition of \emph{family of
  tiny objects}. The argument is based on existing
constructions in cubical sets \cite{lops, awodey19}, but is more
general in three respects:
\begin{enumerate}
\item By working in an arbitrary Grothendieck fibration we can not
  only use objects that are tiny in the internal sense, that
  exponentiation has a right adjoint, but by applying the the result
  to set indexed families, we can also use objects that are tiny in
  the external sense, that their hom set functor has a right adjoint.
\item Instead of focusing on the particular definition of Kan
  fibrations, we consider cofibrantly generated fibred awfs's more
  generally. It turns out that for our sufficient criterion to apply,
  only the codomain of the generating family of left maps matters.
\item We consider not just individual tiny objects in a category, but
  \emph{families} of tiny objects. Whereas the definition of Kan
  fibration only features one tiny object, the interval, we will also
  see in Example \ref{ex:natmodelex} a definable awfs using the fact
  that the ``family of all identity types'' in a natural model can be
  seen as a family of tiny objects.
\end{enumerate}

\subsection{Tiny families of objects}
\label{sec:tiny-famil-objects}

\begin{defn}
  Let $I$ and $J$ be a elements of $\BB$ and $B$ an object of
  $\EE_I$. We say $B$ is \emph{tiny relative to $J$} if the functor
  $\EE_J \to \BB / (I \times J)$ defined as $\hom(B, -)$ has a right
  adjoint. We say it is a \emph{tiny family of objects} if it is tiny
  relative to \(J\) for all \(J\).
\end{defn}

\begin{example}
  Suppose that $\EE \to \BB$ is the fibration of set indexed families
  on a category $\mathbb{C}$. Then a family $(B_i)_{i \in I}$ is tiny
  relative to $J$ (for any $J$) if and only if each object $B_i$ is
  externally tiny, i.e. the hom set functor \(\hom(B_i, -)\) has a
  right adjoint. If $\mathcal{C}$ is a presheaf category then the
  tiny objects are precisely retracts of representables.
\end{example}

\begin{example}
  Suppose that $\EE \to \BB$ is the fibration of category indexed
  families on a category $\mathbb{C}$. Then a diagram
  $D : \mathcal{I} \to \mathbb{C}$ is tiny relative to $\mathcal{J}$
  when the functor
  $(D \downarrow -) : \mathbb{C}^\mathcal{J} \to \cats/(\mathcal{I}
  \times \mathcal{J})$ has a right adjoint.
\end{example}

We can give the following sufficient criterion for diagrams of
presheaves to be tiny over category indexed families.
\begin{lemma}
  Suppose that $\EE \to \BB$ is the fibration of category indexed
  families on a presheaf category $\psh{C}$ and we are given a diagram
  $D : \mathcal{I} \to \mathcal{C}$. Then $\yoneda \circ D$ is tiny
  relative to $\mathcal{J}$ for any small category $\mathcal{J}$.
\end{lemma}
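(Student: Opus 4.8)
The plan is to construct the right adjoint to the functor $F := (\yoneda \circ D \downarrow -) \colon (\psh{C})^{\mathcal{J}} \to \cats/(\mathcal{I}\times\mathcal{J})$ explicitly. First I would unwind $F$: by the Yoneda lemma an object of $F(E) = (\yoneda\circ D \downarrow E)$ is a triple $(i,j,x)$ with $i \in \mathcal{I}$, $j \in \mathcal{J}$ and $x \in E(j)(D(i))$, a morphism $(i,j,x) \to (i',j',x')$ is a pair $(\alpha \colon i \to i',\, \gamma \colon j \to j')$ with $E(\gamma)_{D(i)}(x) = E(j')(D(\alpha))(x')$, and $F(E) \to \mathcal{I}\times\mathcal{J}$ is $(i,j,x) \mapsto (i,j)$. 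Identifying $(\psh{C})^{\mathcal{J}}$ with the presheaf category $\sets^{\mathcal{C}^{\op}\times\mathcal{J}}$ and writing $\yoneda(c,j)$ for the representable at $(c,j)$ --- concretely the diagram $j' \mapsto \coprod_{\mathcal{J}(j,j')}\yoneda(c)$ --- the key computation is that $F$ sends representables to products of comma categories:
\[
  F(\yoneda(c,j)) \;\cong\; (D \downarrow c) \times (j \downarrow \mathcal{J}),
\]
an object of $\cats/(\mathcal{I}\times\mathcal{J})$ via the projection $(D \downarrow c) \to \mathcal{I}$ on the first factor and $(j \downarrow \mathcal{J}) \to \mathcal{J}$ on the second. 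This is a direct unravelling of the description of $F$ just given.

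Guided by this I would put
\[
  R(P)(j)(c) \;:=\; \cats/(\mathcal{I}\times\mathcal{J})\big((D \downarrow c) \times (j \downarrow \mathcal{J}),\ P\big),
\]
made functorial in $c \in \mathcal{C}$ through the functors $(D \downarrow c) \to (D \downarrow c')$ and in $j \in \mathcal{J}$ through the functors $(j' \downarrow \mathcal{J}) \to (j \downarrow \mathcal{J})$; a short check shows this is an object of $(\psh{C})^{\mathcal{J}}$, and since $\mathcal{I}$, $\mathcal{J}$, $\mathcal{C}$ and every object of $\cats$ are small there is no size problem. To see that $F \dashv R$ I would show that $F$ preserves all small colimits and then appeal to density. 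Colimits in $(\psh{C})^{\mathcal{J}} \simeq \sets^{\mathcal{C}^{\op}\times\mathcal{J}}$ are computed pointwise in $\sets$, and colimits in $\cats/(\mathcal{I}\times\mathcal{J})$ are created by the forgetful functor to $\cats$, where the object set of a colimit is the colimit of the object sets; so the object set of $F(E)$, which is $\coprod_{i,j} E(j)(D(i))$, is cocontinuous in $E$ as a coproduct of evaluation functors, and a short argument handles the morphisms. (Alternatively $F$ factors as the equivalence $(\psh{C})^{\mathcal{J}} \simeq \sets^{\mathcal{C}^{\op}\times\mathcal{J}}$, restriction along $D^{\op}\times\mathrm{id}_{\mathcal{J}}$, and the functor sending a $\sets$-valued functor on $\mathcal{I}^{\op}\times\mathcal{J}$ to its category of elements over $\mathcal{I}\times\mathcal{J}$; the first two are manifestly cocontinuous and the last is the point at issue.) Since the representables generate $\sets^{\mathcal{C}^{\op}\times\mathcal{J}}$ under colimits and $\cats/(\mathcal{I}\times\mathcal{J})$ is cocomplete, $F$ is then the cocontinuous extension of $(c,j) \mapsto (D \downarrow c)\times(j \downarrow \mathcal{J})$ along the Yoneda embedding, so it has a right adjoint, given on objects by $P \mapsto \big((j,c) \mapsto \cats/(\mathcal{I}\times\mathcal{J})(F(\yoneda(c,j)), P)\big) = R(P)$, as wanted.

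The step I expect to be the real work is the verification that $F$ preserves colimits, particularly coequalizers (equivalently, that the category-of-elements functor in the factorisation above preserves colimits), and this is precisely where representability of $\yoneda \circ D$ enters: for a diagram $D \colon \mathcal{I} \to \psh{C}$ that is not levelwise a retract of a representable the functor $(D \downarrow -)$ already fails to preserve coproducts, and the claimed tininess fails with it. Everything else --- functoriality of $R$, the computation of $F$ on representables, cocompleteness of $\cats/(\mathcal{I}\times\mathcal{J})$ (a slice of the locally presentable category $\cats$), and the absence of size issues --- is routine.
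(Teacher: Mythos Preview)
Your proposal is correct and lands on the same right adjoint as the paper, but the route is organised differently. The paper simply observes that $(\yoneda\circ D \downarrow \yoneda c) \cong (D\downarrow c)$ by Yoneda, writes down what the right adjoint $G$ must therefore be on an object, namely $G(P)(c) = \hom_{\cats/\mathcal{I}}((D\downarrow c),P)$, and then says ``one can check that this does indeed give a right adjoint''; moreover the paper's written argument is phrased for the target $\cats/\mathcal{I}$, i.e.\ effectively the case $\mathcal{J}=1$, leaving the general $\mathcal{J}$ implicit. You instead identify $(\psh{C})^{\mathcal{J}}$ with $\sets^{\mathcal{C}^{\op}\times\mathcal{J}}$, compute $F$ on representables to get $(D\downarrow c)\times(j\downarrow\mathcal{J})$, and argue via cocontinuity of $F$ plus density of representables that the nerve functor $P\mapsto \hom(F(\yoneda(c,j)),P)$ is the right adjoint. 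Your factorisation of $F$ through restriction along $D^{\op}\times\mathrm{id}$ and the category-of-elements functor is exactly the right way to isolate where cocontinuity needs checking, and that step is indeed standard: the category-of-elements functor $\int:\sets^{\mathcal{A}}\to\cats/\mathcal{A}$ is a left adjoint, with right adjoint sending $p$ to $a\mapsto\hom_{\cats/\mathcal{A}}(a/\mathcal{A},p)$. So what the paper leaves as ``one can check'' you have essentially supplied, and in addition you treat general $\mathcal{J}$ explicitly. The trade-off is that the paper's direct verification, once carried out, is slightly shorter than assembling the nerve--realisation machinery; your approach makes clearer why the argument works and where representability of $\yoneda\circ D$ is used.
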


\begin{proof}
  Given a representable $\yoneda c$ in $\psh{C}$, we see by the Yoneda
  lemma that
  $(\yoneda \circ D \downarrow \yoneda c) \cong (D \downarrow
  c)$. Hence if a right adjoint $G : \cats/\mathcal{I} \to \psh{C}$
  exists at all, we must have that
  $G(\mathcal{J} \to \mathcal{I})(c)$ is the set of
  functors $(D \downarrow c) \to \mathcal{J}$ over $\mathcal{I}$, with
  the action on morphisms given by composition. One can check that
  this does indeed give a right adjoint.
\end{proof}

\begin{example}
  \label{ex:codtiny}
  If $\BB^\to \to \BB$ is a codomain fibration, and $B$ is tiny in the
  internal sense i.e. $(-)^B$ has a right adjoint, then the family
  $B \to 1$ is tiny relative to $1$.
\end{example}

Example \ref{ex:codtiny} is most useful when combined with the results
below.

\begin{lemma}
  Suppose that $\BB$ is locally cartesian closed. Suppose that
  $B \in \EE$ is tiny relative to $J$ and we are given a cartesian map
  $B' \to B$ over $\sigma : I \to K$, say. Then $B'$ is also tiny
  relative to $J$.
\end{lemma}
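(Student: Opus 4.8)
The plan is to exploit the fact that $\hom(B', -)$ can be computed in terms of $\hom(B, -)$ together with pullback along $\sigma$, and then assemble the right adjoint from the right adjoint witnessing tininess of $B$ together with a dependent product.

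More precisely: write $G : \EE_J \to \BB/(K \times J)$ for $\hom(B, -)$, which by hypothesis has a right adjoint $R$. Since $B' \to B$ is cartesian over $\sigma : I \to K$, for any $X \in \EE_J$ there is a natural isomorphism $\hom(B', X) \cong (\sigma \times 1_J)^\ast \hom(B, X)$ as objects of $\BB/(I \times J)$; this is just the statement that reindexing commutes with forming hom objects, using that cartesian maps are preserved by $\hom(-, X)$ in the contravariant argument in the appropriate sense. So the functor $\hom(B', -) : \EE_J \to \BB/(I\times J)$ factors as $(\sigma\times 1_J)^\ast \circ \hom(B, -)$. First I would make this factorisation precise and check its naturality in $X$.

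The key step is then: a composite of two functors each of which has a right adjoint again has a right adjoint. The functor $\hom(B, -) = G$ has right adjoint $R$ by assumption. The functor $(\sigma \times 1_J)^\ast : \BB/(K\times J) \to \BB/(I \times J)$ is pullback along a fixed map, and since $\BB$ is locally cartesian closed this has a right adjoint, namely the dependent product $\prod_{\sigma \times 1_J}$. Hence $\hom(B', -) \cong (\sigma\times 1_J)^\ast \circ G$ has right adjoint $R \circ \prod_{\sigma\times 1_J} : \BB/(I\times J) \to \EE_J$, so $B'$ is tiny relative to $J$. Since $J$ was arbitrary, if one wants the conclusion stated for all $J$ simultaneously this is immediate; as stated the lemma fixes $J$, so we are done once we have the factorisation.

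The main obstacle is getting the base-change isomorphism $\hom(B', X) \cong (\sigma\times 1_J)^\ast\hom(B, X)$ correctly, including bookkeeping of which product of indices everything lives over. Because $B'$ lies over $I$ and $B$ over $K$ with $B' \to B$ cartesian over $\sigma$, one uses that hom objects are stable under reindexing in the contravariant variable: concretely $\hom(B', X) = \hom_{I\times J}(\pi_0^\ast B', \pi_1^\ast X)$ and $\hom(B, X) = \hom_{K\times J}(\pi_0^\ast B, \pi_1^\ast X)$, and pulling the latter back along $\sigma \times 1_J : I \times J \to K \times J$ recovers the former because $(\sigma\times 1_J)^\ast \pi_0^\ast B \cong \pi_0^\ast \sigma^\ast B \cong \pi_0^\ast B'$ (the last using cartesianness of $B' \to B$) and $(\sigma\times 1_J)^\ast \pi_1^\ast X \cong \pi_1^\ast X$. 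Everything else is the formal observation that adjoints compose.
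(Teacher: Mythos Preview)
Your proposal is correct and follows essentially the same approach as the paper: factor $\hom(B',-)$ as $(\sigma\times 1_J)^\ast \circ \hom(B,-)$ using the base-change isomorphism coming from cartesianness of $B'\to B$, and then compose the right adjoint $R$ witnessing tininess of $B$ with the dependent product $\prod_{\sigma\times 1_J}$ coming from local cartesian closedness. The paper's proof is terser (it just asserts the pullback square and the factorisation), whereas you spell out the bookkeeping over $I\times J$ and $K\times J$ explicitly, but the argument is the same.
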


\begin{proof}
  For each $X \in \EE_J$ we have the canonical pullback square
  below.
  \begin{equation*}
    \begin{tikzcd}
      \hom(B', X) \ar[r] \ar[d] \ar[dr, phantom, very near start,
      "\lrcorner"] & \hom(B, X) \ar[d] \\
      I \ar[r, "\sigma"] & J
    \end{tikzcd}
  \end{equation*}
  Hence we can factor the functor $\hom(B', -)$ as
  $\hom(B, -) : \mathbb{E}_1 \to \mathbb{B}/J$ followed by the functor
  $\sigma^\ast : \mathbb{B}/J \to \mathbb{B}/I$ given by pullback. The
  former has a right adjoint by the assumption that $B$ is tiny, and
  the latter by the assumption that $\mathbb{B}$ is locally cartesian
  closed.
\end{proof}

\begin{thm}[Freyd--Yetter]
  Suppose that an object $X \to 1$ is tiny relative to $1$ over a
  codomain fibration on a locally cartesian closed category with a
  classifier for regular monomorphisms.\footnote{Such categories are
    sometimes referred to as \emph{quasitoposes}.} Suppose further
  that the map $X \to 1$ is a regular epimorphism. Then $X$ is tiny
  relative to $J$ for any object $J$.
\end{thm}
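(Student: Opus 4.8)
The plan is to reduce the statement over an arbitrary $J$ to the case $J = 1$, which is exactly the hypothesis that $X$ is tiny relative to $1$, by a descent argument along the regular epimorphism $X \to 1$. First I would unwind what has to be shown. Write $q \colon J \times X \to J$ for the pullback of $X \to 1$ along $J \to 1$; this is the structure map of the object $J^\ast X$ of $\EE_J = \BB/J$, and from the standard formula $f^{J^\ast X} = \prod_q q^\ast f$ we get that $\hom(X,-) = (-)^{J^\ast X} = \prod_q \circ q^\ast$ as an endofunctor of $\BB/J$. Since $q^\ast \dashv \prod_q$, the object $X$ is tiny relative to $J$ if and only if $\prod_q \colon \BB/(J\times X) \to \BB/J$ has a right adjoint. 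For $J = 1$ this says precisely that $\prod_{X\to 1}$ has a right adjoint, which (again because $X^\ast \dashv \prod_{X\to1}$ and $(-)^X = \prod_{X\to 1}\circ X^\ast$) is equivalent to $(-)^X$ having one. So everything comes down to propagating the property ``$\prod_{(-)}$ has a right adjoint'' from the map $X \to 1$ to its pullback $q$.

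The key input is that $X \to 1$ is a regular epimorphism. I would first establish, as a preliminary lemma, that in a quasitopos with a classifier for regular monomorphisms every regular epimorphism is a pullback-stable effective descent morphism; this should be the one place where the quasitopos hypothesis and the regular mono classifier are genuinely used, a quasitopos not being Barr-exact. Granting this, $X \to 1$ presents $\BB$ as the category of descent data for the \v{C}ech nerve of $X \to 1$, and $q$, being a pullback of $X \to 1$ and hence again a pullback-stable regular epimorphism, presents $\BB/J$ as the category of descent data for the \v{C}ech nerve of $q$, i.e. the truncated simplicial object of iterated fibre powers of $J \times X$ over $J$. The projections $J \times X \to X$ induce a morphism from the second descent diagram to the first, and a direct check shows this morphism is degreewise a pullback square of iterated fibre powers; Beck--Chevalley for dependent products over these pullback squares then makes the $\prod$-functors occurring in the descent presentation of $\prod_q$ compatible with those occurring in the descent presentation of $\prod_{X\to1}$.

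With this machinery in place I would construct the right adjoint to $\prod_q$ by transporting the given right adjoint to $\prod_{X \to 1}$ (equivalently, to $(-)^X$) through the two descent presentations: define the candidate right adjoint on underlying objects over $J \times X$ from the right adjoint over $1$ together with pullback along the appropriate projections; check, using the Beck--Chevalley isomorphisms above, that it carries gluing data to gluing data and preserves the cocycle condition, hence descends to a functor $\BB/J \to \BB/(J\times X)$; and finally verify the triangle identities, which reduce to those of the adjunction over $1$ together with the coherence of the Beck--Chevalley isomorphisms. The earlier lemma that reindexing preserves tininess relative to a fixed object already supplies the ``degree one'' ingredient, namely that $J^\ast X$ is tiny relative to $1$; the content of the descent argument is exactly to upgrade this to tininess relative to $J$.

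The main obstacle I expect is this last assembly step: checking that the functor defined on underlying objects genuinely respects the gluing isomorphisms and the cocycle condition, and that the unit and counit obtained this way satisfy the triangle identities rather than merely the identities over $1$. A secondary point that needs a self-contained argument is the supporting descent lemma for quasitoposes, since effective descent for regular epimorphisms in this generality is not part of the standard topos-theoretic toolkit and is precisely where the classifier for regular monomorphisms earns its place in the hypotheses.
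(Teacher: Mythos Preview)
The paper does not supply its own proof; it cites Yetter's Theorem~1.4 and records two adaptations (the necessity of the regular-epimorphism hypothesis, from Yetter's erratum, and that a regular-mono classifier suffices in place of a full subobject classifier). So there is no detailed argument in the paper to compare against, and your descent-based proposal must be judged on its own terms.

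The central gap is in your opening reduction. You assert that $X$ is tiny relative to $J$ if and only if $\prod_q$ has a right adjoint, and in particular that the hypothesis ``$(-)^X$ has a right adjoint'' is equivalent to ``$\prod_{X\to 1}$ has a right adjoint.'' The easy direction is that a right adjoint to $\prod_q$ yields one for $\prod_q \circ q^\ast = (-)^{J^\ast X}$. But the direction you actually need at $J = 1$, to get the descent argument off the ground, is the converse: from a right adjoint to the composite $\prod_p \circ p^\ast$ you want one for $\prod_p$ alone, and this does not follow formally from the adjunction $p^\ast \dashv \prod_p$ that you invoke. This implication is plausibly exactly where the regular-epimorphism hypothesis and the regular-mono classifier should do their work, yet your proposal treats it as a triviality. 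Without it, your descent machinery has nothing to propagate: you have not extracted from the hypothesis any statement about $\prod_p$ by itself.

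The descent transport is the second gap, and you rightly flag it as the main obstacle. Even granting effective descent for regular epimorphisms in a quasitopos, it is not evident that a right adjoint to $\prod_p$ can be pushed through the two descent presentations to yield one for $\prod_q$: the Beck--Chevalley isomorphisms you cite relate pullback with $\prod$, but the functor you are trying to construct is a \emph{further} right adjoint, on the other side of $\prod$, and there is no Beck--Chevalley condition available at that level. Making this step precise would require substantial additional work, likely comparable in difficulty to Yetter's direct argument via the classifier.
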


\begin{proof}
  Essentially this is \cite[Theorem 1.4]{yetter} aside from some
  rephrasing and the observations that the regular epimorphism
  condition is necessary\footnote{This was later noted by Yetter in an
    erratum to \cite{yetter}.} and that a classifier for regular
  monomorphisms suffices for the proof in place of a subobject
  classifier.
\end{proof}

It is well known that if the product functor \( - \times c\) is
defined, then the representable \(\yoneda(c)\) is internally
tiny. This was generalised to tiny families by Newstead
\cite[Section 3.3]{newsteadthesis}.

\begin{thm}[Newstead]
  \label{thm:reprtotiny}
  We work over the codomain fibration on a presheaf category
  \(\psh{C}\). If \(f : X \to I\) is representable as a map of
  presheaves, then it is tiny relative to \(J\) for all \(J\).
\end{thm}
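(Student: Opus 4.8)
The plan is to transport the functor $\hom(f,-)$ through the standard equivalences $\BB/K \simeq \sets^{\operatorname{el}(K)^{\op}}$, where $\operatorname{el}(K) = (\yoneda \downarrow K)$ is the category of elements, and to recognise it there as a restriction functor, for which a right adjoint is automatic.

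Fix $J \in \BB$; we must construct a right adjoint to $\hom(f,-) \colon \EE_J \to \BB/(I \times J)$, where $\EE_J = \BB/J$. Unwinding the convention for $\hom$, for $g \colon Z \to J$ in $\EE_J$ and an element of $(I \times J)(c)$ --- that is, a pair $i \colon \yoneda(c) \to I$ and $j \colon \yoneda(c) \to J$ --- the fibre of $\hom(f,g) \to I \times J$ over $(i,j)$ is the hom-set, taken in $\BB/\yoneda(c)$, from $i^\ast(f)$ to $j^\ast(g)$. The essential input is that, since $f$ is representable, the domain $\yoneda(c) \times_I X$ of $i^\ast(f)$ is a representable presheaf; choose a representing object $\phi(c,i)$ and let $\psi_{(c,i)} \colon \phi(c,i) \to c$ be the map that the Yoneda lemma extracts from the projection $\yoneda(\phi(c,i)) \cong \yoneda(c) \times_I X \to \yoneda(c)$. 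The pasting lemma for pullbacks together with full faithfulness of $\yoneda$ upgrades these choices to a functor $\phi \colon \operatorname{el}(I) \to \mathcal{C}$ equipped with a natural transformation $\psi$ to the canonical projection $\operatorname{el}(I) \to \mathcal{C}$.

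I would then combine $\phi$ and $\psi$ with the fact that $\operatorname{el}(J) \to \mathcal{C}$ is a discrete fibration --- so that elements of $J$ can be transported along maps of $\mathcal{C}$ --- to obtain a functor $\Phi \colon \operatorname{el}(I \times J) \to \operatorname{el}(J)$, using the identification $\operatorname{el}(I \times J) \cong \operatorname{el}(I) \times_{\mathcal{C}} \operatorname{el}(J)$; on objects, $\Phi(c,i,j) = \big(\phi(c,i),\, J(\psi_{(c,i)})(j)\big)$. The remaining claim is that, under the equivalences above, $\hom(f,-)$ corresponds to the restriction functor $\Phi^\ast$. This is checked fibrewise: the fibre of $\hom(f,g)$ at $(c,i,j)$ is $\hom_{\yoneda(c)}(i^\ast(f), j^\ast(g))$, and rewriting the domain of $i^\ast(f)$ as $\yoneda(\phi(c,i))$ over $\yoneda(c)$ via $\psi_{(c,i)}$ and then applying change of base along $\psi_{(c,i)}$ and the Yoneda lemma identifies this with the fibre of $g$ over $J(\psi_{(c,i)})(j) \in J(\phi(c,i))$, i.e.\ with the value at $\Phi(c,i,j)$ of the presheaf corresponding to $g$. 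Granting this, $\hom(f,-) \cong \Phi^\ast$, which has right adjoint $\operatorname{Ran}_{\Phi}$; since $J$ was arbitrary, $f$ is tiny relative to every $J$.

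The real work --- and the only place where care is genuinely needed --- is promoting the fibrewise matching just sketched to an honest natural isomorphism $\hom(f,-) \cong \Phi^\ast$, natural both in the chosen element of $I \times J$ and in $g$, and, relatedly, verifying that the chosen representing objects cohere into a genuine functor $\Phi$ rather than merely a pseudofunctor; these are routine bookkeeping tasks of exactly the sort that the paper's ``cloven'' conventions are designed to handle. The case $J = 1$ serves as a transparent sanity check: there $\operatorname{el}(I \times 1) = \operatorname{el}(I)$ and $\Phi = \phi$, so the claim reduces to the statement that the presheaf on $\operatorname{el}(I)$ whose value at $(c,i)$ is the set of maps $\yoneda(c) \times_I X \to Z$ --- equivalently $Z(\phi(c,i))$ --- is simply $\phi^\ast Z$.
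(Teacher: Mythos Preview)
Your proposal is correct and follows essentially the same route as the paper: both construct a functor from the category of elements of $I$ to $\mathcal{C}$ using representability of $f$, lift it to a functor $\Phi$ (the paper's $F_J$) between categories of elements, identify $\hom(f,-)$ with the restriction functor $\Phi^\ast$ under the equivalences $\BB/K \simeq \psh{\int_{\mathcal{C}} K}$, and conclude via right Kan extension. You are in fact slightly more careful than the paper, which writes $F_J(c,i,j) = (F(c,i), j)$ without explicitly noting that $j$ must be transported along the map $F(c,i) \to c$, exactly as you spell out with your $\psi$.
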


\begin{proof}
  Note that if \(f\) is representable, then we have a functor
  \(F : \int_{\mathcal{C}} I \to \mathcal{C}\) such that for all
  \((c, i) \in \int_{\mathcal{C}} I\) we have a pullback diagram of
  the form below:
  \begin{displaymath}
    \begin{tikzcd}
      \yoneda (F(c, i)) \pbcorner \ar[r] \ar[d] & X \ar[d] \\
      \yoneda c \ar[r] & I
    \end{tikzcd}
  \end{displaymath}

  For each \(J\), we can show that \(\hom(X, -)\) is isomorphic to a
  functor obtained by reindexing along the functor
  \(F_J : \int_\mathcal{C} I \times J \to \int_C J\) defined by
  \(F_J(c, i, j) := (F(c, i), j)\): for any \(c \in \mathcal{C}\) maps
  \(\yoneda c \to \hom_{I \times J}(J^\ast(X), I^\ast(Y))\) correspond,
  by the the adjunction between local exponentials and pullback, to
  maps \(\yoneda c \times_I X \to Y\), which correspond precisely to
  elements of \(Y(F(c, i))\) in the fibre of \(j\).
  \begin{displaymath}
    \begin{tikzcd}[column sep=8em]
      \psh{C}/J \ar[r, "{\hom_{I \times J}(J^\ast(X), I^\ast(-))}"]
      \ar[d, "\simeq"] \ar[dr, phantom, "\cong"]
      & \psh{C}/(I \times J) \ar[d,
      "\simeq"] \\
      \psh{\int_{{\mathcal{C}}} J} \ar[r, "F_J^\ast"] &
      \psh{\int_{\mathcal{C}} I \times J}
    \end{tikzcd}
  \end{displaymath}
  However, \(F_J^\ast\) has a right adjoint given by right Kan
  extension, so we are done.
\end{proof}

\begin{rmk}
  Newstead also showed a converse statement when the small category
  \(\mathcal{C}\) is Cauchy complete and has finite products. In this
  case every tiny family of objects is a representable map.
\end{rmk}

\begin{cor}
  Suppose that for an object \(c\) of a small category
  \(\mathcal{C}\), the product functor \(- \times c\) exists. Then
  the representable object \(\yoneda c\) is tiny as an object in
  presheaves.
\end{cor}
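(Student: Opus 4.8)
The plan is to deduce this directly from Theorem~\ref{thm:reprtotiny}, working over the codomain fibration on $\psh{C}$ and taking the map $f$ there to be $\yoneda c \to 1$, where $1$ is the terminal presheaf. First I would unwind what ``tiny as an object in presheaves'' means here. By the hom-object conventions fixed at the start of Section~\ref{sec:tiny-objects-locally}, with $I = J = 1$, the hom object $\hom(\yoneda c, X)$ of a presheaf $X$ is just the exponential $X^{\yoneda c}$, so the functor $\hom(\yoneda c, -) : \EE_1 \to \BB/(1 \times 1)$ appearing in the definition of tininess is precisely $(-)^{\yoneda c} : \psh{C} \to \psh{C}$. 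Hence ``$\yoneda c \to 1$ is tiny relative to $1$ over the codomain fibration'' is the same assertion as ``$(-)^{\yoneda c}$ has a right adjoint'', i.e. $\yoneda c$ is tiny in the usual internal sense; this also follows from Example~\ref{ex:codtiny} (together with the converse, which is immediate since the two functors literally coincide when $I = J = 1$). So it suffices to check the hypothesis of Theorem~\ref{thm:reprtotiny}, namely that $\yoneda c \to 1$ is representable as a map of presheaves.

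To verify this, I would recall that representability of $f : X \to I$ amounts to exhibiting a functor $F : \int_{\mathcal{C}} I \to \mathcal{C}$ together with the pullback squares described in the proof of Theorem~\ref{thm:reprtotiny}; equivalently, every pullback of $f$ along a map $\yoneda d \to I$ out of a representable must again be representable. In our case $I = 1$, so $\int_{\mathcal{C}} 1 \simeq \mathcal{C}$ and there is a unique map $\yoneda d \to 1$ for each object $d$ of $\mathcal{C}$; the pullback of $\yoneda c \to 1$ along it is simply the product $\yoneda d \times \yoneda c$. Since the product functor $- \times c$ is assumed to exist, $d \times c$ exists in $\mathcal{C}$ for every $d$, and the Yoneda embedding preserves products that exist, so $\yoneda d \times \yoneda c \cong \yoneda(d \times c)$ is representable. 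Taking $F(d) := d \times c$ packages these isomorphisms into the required functor, so $\yoneda c \to 1$ is indeed a representable map.

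Applying Theorem~\ref{thm:reprtotiny} then gives that $\yoneda c \to 1$ is tiny relative to $J$ for every $J$, and in particular relative to $1$, which by the first paragraph is exactly the claim that $\yoneda c$ is tiny as an object in presheaves. I do not expect any real obstacle here: the argument is entirely a matter of bookkeeping, and the only points needing a little care are the identification of ``tiny relative to $1$ over the codomain fibration'' with internal tininess, and the standard fact that representability of a map of presheaves is detected on pullbacks along maps out of representables.
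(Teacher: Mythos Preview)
Your proposal is correct and follows essentially the same route as the paper: show that the existence of $- \times c$ makes the map $\yoneda c \to 1$ representable (since its pullback along any $\yoneda d \to 1$ is $\yoneda d \times \yoneda c \cong \yoneda(d \times c)$), then invoke Theorem~\ref{thm:reprtotiny}. The paper's proof is the one-line version of exactly this.

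One small remark on your reading of the conclusion: you interpret ``tiny as an object in presheaves'' as internal tininess, i.e.\ tiny relative to $1$, and then extract that as the special case $J = 1$ from the theorem. Given the placement of the corollary immediately after Newstead's theorem and the surrounding emphasis on tiny \emph{families}, the paper more plausibly intends the full conclusion that $\yoneda c \to 1$ is tiny relative to every $J$. This costs you nothing, since you already obtain it from Theorem~\ref{thm:reprtotiny} before specialising; you could simply stop one step earlier.
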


\begin{proof}
  If \(- \times c\) exists then the unique map \(\yoneda c \to
  1\) is representable.
\end{proof}

\subsection{Definability from Tiny Codomain}
\label{sec:defin-from-tiny}

We now use tininess to give examples of fibrewise definable lifting
notions of structure.

We will use the following observation.
\begin{lemma}
  \label{lem:pbulpsoln}
  Suppose we are given vertical maps $m : A \to B$, $f : X \to Y$ and
  $g : Z \to Y$ (where $f$ and $g$ necessarily lie in same the
  fibre). Write $g^\ast(f)$ for the pullback of $f$ along $g$. Then
  solutions to the universal lifting problem from $m$ to $g^\ast(f)$
  correspond precisely to maps $\hom(A, X) \times_{\hom(A, Y)} \hom(B,
  Z) \to \hom(B, Y)$ making a commutative triangle as below.
  \begin{equation}
    \label{eq:3}
    \begin{tikzcd}
      \hom(A, X) \times_{\hom(A, Y)} \hom(B, Z) \ar[r] \ar[d, swap,
      "{\langle 1_{\hom(A, X)} , \hom(B, g) \rangle}"] &
      \hom(B, X) \ar[dl] \\
      \hom(A, X) \times_{\hom(A, Y)} \hom(B, Y) &
    \end{tikzcd}
  \end{equation}
\end{lemma}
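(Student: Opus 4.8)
The plan is to unfold the definition of the lifting notion of structure generated by \(m\), apply it to \(g^\ast(f)\), and then simplify everything using the fact that hom-objects preserve limits. By definition, a solution to the universal lifting problem from \(m\) to \(g^\ast(f) \colon g^\ast(X) \to Z\) is a section of the canonical map
\[
  c \colon \hom(B, g^\ast(X)) \longrightarrow \hom(A, g^\ast(X)) \times_{\hom(A, Z)} \hom(B, Z),
\]
which sends \(u \colon B \to g^\ast(X)\) to \((u \circ m,\ g^\ast(f) \circ u)\). First I would use that \(g^\ast(X)\) is the pullback \(X \times_Y Z\) and that \(\hom(A, -)\) and \(\hom(B, -)\) preserve pullbacks to rewrite \(\hom(B, g^\ast(X)) \cong \hom(B, X) \times_{\hom(B, Y)} \hom(B, Z)\) and \(\hom(A, g^\ast(X)) \cong \hom(A, X) \times_{\hom(A, Y)} \hom(A, Z)\); pasting pullback squares then identifies the codomain of \(c\) with \(\hom(A, X) \times_{\hom(A, Y)} \hom(B, Z)\), the object appearing on the left of \eqref{eq:3}.

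Next I would check --- this being the step requiring a little care with the definition of \(c\) --- that under these identifications \(c\) becomes the map \(\hom(B, X) \times_{\hom(B, Y)} \hom(B, Z) \to \hom(A, X) \times_{\hom(A, Y)} \hom(B, Z)\) that applies \(\hom(m, X)\) to the first factor and is the identity on the second. It then follows at once that any section of \(c\) is the identity on the \(\hom(B, Z)\)-component, so it is determined by a single map \(t \colon \hom(A, X) \times_{\hom(A, Y)} \hom(B, Z) \to \hom(B, X)\) subject to exactly two equations: \(\hom(m, X) \circ t = \pi_{\hom(A,X)}\) (the section identity for \(c\)), and \(\hom(B, f) \circ t = \hom(B, g) \circ \pi_{\hom(B,Z)}\) (needed so that \((t, \pi_{\hom(B,Z)})\) actually lands in the pullback \(\hom(B, g^\ast(X))\)).

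Finally I would observe that the unlabelled diagonal of \eqref{eq:3} is the canonical map of the lifting structure for \(f\), namely \(v \mapsto (v \circ m,\ f \circ v)\), and the left-hand vertical is \(\langle 1_{\hom(A,X)}, \hom(B, g)\rangle\), i.e. \((a, z) \mapsto (a,\ g \circ z)\); so commutativity of \eqref{eq:3} for a map \(t\) is precisely the conjunction of the two equations from the previous paragraph. Therefore sections of \(c\) --- that is, solutions of the universal lifting problem from \(m\) to \(g^\ast(f)\) --- correspond bijectively with maps \(t\) making \eqref{eq:3} commute, naturally in the given data. The main obstacle is the middle step: verifying that the canonical map of the lifting structure on the pullback \(g^\ast(f)\) takes the asserted ``precompose with \(m\)'' form after the limit-preservation rewrites; the remaining manipulations are routine pasting of pullback squares.
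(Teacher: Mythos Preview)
Your argument is correct. It is, however, a genuinely different route from the paper's. You work entirely on the hom-object side: you take the definition of lifting structure on \(g^\ast(f)\) as a section of the canonical map \(c\), then use that \(\hom(A,-)\) and \(\hom(B,-)\) preserve pullbacks to rewrite domain and codomain of \(c\), and finally read off that a section amounts to a single map \(t\) satisfying the two equations encoded by the triangle. The paper instead works on the fibre-category side: it reindexes \(m\) and \(f\) to the fibre over \(\hom(A,X)\times_{\hom(A,Y)}\hom(B,Z)\), exhibits there a concrete commutative square, identifies its left half as the universal lifting problem from \(m\) to \(g^\ast(f)\), and then translates diagonal fillers of that square into maps into \(\hom(B,X)\) via the universal property of the hom object. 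Your approach is arguably more direct given how the lifting notion of structure was defined in Section~\ref{sec:lifting-structures}; the paper's approach makes the phrase ``universal lifting problem'' literal by actually displaying the square whose fillers are being counted, which some readers may find more illuminating. The step you flag as the main obstacle (identifying \(c\) after the rewrites) is exactly right and is routine once written out, as you indicate.
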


\begin{proof}
  In the fibre category over $\hom(A, X) \times_{\hom(A, Y)} \hom(B,
  Z)$ we can construct a commutative diagram of the following form.
  \begin{equation}
    \label{eq:1}
    \begin{tikzcd}
      \sigma^\ast(A) \ar[rr] \ar[d] & & \tau^\ast(X) \ar[d] \\
      \sigma^\ast(B) \ar[r] & \tau^\ast(Z) \ar[r] & \tau^\ast(Y)
    \end{tikzcd}
  \end{equation}

  By the universal property of the pullback, this factors as two
  squares, below.
  \begin{equation}
    \label{eq:2}
    \begin{tikzcd}
      \sigma^\ast(A) \ar[r] \ar[d] & \tau^\ast(g^\ast(X)) \ar[r]
      \ar[d] \ar[dr, phantom, very near start, "\lrcorner"]      
      & \tau^\ast(X) \ar[d] \\
      \sigma^\ast(B) \ar[r] & \tau^\ast(Z) \ar[r] & \tau^\ast(Y)      
    \end{tikzcd}
  \end{equation}

  One can check, for example by directly verifying the relevant
  universal property, that the left hand square is exactly the
  universal lifting problem from $m$ to $g^\ast(f)$.

  Again applying the universal property of the pullback, diagonal
  fillers in the left hand square of \eqref{eq:2} correspond precisely
  to diagonal fillers of \eqref{eq:1}. Maps
  $\sigma^\ast(B) \to \tau^\ast(X)$ correspond precisely to maps
  $\hom(A, X) \times_{\hom(A, Y)} \hom(B, Z) \to \hom(B, X)$ by the
  universal property of $\hom(B, X)$, and the upper and lower
  triangles commute for the diagonal filler if and only if the
  triangle in \eqref{eq:3} commutes.
\end{proof}

\begin{thm}
  \label{thm:tinytolocrepr}
  Suppose that $\BB$ is locally cartesian closed. Suppose we are given
  a vertical map $m : A \to B$ in $\EE_I$ where $B$ is tiny relative
  to \(J \in \BB\). Then the restriction of the lifting notion of
  structure generated by \(m\) to \(J\) is definable.
\end{thm}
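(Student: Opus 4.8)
The plan is to show directly that, for each vertical map $f : X \to Y$ in $\EE_J$, the presheaf $\bar{\chi}_f$ on $\EE_J/Y$ sending $\sigma : Z \to Y$ to the set of lifting structures on $\sigma^\ast f$ is representable; by Lemma~\ref{lem:partreprequiv} this is exactly what it means for the restriction of the lifting notion of structure to $J$ to be definable. The representing object will be produced by transposing the description of lifting structures across two adjunctions: the dependent product along $\hom(A,f)$ in $\BB$ (available since $\BB$ is locally cartesian closed), and the adjunction $\hom(B,-) \dashv G_B$ coming from the hypothesis that $B$ is tiny relative to $J$.

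First I would rewrite $\bar{\chi}_f(\sigma)$ using Lemma~\ref{lem:pbulpsoln}: a lifting structure on $\sigma^\ast f$ is the same thing as a map $\phi : \hom(A,X) \times_{\hom(A,Y)} \hom(B,Z) \to \hom(B,X)$ with $j_f \circ \phi = \langle 1_{\hom(A,X)}, \hom(B,\sigma)\rangle$, where $j_f : \hom(B,X) \to \hom(A,X) \times_{\hom(A,Y)} \hom(B,Y)$ is the canonical gap map, and this identification is natural in $\sigma \in \EE_J/Y$. Writing $r := \hom(A,f) : \hom(A,X) \to \hom(A,Y)$ and observing that $\hom(A,X) \times_{\hom(A,Y)} \hom(B,Z) = r^\ast \hom(B,Z)$, that $\hom(A,X) \times_{\hom(A,Y)} \hom(B,Y) = r^\ast \hom(B,Y)$, that $\langle 1_{\hom(A,X)}, \hom(B,\sigma)\rangle = r^\ast \hom(B,\sigma)$, and that the first component of $j_f$ is $\hom(m,X)$ so that $j_f$ is a morphism of $\BB/\hom(A,X)$, the condition on $\phi$ says precisely that $\phi$ is a morphism in the slice $\BB/\hom(A,X)$, over $r^\ast\hom(B,Y)$, from $r^\ast\hom(B,\sigma)$ to $j_f$.

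I would then transpose across $r^\ast \dashv \Pi_r$, which turns $\phi$ into a map $\hom(B,Z) \to \Pi_r\hom(B,X)$ in $\BB/\hom(A,Y)$ whose composite with $\Pi_r(j_f)$ equals the composite of $\hom(B,\sigma)$ with the unit $\hom(B,Y) \to \Pi_r r^\ast \hom(B,Y)$. Forming the pullback $\mathcal{P} := \hom(B,Y) \times_{\Pi_r r^\ast\hom(B,Y)} \Pi_r\hom(B,X)$ with its first projection $\pi : \mathcal{P} \to \hom(B,Y)$ (this pullback exists since $\BB$ has finite limits), this data is exactly a map $\hom(B,Z) \to \mathcal{P}$ lying over $\hom(B,\sigma)$ along $\pi$. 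Next I would use tininess: since $B$ is tiny relative to $J$ we have $\hom(B,-) : \EE_J \to \BB/(I\times J)$ with a right adjoint $G_B$, and since $\hom(B,Z) = \hom(B,-)(Z)$ and $\hom(B,\sigma) = \hom(B,-)(\sigma)$, transposing once more replaces this by a map $Z \to G_B(\mathcal{P})$ in $\EE_J$ lying over $\eta_Y \circ \sigma$ along $G_B(\pi)$, where $\eta_Y : Y \to G_B(\hom(B,Y))$ is the unit. Taking the pullback $W := Y \times_{G_B(\hom(B,Y))} G_B(\mathcal{P})$ in $\EE_J$ with projection $w : W \to Y$, such maps correspond exactly to maps $Z \to W$ over $Y$. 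Composing all of these natural bijections gives $\bar{\chi}_f(\sigma) \cong (\EE_J/Y)(\sigma, w)$, so $\bar{\chi}_f$ is represented by $w$; since $f$ was arbitrary, the restricted notion of structure is definable.

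The bulk of the work is bookkeeping: at each transposition one must check that the resulting map is constrained by exactly the right commuting triangle — the danger being an off-by-one slip in the units or projections that would produce the wrong representing object — and one must verify that all of these correspondences are natural in $\sigma$, which follows from naturality of the adjunctions together with the naturality built into Lemma~\ref{lem:pbulpsoln}. The genuinely substantive point, rather than routine, is using the two adjunctions in tandem, and in particular the need for the fibre $\EE_J$ to admit the pullbacks $\sigma^\ast f$ and $W$; these are supplied by the standing assumptions of this section together with the existence of $G_B$, so the only new ingredient is precisely tininess of $B$ relative to $J$.
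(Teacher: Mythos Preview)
Your proposal is correct and follows essentially the same strategy as the paper: apply Lemma~\ref{lem:pbulpsoln}, transpose through a dependent product in the locally cartesian closed base, then through the tininess adjunction $\hom(B,-)\dashv G$, and finally pull back along the unit $\eta_Y : Y \to G(\hom(B,Y))$. The only cosmetic difference is that the paper takes the dependent product along the projection $p : \hom(m,f) \to \hom(B,Y)$ rather than along $r = \hom(A,f)$, which lands directly over $\hom(B,Y)$ and so avoids your intermediate pullback $\mathcal{P}$.
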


\begin{proof}
  Let $G : \BB / (I \times J) \to \EE_J$ be the right adjoint to $\hom(B,
  -)$.
  
  Given $f : X \to Y \in \EE_J$, we will show that the presheaf
  $\bar{\chi}_{\cod(f)}$
  from Definition \ref{def:locpresheaf} is representable.

  Write
  $p : \hom(m, f) \to \hom(B,
  Y)$ for the projection map, and the canonical map
  $\hom(B, X) \to \hom(m, f)$
  as $t$. We construct the dependent product
  $\prod_p t : \prod_{p} \hom(B, X) \to \hom(B,
  Y)$. Viewing this as a map in $\BB/(I \times J)$, we apply $G$ to get a
  map $G(\prod_{p} \hom(B, X)) \to G(\hom(B, Y))$. We
  pullback along the unit map $\eta_Y : Y \to G(\hom(B, Y))$ to get a
  map $Y \times_{G(\hom(B, Y))} G(\prod_{p} \hom(B, X)) \to Y$. We
  will show this is representing for the presheaf $\bar{\chi}_{q(f)}$.

  Fix a map $g : Z \to Y$. Maps from $g$ to $f$ in $\EE_J/Y$
  correspond naturally by the universal property of the pullback to
  maps $h : Z \to G(\prod_{p} \hom(B, X))$ forming a commutative
  square as below.
  \begin{displaymath}
    \begin{tikzcd}
      Z \ar[r, "h"] \ar[d, "g"] & G(\prod_{p} \hom(B, X)) \ar[d] \\
      Y \ar[r, "\eta_Y"] & G(\hom(B, Y))
    \end{tikzcd}
  \end{displaymath}

  Passing across the adjunction, $\hom(B, -) \dashv G$, we see that
  such maps correspond to the maps $\hom(B, Z) \to \prod_p \hom(B, X)$
  in the commutative square below.
  \begin{displaymath}
    \begin{tikzcd}
      \hom(B, Z) \ar[r] \ar[d, "{\hom(B, g)}"] & \prod_{p} \hom(B, X) \ar[d] \\
      \hom(B, Y) \ar[r, equals] & \hom(B, Y)
    \end{tikzcd}
  \end{displaymath}

  Rearranging, passing across the pullback-dependent product
  adjunction and simplifying allows us to apply Lemma
  \ref{lem:pbulpsoln} to show such diagrams correspond precisely to
  solutions of the universal lifting problem of $m$ against
  $g^\ast(f)$.
\end{proof}

\begin{cor}
  \label{cor:setreprcodtolocrepr}
  Let $\mathcal{C}$ be a small category, and
  $(m_i : A_i \to \yoneda B_i)$ a family of maps in the presheaf
  category $\psh{C}$ with representable codomain. Then the awfs
  cofibrantly generated by the family is definable.
\end{cor}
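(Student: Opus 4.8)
The plan is to deduce this from Theorem~\ref{thm:tinytolocrepr}, applied not to the codomain fibration on $\psh{C}$ but to the fibration of set-indexed families $q \colon \fmly(\psh{C}) \to \sets$. This fibration is locally small, since $\psh{C}$ has small hom-sets, and its base $\sets$ has all finite limits and is locally cartesian closed, so both the standing assumptions of this section and the hypothesis of Theorem~\ref{thm:tinytolocrepr} are available. The key observation is that, over this fibration, the entire generating family $(m_i \colon A_i \to \yoneda B_i)_{i \in S}$ is literally a single vertical map $m \colon A \to B$ in the fibre $\fmly(\psh{C})_S$, with $A = (A_i)_{i \in S}$ and $B = (\yoneda B_i)_{i \in S}$. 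Unwinding the hom-objects of this fibration — $\hom(B, X)$ is the $S$-indexed family of sets $\bigl(X(B_i)\bigr)_{i \in S}$ over $S$, and similarly for $\hom(A,X)$, $\hom(A,Y)$ — one checks that a structure for the lifting notion of structure generated by $m$, evaluated at a map $f$ of the fibre $\fmly(\psh{C})_1 \simeq \psh{C}$, is precisely a choice of diagonal filler for each lifting problem of each $m_i$ against $f$; that is, the restriction of this lifting notion of structure to $\psh{C}$ is exactly the underlying notion of structure of the awfs cofibrantly generated by $(m_i)_{i \in S}$ on $\psh{C}$. (That this lifting notion of structure is monadic, hence genuinely an awfs, follows from an algebraic small object argument, since $\psh{C}$ is cocomplete and every presheaf is $\alpha$-presentable for some regular $\alpha$; it is presupposed by the statement in any case.) So it suffices to show this restriction is definable.

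For this I would verify the remaining hypothesis of Theorem~\ref{thm:tinytolocrepr}: that the codomain $B = (\yoneda B_i)_{i \in S}$ is tiny relative to the terminal set $1$ (in fact relative to every $J \in \sets$). Recall that for the fibration of set-indexed families on a category, a family of objects is tiny relative to any $J$ precisely when each of its components is externally tiny, i.e.\ has a hom-set functor admitting a right adjoint; and in a presheaf category the externally tiny objects are exactly the retracts of representables. Each $\yoneda B_i$ is a representable, hence externally tiny, so $B$ is a tiny family of objects. Theorem~\ref{thm:tinytolocrepr}, with $I = S$ and $J = 1$, now gives that the restriction of the lifting notion of structure generated by $m$ to $\psh{C}$ is definable, which is what was required. (Running the argument with arbitrary $J$ shows more: the induced awfs over $q$ is fibrewise definable.)

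I expect the only mildly delicate step to be the translation carried out in the first paragraph: repackaging ``a set of generating maps with representable codomains'' as ``a single vertical map over $\sets$ whose codomain is a tiny family of objects'', and confirming that restricting the resulting lifting notion of structure to the fibre over $1$ recovers the ordinary cofibrantly generated awfs on $\psh{C}$ — a routine but slightly fiddly unwinding of the definitions of the hom-objects in the family fibration. Conceptually, this is exactly where the extra generality of the section does its work: for a single representable $\yoneda B_i$ the internally-tiny route (working over the codomain fibration on $\psh{C}$) would demand that $- \times B_i$ exist in $\mathcal{C}$, which is not assumed here, whereas over $\sets$ every representable presheaf is automatically (externally) tiny. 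Everything else is an immediate appeal to the cited results.
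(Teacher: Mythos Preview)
Your proposal is correct and is exactly the intended argument: the paper leaves this corollary unproved because it is meant to follow immediately from Theorem~\ref{thm:tinytolocrepr} applied to the set-indexed family fibration $\fmly(\psh{C}) \to \sets$, using the earlier example that a family $(B_i)_{i \in I}$ in this fibration is tiny iff each $B_i$ is externally tiny, and that representables in a presheaf category are externally tiny. Your commentary on why one must pass to $\sets$ rather than use the codomain fibration (since $\mathcal{C}$ need not have products, so $\yoneda B_i$ need not be internally tiny) is apt and matches the paper's stated motivation for the generality of the section.
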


\begin{cor}
  For any small category $\mathcal{C}$ with a wfs $(\mathcal{L},
  \mathcal{R})$, there is a definable awfs on $\psh{C}$
  such that the Yoneda embedding preserves and reflects left maps and
  right maps.
\end{cor}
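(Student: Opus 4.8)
The plan is to obtain the awfs by cofibrantly generating it with the Yoneda image of the left class \(\mathcal{L}\). Since \(\mathcal{C}\) is small, \(\mathcal{L}\) is a set of morphisms of \(\mathcal{C}\); writing \(\ell \colon A_\ell \to B_\ell\) for a typical element, the family \(\yoneda\mathcal{L} := (\yoneda\ell \colon \yoneda A_\ell \to \yoneda B_\ell)_{\ell \in \mathcal{L}}\) is a set of maps in \(\psh{C}\), each of which has \emph{representable} codomain (indeed also representable domain). By Corollary~\ref{cor:setreprcodtolocrepr} the awfs cofibrantly generated by \(\yoneda\mathcal{L}\) exists and is definable; what remains is to check that the Yoneda embedding preserves and reflects left and right maps for it, where a \emph{left map} is a map carrying a coalgebra structure for the comonad and a \emph{right map} one carrying an algebra structure for the monad.

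Before that I would record four facts. (i) Since the awfs is cofibrantly generated by \(\yoneda\mathcal{L}\), a right-map structure on a map \(u\) of \(\psh{C}\) is the same data as a lifting structure against \(\yoneda\mathcal{L}\): for each \(\ell \in \mathcal{L}\), a section of the canonical comparison map of the lifting notion of structure generated by \(\yoneda\ell\) — equivalently, a choice of solution to every lifting problem of \(\yoneda\ell\) against \(u\). (ii) Each generator \(\yoneda\ell\) carries its canonical coalgebra structure, so is a left map. (iii) In any awfs, a left map has a canonical (structured) left lifting property against every right map. (iv) Because \(\yoneda\) is fully faithful, for morphisms \(\ell, r\) of \(\mathcal{C}\) a lifting problem of \(\yoneda\ell\) against \(\yoneda r\) in \(\psh{C}\) is, via the isomorphisms \(\hom(\yoneda A, \yoneda X) \cong \mathcal{C}(A, X)\), exactly a lifting problem of \(\ell\) against \(r\) in \(\mathcal{C}\), with solutions corresponding bijectively. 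I will also use the defining property of the wfs that \(\mathcal{R} = \mathcal{L}^\pitchfork\) and \(\mathcal{L} = {}^\pitchfork\mathcal{R}\).

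Granting these, the four implications are short. \emph{Preserves left maps}: if \(g \in \mathcal{L}\) then \(\yoneda g\) is one of the generators, hence a left map by (ii). \emph{Preserves right maps}: if \(r \in \mathcal{R}\), then for each \(\ell \in \mathcal{L}\) every lifting problem of \(\ell\) against \(r\) has a solution in \(\mathcal{C}\), so by (iv) every lifting problem of \(\yoneda\ell\) against \(\yoneda r\) does; choosing such solutions for all \(\ell\) assembles into a lifting structure on \(\yoneda r\), i.e.\ a right-map structure, by (i). \emph{Reflects right maps}: a right-map structure on \(\yoneda g\) in particular solves every lifting problem of every \(\yoneda\ell\) against \(\yoneda g\), so by (iv) \(g\) has the right lifting property against every \(\ell \in \mathcal{L}\), whence \(g \in \mathcal{L}^\pitchfork = \mathcal{R}\). \emph{Reflects left maps}: if \(\yoneda g\) is a left map then, combining the ``preserves right maps'' case just proved with (iii), \(\yoneda g\) has the structured left lifting property against \(\yoneda r\) for every \(r \in \mathcal{R}\); by (iv), \(g\) then has the left lifting property against every \(r \in \mathcal{R}\), so \(g \in {}^\pitchfork\mathcal{R} = \mathcal{L}\).

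The argument is light; the only points needing care rather than ingenuity are being precise that ``cofibrantly generated'' means the category of algebras for the monad literally is the category of lifting structures against \(\yoneda\mathcal{L}\) (so that the hand-built lifting structure on \(\yoneda r\) is genuinely an algebra structure), and invoking the standard coalgebra structures on generators and the canonical lifting operation of an awfs. The sole use of choice is selecting a section of a surjection of sets in the ``preserves right maps'' step, matching the mild use of choice already flagged in Example~\ref{ex:wfsfns}.
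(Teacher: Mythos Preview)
Your proof is correct and takes the same approach as the paper: cofibrantly generate by the Yoneda image of \(\mathcal{L}\), with definability coming from Corollary~\ref{cor:setreprcodtolocrepr}. The paper's proof is a single sentence stating only the choice of generators, so you have simply spelled out the preservation and reflection checks that the paper leaves to the reader.
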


\begin{proof}
  Take the generating left maps to be the image of the left maps in
  $\mathcal{C}$ under the Yoneda embedding.
\end{proof}

\begin{cor}
  Let $\BB$ be a locally cartesian closed category and let $B$ be an
  internally tiny object in $\BB$. For any object $I$ and any map $m$
  of the form $A \to I^\ast(B)$ in the slice category $\BB/I$, the
  awfs cofibrantly generated by $m$ is definable.
\end{cor}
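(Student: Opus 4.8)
The plan is to realise the cofibrantly generated awfs as a lifting notion of structure whose codomain is a tiny family of objects, and then quote Theorem~\ref{thm:tinytolocrepr}. I work throughout with the codomain fibration $\cod : \BB^\to \to \BB$, which is a locally small bifibration because $\BB$ is locally cartesian closed, and for which $\BB$ has finite limits, so the standing hypotheses of this section hold. The map $m : A \to I^\ast(B)$, being a morphism in $\BB/I = (\BB^\to)_I$, is a vertical map in $\BB^\to$ over $I$, and the awfs it cofibrantly generates is the (monadic notion of structure underlying the) lifting notion of structure it generates on $\cod : V(\BB^\to) \to \BB^\to$; definability of the awfs is definability of this notion of structure. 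By Theorem~\ref{thm:tinytolocrepr}, taking the codomain $I^\ast(B)$ as the tiny object, the restriction of this notion of structure to $J$ is definable for every $J$ for which $I^\ast(B)$ is tiny relative to $J$; since every object of $V(\BB^\to)$ lies over some object of $\BB$, it therefore suffices to show that $I^\ast(B)$ is tiny relative to every $J$.

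First I would reduce to $B$ itself. The square exhibiting $I^\ast(B)$ as the pullback of $B \to 1$ along $I \to 1$ is a cartesian morphism $I^\ast(B) \to B$ in $\BB^\to$ over $I \to 1$, so by the earlier lemma that reindexing along cartesian maps preserves tininess relative to a fixed object, it is enough to show that $B$, viewed as the object $B \to 1$ of $(\BB^\to)_1$, is tiny relative to every $J$. Unwinding the description of the hom-objects of the codomain fibration, this amounts to saying that $(-)^{J^\ast B} : \BB/J \to \BB/J$ has a right adjoint, i.e. that $J^\ast B$ is internally tiny in the locally cartesian closed category $\BB/J$.

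The heart of the argument is thus that internal tininess passes to slices: if $B$ is internally tiny in $\BB$, then $J^\ast B$ is internally tiny in $\BB/J$. Let $R$ be the right adjoint to $(-)^B : \BB \to \BB$, with unit $\eta$. I would verify that $(-)^{J^\ast B}$ on $\BB/J$ factors, up to natural isomorphism, as
\begin{displaymath}
  \BB/J \xrightarrow{\;(-)^B\;} \BB/J^B \xrightarrow{\;c_J^\ast\;} \BB/J ,
\end{displaymath}
where the first functor sends $(P \to J)$ to $(P^B \to J^B)$ (the $B$-exponential applied to arrows) and $c_J : J \to J^B$ is the exponential transpose of the projection $J \times B \to J$; this is a routine exponential-transpose calculation. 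Both factors have right adjoints: the first because $(-)^B$ does on $\BB$, with right adjoint $(Q \to J^B) \mapsto \eta_J^\ast(R(Q) \to R(J^B))$ (right adjointness following from naturality of $\eta$), and the second because $c_J^\ast$ is pullback along a map in a locally cartesian closed category, hence has right adjoint $\Pi_{c_J}$. Composing, $(-)^{J^\ast B}$ has a right adjoint, establishing the claim.

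Chaining the steps, $J^\ast B$ is internally tiny in $\BB/J$ for all $J$, so $B \to 1$ is tiny relative to all $J$, so $I^\ast(B)$ is tiny relative to all $J$ by the reindexing lemma, and Theorem~\ref{thm:tinytolocrepr} then yields definability of the lifting notion of structure, that is, of the cofibrantly generated awfs. I expect the main obstacle to be the factorisation in the third step together with the identification of the right adjoint to its first factor, as that is the only place genuine computation enters. A secondary point to watch is the passage from ``definable after restriction to each $J$'' to ``definable'': one should check that Theorem~\ref{thm:tinytolocrepr} ranging over all $J$ indeed witnesses representability of $\bar{\chi}_Y$ for every $Y$ in the codomain of the notion of structure.
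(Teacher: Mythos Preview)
Your argument is correct, but it does noticeably more work than the paper intends. The paper treats this corollary as an immediate consequence of Example~\ref{ex:codtiny}, the reindexing lemma immediately following it, and Theorem~\ref{thm:tinytolocrepr}. The point is that the ``awfs cofibrantly generated by \(m\)'' in the statement is the ordinary awfs on \(\BB\) itself, i.e.\ the restriction of the lifting notion of structure to the terminal fibre \(J = 1\) of the codomain fibration \(\BB^\to \to \BB\). So one only needs \(I^\ast(B)\) to be tiny \emph{relative to \(1\)}. Example~\ref{ex:codtiny} says that internal tininess of \(B\) is exactly tininess of \(B \to 1\) relative to \(1\); then the reindexing lemma gives tininess of \(I^\ast(B)\) relative to \(1\); and Theorem~\ref{thm:tinytolocrepr} at \(J=1\) finishes. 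Your secondary worry about passing from ``definable at each \(J\)'' to ``definable'' disappears under this reading: only \(J=1\) is in play.

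What you actually prove is the stronger statement that the lifting notion of structure is fibrewise definable, by establishing directly that internal tininess passes to slices: if \(B\) is internally tiny in \(\BB\), then \(J^\ast B\) is internally tiny in \(\BB/J\) for every \(J\). Your factorisation of \((-)^{J^\ast B}\) through \((-)^B : \BB/J \to \BB/J^B\) and pullback along \(c_J\), together with the identification of the right adjoint to the first factor via the unit \(\eta_J\), is correct and is essentially a slick proof of (a strengthening of) the Freyd--Yetter result quoted in the paper, without the quasitopos or regular-epimorphism hypotheses. That is a genuinely useful observation and gives a more general conclusion, but for the corollary as stated it is not needed.
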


\begin{cor}
  \label{cor:catindexedtinytolocrepr}
  Let $\EE \to \BB$ be the category indexed families
  fibration for a category $\mathbb{C}$ and let $M : \mathcal{I} \to
  \mathbb{C}^\to$ be a diagram of left maps that cofibrantly generates
  an awfs $(L, R)$. If $\cod \circ M$ is tiny, then the awfs is
  definable.
\end{cor}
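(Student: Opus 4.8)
The plan is to reduce the statement to Theorem~\ref{thm:tinytolocrepr}. First, a diagram $M : \mathcal{I} \to \CC^\to$ is precisely a vertical map $m : A \to B$ in $\EE = \fmly(\CC)$ lying over the object $\mathcal{I}$ of the base $\cats$, with $A = \dom \circ M$ and $B = \cod \circ M$ (both objects of $\EE_{\mathcal{I}} = \CC^{\mathcal{I}}$); the awfs cofibrantly generated by $M$ is, by definition, the lifting notion of structure $\chi : m^\pitchfork \to V(\EE)$ attached to $m$, and the assumption that this is an awfs (that $\chi$ is monadic) plays no role in definability, so it may be ignored. The hypothesis that $\cod \circ M$ is tiny says exactly that $B$ is tiny relative to every small category $\mathcal{J}$. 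By Proposition~\ref{prop:repdefns}, $\chi$ is definable if and only if $\bar\chi_f$ is representable for every object $f$ of $V(\EE)$, and each such $f$ is a vertical map of $\EE$, hence lies over some $\mathcal{J} \in \cats$.

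I would then fix $\mathcal{J}$ and apply Theorem~\ref{thm:tinytolocrepr} with $I := \mathcal{I}$ and $J := \mathcal{J}$: since $m$ is a vertical map in $\EE_{\mathcal{I}}$ whose codomain $B$ is tiny relative to $\mathcal{J}$, the restriction of the lifting notion of structure to the fibre over $\mathcal{J}$ is definable, so $\bar\chi_f$ is representable for every $f$ over $\mathcal{J}$. Letting $\mathcal{J}$ range over all small categories yields representability of $\bar\chi_f$ for every $f \in V(\EE)$, and hence definability; Lemma~\ref{lem:pblocrepr} moreover shows that the representing objects produced for the various $\mathcal{J}$ are compatible under reindexing, so they really do glue into a single right adjoint to $\cart(\chi)$ rather than a merely fibrewise one.

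The hitch, which I expect to be the main obstacle, is that Theorem~\ref{thm:tinytolocrepr} is stated for a \emph{locally cartesian closed} base, whereas $\cats$ is not locally cartesian closed: in the proof of that theorem the representing data is built from a dependent product $\prod_p \hom(B,X)$ along the projection $p : \hom(m,f) \to \hom(B,Y)$, and for a general functor $p$ such dependent products do not exist in $\cats$. The repair I would pursue is to observe that one never needs $\prod_p$ as an operation on all of $\cats/\hom(B,Y)$, but only an object $\Lambda \to \hom(B,Y)$ corepresenting ``lifting structures on the reindexing along $\hom(B,g)$'' as $Z$ ranges over $\EE_{\mathcal{J}}$, that is, over objects of the shape $\hom(B,Z)$. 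Such a $\Lambda$ can be built directly as a limit in $\cats$ — an equalizer singling out, among the $\hom(B,Y)$-indexed families of diagonal fillers, those that are genuine sections of $\hom(B,X) \to \hom(m,f)$ — which exists since $\cats$ is complete; because the tininess right adjoint $G$ (right adjoint to $\hom(B,-) : \EE_{\mathcal{J}} \to \cats/(\mathcal{I} \times \mathcal{J})$) preserves this limit and the transposition across $\hom(B,-) \dashv G$ still converts maps out of $\hom(B,Z)$ into maps out of $Z$, the remainder of the argument of Theorem~\ref{thm:tinytolocrepr} — culminating in an application of Lemma~\ref{lem:pbulpsoln} — goes through with $\Lambda$ in place of the dependent product. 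Verifying that this substitute $\Lambda$ has exactly the right universal property, so that Lemma~\ref{lem:pbulpsoln} still applies after the transposition, is the one genuinely delicate point.
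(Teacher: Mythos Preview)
Your overall strategy is exactly that of the paper: reduce to Theorem~\ref{thm:tinytolocrepr}, note that the only obstruction is that $\cats$ is not locally cartesian closed, and argue that the particular dependent product $\prod_p$ used in that proof nonetheless exists. Where you diverge is in the workaround. You propose to replace $\prod_p \hom(B,X)$ by an ad hoc object $\Lambda$ built as ``an equalizer singling out, among the $\hom(B,Y)$-indexed families of diagonal fillers, those that are genuine sections,'' and then to push the rest of the argument through with $\Lambda$ in place of the dependent product. This is vague: you have not written down the diagram whose equalizer $\Lambda$ is supposed to be, and it is not clear that any equalizer in $\cats$ has the universal property you need (maps $\hom(B,Z) \to \Lambda$ over $\hom(B,Y)$ corresponding to sections of $t$ after pulling back along $\hom(B,g)$). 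You yourself flag this as ``the one genuinely delicate point,'' and as written it is a gap.

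The paper's fix is much simpler and avoids the issue entirely: the functor
\[
  p \;:\; \hom(A,X) \times_{\hom(A,Y)} \hom(B,Y) \longrightarrow \hom(B,Y)
\]
is a pullback of $\hom(A,X) \to \hom(A,Y)$, which in the category-indexed case is the functor $(A \downarrow X) \to (A \downarrow Y)$ induced by postcomposition with $f$. That functor is a discrete fibration, hence so is $p$. But $\cats$ \emph{does} have dependent products along discrete fibrations (discrete fibrations are Conduch\'e/exponentiable functors), so the genuine $\prod_p$ exists and the proof of Theorem~\ref{thm:tinytolocrepr} carries over verbatim. There is no need to build a substitute object or to restrict the universal property to objects of the form $\hom(B,Z)$.
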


\begin{proof}
  Since $\cats$ is not locally cartesian closed, we need to verify
  that the particular dependent product used in the proof of Theorem
  \ref{thm:tinytolocrepr} exists, so that we can apply the same proof
  as before.

  Write $A$ for $\cod \circ M$ and $B$ for $\dom \circ M$.

  Explicitly, for each morphism $f : X \to Y$ in $\mathbb{C}$, we need
  to construct a dependent product along the canonical functor
  $p : (A \downarrow X) \times_{(A \downarrow Y)} (B \downarrow Y) \to
  (B \downarrow Y)$. However, this functor is a discrete fibration,
  since it is a pullback of the discrete fibration
  $(A \downarrow X) \to (A \downarrow Y)$. Hence the dependent product
  along $p$ exists, and so we can continue following the same proof as
  in Theorem \ref{thm:tinytolocrepr}.
\end{proof}

\begin{cor}
  For any small category $\mathcal{C}$ with an awfs $(L, R)$, there is
  a definable awfs on $\psh{C}$ such that the Yoneda
  embedding lifts to functors from the categories of (co)algebra
  structures in $\mathcal{C}$ to those in $\psh{C}$.
\end{cor}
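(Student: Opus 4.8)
The plan is to present $(L,R)$ as cofibrantly generated by its own left maps, transport the generating diagram along the Yoneda embedding, and then invoke Corollary~\ref{cor:catindexedtinytolocrepr}. First I would recall from Bourke and Garner \cite{bourkegarnerawfs1} that every awfs is cofibrantly generated by its category of $L$-coalgebras. Writing $U \colon \xcoalg{L} \to \mathcal{C}^\to$ for the forgetful functor, viewed as a diagram indexed by the \emph{category} $\xcoalg{L}$ rather than merely by its set of objects, the awfs $(L,R)$ is the one cofibrantly generated by $U$; in particular $\xalg{R}$, regarded as a category over $\mathcal{C}^\to$, is the category $U^\pitchfork$ of coherent right lifting structures against $U$. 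Since $\mathcal{C}$ is small, so is $\mathcal{C}^\to$, and hence so is $\xcoalg{L}$, so that $U$ is a genuine family of maps for the category indexed family fibration $\fmly(\mathcal{C}) \to \cats$.

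Next I would let $M \colon \xcoalg{L} \to (\psh{C})^\to$ be the composite of $U$ with the functor on arrow categories induced by the Yoneda embedding. Then $\cod \circ M = \yoneda \circ (\cod \circ U)$ is a diagram of representables, so by the earlier lemma identifying such diagrams as tiny over category indexed families, $\cod \circ M$ is tiny relative to every small category. Since $\psh{C}$ is cocomplete and locally presentable, Garner's algebraic small object argument \cite{garnersmallobject} applies, so the lifting notion of structure generated by $M$ is monadic and $M$ cofibrantly generates an awfs $(L',R')$ on $\psh{C}$, each $M(c)$ being one of its left maps. By Corollary~\ref{cor:catindexedtinytolocrepr}, $(L',R')$ is definable.

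It remains to lift the Yoneda embedding to the categories of (co)algebras. On the coalgebra side this is essentially built into the construction: the small object argument equips each generator $M(c)$ with a canonical $L'$-coalgebra structure, functorially in $c$, giving a functor $\xcoalg{L} \to \xcoalg{L'}$ compatible with the forgetful functors that sends an $L$-coalgebra on $f$ to an $L'$-coalgebra on $\yoneda f$. On the algebra side, $\xalg{R'}$ is by construction $M^\pitchfork$. For a map $g$ of $\mathcal{C}$, a lifting problem of a component $M(c)$ against $\yoneda g$ is a commutative square all of whose vertices are representable, so by full faithfulness of $\yoneda$ it is the image of a unique lifting problem of $U(c)$ against $g$; fillers transport in the same way, and the coherence conditions correspond because $\yoneda$ preserves pullbacks and composition. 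Hence a coherent right lifting structure against $U$ on $g$ determines one against $M$ on $\yoneda g$, naturally in $g$, and composing with the identification $\xalg{R} \cong U^\pitchfork$ yields the desired functor $\xalg{R} \to \xalg{R'}$ over $\yoneda$. The step requiring the most care is exactly this last one: one has to be precise about the Bourke--Garner presentation of $\xalg{R}$ as coherent lifting structures against $U$, and then verify that full faithfulness and pullback preservation of $\yoneda$ transfer not merely individual lifting problems and their fillers but the entire coherent structure, including compatibility with the double category structure of $\xcoalg{L}$. The remaining ingredients — tininess of $\cod \circ M$, monadicity of $M^\pitchfork$ via Garner, and definability via Corollary~\ref{cor:catindexedtinytolocrepr} — are immediate applications of results already established.
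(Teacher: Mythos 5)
Your proposal is correct and follows essentially the same route as the paper, whose proof simply takes the generating diagram to be the composite $L\text{-}\operatorname{Map} \to \mathcal{C}^\to \to \psh{C}^\to$, notes that its codomains factor through the Yoneda embedding (hence form a tiny family), and invokes Corollary~\ref{cor:catindexedtinytolocrepr}. Your extra steps---monadicity via Garner's small object argument and the explicit transfer of (co)algebra structures along the full and faithful $\yoneda$---just spell out details the paper leaves implicit.
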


\begin{proof}
  We take the generating diagram of left maps to be the composition
  $L\text{-}\operatorname{Map} \to \mathcal{C}^\to \to
  {\psh{C}}^{\to}$. It is clear by definition that composition with
  the codomain map factors through the Yoneda embedding.
\end{proof}

\newcommand{\UU}{\mathcal{U}}

\begin{example}
  \label{ex:natmodelex}
  In Awodey's natural models \cite[Section 2.4]{awodeynatmod},
  intensional identity types are implemented as maps \(\mathsf{Id} :
  \tilde{\mathcal{\UU}} \times_\mathcal{\UU} \tilde{\mathcal{\UU}}\) and
  \(\mathsf{i}\) making a commutative square as below:
  \begin{displaymath}
    \begin{tikzcd}
      \tilde{\UU} \ar[d, "\delta"]
      \ar[r, "\mathsf{i}"] & \tilde{\UU} \ar[d, "p"] \\
      \tilde{\UU} \times_\UU \tilde{\UU} \ar[r, "\mathsf{Id}"] & \UU
    \end{tikzcd}
  \end{displaymath}
  We can view the map \(\rho\), given by the universal property of the
  pullback below, as the ``universal reflexivity map.''
  \begin{displaymath}
    \begin{tikzcd}      
      \tilde{\UU} \ar[ddr, swap, bend right, "\delta"] \ar[drr, bend left,
      "\mathsf{i}"] \ar[dr, dotted, "\rho"] & & \\
      & I \pbcorner \ar[r] \ar[d] & \tilde{\UU} \ar[d, "p"] \\
      & \tilde{\UU} \times_\UU \tilde{\UU} \ar[r, "\mathsf{Id}"] & \UU
    \end{tikzcd}
  \end{displaymath}
  Viewing \(\rho\) as a family of maps over
  \(\tilde{\UU} \times_\UU \tilde{\UU}\) in the codomain fibration,
  lifting structures against \(\rho\) are used in loc. cit. to
  implement the \(\mathsf{j}\) terms in type theory. Note that the
  pullback projection map
  \(I \to \tilde{\UU} \times_{\UU} \tilde{\UU}\) is a pullback of a
  representable map of presheaves, and so representable itself. It
  follows by Theorems ~\ref{thm:tinytolocrepr} and
  ~\ref{thm:reprtotiny} that the lifting notion of structure generated
  by \(\rho\) is definable. Also note that any of the small object
  arguments listed in Section ~\ref{sec:lifting-structures} can be
  used to show the lifting notion of structure is monadic.
\end{example}

\section{Non definable examples}
\label{sec:non-loc-repr}

\subsection{A review of Kan fibrations and Hurewicz Fibrations}
\label{sec:revi-kan-fibr}

Kan fibrations are one of the key ingredients to the standard model
structure on simplicial sets \cite{quillen67}. In this section we give
a general definition of Kan fibration over a Grothendieck fibration,
\(q : \EE \to \BB\). The definition is not the most general
possible\footnote{Two possible generalisations are to replace an
  interval object with two global endpoints with a single generic
  point, as in \cite{awodey19}, and to replace cartesian product with
  a general fibred monoidal product.} but is enough to cover most of
the cases we will consider in this paper. For convenience we will
assume that the base \(\BB\) has all finite limits and colimits, and
that \(q\) has fibred products. We will write \(\hat{\times}\) for
the pushout product on cartesian product. See e.g. \cite[Section
11.1]{riehlcht} for a standard reference on pushout product.

\begin{defn}
  Let \(q : \EE \to \BB\) be a locally small fibration. Suppose we are
  given an interval object \(1 \rightrightarrows \II\) in \(\EE_1\)
  and a vertical monomorphism \(m : A \to B\) over \(I \in \EE_I\).

  We say a vertical map \(f : X \to Y\) is a \emph{Kan fibration} if
  it has the fibred right lifting property against the following
  family of maps: we first form the pushout products
  \(\delta_i \hat{\times}_I m\) in \(\EE_I\) for \(i = 0, 1\), and
  then take their coproduct to obtain a vertical map over \(I + I\).
\end{defn}

\begin{example}
  \label{ex:gzb2}
  We work over the set indexed family fibration on simplicial sets. We
  take the interval object to be \(\Delta_1\) and \(m\) to be the set
  indexed family of all boundary inclusions
  \(\partial \Delta_n \hookrightarrow \Delta_n\). In this way we
  obtain \cite[Chapter IV, Section 2, \(B_2\)]{gabrielzisman}.
\end{example}

\begin{example}
  \label{ex:gzb3}
  We work over the codomain fibration on simplicial sets. We take the
  interval object again to be \(\Delta_1\). We take \(m\) to be the
  subobject classifier \(1 \to \Omega\) viewed as an object in
  \(\sset/\Omega\). This gives us \cite[Chapter IV, Section 2,
  \(B_3\)]{gabrielzisman}.
\end{example}

\begin{example}
  As a generalisation of Example \ref{ex:gzb3} we can work over an
  arbitrary topos with connected interval with disjoint endpoints,
  replace the subobject classifier with a classifier for a subclass of
  monomorphisms closed under composition and finite unions, and
  containing the endpoints of the interval. This gives the definition
  of Kan fibration in \cite{pittsortoncubtopos}\footnote{for Kan
    filling, rather than composition}.
\end{example}

We also consider the following degenerate example:
\begin{defn}
  A Hurewicz fibration is a Kan fibration where \(m\) is the unique
  map in \(\EE_1\) from the initial object to the terminal object.
\end{defn}

\subsection{Full notions of structure and the axiom of choice}
\label{sec:some-non-definable}

Our first examples of non-definable notions of structure will be full
notions of structure arising from certain weak factorisation
systems. The intuitive idea behind these results is that from any
notion of structure we can obtain a full notion of structure by image
factorisation, i.e. we can consider the class of objects ``admitting
at least one structure.'' We can give a general rule of thumb that in
the absence of the axiom of choice this is often unreasonable and can
lead to non definable full notions of structure, even when the
original (non-full) notion of structure is definable. We first
illustrate this idea with the very simple example of split
epimorphisms. As we saw in Example \ref{ex:sections} sections form a
definable notion of structure on a codomain fibration.

\begin{thm}
  Suppose that the class of split epimorphisms in $\BB$ is definable
  as a full notion of structure on $\cod : \BB^\to \to
  \BB$. Then every regular epimorphism splits.
\end{thm}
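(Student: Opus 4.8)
The plan is to instantiate the definability hypothesis at a cleverly chosen object of $\BB^\to$ so that the representing subobject is forced to be the whole of the base, which then yields the desired section. Recall from the discussion after the B\'{e}nabou proposition that for a full notion of structure $\mathcal{D} \subseteq \EE$, the presheaf $\bar{\chi}_f$ on $\BB/\cod(f)$ is subterminal and, when representable, is represented by the largest subobject $\sigma : I \rightarrowtail \cod(f)$ such that $\sigma^\ast(f) \in \mathcal{D}$, i.e.\ such that $\sigma^\ast(f)$ is a split epimorphism. So definability of split epimorphisms amounts to: for every map $f : X \to Y$ in $\BB$ there is a largest subobject of $Y$ over which $f$ restricts to a split epi.

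First I would take $f$ to be an arbitrary regular epimorphism $e : X \twoheadrightarrow Y$ and consider the identity map $\mathrm{id}_Y : Y \to Y$ as the base object; more precisely, apply definability to $e$ itself, obtaining the universal subobject $\sigma : I \rightarrowtail Y$ with $\sigma^\ast(e)$ a split epimorphism. The key step is to show $\sigma$ is an isomorphism, equivalently that $\sigma$ is a (split, even) epimorphism. To see this, note that pulling back a regular epimorphism along any map yields a regular epimorphism, and a regular epimorphism that is split as such — wait, that is the conclusion — so instead I would argue \emph{locally}: for each ``generalized element'' $y : Z \to Y$, the pullback $y^\ast(e) : y^\ast(X) \to Z$ is again a regular epimorphism, but I need it to be \emph{split} to factor $y$ through $\sigma$. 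That is circular, so the real move must be different.

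The correct approach is to exploit that a regular epimorphism is the coequalizer of its kernel pair, and choose the test object so that splitness is automatic on a cover. Concretely: given a regular epi $e : X \twoheadrightarrow Y$, apply definability to the \emph{pullback of $e$ along $e$}, or better, observe that $e^\ast(e) : X \times_Y X \to X$ has the diagonal $\Delta : X \to X \times_Y X$ as a section, so $e^\ast(e)$ is a split epimorphism. Hence the universal subobject $\sigma : I \rightarrowtail X$ for $e^\ast(e)$ satisfies: $e : X \to Y$ (regarded as the map $e$ itself, which equals $e^\ast(e)$ after identifying — careful here) factors through $\sigma$, so $\sigma$ admits a section and is therefore an isomorphism by subterminality. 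Then by the universal (largest) property, since $\sigma = \mathrm{id}_X$, the map $\mathrm{id}_X^\ast(e^\ast(e)) = e^\ast(e)$ restricted over all of $X$ is split — but more usefully, running the same argument with the base being $Y$ and the map being $e$, the subobject $\tau : J \rightarrowtail Y$ is the largest over which $e$ splits, and we must show $e$ itself factors through $\tau$, i.e.\ $\tau$ has a section, using the already-established fact that $e^\ast(e)$ splits and that $e : X \to Y$ is a cover. The main obstacle — and the heart of the proof — is precisely this descent step: transferring the section of $e^\ast(e)$ along $e$ down to a section of $e$ over all of $Y$, which should follow because $e$ being a coequalizer means a map out of $X$ compatible with the kernel pair descends, and the canonical comparison exhibiting $J$ as largest forces $J = Y$. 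I would carry it out by checking that the composite $X \xrightarrow{\Delta} X\times_Y X \to X$ being a section of $e^\ast(e)$, pushed along $e$, gives a cocone on the kernel pair of $e$ into $\tau$, hence a map $Y \to \tau \rightarrowtail Y$ splitting $\tau$; subterminality of $\bar{\chi}_e$ then gives $\tau \cong \mathrm{id}_Y$, and unfolding that isomorphism produces the desired section of $e$.
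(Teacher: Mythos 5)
Your final run-through is essentially the paper's proof: the diagonal section of $e^\ast(e) : X \times_Y X \to X$ shows that $e$ factors through the representing subobject $\tau : J \rightarrowtail Y$, and then orthogonality of the regular epi $e$ against the mono $\tau$ — which you carry out by hand via descent along the kernel-pair coequalizer, where the paper simply cites that regular epis are left orthogonal to monos — yields a section of $\tau$, so $\tau$ is an isomorphism and $e$ is split. The detour applying definability to $e^\ast(e)$ over the base $X$ (and the momentary conflation of ``$e$ factors through $\tau$'' with ``$\tau$ has a section'') is unnecessary, but the essential argument is correct and matches the paper's.
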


\begin{proof}
  Let $f : X \to Y$ be a map in $\BB$ and let
  $m : I \rightarrowtail Y$ be the representing object at $f$. Note
  that if we pull $f$ back along itself, then the projection map
  $X \times_Y X \to X$ is a split epimorphism, with the diagonal map
  $\Delta : X \to X \times_Y X$ as section. Hence $f$ factors through
  $m$. Now if $f$ is a regular epimorphism, then it is left orthogonal
  to any monomorphism, giving us a section of $m$. Hence $f$ is a
  split epimorphism.  
\end{proof}

We now show the same basic idea applies when we consider the
underlying wfs of a wide range of awfs's in presheaf categories.

\begin{thm}
  \label{thm:wfsnotlocrepr}
  Suppose we are given a weak factorisation system on a presheaf
  category \(\psh{C}\) satisfying the following conditions:
  \begin{enumerate}
  \item The wfs is generated by locally decidable monomorphisms.
  \item \label{it:propermono} There is a left map \(m : A \to B\) such
    that there is \(c \in \mathcal{C}\) and \(x \in B(c)\) which does
    not lie in the image of \(m_c\).
  \end{enumerate}

  If the notion of structure given by right maps (as in Example
  \ref{ex:wfsfns}) is definable then the axiom of choice holds.
\end{thm}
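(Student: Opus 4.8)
### Proof Proposal

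The plan is to mimic the strategy of the preceding theorem about split epimorphisms: use definability of the right-map structure to produce a representing object for the presheaf of fibration structures on some cleverly chosen map, and then extract a contradiction with choice from the fact that this representing object is itself a monomorphism (since full notions of structure are subterminal). The key insight I would exploit is condition \eqref{it:propermono}: the map $m : A \to B$ has a ``new'' element $x \in B(c)$ not in the image of $m_c$. Pulling $m$ back appropriately or building a suitable test object from $x$ should let me cook up a map whose right-lifting structures encode choice functions, while the locally decidable hypothesis ensures the generating left maps behave well enough (in particular, their pushout-products and the relevant diagonal fillers exist constructively, and complemented subobjects are available).

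Concretely, first I would fix a surjection (regular epimorphism) $e : S \twoheadrightarrow T$ of presheaves — an arbitrary one, since the goal is to show every such $e$ admits a section, which over $\psh{C}$ amounts to the axiom of choice (a section of a surjection of presheaves is exactly a choice of preimage at each stage, naturally; over $\mathbf{Set}$-valued presheaves this is equivalent to $\mathbf{AC}$ when $\mathcal{C}$ is chosen suitably, and in any case we only need the generic instance coming from $m$ itself). Then I would construct from $e$ and the left map $m$ a vertical map $f$ in the codomain fibration whose right-map (fibration) structures are in natural bijection with sections of $e$ pulled back to the relevant base object — the idea being that a lifting structure for $f$ against $m$ must in particular solve a lifting problem that forces a choice of preimage at the ``new'' element $x$, with naturality in the base handled by the functoriality built into the definition of lifting structure. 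Here the locally decidable monomorphism hypothesis is what lets me glue the ``interesting'' part of $f$ (where a genuine choice is needed) to a ``trivial'' part (where the lifting is forced) along a complemented subobject, so that $f$ is well-defined and the bijection is clean.

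Next, since the right-map notion of structure is assumed definable, Lemma~\ref{lem:partreprequiv} gives a representing object $\sigma : J \to \cod(f)$ for $\bar{\chi}_f$, and since this is a \emph{full} notion of structure the representing object is a monomorphism (as in the proposition following the definition of Ben\'abou definability). But the construction of the previous paragraph, run in reverse, shows that $\sigma$ is \emph{split} epi — because the identity structure / a canonical lifting problem exhibits $\cod(f)$ itself as factoring through $J$, exactly as the diagonal map $\Delta : X \to X \times_Y X$ does in the split-epimorphism theorem. A map that is both a monomorphism and a split epimorphism is an isomorphism, so $\bar{\chi}_f$ is representable by $\cod(f)$ itself, i.e. $f$ genuinely has a right-map structure; unwinding the bijection, this is exactly a section of $e$. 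Applying this to the universal instance (or to each surjection) yields the axiom of choice.

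The main obstacle I expect is the construction of $f$ in the second paragraph: one needs a map whose right-lifting structures against the single generating left map $m$ are \emph{precisely} (and naturally) the data of a section of a given surjection, with no spurious extra choices and no missing constraints. The ``new element $x$ not in the image of $m_c$'' in condition \eqref{it:propermono} is clearly the hook — the lifting problem at $x$ is where a free choice of filler appears — but turning this into a clean natural bijection, and checking that the locally decidable hypothesis really does suffice to make the ambient gluing constructive (so that no choice sneaks in through the back door in verifying $f$ is a map of presheaves), is the delicate part. If a fully faithful $f$ is hard to produce, a fallback is to settle for a natural \emph{retraction} between sections of $e$ and lifting structures on $f$, which is still enough to transport representability and hence to recover a section of $e$.
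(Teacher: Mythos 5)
Your plan, as stated, breaks down at its central device. The notion of structure in the theorem is the \emph{full} one of Example \ref{ex:wfsfns}: a ``right-map structure'' on a map is a mere property, so each presheaf $\bar{\chi}_f$ is subterminal. Consequently there cannot be a natural bijection between structures on some constructed $f$ and the set of sections of a surjection $e$ (that set is not a truth value), and the step where you want the representing monomorphism to be split epi has no analogue of the diagonal from the split-epimorphism theorem: all one knows is that fillers \emph{merely} exist, and producing a canonical structure through which the base factors is exactly what the absence of choice forbids. Your reduction target is also off: the conclusion is the axiom of choice for sets (a choice function for an arbitrary family $(X_i)_{i\in I}$ of inhabited sets), not that every regular epimorphism of presheaves splits --- the latter is false in general even assuming AC, and you never restrict to a class of epis (e.g.\ between constant presheaves) for which the equivalence could be argued.

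For comparison, the actual argument runs differently and your proposal is missing its decisive ingredients. Given $(X_i)_{i\in I}$, for each $i$ one applies the algebraically free factorisation cofibrantly generated by the family $X_i \to \Omega_{\decble}\times X_i$ to the given left map $m$, obtaining $f_i := R_i m$; a mere element of $X_i$ suffices to witness that $f_i$ is a right map, so no choice is used here. Definability then enters not via ``mono plus split epi implies iso'' but via locality of definable classes: the representing subobject of $I\times B$ for $\coprod_{i\in I} f_i$ contains each $\{i\}\times B$, hence is all of $I\times B$, so the \emph{single} map $\coprod_i f_i$ is a right map; one existential instantiation then provides fillers $j_i$ for the whole $I$-indexed family of lifting problems against $m$ simultaneously. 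Finally, hypothesis (\ref{it:propermono}) (the element $x\in B(c)$ outside the image of $m_c$) together with the explicit $W$-type description of $K_i m$ forces $j_i(x)$ to be of the form $\sup(z,\alpha)$, from which $\pi_1(z)\in X_i$ gives the choice function. Your proposal contains neither the construction of the comparison map (you defer it yourself as the ``main obstacle''), nor any mechanism turning an $I$-indexed family of mere-existence statements into a single one, nor an extraction of elements of $X_i$; so the gap is not a technical detail but the core of the proof.
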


\begin{proof}
  Suppose we are given a family of merely inhabited sets
  \((X_i)_{i \in I}\). We will construct a choice function for the
  family \((X_i)_{i \in I}\) from the assumption that the wfs is
  definable.

  Let \(m : A \to B\) be a left map satisfying condition
  \ref{it:propermono}. Write \(\Omega_{\decble} \) for the classifying
  object for locally decidable subobjects. For each \(i \in I\) we
  consider the factorisation \(m\) given by the awfs cofibrantly by
  the following family of objects over the codomain fibration on
  \(\psh{C}\).
  \begin{displaymath}
    \begin{tikzcd}
      X_i \ar[rr, "p_i"] \ar[dr] & & \Omega_{\decble} \times X_i
      \ar[dl] \\
      & \Omega_{\decble} \times X_i &
    \end{tikzcd}
  \end{displaymath}
  Intuitively the factorisation freely adds a filler for each lifting
  problem from a locally decidable monomorphism to \(m\) \emph{and for
    each \(x \in X_i\)}. Hence we can find a filler for each lifting
  problem, given a choice of \(x \in X_i\).
  
  We will write this factorisation as \((L_i, R_i)\), so we are
  considering the map \(f_i := R_i m : K_i m \to B \).

  Given a fixed \(x \in X_i\) we can choose for each locally decidable
  monomorphism \(n : C \to D\) a map \(c : D \to \Omega_{\decble}\)
  such that \(n\) is the pullback of \(p_i\) along \(c\). Hence we can
  assign each left map an \(L_i\)-coalgebra structure and thereby a
  choice of diagonal filler for each lifting problem of a diagonal map
  against \(f_i\). Since \(X_i\) is merely inhabited it follows that
  there exists a function witnessing that \(f_i\) has the right
  lifting property against each left map, and so is a right map. By
  the assumption of definability, it follows that the coproduct
  \(\coprod_{i \in I} f_i : \coprod_{i \in I} K_i m \to I \times B\)
  is also a right map. Hence there is a function assigning a choice of
  filler for each lifting problem against \(m\). In particular, for
  each \(i \in I\) we have a choice of map \(j_i\) for each of the
  following lifting problems.
  \begin{displaymath}
    \begin{tikzcd}
      A \ar[d, "m"] \ar[r, "{\langle i, L_i \rangle}"] &
      \coprod_{i \in I} K_i m \ar[d] \\
      B \ar[r, swap, "{\langle i, 1_b \rangle}"] \ar[ur, dotted, "j_i"
      description] & I \times B
    \end{tikzcd}
  \end{displaymath}
  However, we can now read off from the explicit construction of
  \(K_i m\) in presheaves \cite{swanwtypered} that each element of
  \(K_i m(c)\) is either in the image of \(L_i m\) or of the form
  \(\sup(z, \alpha)\) where \(z\) belongs to
  \(\Omega_{\decble} \times X_i(c)\) and \(\alpha\) is a dependent
  function to earlier constructed elements. We choose the object \(c\)
  of \(\mathcal{C}\) as in the condition on \(m\) in the statement of
  the theorem to ensure the former case is not possible
  leaving only the latter case. In particular \(\pi_1(z)\) belongs to
  \(X_i\), giving us a choice function for the family \((X_i)_{i \in I}\).
\end{proof}

We can use Theorem \ref{thm:wfsnotlocrepr} to give a concrete example
of a non definable weak factorisation system in simplicial
presheaves. Given a small category \(\mathcal{D}\), the category of
\emph{simplicial presheaves} is by definition the category of
presheaves on \(\mathcal{D} \times \simpcat\). We can view this as the
category of category of simplicial sets constructed internally in the
presheaf topos \(\psh{D}\). Following the work of Gambino, Henry,
Sattler, Szumi\l{}o
\cite{gambinosattlerszumilo, henryexinfty, ghsseffectivemodelstr} we can
construct a model structure on simplicial presheaves using the
internal logic of \(\psh{D}\). They define Kan fibrations as
cofibrantly generated by the set of horn inclusions. We can read off
an external description of the Kan fibrations as follows. We can
equivalently view the category of simplicial presheaves as the
category of functors \([\mathcal{D}, \sset ]\). We can then read off
the internal definition of Kan fibration as being the same as that
given by the \emph{pointwise awfs} \cite[Section 4.2]{riehlams} on Kan
fibrations. Explicitly a natural transformation \(f\) between functors
\([\mathcal{D}, \sset ]\) is a right map if we can assign Kan
fibration structures \(f_d\) for each object \(d\) of \(\mathcal{D}\)
in such as way that for each morphism \(\sigma : d \to d'\) in
\(\mathcal{D}\) is a morphism of Kan fibrations. That is, the diagonal
fillers are chosen so that the triangles in the centre of each diagram
below commute.
\begin{displaymath}
  \begin{tikzcd}
    \Lambda^i_n \ar[r] \ar[d] & X_{d'} \ar[r, "X_\sigma"] \ar[d] &
    X_{d} \ar[d, "f_{d'}"] \\
    \Delta_n \ar[r] \ar[ur, dotted] \ar[urr, dotted, crossing over] &
    Y_{d'} \ar[r, "Y_\sigma"] & Y_{d}
  \end{tikzcd}
\end{displaymath}

We note that this definition gives a definable awfs. This follows from
our general result, but already appears implicitly in the construction
of the universe by Gambino and Henry \cite{gambinohenry}. However, it
is commonly the case for the axiom of choice to fail in presheaf
toposes, and since horn inclusions are locally decidable we can apply
Theorem \ref{thm:wfsnotlocrepr} to show that the full notion of
structure from the wfs underlying the awfs is not definable. For
instance, this applies for the very simple example of simplicial
presheaves on the walking arrow \(\cdot \to \cdot\), since
\(\sets^\to\) is not boolean and so does not satisfy the internal axiom
of choice. Finally we observe that in homotopical algebra it is common
to consider two other definitions of Kan fibration in simplicial
presheaves: projective and injective. The wfs of projective Kan
fibrations is cofibrantly generated by a set of maps with
representable codomain and so definable. For injective Kan fibrations
the situation in general in unclear, but if \(\mathcal{D}\) is an
inverse category, as for simplicial presheaves on the walking arrow,
then we can apply the construction of the universe in \cite[Section
12]{shulmanuidhc} to see that the wfs is definable, while the more
sophisticated techniques of \cite{shulmaninftytopunivalence}
allow one to replace
the wfs of injective Kan fibrations with a different, non-full, notion
of structure, which is definable and has the same underlying class of
maps for any small category \(\mathcal{D}\).

\subsection{Hurewicz fibrations in topological spaces and related
  examples}
\label{sec:hurew-fibr-topol}

\begin{thm}
  The awfs of Hurewicz fibrations is not definable in any
  of the following categories:
  \begin{enumerate}
  \item Topological spaces
  \item The function realizability topos
  \item The Kleene--Vesley topos
  \end{enumerate}
\end{thm}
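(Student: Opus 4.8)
The plan is to show that definability of the Hurewicz awfs would force the interval object $\II$ to be tiny --- that is, the path-space functor $(-)^{\II}$ (in the fibred setting, $\hom(\II,-)$) would have to admit a right adjoint --- and then to rule this out in each of the three categories.

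First I would unwind what a Hurewicz fibration structure is. Since the generating map is the pushout product of the endpoint inclusions $1 \rightrightarrows \II$ with $0 \to 1$, which is again an endpoint inclusion, a Hurewicz fibration structure on $f \colon X \to Y$ amounts, fibrewise over the codomain fibration, to a coherent path-lifting operation; where the relevant internal homs exist this is exactly a section of the canonical map $X^{\II} \to X \times_Y Y^{\II}$, i.e.\ a \emph{connection} on $f$. Now the path-space projections $\mathrm{ev}_0 \colon Y^{\II} \to Y$ and the product projections $\II \times Y \to Y$ are themselves Hurewicz fibrations, so I would apply the hypothesis of definability to these and to product projections $W \times Z \to Z$, and combine Lemma~\ref{lem:sectlocrepr} (definability of sections, with the map itself as representing object) with Lemma~\ref{lem:pblocrepr} (pullback-stability of representing objects) to read off that, for suitable objects $W$, the functor $Z \mapsto \hom(Z^{\II}, W)$ is representable. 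Running this over enough $W$ --- or, more conceptually, invoking Theorem~\ref{thm:locrepuniverses} and noting that a universe for the Hurewicz awfs is precisely a right-adjoint datum for $(-)^{\II}$ --- produces a right adjoint to $(-)^{\II}$, so that $\II$ is tiny.

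It then suffices to observe that $\II$ is not tiny in any of the three categories. In topological spaces $\II = [0,1]$ is locally compact Hausdorff, hence exponentiable, so $(-)^{[0,1]}$ exists; but it is not a left adjoint, since it fails to preserve colimits. Concretely, for the pushout $A \sqcup_{1} B \cong [0,2]$, where $A \cong [0,1]$ and $B \cong [1,2]$ are glued by identifying the right endpoint of $A$ with the left endpoint of $B$, the comparison map from the pushout of the path spaces is not surjective: a path that crosses the glue point lies in the image of neither summand. Hence $(-)^{[0,1]}$ has no right adjoint. In the function realizability topos and the Kleene--Vesley topos the interval object appropriate to their homotopy theory --- the continuous (Dedekind) unit interval, reflecting their close kinship with $\mathbf{Top}$ --- is likewise connected with two disjoint global points, and the same crossing argument, now carried out in the internal logic, shows the corresponding path-space functor does not preserve this pushout; so $\II$ is again not tiny. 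Together with the previous paragraph this is a contradiction, proving non-definability.

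The step I expect to be the main obstacle is the extraction of a right adjoint to $(-)^{\II}$ from definability. Two points need care: (i) $\mathbf{Top}$ is not cartesian closed, so one must only invoke exponentials that actually exist --- $[0,1]$ is exponentiable so the path spaces $Y^{\II}$ are fine, but $\hom(Z^{\II}, W)$ presupposes $Z^{\II}$ exponentiable, which has to be arranged by restricting attention to suitable $W$ or by passing to the set-indexed family fibration; and (ii) one must pin down precisely which fibration ``the awfs of Hurewicz fibrations'' is taken over in each example, and hence whether ``definable'' here means ordinary or fibrewise definability. An alternative, more hands-on route would bypass tininess entirely and instead exhibit a single map $f \colon X \to Y$ for which $\bar{\chi}_f$ is demonstrably not representable, reflecting the absence of a largest subobject of the base over which $f$ carries a connection.
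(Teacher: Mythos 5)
There is a genuine gap, and it is exactly the step you flag as the main obstacle: the deduction that definability of the Hurewicz awfs forces $(-)^{\II}$ (or $\hom(\II,-)$) to have a right adjoint. Nothing in the paper's framework, and nothing in your sketch, supplies this implication, and it is in the wrong direction: Theorem~\ref{thm:tinytolocrepr} proves \emph{tininess implies definability}, and no converse is claimed anywhere. Definability only says that for each $f\colon X\to Y$ the presheaf on $\BB/Y$ sending $\sigma\colon Z\to Y$ to the set of Hurewicz structures on $\sigma^\ast(f)$ is representable; the representing objects classify path-lifting data (sections of $X^{\II}\to X\times_Y Y^{\II}$ after pullback), which is far less information than representability of $Z\mapsto\hom(Z^{\II},W)$ for arbitrary $W$, so there is no visible way to assemble a right adjoint to $(-)^{\II}$ from them. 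In particular the assertion that ``a universe for the Hurewicz awfs is precisely a right-adjoint datum for $(-)^{\II}$'' is not correct: Theorem~\ref{thm:locrepuniverses} produces an object classifying small maps \emph{equipped with fibration structure}, not a representation of the path-space functor. Since your contradiction rests entirely on tininess of $\II$, the proof does not go through as written (the non-tininess of $[0,1]$ in $\mathbf{Top}$, via failure of $(-)^{[0,1]}$ to preserve the pushout $[0,2]\cong[0,1]\cup_{\{1\}}[1,2]$, is fine but carries no weight without the first step). There are also secondary issues you would have to confront even if that step were repaired: $\mathbf{Top}$ is not locally cartesian closed, so the universe machinery you invoke is not directly available there, and in the realizability toposes the non-tininess of the Dedekind interval is itself an unproved claim in your sketch.

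Your closing remark points at the correct repair, and it is what the paper actually does: exhibit a single map whose presheaf of structures cannot be representable. Concretely, the paper writes $\RR$ as the pushout of $(-\infty,1)\leftarrow(-1,1)\rightarrow(-1,\infty)$ (with the corresponding pullback decomposition of the projection $\RR\times\RR\to\RR$) and gives an explicit one-parameter family of fillers
$j_c(z,x)=\bigl(h(z)+c\min(k(z,0)+1,0)\max(k(z,x)-1,0),\,k(z,x)\bigr)$,
yielding distinct Hurewicz fibration structures on $\RR\times\RR\to\RR$ that all agree when restricted to the two pieces of the pushout. A representable presheaf on $\mathbf{Top}/\RR$ sends this pushout to a pullback of sets, so restriction to the pieces would be injective; the family $j_c$ contradicts this, and no tininess considerations are needed. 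The realizability cases then follow not by an internal interval argument but by the embedding of countably based $T_0$-spaces into the function realizability topos, together with the observation that $j_c$ is computable in $h$ and $k$. If you replace your first two paragraphs by an argument of this ``two structures agreeing on a cover'' type, the overall plan becomes sound.
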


\begin{proof}
  We first consider topological spaces. We note that in the
  commutative cube below the top and bottom faces are pushouts and all
  side faces are pullbacks.
  \begin{displaymath}
    \begin{tikzcd}[sep=1em]
      & (-\infty, 1) \times \RR \ar[dd] \ar[rr] & & \RR \times \RR \ar[dd] \\
      (-1, 1) \times \RR \ar[ur] \ar[rr, crossing over] \ar[dd] & & (-1, \infty)
      \times \RR \ar[ur] & \\
      & (-\infty, 1) \ar[rr] & & \RR \\
      (-1, 1) \ar[rr] \ar[ur] & & (-1, \infty)  \ar[ur] \ar[from=uu,
      crossing over] &
    \end{tikzcd}
  \end{displaymath}

  We will show there are multiple Hurewicz fibration structures that
  all agree on the two pushout inclusions.

  Suppose we are given a lifting problem as below:
  \begin{displaymath}
    \begin{tikzcd}
      Z \times 1 \ar[d, "Z \times \delta_0"] \ar[r, "h"] & \RR \times
      \RR \ar[d] \\
      Z \times \II \ar[r, "k"] & \RR
    \end{tikzcd}
  \end{displaymath}

  For each $c \in \RR$, we define a diagonal filler
  \(j_c : Z \times \II \to \RR \times \RR\) by the following formula:
  \begin{equation*}
    j_c(z, x) := (h(z) + c \min(k(z, 0) + 1, 0) \max(k(z, x) - 1, 0) ,
    k(z, x))
  \end{equation*}
  Note that if the homotopy \(k\) factors through the inclusion
  \((-\infty, 1) \hookrightarrow \RR\) then \(\max(k(z, x) - 1, 0) =
  0\) for all \(z, x\), and if it factors through the inclusion
  \((-1, \infty) \hookrightarrow \RR\) then \(\min(k(z, 0) + 1, 0) =
  0\). In either case we have
  \begin{equation*}
    j_c(z, x) := (h(z),  k(z, x)).
  \end{equation*}
  However, it is easy to come up with examples of lifting problems
  where \(j_c\) is different for different values of \(c\). For
  example, this is the case whenever \(Z = 1\) and \(k\) is defined by
  \(k(x) := 4 x - 2\).

  For the function realizability and Kleene--Vesley topos, we simply
  use the embedding of countably based \(T_0\) spaces into the
  function realizability topos \cite{bauereqsptte}, and observe that
  \(j_c\) is computable whenever \(h\) and \(k\) are.
\end{proof}

\subsection{Non definability of Kan fibrations from logical properties
  of the interval}

We now give two classes of examples of non definable awfs's. In both
cases we use the internal logic of a topos to construct similar
examples to the one in Section \ref{sec:hurew-fibr-topol} from certain
logical principles. The first of these is that the interval admits a
linear ordering, and the second a principle that we denote ``detachable
diagonal.'' In both cases we will construct the Kan fibration
structures in the internal logic of the topos, following Orton and
Pitts \cite{pittsortoncubtopos}.

\subsubsection{Linear intervals and simplicial sets}
\label{sec:non-locally-repr-1}

It has already been shown by Sattler that the Kan fibrations of
Example \ref{ex:gzb3} are not definable in the category of simplicial
sets, with a proof appearing in \cite[Appendix
D]{vdbergfaber}. In this section we will see a new, more general proof
of this fact.

Recall that the category of simplicial sets is the classifying topos
for linear intervals with disjoint endpoints. In particular $\Delta_1$ is the
generic such, with order relation given by degeneracy maps
$\Delta_2 \rightarrowtail \Delta_1 \times \Delta_1$, and endpoints the
face maps $\delta_0, \delta_1 : \Delta_0 \rightrightarrows
\Delta_1$. We will show that in fact linearity of the interval
suffices to show non definability.

In the below, let \(\CC\) be a topos and \((\II, \leq, 0, 1)\) be a
linear order with endpoints in \(\CC\). Assume further that
\((\II, \leq, 0, 1)\) is non trivial in that the endpoint map
\(2 \to \II\) is not a regular epimorphism. Equivalently, the
following statement does \emph{not} hold in the internal logic of
\(\CC\):
\begin{displaymath}
  \forall x \in \II\;x = 0 \,\vee\, x = 1
\end{displaymath}
Note that any connected interval with disjoint endpoints is non
trivial in this sense.

From linearity, we can show that $\II \times \II$ is the union of the
two subobjects defined by $T_0 := \{(x, y) \;|\; x \geq y \}$ and
$T_1 := \{(x, y) \;|\; x \leq y \}$. We define in the internal
language a family of objects indexed over $\II \times \II$ by
$Z_{x, y} := \{ \varphi \in \Omega \;|\; x \geq y \,\rightarrow\,
\varphi \}$. Note that the pullback of $Z$ along each of the
inclusions $T_i \hookrightarrow \II \times \II$ is a trivial
fibration, in the strongest sense, that we have a choice of lift
against all monomorphisms.

\begin{lemma}
  \label{lem:sssetnodefmainlemma}
  We construct two different fibration structures, in the sense of
  Example \ref{ex:gzb3} on $Z \to \II \times \II$ that are equal when
  restricted to $T_0$ and when restricted to $T_1$.
\end{lemma}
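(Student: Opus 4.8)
The plan is to transport the construction of the two Hurewicz fibration structures from Section~\ref{sec:hurew-fibr-topol} into the internal logic of $\CC$, following the style of Orton and Pitts \cite{pittsortoncubtopos}; this also gives the promised more general reproof of Sattler's result. Recall that, internally, a fibration structure in the sense of Example~\ref{ex:gzb3} on a vertical map $p : Z \to W$ amounts to an operation that, given a path $\gamma : \II \to W$ in the base, a cofibration $\varphi \in \Omega$, a partial section $u$ over $\varphi$ along $\gamma$, and an endpoint datum $a_0$ in the fibre over $\gamma(0)$ agreeing with $u(0)$ on $\varphi$, returns a total section $p$ along $\gamma$ with $p(0) = a_0$ and $p = u$ on $\varphi$, together with the symmetric operation for the endpoint $1$; here $W = \II \times \II$, so a base path is a pair $\gamma = (\alpha, \beta)$ of maps $\II \to \II$, and a section of $Z$ along $\gamma$ is a map $i \mapsto \psi_i : \II \to \Omega$ with $\alpha(i) \geq \beta(i) \rightarrow \psi_i$ for all $i$. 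Uniformity under reindexing will be automatic since every operation below is given by a formula in the internal language, built from $\wedge$, $\vee$, $\neg$, the order and equality on $\II$, all of which are stable under substitution.

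First I would write down the \emph{canonical} filler for the $0$-endpoint,
\[
  p^{\mathrm{can}}(i) \;:=\; \big(\varphi \wedge u(i)\big) \;\vee\; \big(i = 0 \wedge a_0\big) \;\vee\; \big(\alpha(i) \geq \beta(i)\big),
\]
and check the three required conditions: $p^{\mathrm{can}}(i)$ lies in $Z_{\gamma(i)}$ because it contains the disjunct $\alpha(i)\geq\beta(i)$; it restricts to $a_0$ at $i=0$ because the other two disjuncts are then below $a_0$ (using $\varphi \rightarrow u(0) = a_0$ and $a_0 \in Z_{\gamma(0)}$); and it restricts to $u$ on $\varphi$ because the remaining disjuncts are already below $u(i)$ there. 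With the symmetric formula at the $1$-endpoint this gives a fibration structure $s_0$. I would then define $s_1$ by leaving the $1$-endpoint operation canonical but replacing the $0$-endpoint filler with
\[
  p^{\mathrm{mod}}(i) \;:=\; p^{\mathrm{can}}(i) \;\vee\; \big(\neg\varphi \wedge C_i\big),
  \qquad
  C_i \;:=\; \big(\alpha(0) > \beta(0)\big) \wedge \big(\alpha(i) < \beta(i)\big),
\]
the predicate $C_i$ being the internal counterpart of the factor $\min(k(z,0)+1,0)\max(k(z,x)-1,0)$ in the topological proof: it detects base paths that start strictly on the $T_0$-side of the diagonal and later cross strictly to the $T_1$-side.

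The verification that $p^{\mathrm{mod}}$ is again a valid filler is short: the extra disjunct does not leave $Z_{\gamma(i)}$; at $i=0$ we have $C_0 = \bot$ since $\alpha(0) > \beta(0)$ and $\alpha(0) < \beta(0)$ cannot both hold, so $p^{\mathrm{mod}}(0) = a_0$; and the guard $\neg\varphi$ makes the extra disjunct disappear when $\varphi$ holds, so $p^{\mathrm{mod}} = u$ on $\varphi$. For equality on $T_0$: if $\gamma$ factors through $T_0$ then $\alpha(i) \geq \beta(i)$, which contradicts $\alpha(i) < \beta(i)$, so $C_i = \bot$ and $p^{\mathrm{mod}} = p^{\mathrm{can}}$; equality on $T_1$ is dual, using that a path through $T_1$ has $\alpha(0) \leq \beta(0)$, and equality on the overlap $T_0 \cap T_1$ (where $Z$ is terminal) is then automatic. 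Finally, to see $s_0 \neq s_1$, I would exhibit a single lifting problem separating them: in context $\II$ with variable $w$, take $\varphi = \bot$, base path $\gamma_w(i) := (w, i)$ and the forced datum $a_0 = \top$; then $p^{\mathrm{can}}(i) = (i = 0) \vee (w \geq i)$ while $p^{\mathrm{mod}}(i) = p^{\mathrm{can}}(i) \vee \big((w > 0) \wedge (w < i)\big)$, and these differ over the subobject $\{\,(w,i) \mid 0 < w < i\,\}$ of $\II \times \II$.

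The main obstacle I anticipate is precisely this last point: one must show that non-triviality of the interval ($2 \to \II$ not a regular epimorphism, equivalently the failure of $\forall x \in \II.\, x = 0 \vee x = 1$) really forces the separating subobject to be non-zero, rather than the correction term being accidentally trivial for formal reasons — this is where the hypothesis on $\II$ is used, and it may require a $\neg\neg$-style argument or a slightly adjusted $C_i$. A secondary care point is making sure $p^{\mathrm{mod}}$ satisfies every coherence actually packaged into "fibration structure" for the exact formulation of Example~\ref{ex:gzb3} (so that the $\neg\varphi$-guard trick is legitimate), but since that formulation is just the fibred right lifting property against the pushout products $\delta_i \mathbin{\hat\times} m$, this reduces to the bullet-point checks above.
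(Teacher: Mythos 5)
Your overall strategy is the paper's: work in the internal logic, take the canonical filler $(\varphi \wedge u(i)) \vee (i = 0 \wedge a_0) \vee (\alpha(i) \geq \beta(i))$, and produce a second filler by adding one extra disjunct that is forced to vanish whenever the base path factors through $T_0$ or through $T_1$; your verifications of the boundary conditions and of agreement on $T_0$, $T_1$ are fine. The paper's two structures have exactly this shape, except that its extra disjuncts are \emph{positive}: $p(0) = (0,0) \wedge \pi_1(p(x)) = 1 \wedge \prod_{w : \psi} f(x,w)$ for one structure and $\pi_1(p(0)) = 0 \wedge \pi_1(p(x)) = 1 \wedge \prod_{w : \psi} f(x,w)$ for the other, with the conjunct $\prod_{w : \psi} f(x,w)$ doing the job of your $\neg\varphi$ guard.

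The genuine gap is the step you flag yourself: showing the two structures actually differ, and this is not a removable technicality but a failure of your particular correction term. Your $C_i = (\alpha(0) > \beta(0)) \wedge (\alpha(i) < \beta(i))$ is built from strict inequalities, which in a topos are negative formulas however you define $<$ from $\leq$ (as $\leq \wedge \neg\geq$, as $\neg\geq$, or as $\leq \wedge \neq$). In the motivating example of simplicial sets one computes that, as subobjects of $\Delta_1 \times \Delta_1$, the locus $w < i$ is just the vertex $(0,1)$ (a cell lies in $\neg(i \leq w)$ only if every vertex does, forcing $w \equiv 0$, $i \equiv 1$) and the locus $0 < w$ is just the vertex $w = 1$; hence your separating subobject $\{(w,i) \mid 0 < w < i\}$ is the zero subobject, $p^{\mathrm{mod}} = p^{\mathrm{can}}$ on your test problem, and non-triviality of $2 \to \II$ cannot rescue this. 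So the correction really has to be phrased in positive terms, as in the paper. The paper then separates its two structures by testing against the pushout product of $2 \hookrightarrow \II$ with $\delta_0$: the family of paths $p_x(y) = (x,y)$ with cofibration $\psi_x = (x = 0) \vee (x = 1)$ and partial element constantly $\top$, for which the two fillers at $y = 1$ evaluate to $x = 0 \vee x = 1$ and to $\top$ respectively — subobjects of $\II$ that differ precisely by the non-triviality hypothesis (and this choice of test map is also what lets the subsequent corollary transfer the argument to the other generating families).
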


\begin{proof}
  We work in the internal logic of \(\CC\). We will just
  define fillers for paths in the direction $0$ to $1$, the
  other direction being similar.

  Suppose we are given $\psi \in \Omega$, a path
  $p : \II \to \II \times \II$, an element $z_0$ of
  $Z_{p(0)}$ and a dependent function
  $f : \prod_{x : \II} \psi \to Z_{p(x)}$ such that
  $\prod_{w : \psi} f(0, w) = z_0$.

  For the first fibration structure, we define
  $q : \prod_{x : \II} Z_{p(x)}$, as follows.
  \begin{multline*}
    q(x) := \sum_{w : \psi} f(x, w) \;\vee\; (x = 0 \wedge z_0)
    \;\vee\; p(x) \in T_0 \;\vee \\
    p(0) = (0, 0) \,\wedge\, \pi_1(p(x)) = 1 \,\wedge\,
    \prod_{w : \psi} f(x, w)
  \end{multline*}
  Note that the clause $p(x) \in T_0$ ensures that $q(x)$ belongs to
  $Z_{p(z)}$. We also need to check the boundary conditions. We
  clearly have $z_0 \rightarrow q(0)$. It remains to check $q(0)
  \rightarrow z_0$. To do this we show that each clause in the
  disjunction defining $q(0)$ implies $z_0$. For $\sum_{w : \psi} f(x,
  w)$ we apply the assumption that $\prod_{w : \psi} f(0, w) =
  z_0$. The second clause $x = 0 \wedge z_0$ is clear. For the third
  clause $p(0) \in T_0$, note that this implies $z_0 = \top$. For the
  final clause we note that $p(0) = (0, 0)$ and $\pi_1(p(0)) = 1$
  gives a contradiction, making the final clause equal to $\bot$. We
  can similarly show the boundary condition for the partial elements.

  For the second fibration structure, we define
  $r : \prod_{x : \II} Z_{p(x)}$, as follows.
  \begin{multline*}
    r(x) := \sum_{w : \psi} f(x, w) \;\vee\; (x = 0 \wedge z_0)
    \;\vee\; p(x) \in T_0 \;\vee \\
    \pi_1(p(0)) = 0 \,\wedge\, \pi_1(p(x))
    = 1 \,\wedge\, \prod_{w : \psi} f(x, w)
  \end{multline*}
  A similar argument to before shows that $r$ satisfies the boundary
  conditions.

  We check that $q$ and $r$ agree whenever $p$ lies entirely in $T_0$
  and whenever it lies entirely in $T_1$. The former is trivial. For
  the latter, note that for any element of $T_1$ of the form $(x, y)$
  we have by definition $x \leq y$ and so $y = 0$ if and only if $(x,
  y) = (0, 0)$, so we can see that $q(x)$ and $r(x)$ are equivalent
  for all $x$.
  
  We will show these give different values for a lifting problem
  against the pushout product of
  $2 \hookrightarrow \II$ and $\delta_0 : 1 \to
  \II$. In the internal logic we view this as a family of paths
  indexed by $\II$, say a path $p_x : \II \to \II
  \times \II$ for each $x \in \II$, taken together with
  $\psi_x := (x = 0) \vee (x = 1)$ and partial elements that we need
  to define. We define $p_x(y) := (x, y)$ and take the partial elements
  $f$ to be constantly equal to $\top$.

  We can then compute $q_x(1) = x = 0 \vee x = 1$ and $r_x(1) =
  \top$. However, composing with the inclusion
  $Z_{(x, 1)} \hookrightarrow \Omega$ these give two different
  subobjects of $\II$ by the non triviality condition on \(\II\).
\end{proof}

There are several possible ways to define Kan fibrations in simplicial
sets that are known to give the same class of maps when working in a
classical setting (see e.g. \cite[Chapter IV, Section
2]{gabrielzisman}). By Corollary \ref{cor:setreprcodtolocrepr} we know
that the awfs generated by the set indexed family of horn inclusions
is definable. Also, as remarked by Shulman, the full notion of
structure for the underlying wfs is definable, assuming the axiom of
choice (which is strictly necessary by Theorem
\ref{thm:wfsnotlocrepr}). However, none of the other commonly
considered awfs's on simplicial sets are definable.

\begin{cor}
  The awfs's cofibrantly generated by the following classes of maps
  are not definable in simplicial sets:
  \begin{enumerate}
  \item \label{it:ppmono} Pushout product of mono and endpoint
    inclusion. (Example \ref{ex:gzb3})
  \item \label{it:pplocdec} Pushout product of locally decidable mono
    and endpoint inclusion.
  \item \label{it:ppsubrepr} Pushout product of the set of subobjects
    of representables and endpoint inclusion.
  \item \label{it:ppbdyinc} Pushout product of boundary inclusions and
    endpoint inclusion. (Example \ref{ex:gzb2})
  \item \label{it:enrichedlp} Fibred lifting problem against the
    coproduct of all horn inclusions with respect to the codomain
    fibration. This is same as enriched lifting problem for cartesian
    monoidal product.
\end{enumerate}
\end{cor}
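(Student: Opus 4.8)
The plan is to reduce all five cases to Lemma~\ref{lem:sssetnodefmainlemma} together with the general fact (Proposition~\ref{prop:repdefns}) that a definable notion of structure has $\cart(\chi)$ comonadic, and hence creating colimits. First I would instantiate the lemma. Since $\sset$ is the classifying topos for linear intervals with disjoint endpoints, $\Delta_1$ is a connected such interval, so its endpoint inclusion $2 \to \Delta_1$ is not a regular epimorphism; thus Lemma~\ref{lem:sssetnodefmainlemma} applies with $\CC = \sset$ and $\II = \Delta_1$, yielding a vertical map $f = (Z \to \II \times \II)$ carrying two \emph{distinct} Kan fibration structures $q, r$ in the sense of Example~\ref{ex:gzb3} whose restrictions along $T_0 \hookrightarrow \II \times \II$ agree and whose restrictions along $T_1 \hookrightarrow \II \times \II$ agree (hence so do those along $T_0 \cap T_1$).

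The key step is then to see that $f$ is created from its restrictions as a colimit. Because $\sset$ is a topos, hence adhesive, $\II \times \II = T_0 \cup T_1$ is the van Kampen pushout of $T_0 \leftarrow T_0 \cap T_1 \to T_1$, and pulling $Z$ back along the three inclusions exhibits $f$, along the evident cartesian (pullback) squares, as the pushout of $f|_{T_0} \leftarrow f|_{T_0 \cap T_1} \to f|_{T_1}$ in the category of cartesian maps; the restrictions of $q$, and separately those of $r$, are then two cocones over one and the same span of Kan fibration structures. If the awfs of Example~\ref{ex:gzb3} were definable, $\cart(\chi)$ would be comonadic and would therefore create this colimit, so there would be at most one Kan fibration structure on $f$ restricting to the given structures on $T_0$ and on $T_1$; this contradicts $q \neq r$, proving item~(1).

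For items~(2)--(4) I would transport $q$ and $r$ along comparison functors. Each of these awfs's is cofibrantly generated by a family lying among the monomorphisms, and a lifting structure for $\delta_\varepsilon \hat{\times}(1 \to \Omega)$ restricts, by pulling back along classifying maps into $\Omega$ and using $\delta_\varepsilon \hat{\times}(g^\ast m) \cong (\II \times g)^\ast(\delta_\varepsilon \hat{\times} m)$, to a lifting structure against every $\delta_\varepsilon \hat{\times} m$ with $m$ a (locally decidable) monomorphism; this gives functors from the structures of Example~\ref{ex:gzb3} to those of items~(2), (3), (4), compatible with reindexing. Pushing $q$ and $r$ forward yields structures for each of these awfs's still agreeing on $T_0$ and on $T_1$, and they stay distinct because Lemma~\ref{lem:sssetnodefmainlemma} already separates $q$ from $r$ on a lifting problem against $\delta_0 \hat{\times}(\partial\Delta_1 \hookrightarrow \Delta_1)$ --- and $\partial\Delta_1 \hookrightarrow \Delta_1$ is at once a boundary inclusion, a subobject of a representable, and (its complement being levelwise a subset of a finite set) a locally decidable monomorphism, so that particular problem lies in the generating family in each case. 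The pushout argument above then applies verbatim, giving non-definability for items~(2)--(4).

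Item~(5) is the one I expect to require real care. Here the separating map $\delta_0 \hat{\times}(\partial\Delta_1 \hookrightarrow \Delta_1)$ is the inclusion of three sides of the square $\Delta_1 \times \Delta_1$ into it, which, via the triangulation $\Delta_1 \times \Delta_1 = \Delta_2 \cup_{\Delta_1} \Delta_2$, factors as a composite of two pushouts of horn inclusions ($\Lambda^1_2$ of one triangle followed by $\Lambda^0_2$ of the other); so a fibred (equivalently enriched) lifting structure against the coproduct of all horns does induce a lift against it. The work is to produce two \emph{distinct} such structures on $f$ agreeing on $T_0$ and $T_1$: either by re-running the construction of Lemma~\ref{lem:sssetnodefmainlemma}, which amounts to combining the two ``trivial fibration'' structures on $Z|_{T_0}$ and $Z|_{T_1}$ in two different orders, now at the level of horn-filling rather than interval-filling, or by checking that the comparison from the Example~\ref{ex:gzb3} awfs to the enriched horn awfs does not identify the images of $q$ and $r$. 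Either way, the main obstacle --- shared with items~(2)--(4) but sharpest here --- is the bookkeeping needed to confirm that the two structures of Lemma~\ref{lem:sssetnodefmainlemma} remain genuinely distinct in each presentation; the rest is the routine verification that $f$ is a van Kampen pushout of cartesian maps, hence created by $\cart(\chi)$ under the definability hypothesis.
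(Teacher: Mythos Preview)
Your overall strategy is correct and coincides with the paper's: use Lemma~\ref{lem:sssetnodefmainlemma} to produce two distinct fibration structures on $Z \to \II \times \II$ agreeing on the restrictions to $T_0$ and $T_1$, then argue that definability would force uniqueness via the colimit-creation property of $\cart(\chi)$. You spell out the van Kampen pushout step more explicitly than the paper does, which is fine. Your treatment of items~(2)--(4) is also essentially the paper's: the generating left maps of each awfs are left maps of item~(1), so $q$ and $r$ restrict to structures for each, and the separating lifting problem $\delta_0 \hat{\times}(2 \hookrightarrow \II)$ lies in the generating family in each case.

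Where you diverge unnecessarily is item~(5). You propose to factor the three-sides-of-the-square inclusion as a composite of pushouts of horn inclusions and then argue that the induced structures remain distinct. This works in principle but is more than is needed. The point you are missing is that the endpoint inclusion $\delta_0 : 1 \to \Delta_1$ is \emph{itself} a horn inclusion (the $1$-dimensional horn $\Lambda^1_1 \hookrightarrow \Delta_1$), so the enriched lifting problem at the object $\II$ against this horn is exactly the map $\II \times \delta_0 : \II \to \II \times \II$. This map is therefore directly among the generating left maps for item~(5). Now simply rerun the computation at the end of Lemma~\ref{lem:sssetnodefmainlemma} with $\psi = \bot$ instead of $\psi = (x=0)\vee(x=1)$: since $\prod_{w:\bot} f(x,w) = \top$ vacuously, one still obtains $q_x(1) = (x=0 \vee x=1)$ and $r_x(1) = \top$, so $q$ and $r$ already differ on $\II \times \delta_0$. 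No factoring through higher horns, no extra bookkeeping. This is what the paper means by ``a similar argument applies to $\II \times \delta_0$.''
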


\begin{proof}
  We directly considered \ref{it:ppmono} in Lemma
  \ref{lem:sssetnodefmainlemma}. Note that in each awfs, each
  generating left map is a left map of \ref{it:ppmono}, and so we do
  get two right map structures for each of the awfs's. To show that
  the two right map structures are different in Lemma
  \ref{lem:sssetnodefmainlemma} we considered a lifting problem
  against the pushout product of \(2 \hookrightarrow \II\) and
  \(\delta_0 : 1 \to \II\). This is a left map in \ref{it:pplocdec},
  \ref{it:ppsubrepr} and \ref{it:ppbdyinc}, so exactly the same proof
  applies for each of these. This only leaves \ref{it:enrichedlp}, for
  which we observe that a similar argument applies to
  \(\II \times \delta_0\).
\end{proof}

We also note that the same argument applies to Hurewicz fibrations:
\begin{cor}
  Let \(\CC\) be a topos and \((\II, \leq, 0, 1)\) be a non trivial
  linear order with endpoints in \(\CC\). Then Hurewicz fibrations are
  not definable.
\end{cor}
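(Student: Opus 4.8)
The plan is to adapt the construction from Lemma~\ref{lem:sssetnodefmainlemma} to the Hurewicz setting and then extract a failure of definability from the fact that a definable notion of structure must have $\cart(\chi)$ creating colimits. The first step is the elementary observation that a Hurewicz fibration structure on a vertical map $f$ is precisely a fibred lifting structure against the two endpoint inclusions $\delta_0,\delta_1 \colon 1 \rightrightarrows \II$: since $m$ is the map $0 \to 1$ from the initial to the terminal object, the pushout product $\delta_i \hat{\times} m$ is just $\delta_i$ itself, so there is no cofibrant datum at all. In the internal language this says that a Hurewicz structure assigns, to every path $p$ in the base together with a lift of its source $p(0)$, a lift of $p$ agreeing with it at $0$ — exactly the data in the proof of Lemma~\ref{lem:sssetnodefmainlemma} with the proposition $\psi$ taken to be $\bot$ (so that the partial-element datum $f$ is vacuous).

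Next I would reuse the object $Z \to \II \times \II$ defined there, with $Z_{x,y} := \{\varphi \in \Omega \mid x \geq y \to \varphi\}$, together with the covering $\II \times \II = T_0 \cup T_1$ by the two ``triangles'' provided by linearity, whose intersection is the diagonal $\Delta$, and for which each restriction $Z|_{T_i}$ is a trivial fibration in the strong sense. Substituting $\psi := \bot$ in the two formulas for $q$ and $r$ yields two Hurewicz fibration structures on $Z \to \II \times \II$, and I would check, exactly as in Lemma~\ref{lem:sssetnodefmainlemma}, that they satisfy the boundary conditions, that they restrict to the \emph{same} structure on $T_0$ and on $T_1$ (hence also on $\Delta$), and that they nonetheless differ: working over $\CC/\II$ with the path $p_x(y) := (x,y)$ and the (forced) source lift $\top$, the structures $q$ and $r$ produce, at the other endpoint, the subobjects $x = 0 \vee x = 1$ and $\top$ of $\II$, which are distinct precisely by the non-triviality hypothesis that $2 \to \II$ is not a regular epimorphism.

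Finally I would convert this into the claimed non-definability. In a topos the square exhibiting $\II \times \II$ as the union $T_0 \cup T_1$ along $\Delta$ is a pushout, and it remains a pushout after pulling back $Z$ along it, so $Z$ is the colimit of the diagram $Z|_{T_1} \leftarrow Z|_{\Delta} \to Z|_{T_0}$ of cartesian maps. If the notion of structure were definable, then $\cart(\chi)$ would create this colimit, so a Hurewicz structure on $Z$ would be uniquely determined by a compatible family of structures on the three pieces; since $q$ and $r$ induce the same such family, this forces $q = r$, contradicting the previous step. (Equivalently one may argue directly from Lemmas~\ref{lem:partreprequiv} and~\ref{lem:pblocrepr}: definability provides a representing object over the base of $Z$ whose sections classify structures compatibly with reindexing, so $q$ and $r$ would correspond to sections agreeing after precomposition with the jointly epimorphic family $T_0 \hookrightarrow \II\times\II$, $T_1 \hookrightarrow \II\times\II$, whence $q = r$.) The only real work is in the first two steps — checking that the simplified $q$ and $r$ are genuine Hurewicz structures, agree on the cover, and are genuinely distinct — but this is essentially inherited from Lemma~\ref{lem:sssetnodefmainlemma}; the one new ingredient is the reduction of the generating left maps to the plain endpoint inclusions, and that is immediate.
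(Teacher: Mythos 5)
Your proposal is correct and is essentially the paper's intended argument: the paper proves this corollary simply by observing that the construction of Lemma~\ref{lem:sssetnodefmainlemma} applies verbatim, and you carry that out, correctly noting that with \(m : 0 \to 1\) the generating maps reduce to the endpoint inclusions (so \(\psi = \bot\)), that the source lift is forced to be \(\top\), and that the same distinguishing family over \(\II\) still separates \(q\) from \(r\) by non-triviality. The only material you add is the explicit final step from ``two distinct structures agreeing on the cover \(T_0, T_1\)'' to non-definability, and your parenthetical version via representability of \(\bar{\chi}_Z\) and the jointly epimorphic family is the cleanest way to finish (the colimit-creation phrasing also works but quietly uses that the union square remains a pushout in \(\cart(\CC^\to)\), i.e.\ descent/adhesivity in the topos).
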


\subsubsection{Detachable diagonal and BCH cubical sets}
\label{sec:non-locally-repr}

We recall that BCH cubical sets \cite{bchcubicalsets} possess a non-cartesian monoidal
product, \emph{separated product}. Bezem, Coquand and Huber defined
Kan fibrations to be maps with the right lifting property against the
category indexed family given by pushout product of a maps
$0 \to \square_n$ a boundary inclusion
$\partial \square_m \hookrightarrow \square_m$ and an endpoint
inclusion (the last two can also be merged together to give an ``open
box inclusion''). A morphism is a pair of maps
$\square_n \to \square_{n'}$ and $\square_m \to \square_{m'}$, which
induces a morphism the corresponding maps in the pushout product. It
is clear from the construction of the universe by Bezem, Coquand and
Huber that this gives a definable awfs, although we can
now also see definability as an instance of Corollary
\ref{cor:catindexedtinytolocrepr}.

One might wonder what happens if instead of separated product we use
cartesian product in the pushout product in the generating family of
trivial cofibrations. We will give a concrete reason that this is a
bad idea: the resulting awfs is not definable. Just as for simplicial
sets, we will identify a logical property of the interval that
suffices to carry out the argument.

We first note that $\II \times \II$ can be constructed by ``pasting a
diagonal to $\II \otimes \II$.'' More formally we have the following
lemma.

\begin{lemma}
  \label{lem:sqplusdiag}
  The following diagram is both a pushout and a pullback.
  \begin{equation*}
    \begin{tikzcd}[sep = 3.8em]
      1 + 1 \ar[r, "{[\langle 0, 0 \rangle, \langle 1, 1 \rangle]}"]
      \ar[d, "{[0, 1]}"] &
      \II \otimes \II \ar[d] \\
      \II \ar[r, "\Delta"] & \II \times \II
    \end{tikzcd}
  \end{equation*}
\end{lemma}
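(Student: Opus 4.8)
The plan is to reduce the statement to a pointwise check in $\sets$. Since both pullbacks and pushouts in a presheaf category are computed objectwise, it is enough to show that, for each object $J$ of the cube category of Bezem, Coquand and Huber, the induced square of sets
\[
  \begin{tikzcd}[sep = 2.5em]
    (1+1)(J) \ar[r] \ar[d] & (\II \otimes \II)(J) \ar[d] \\
    \II(J) \ar[r] & (\II \times \II)(J)
  \end{tikzcd}
\]
is simultaneously a pullback and a pushout. Here I would use the elementary fact that a commuting square of sets in which the two maps into the bottom right corner are monomorphisms is a pushout if and only if it is a pullback \emph{and} the images of those two maps jointly cover the bottom right corner --- so that verifying the pullback property together with the covering property yields both at once. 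The work then splits into: (a) identifying the four presheaves and the four maps explicitly; (b) checking that the comparison map $\iota \colon \II \otimes \II \to \II \times \II$ and the diagonal $\Delta \colon \II \to \II \times \II$ are monomorphisms; (c) computing the intersection of their images; and (d) checking that their images cover.

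For (a) and (b): the interval $\II$ is the representable cubical set on the one-name context, and since the separated product of representables is the representable on the disjoint union of name contexts, $\II \otimes \II$ is the representable on the two-name context. By Yoneda, its elements at stage $J$ are precisely the pairs of elements of $\II(J)$ that can be realised along two \emph{distinct} names, i.e.\ the pairs with disjoint name-supports; the canonical comparison $\iota$ sends such a pair to itself, viewed as an element of $\II(J) \times \II(J)$. This is injective, so $\iota$ is a monomorphism, and $\Delta$ is a (split) monomorphism in any case. Under these identifications the top map $1+1 \to \II \otimes \II$ picks out the pairs $(0,0)$ and $(1,1)$ --- these lie in $\II \otimes \II$ because the endpoints have empty support --- and the left map $1+1 \to \II$ picks out the two endpoints of the interval; the square commutes since both composites $1+1 \to \II \times \II$ select the points $(0,0)$ and $(1,1)$.

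For (c) and (d): a pair $(a,b)$ lies in the image of both $\iota$ and $\Delta$ exactly when $a = b$ and the supports of $a$ and $b$ are disjoint, which forces $a = b$ to be an endpoint; hence the intersection of the two images is $\{(0,0),(1,1)\}$, which is exactly the image of $(1+1)(J)$ --- so each pointwise square is a pullback. For the covering property: if $a = b$ then $(a,b)$ lies in the image of $\Delta$, and if $a \neq b$ then the two coordinates have disjoint supports (they are distinct names, or at least one is an endpoint), so $(a,b)$ lies in the image of $\iota$; thus the two images exhaust $(\II \times \II)(J)$. By the criterion above, each pointwise square, and therefore the original square, is at once a pullback and a pushout.

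The step I expect to be the crux is (a)--(b): pinning down the image of $\iota$ precisely, i.e.\ that ``separated'' corresponds exactly to ``disjoint name-supports'' for the particular cube category used by Bezem, Coquand and Huber (this is where the absence of connections matters --- with connections the covering property in (d) would fail). Once that description is in place, (c) and (d) are routine bookkeeping with two subobjects and the behaviour of the endpoints.
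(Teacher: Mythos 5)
Your argument is correct, but it is genuinely more explicit than what the paper does: the paper's entire proof is a one-line deferral, ``this can be seen clearly using Pitts' presentation of the category as $01$-substitution sets,'' with no computation given. You instead reduce to a pointwise check in $\sets$ (legitimate, since limits and colimits in a presheaf category are computed objectwise), identify $\II(J) \cong J \cup \{0,1\}$ and, via Day convolution of representables plus Yoneda, $(\II \otimes \II)(J)$ as the injective-on-names pairs, i.e.\ pairs with disjoint name-supports; then the intersection-of-images computation gives the pullback property and the covering argument (any pair is either diagonal or has disjoint supports) gives the pushout, using the standard fact that in $\sets$ the union of two subobjects is the pushout over their intersection. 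This buys a self-contained verification that does not presuppose familiarity with nominal/$01$-substitution sets, at the cost of having to pin down the separated product of representables explicitly --- which you correctly flag as the crux, and which is exactly the point the paper outsources to Pitts. One small caveat: your stated criterion is not an ``if and only if'' --- a pushout square of sets with both legs into the corner monic need not be a pullback (e.g.\ $A = 1+1$ mapping onto $B = C = D = 1$) --- but you only ever use the true direction (pullback plus jointly covering monos implies pushout), so this does not affect the proof; just state that direction only, or add that $A \to B$ is monic when claiming the converse.
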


\begin{proof}
  E.g. this can be seen clearly using Pitts' presentation of the
  category as $01$-substitution sets \cite{pittsnompcs}.
\end{proof}

We can understand this pushout in the internal logic as follows. The
lemma tells us directly that \(\II \times \II\) can be written as the
union of the subobjects \(\II \otimes \II\) and
\(\Delta : \II \hookrightarrow \II \times \II\) and that the
intersection of these two subobjects is the inclusion of diagonal
endpoints \(2 \hookrightarrow \II \times \II\). In general, separated
product \(\otimes\) is not well behaved with respect to the internal
logic of cubical sets\footnote{The only closed semi cartesian monoidal
  product fibred over a codomain fibration is cartesian product: any
  such monoidal product has a fibred right adjoint by closedness and
  so by Lemma \ref{lem:particalrightadjfib} preserves opcartesian
  maps, and so we calculate
  \(A \otimes_1 B \cong A \otimes_1 \sum_B 1 \cong \sum_B (B^\ast(A)
  \otimes_B 1) \cong A \times B\).}, but in this case we can deduce
from the above statement and purely formal reasoning in the Heyting
algebra of subobjects that \(\II \otimes \II\) can be defined from the
diagonal inclusion via Heyting implication. Hence BCH cubical sets
satisfy the following:

\begin{defn}
  Let \(\BB\) be a topos with interval object \(\II\). We say \(\II\)
  has \emph{detachable diagonal} if the following statement holds in
  the internal language of \(\BB\).
  \begin{displaymath}
    \forall i, j \in \II, \qquad i = j \quad\vee\quad (i = j \;\to\; i = 0 \vee
    i = 1) 
  \end{displaymath}
\end{defn}

\begin{rmk}
  Once again our main example of a topos with this property is in fact the
  classifying topos. To make this precise, note that given an interval
  \(\II\) with disjoint endpoints and detachable diagonal, we can
  define a binary relation \(- \# -\) by taking \(x \# y\) when \(x =
  y \to x = 0 \vee x = 1\). This then defines a model for the
  following geometric theory:
  \begin{align*}
    x \# y &\vdash y \# x & x \# x &\vdash x = 0 \vee x = 1 & &\vdash x = y \vee x \# y \\
           & \vdash x \# 0 & & \vdash x \# 1 & 0 = 1 & \vdash \bot
  \end{align*}
  BCH cubical sets are the classifying topos for this theory with
  generic object \(\II\), where the binary relation \(-\#-\) is the
  canonical map \(\II \otimes \II \rightarrowtail \II \times \II\).
\end{rmk}

\begin{thm}
  Suppose that we are given a topos with connected interval object
  with disjoint endpoints and detachable diagonal.

  Then the awfs cofibrantly generated by pushout product of
  monomorphisms and endpoint inclusions (with respect to cartesian
  product) is not definable.
\end{thm}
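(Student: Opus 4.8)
The plan is to replay the argument for simplicial sets --- Lemma~\ref{lem:sssetnodefmainlemma} and the corollary following it --- with the detachable diagonal playing the role that linearity of the interval played there; all the constructions take place in the internal logic of the topos $\BB$. Write $\II \otimes \II$ for the subobject $\{(i,j) \in \II \times \II \mid i = j \to (i = 0 \vee i = 1)\}$ of $\II \times \II$, that is, the relation $\#$ of the remark preceding the theorem, and $\Delta_{\II}$ for the diagonal subobject. Detachable diagonal says exactly that $\Delta_{\II} \cup (\II \otimes \II) = \II \times \II$, i.e.\ that the two inclusions into $\II \times \II$ are jointly epimorphic, while disjoint endpoints give $\Delta_{\II} \cap (\II \otimes \II) = 1 + 1$, the pair of diagonal endpoints (so in particular $1 + 1 \hookrightarrow \II$ is a monomorphism). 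Mirroring the simplicial construction, I would take the family $Z$ over $\II \times \II$ with
\begin{equation*}
  Z_{i,j} := \{ \varphi \in \Omega \mid i = j \to \varphi \}.
\end{equation*}
The map $Z \to \II \times \II$ becomes an isomorphism when restricted along $\Delta$, and becomes a trivial fibration in the strongest possible sense when restricted along $\II \otimes \II \hookrightarrow \II \times \II$: a chosen filler against an arbitrary monomorphism $A \hookrightarrow B$ is obtained by extending a partial section to $b \in B_{i,j}$ via $(b \in A \wedge \varphi(b)) \vee (i = j)$.

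The crux --- and the step I expect to be the main obstacle --- is the following analogue of Lemma~\ref{lem:sssetnodefmainlemma}: \emph{there exist two fibration structures on $Z \to \II \times \II$, for the awfs cofibrantly generated by the pushout products of monomorphisms with the endpoint inclusions, that restrict to the same structure on $\Delta_{\II}$ and to the same structure on $\II \otimes \II$, yet are not equal.} Agreement on $\Delta_{\II}$ is automatic, since an identity map carries a unique fibration structure. As in the linear case, each structure would be presented by a filler formula assembled from a common ``trivial fibration'' term, a disjunct ``$p(x) \in \II \otimes \II$'' (which forces the value to lie in $Z_{p(x)}$), and a ``diagonal correction'' clause. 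The two correction clauses must be arranged so that they coincide along every path $p$ landing entirely in $\II \otimes \II$ --- this is precisely where disjoint endpoints replace the use of linearity made in Lemma~\ref{lem:sssetnodefmainlemma} for analysing $T_1$-paths --- while still producing different values at the endpoint $p(1)$ of a lifting problem against the pushout product of $1 + 1 \hookrightarrow \II$ (the two endpoints) with $\delta_0 : 1 \to \II$; non-triviality of $\II$, which follows from connectedness together with disjoint endpoints, then guarantees that these two values name distinct subobjects of $\II$. As in the proof of Lemma~\ref{lem:sssetnodefmainlemma}, the real work is in pinning down the correction clauses and checking the boundary conditions.

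Granting this, non-definability follows formally. Suppose the lifting notion of structure $\chi$ underlying the awfs were definable. By Lemma~\ref{lem:partreprequiv} (equivalently Proposition~\ref{prop:repdefns}) the presheaf $\bar{\chi}_Z$ on $\BB/(\II \times \II)$ is then represented by some $\sigma : U \to \II \times \II$, with fibration structures on $Z$ corresponding naturally to sections of $\sigma$; and by Lemma~\ref{lem:pblocrepr} the pullbacks of $\sigma$ along $\Delta$ and along $\II \otimes \II \hookrightarrow \II \times \II$ represent the notions of structure on the respective restrictions of $Z$, compatibly with restriction of sections. Hence the two fibration structures produced above would correspond to sections $s, s' : \II \times \II \to U$ of $\sigma$ whose composites with $\Delta$ agree and whose composites with $\II \otimes \II \hookrightarrow \II \times \II$ agree; since these two maps are jointly epimorphic, $s = s'$, so the two fibration structures coincide --- contradicting the lemma. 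Therefore $\chi$, and hence the awfs, is not definable. (Throughout we assume the lifting notion of structure is monadic, so that it genuinely underlies an awfs; this holds, for example, by Garner's small object argument when $\BB$ is a Grothendieck topos, as in the small object arguments recalled in Section~\ref{sec:lifting-structures}.)
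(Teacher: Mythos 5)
Your reduction of the theorem to the key lemma is fine as far as it goes: if there were two distinct fibration structures on a single family agreeing on the restrictions to the diagonal and to \(\II \otimes \II\), then representability of \(\bar{\chi}_Z\) (Lemma~\ref{lem:partreprequiv}, Lemma~\ref{lem:pblocrepr}) together with the fact that the two subobjects are jointly epimorphic onto \(\II \times \II\) would indeed force the corresponding sections, hence the structures, to coincide. But the mathematical heart of the theorem --- the construction of the counterexample --- is exactly what you leave out. You state the analogue of Lemma~\ref{lem:sssetnodefmainlemma} as ``the main obstacle,'' propose that each structure ``would be presented by'' a filler formula with an unspecified diagonal correction clause, and defer ``pinning down the correction clauses and checking the boundary conditions'' --- which is the entire content. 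This is not a routine transport of the simplicial argument: there, the verification that the two formulas \(q\) and \(r\) agree on \(T_1\) uses linearity in an essential way (on \(T_1\) one has \(x \leq y\), so \(y = 0\) iff \((x,y) = (0,0)\)), and you give no indication of what replaces this when the only hypothesis is detachable diagonal; ``disjoint endpoints replace the use of linearity'' is a hope, not an argument. As it stands the proposal proves nothing beyond the formal deduction from an unestablished lemma.

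It is also worth noting that the paper does not take this route at all. Rather than exhibiting a failure of \emph{uniqueness} of structure gluing on a fixed family \(Z\), it exhibits a failure of \emph{existence}: it builds a family \(P\) over \(\II \times \II\) as a quotient of \(\{0,1 \mid x = y\} + \{2,3 \mid x = y \to x = 0 \vee x = 1\}\) by identifications at the two endpoints (a ``twisted'' gluing of two copies of the constant family \(2\) over the diagonal and over \(\II \otimes \II\)), shows that its restriction to each of the two covering subobjects is isomorphic to the constant family \(2\) and hence carries a (essentially canonical) Kan fibration structure, and then uses connectedness of the interval to show that \(P\) admits \emph{no} fibration structure whatsoever. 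Non-definability follows because a representing object for \(\bar{\chi}_P\) would turn the union \(\II \times \II = \Delta \cup (\II \otimes \II)\) into a limit of sets of structures, so compatible structures on the pieces would glue to a structure on \(P\) --- a descent-style contradiction that sidesteps the delicate boundary-condition bookkeeping your sketch would require. If you want to salvage your approach you must actually produce the two filler formulas and verify their agreement on \(\II \otimes \II\) using only detachable diagonal and disjoint endpoints; otherwise the paper's construction is the proof you are missing.
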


\begin{proof}
  We define for each $x, y \in \II$ a set $P_{x, y}$ as follows. We
  first define
  \begin{displaymath}
    Q_{x, y} := \{ 0, 1 \in 2 \;|\; x = y \} + \{ 2, 3 \;|\; x = y \to x = 0
    \vee x = 1 \}
  \end{displaymath}
  We define an equivalence relation $\sim$ on $Q$ by setting $0 \sim
  2$ and $1 \sim 3$ when $x = y = 0$ and setting $0 \sim 3$ and $1
  \sim 2$ when $x = y = 1$. We define $P_{x, y}$ to be the quotient
  $Q_{x, y} / {\sim}$.

  Note that by assumption we can write $\II \times \II$ as a union of
  two subobjects: the diagonal
  $\{(x, y) \in \II \times \II \;|\; x = y \}$ and the subobject
  $C := \{(x, y) \in \II \times \II \;|\; x = y \to x = 0 \vee x =
  1\}$. We will show that the restriction of $P$ to either subobject
  is isomorphic to the family constantly equal to $2$, and so a Kan
  fibration, whereas \(P\) itself is not.

  First consider the diagonal. In this case each $P_{x, x}$ contains
  equivalence classes $[0]$ and $[1]$. It can only contain the
  equivalence class $[2]$ when $x = 0 \vee x = 1$. However, in the
  former case $[2] = [0]$ and in the latter case $[2] =
  [1]$. Similarly, it can only contain an equivalence class $[3]$ when
  it is equal to either $[0]$ or to $[1]$. Hence $P_{x, x} \cong 2$.

  Now consider the case where $(x, y) \in C$. In this case $P_{x, y}$
  definitely contains the equivalence relations $[2]$ and
  $[3]$. However, it can only contain $[0]$ when it is identified with
  either $[2]$ or with $[3]$, and similarly for $[1]$. Hence we have
  $P_{x, y} \cong \{2, 3\} \cong 2$.

  We now define a family of paths $p_x$ in $\II \times \II$ by setting
  $p_x(y) := (x, y)$. We define $z \in P_{p(0)}$ to be $[2]$. If $P$ is
  a Kan fibration, then we would have a family of fillers $j_x :
  \prod_{y : \II}P_{x, y}$. Note that by the explicit description of
  $P_{x, x}$ above, we have for all $x$ that $j_x(x) = [0]$ or $j_x(x)
  = [1]$. Since $j_0(0) = z_0 = [2] = [0]$, and using the
  connectedness of the interval, we have $j_1(1) = [0]$.

  Now using the explicit description of $P_{x, y}$ for $(x, y) \in C$,
  we see that each $j_1(x)$ must be either equal to $[2]$ or to
  $[3]$. Again using $z_1$ and the connectedness of the interval, we
  have $j_1(1) = [2]$. However, $[0]$ and $[2]$ are not equal as
  elements of $P_{1, 1}$, giving a contradiction.  
\end{proof}

\begin{rmk}
  Since we showed there is no fibration structure at all on the
  pushout, we can show that for presheaf categories where the interval
  has detachable diagonal the ``canonical'' universe, constructed in
  Theorem \ref{thm:locrepinpshbase} is not fibrant. If it was, we
  would be able to construct a map from \(\II\) into the universe
  using the universal property of the pushout, and thereby pull back
  the fibration structure to the pushout.
\end{rmk}

\section{Conclusion}
\label{sec:conclusion}

Definability is a fundamental notion in the theory of Grothendieck
fibrations that characterises when external properties and structure
can be accessed from within the internal logic of the base of a
fibration. It has appeared in many different guises over time. In this
paper we gave a comprehensive overview uniting the theory of
definability developed by Lawvere, B\'{e}nabou and Johnstone with the
separate thread starting with Cisinski's definition of local fibration
\cite[Definition 3.7]{cisinskiuniv}, further developed by Sattler
\cite{sattlermodelstructures} and ending with Shulman's local
representability.

Algebraic weak factorisation systems can be viewed as monadic notions
of structure equipped with a composition functor. As notions of
structure they lie on the boarder between definability and non
definability. On the side of definability we saw a general sufficient
criterion that encompasses some very different looking examples of
definable awfs's. By applying our result to a codomain fibration, we
recovered the definability of Kan fibrations in cubical sets \cite{lops, awodey19}. By
applying to set indexed family fibrations, we obtained a different
looking criterion, where the exponential functor used in the internal
definition of tininess is replaced with a hom set functor. The theorem
is phrased as a general condition on awfs's cofibrantly generated by a
family of maps in a fibration, that includes Kan fibrations generated
by a tiny interval, but also other examples. In particular in Example
\ref{ex:natmodelex} we saw an example of awfs's in natural models that
made essential use of a tiny family of objects that is not simply
generated by one tiny object. The general result includes most
examples of cofibrantly generated awfs's used in the semantics of
homotopy type theory. However, we leave two interesting classes of
examples as a direction for future work. The first is awfs's
cofibrantly generated by a double category, such as the definition of
Kan fibration due to Van den Berg and Faber in \cite{vdbergfaber}, who
gave a direct proof of definability. The
second is examples where the role of exponential in Kan fibration is
replaced by monoidal exponential, as in the definition of Kan
fibration by Bezem, Coquand and Huber in \cite{bchcubicalsets}. A
promising approach is suggested by Nuyts and Devriese in
\cite{nuytsdevriese}, who showed that the relevant right
adjoint to monoidal exponentiation is an instance of a general
construction of transpension types in presheaf categories.

Our examples of non definable awfs's included identifying logical
principles satisfied by the interval that can be used to show the non
definability of Kan fibrations. In both cases the main examples of
simplicial sets, and BCH cubical sets respectively turned out to be
classifying toposes for the structures that we considered. Simplicial
sets have long been regarded as a very natural setting for studying
the structure of topological spaces up to homotopy
\cite{gabrielzisman}, and they are used in the original model of
homotopy type theory \cite{voevodskykapulkinlumsdainess}
so it is natural to ask if the same can be done
when working constructively. We have seen that many reasonable
definitions of Kan fibration in simplicial sets are non definable.
However, we leave it as an open problem to
either show that one of the non definable versions of Kan fibration can still
be used to model univalent type theory in simplicial sets, or to show
it causes an unavoidable obstruction, in which case it is necessary to
use one of the definable versions of Kan fibration, as in \cite{gambinosattlerszumilo} or
\cite{vdbergfaber}, or to avoid simplicial sets entirely in favour of
other categories such as cubical sets. In BCH cubical sets we saw a
more severe example of the kind of thing that can happen in the
absence of definability. BCH cubical sets are a presheaf topos, and so
very well behaved as a category, and possesses an obvious choice of
interval object. As such, one might na\"{i}vely expect that as an
alternative to the original monoidal definition of Kan fibration, it
would be possible to construct a model of homotopy type theory using
the cartesian definition of Kan fibration. However, the only apparent
choice of universe classifying Kan fibrations fails to be a Kan
fibration itself. We can see from this that definability is a key
property to consider when constructing models of homotopy type
theory.

\bibliographystyle{alpha}
\bibliography{mybib}{}

\begin{thebibliography}{CCHM18}

\bibitem[Awo18]{awodeynatmod}
Steve Awodey.
\newblock Natural models of homotopy type theory.
\newblock {\em Mathematical Structures in Computer Science}, 28(2):241–286,
  2018.

\bibitem[Awo19]{awodey19}
Steve Awodey.
\newblock {A Quillen model structure on the category of cartesian cubical
  sets}.
\newblock Preprint available at
  \url{https://github.com/awodey/math/blob/master/QMS/qms.pdf}, 2019.

\bibitem[Bau02]{bauereqsptte}
Andrej Bauer.
\newblock A relationship between equilogical spaces and type two effectivity.
\newblock {\em Mathematical Logic Quarterly}, 48(S1):1--15, 2002.

\bibitem[BCH14]{bchcubicalsets}
Marc Bezem, Thierry Coquand, and Simon Huber.
\newblock {A Model of Type Theory in Cubical Sets}.
\newblock In Ralph Matthes and Aleksy Schubert, editors, {\em 19th
  International Conference on Types for Proofs and Programs (TYPES 2013)},
  volume~26 of {\em Leibniz International Proceedings in Informatics (LIPIcs)},
  pages 107--128, Dagstuhl, Germany, 2014. Schloss Dagstuhl--Leibniz-Zentrum
  fuer Informatik.

\bibitem[B{\'{e}}n85]{benaboufibcat}
Jean B{\'{e}}nabou.
\newblock Fibered categories and the foundations of naive category theory.
\newblock {\em The Journal of Symbolic Logic}, 50(1):10--37, 1985.

\bibitem[BG16]{bourkegarnerawfs1}
John Bourke and Richard Garner.
\newblock Algebraic weak factorisation systems {I}: Accessible awfs.
\newblock {\em Journal of Pure and Applied Algebra}, 220(1):108 -- 147, 2016.

\bibitem[CCHM18]{coquandcubicaltt}
Cyril Cohen, Thierry Coquand, Simon Huber, and Anders M{\"o}rtberg.
\newblock {Cubical Type Theory: A Constructive Interpretation of the Univalence
  Axiom}.
\newblock In Tarmo Uustalu, editor, {\em 21st International Conference on Types
  for Proofs and Programs (TYPES 2015)}, volume~69 of {\em Leibniz
  International Proceedings in Informatics (LIPIcs)}, pages 5:1--5:34,
  Dagstuhl, Germany, 2018. Schloss Dagstuhl--Leibniz-Zentrum fuer Informatik.

\bibitem[Cis14]{cisinskiuniv}
Denis-Charles Cisinski.
\newblock Univalent universes for elegant models of homotopy types.
\newblock arXiv:1406.0058, 2014.

\bibitem[Die79]{diersfum}
Yves Diers.
\newblock Familles universelles de morphismes.
\newblock {\em Annales de la Soci\'{e}t\'{e} Scientifique de Bruxelles},
  93:175--195, 1979.

\bibitem[Gar09]{garnersmallobject}
Richard Garner.
\newblock Understanding the small object argument.
\newblock {\em Applied Categorical Structures}, 17(3):247--285, 2009.

\bibitem[GH22]{gambinohenry}
Nicola Gambino and Simon Henry.
\newblock Towards a constructive simplicial model of univalent foundations.
\newblock {\em Journal of the London Mathematical Society}, 105(2):1073--1109,
  2022.

\bibitem[GHSS21]{ghsseffectivemodelstr}
Nicola Gambino, Simon Henry, Christian Sattler, and Karol Szumi\l{}o.
\newblock The effective model structure and \(\infty\)-groupoid objects.
\newblock arXiv:2102.06146, 2021.

\bibitem[GS17]{gambinosattlerpi}
Nicola Gambino and Christian Sattler.
\newblock The {F}robenius condition, right properness, and uniform fibrations.
\newblock {\em Journal of Pure and Applied Algebra}, 221(12):3027 -- 3068,
  2017.

\bibitem[GSS19]{gambinosattlerszumilo}
Nicola Gambino, Christian Sattler, and Karol Szumi\l{}o.
\newblock The constructive {K}an-{Q}uillen model structure: two new proofs.
\newblock arXiv:1907.05394, 2019.

\bibitem[GT06]{grandistholennwfs}
Marco Grandis and Walter Tholen.
\newblock Natural weak factorization systems.
\newblock {\em Archivum Mathematicum}, 42(4):397--408, 2006.

\bibitem[GZ67]{gabrielzisman}
Peter Gabriel and Michel Zisman.
\newblock {\em Calculus of Fractions and Homotopy Theory}.
\newblock Ergebnisse der Mathematik und ihrer Grenzgebiete. Springer, 1967.

\bibitem[Hen19]{henryexinfty}
Simon Henry.
\newblock A constructive account of the {K}an-{Q}uillen model structure and of
  {K}an's {E}x$^\infty$ functor.
\newblock arXiv:1905.06160, 2019.

\bibitem[Jac99]{jacobs}
Bart Jacobs.
\newblock {\em Categorical Logic and Type Theory}.
\newblock Number 141 in Studies in Logic and the Foundations of Mathematics.
  North Holland, Amsterdam, 1999.

\bibitem[Joh02]{theelephant}
Peter~T. Johnstone.
\newblock {\em Sketches of an Elephant: A Topos Theory Compendium}.
\newblock Oxford logic guides. Oxford University Press, 2002.

\bibitem[KL21]{voevodskykapulkinlumsdainess}
Chris Kapulkin and Peter~LeFanu Lumsdaine.
\newblock The simplicial model of univalent foundations (after {V}oevodsky).
\newblock {\em Journal of the European Mathematical Society}, 23(6):2071--2126,
  2021.

\bibitem[KS74]{kellystreettwocategories}
G.~M. Kelly and Ross Street.
\newblock Review of the elements of 2-categories.
\newblock In Gregory~M. Kelly, editor, {\em Category Seminar: Proceedings
  Sydney Category Theory Seminar 1972/1973}, pages 75--103. Springer Berlin
  Heidelberg, Berlin, Heidelberg, 1974.

\bibitem[Law70]{lawverecomp}
William Lawvere.
\newblock Equality in hyperdoctrines and comprehension schema as an adjoint
  functor.
\newblock In {\em Proceedings of the AMS Symposium on Pure Mathematics XVII},
  pages 1--14, 1970.

\bibitem[LOPS18]{lops}
Daniel~R. Licata, Ian Orton, Andrew~M. Pitts, and Bas Spitters.
\newblock {Internal Universes in Models of Homotopy Type Theory}.
\newblock In H{\'e}l{\`e}ne Kirchner, editor, {\em 3rd International Conference
  on Formal Structures for Computation and Deduction (FSCD 2018)}, volume 108
  of {\em Leibniz International Proceedings in Informatics (LIPIcs)}, pages
  22:1--22:17, Dagstuhl, Germany, 2018. Schloss Dagstuhl--Leibniz-Zentrum fuer
  Informatik.

\bibitem[ND21]{nuytsdevriese}
Andreas Nuyts and Dominique Devriese.
\newblock Transpension: The right adjoint to the pi-type.
\newblock arXiv preprint, arXiv 2008.08533, September 2021.

\bibitem[New18]{newsteadthesis}
Clive Newstead.
\newblock {\em Algebraic Models of Dependent Type Theory}.
\newblock PhD thesis, Carnegie Mellon University, 2018.

\bibitem[OP16]{pittsortoncubtopos}
I.~Orton and A.~M. Pitts.
\newblock Axioms for modelling cubical type theory in a topos.
\newblock In J.-M. Talbot and L.~Regnier, editors, {\em 25th EACSL Annual
  Conference on Computer Science Logic ({CSL} 2016)}, volume~62 of {\em Leibniz
  International Proceedings in Informatics (LIPIcs)}, pages 24:1--24:19,
  Dagstuhl, Germany, 2016. Schloss Dagstuhl--Leibniz-Zentrum f\"ur Informatik.

\bibitem[Pit15]{pittsnompcs}
A.~M. Pitts.
\newblock Nominal presentation of cubical sets models of type theory.
\newblock In H.~Herbelin, P.~Letouzey, and M.~Sozeau, editors, {\em 20th
  International Conference on Types for Proofs and Programs (TYPES 2014)},
  Leibniz International Proceedings in Informatics (LIPIcs), Dagstuhl, Germany,
  2015. Schloss Dagstuhl--Leibniz-Zentrum fuer Informatik.

\bibitem[Qui67]{quillen67}
Daniel~G. Quillen.
\newblock {\em Homotopical Algebra}.
\newblock Lecture Notes in Mathematics. Springer, Berlin, Heidelberg, 1967.

\bibitem[Rie11]{riehlams}
Emily Riehl.
\newblock Algebraic model structures.
\newblock {\em New York Journal of Mathematics}, 17:173--231, 2011.

\bibitem[Rie14]{riehlcht}
Emily Riehl.
\newblock {\em Categorical Homotopy Theory}, volume~24 of {\em New Mathematical
  Monographs}.
\newblock Cambridge University Press, 2014.

\bibitem[Sat17]{sattlermodelstructures}
Christian Sattler.
\newblock The equivalence extension property and model structures.
\newblock arXiv:1704.06911, 2017.

\bibitem[Shu14]{shulmanuidhc}
Michael Shulman.
\newblock Univalence for inverse diagrams and homotopy canonicity.
\newblock {\em Mathematical Structures in Computer Science}, 25(5):1--75,
  November 2014.

\bibitem[Shu19]{shulmaninftytopunivalence}
Michael Shulman.
\newblock All $(\infty ,1)$-toposes have strict univalent universes.
\newblock arXiv:1904.07004, April 2019.

\bibitem[Ste20]{stenzelcc}
Raffael Stenzel.
\newblock $(\infty,1)$-categorical comprehension schemes.
\newblock arXiv:2010.09663, 2020.

\bibitem[Str72]{streetmonads}
Ross Street.
\newblock The formal theory of monads.
\newblock {\em Journal of Pure and Applied Algebra}, 2(2):149--168, 1972.

\bibitem[Str22]{streicherfibcat}
Thomas Streicher.
\newblock Fibered categories \`{a} la {J}ean {B}\'{e}nabou, 2022.
\newblock Lecture notes, available at
  \url{https://arxiv.org/pdf/1801.02927.pdf}.

\bibitem[Swa16]{swannomawfs}
Andrew~Wakelin Swan.
\newblock An algebraic weak factorisation system on 01-substitution sets:\ a
  constructive proof.
\newblock {\em Journal of Logic and Analysis}, 8, December 2016.

\bibitem[Swa18a]{swanidams}
Andrew Swan.
\newblock Identity types in algebraic model structures and cubical sets.
\newblock arXiv:1808.00915, August 2018.

\bibitem[Swa18b]{swanliftprob}
Andrew~Wakelin Swan.
\newblock Lifting problems in {G}rothendieck fibrations.
\newblock arXiv:1802.06718, February 2018.

\bibitem[Swa18c]{swanwtypered}
Andrew~Wakelin Swan.
\newblock {W}-types with reductions and the small object argument.
\newblock arXiv:1802.07588, February 2018.

\bibitem[vdBF22]{vdbergfaber}
Benno van~den Berg and Eric Faber.
\newblock Effective {K}an fibrations in simplicial sets.
\newblock arXiv preprint, reference arXiv:2009.12670, April 2022.

\bibitem[Yet87]{yetter}
David Yetter.
\newblock On right adjoints to exponential functors.
\newblock {\em Journal of Pure and Applied Algebra}, 45(3):287--304, 1987.

\end{thebibliography}

\end{document}